\newtheorem{defin}{Definition}
\newtheorem{lemma}{Lemma} 
\newtheorem{prop}{Proposition}
\newtheorem{theo}{Theorem}
\newtheorem{corol}{Corollary}
\newenvironment{proof}{\medskip\par\noindent{\bf Proof}}{\hfill $\Box$
\medskip\par}
\begin{document}
\title{On parametric Borel summability for linear singularly perturbed 
Cauchy problems with linear fractional transforms}
\author{Alberto Lastra, St\'ephane Malek}
\date{February, 12 2018}
\maketitle
\thispagestyle{empty}
{ \small \begin{center}
{\bf Abstract}
\end{center}
We consider a family of linear singularly perturbed Cauchy problems which combines partial differential operators and linear
fractional transforms. This work is the sequel of a study initiated in \cite{ma}. We construct a collection of
holomorphic solutions on a full covering by sectors of a neighborhood of the origin in $\mathbb{C}$ with respect to
the perturbation parameter $\epsilon$. This set is built up through classical and special Laplace transforms along
piecewise linear paths of functions which possess exponential or super exponential growth/decay on horizontal strips.
A fine structure which entails two levels of Gevrey asymptotics of order 1 and so-called order $1^{+}$ is witnessed. Furthermore,
unicity properties regarding the $1^{+}$ asymptotic layer are observed and follow from results on summability w.r.t a particular
strongly regular sequence recently obtained in \cite{lamasa}.

\noindent Key words: asymptotic expansion, Borel-Laplace transform, Cauchy problem, difference equation,
integro-differential equation, linear partial differential equation, singular perturbation.
2010 MSC: 35R10, 35C10, 35C15, 35C20.} \bigskip \bigskip

\section{Introduction}

In this paper, we aim attention at a family of linear singularly perturbed equations that involve linear fractional
transforms and partial derivatives of the form
\begin{equation}
\mathcal{P}(t,z,\epsilon,\{m_{k,t,\epsilon}\}_{k \in I},\partial_{t},\partial_{z})y(t,z,\epsilon)=0  \label{SPEm_intro}
\end{equation}
where $\mathcal{P}(t,z,\epsilon,\{U_{k}\}_{k \in I},V_{1},V_{2})$ is a polynomial in $V_{1},V_{2}$, linear in
$U_{k}$, with holomorphic coefficients relying on $t,z,\epsilon$ in the vicinity of the origin in
$\mathbb{C}^{2}$, where $m_{k,t,\epsilon}$ stands for the Moebius operator acting on the time variable
$m_{k,t,\epsilon}y(t,z,\epsilon) = y(\frac{t}{1 + k \epsilon t},z,\epsilon)$ for $k$ belonging to some finite
subset $I$ of $\mathbb{N}$.

More precisely, we assume that the operator $\mathcal{P}$ can be factorized in the following manner
$\mathcal{P} = \mathcal{P}_{1}\mathcal{P}_{2}$ where $\mathcal{P}_{1}$ and $\mathcal{P}_{2}$ are linear operators with the
specific shapes
\begin{multline*}
\mathcal{P}_{1}(t,z,\epsilon,\{ m_{k,t,\epsilon} \}_{k \in I},\partial_{t},\partial_{z}) = 
P(\epsilon t^{2}\partial_{t})\partial_{z}^{S}
- \sum_{\underline{k} = (k_{0},k_{1},k_{2}) \in \mathcal{A}}
c_{\underline{k}}(z,\epsilon) m_{k_{2},t,\epsilon} (t^{2}\partial_{t})^{k_0} \partial_{z}^{k_1},\\
\mathcal{P}_{2}(t,z,\epsilon,\partial_{t},\partial_{z}) =
P_{\mathcal{B}}(\epsilon t^{2}\partial_{t}) \partial_{z}^{S_{\mathcal{B}}} -
\sum_{\underline{l}=(l_{0},l_{1},l_{2}) \in \mathcal{B}} d_{\underline{l}}(z,\epsilon)
t^{l_0}\partial_{t}^{l_1}\partial_{z}^{l_{2}}.
\end{multline*}
Here, $\mathcal{A}$ and $\mathcal{B}$ are finite subsets of $\mathbb{N}^{3}$ and $S,S_{\mathcal{B}} \geq 1$ are integers
that are submitted to the constraints (\ref{cond_SPCP_first}) and (\ref{SB_underline_l_constraints})
with (\ref{d_l01_defin}). Moreover, $P(X)$ and $P_{\mathcal{B}}(X)$ represent polynomials that are not identically
vanishing with complex coefficients and suffer the property that their roots belong to the open right plane
$\mathbb{C}_{+} = \{ z \in \mathbb{C} / \mathrm{Re}(z) > 0 \}$ and avoid a finite set of suitable unbounded sectors
$S_{d_p} \subset \mathbb{C}_{+}$, $0 \leq p \leq \iota-1$ centered at 0 with bisecting directions $d_{p} \in \mathbb{R}$.
The coefficients $c_{\underline{k}}(z,\epsilon)$ and $d_{\underline{l}}(z,\epsilon)$ for
$\underline{k} \in \mathcal{A}$, $\underline{l} \in \mathcal{B}$ define holomorphic functions on some polydisc centered
at the origin in $\mathbb{C}^{2}$. We consider the equation (\ref{SPEm_intro}) together with a set of initial
Cauchy data
\begin{equation}
(\partial_{z}^{j}y)(t,0,\epsilon) = \begin{cases}
                                    \psi_{j,k}(t,\epsilon) & \text{if } k \in \llbracket -n,n \rrbracket\\
                                    \psi_{j,d_{p}}(t,\epsilon) & \text{if } 0 \leq p \leq \iota-1 
                                    \end{cases} \label{SPEm_ic_1}
\end{equation}
for $0 \leq j \leq S_{\mathcal{B}}-1$ and
\begin{equation}
(\partial_{z}^{h}\mathcal{P}_{2}(t,z,\epsilon,\partial_{t},\partial_{z})y)(t,0,\epsilon) =
                                    \begin{cases}
                                    \varphi_{h,k}(t,\epsilon) & \text{if } k \in \llbracket -n,n \rrbracket\\
                                    \varphi_{h,d_{p}}(t,\epsilon) & \text{if } 0 \leq p \leq \iota-1 \label{SPEm_ic_2} 
                                    \end{cases}
\end{equation}
for $0 \leq h \leq S-1$ and some integer $n \geq 1$. We write $\llbracket -n,n \rrbracket$ for the set of integer numbers $m$ such that $-n\le m\le n$.  For $0 \leq j \leq S_{\mathcal{B}}-1$, $0 \leq h \leq S-1$,
the functions $\psi_{j,k}(t,\epsilon)$ and $\varphi_{h,k}(t,\epsilon)$
(resp. $\psi_{j,d_{p}}(t,\epsilon)$ and $\varphi_{h,d_{p}}(t,\epsilon)$) are holomorphic
on products $\mathcal{T} \times \mathcal{E}_{HJ_n}^{k}$ for $k \in \llbracket -n,n \rrbracket$
(resp. on $\mathcal{T} \times \mathcal{E}_{S_{d_p}}$ for $0 \leq p \leq \iota-1$), where $\mathcal{T}$ is a fixed open
bounded sector centered at 0 with bisecting direction $d=0$ and
$\underline{\mathcal{E}} = \{ \mathcal{E}_{HJ_n}^{k} \}_{k \in \llbracket -n,n \rrbracket}
\cup \{ \mathcal{E}_{S_{d_p}} \}_{0 \leq p \leq \iota-1}$ represents a collection of open bounded sectors centered
at 0 whose union form a covering of $\mathcal{U} \setminus \{ 0 \}$, where $\mathcal{U}$ stands for some
neighborhood of 0 in $\mathbb{C}$ (the complete list of constraints attached to $\underline{\mathcal{E}}$ is provided
at the beginning of Subsection 3.3).

This work is a continuation of a study harvested in the paper \cite{ma} dealing with small step size difference-differential
Cauchy problems of the form
\begin{equation}
\epsilon \partial_{s} \partial_{z}^{S}X_{i}(s,z,\epsilon) =
\mathcal{Q}(s,z,\epsilon,\{ T_{k,\epsilon} \}_{k \in J}, \partial_{s},\partial_{z})X_{i}(s,z,\epsilon)
+ P(z,\epsilon,X_{i}(s,z,\epsilon)) \label{SPE_shift_intro}
\end{equation}
for given initial Cauchy conditions $(\partial_{z}^{j}X_{i})(s,0,\epsilon) = x_{j,i}(s,\epsilon)$, for
$0 \leq i \leq \nu - 1$, $0 \leq j \leq S-1$, where $\nu,S \geq 2$ are integers, $\mathcal{Q}$ is some differential
operator which is polynomial in time $s$, holomorphic near the origin in $z,\epsilon$, that includes shift operators
acting on time, $T_{k,\epsilon}X_{i}(s,z,\epsilon) = X_{i}(s + k \epsilon,z,\epsilon)$ for $k \in J$ that represents
a finite subset of $\mathbb{N}$ and $P$ is some polynomial. Indeed, by performing the change of variable $t=1/s$,
the equation (\ref{SPEm_intro}) maps into a singularly perturbed linear PDE combined with small shifts $T_{k,\epsilon}$,
$k \in I$. The initial data $x_{j,i}(s,\epsilon)$ were supposed to define holomorphic functions on products
$(\mathcal{S} \cap \{ |s| > h \} ) \times \mathcal{E}_{i} \subset \mathbb{C}^{2}$ for some $h > 0$ large enough,
where $\mathcal{S}$ is a fixed open unbounded sector centered at 0 and
$\overline{\mathcal{E}} = \{ \mathcal{E}_{i} \}_{0 \leq i \leq \nu-1}$ forms a set of sectors which covers the vicinity
of the origin. Under appropriate restrictions regarding the shape of (\ref{SPE_shift_intro}) and the inputs
$x_{j,i}(s,\epsilon)$, we have built up bounded actual holomorphic solutions written as Laplace transforms
$$ X_{i}(s,z,\epsilon) = \int_{L_{e_i}} V_{i}(\tau,z,\epsilon) \exp( - \frac{s \tau}{\epsilon} ) d\tau $$
along halflines $L_{e_i} = \mathbb{R}_{+} e^{\sqrt{-1}e_{i}}$ contained in $\mathbb{C}_{+} \cup \{ 0 \}$ and, following
an approach by G. Immink (see \cite{im1}), written as truncated Laplace transforms
$$ X_{i}(s,z,\epsilon) = \int_{0}^{\Gamma_{i}\log(\Omega_{i}s/\epsilon)} V_{i}(\tau,z,\epsilon)
\exp( - \frac{s \tau}{\epsilon} ) d\tau $$
provided that $\Gamma_{i} \in \mathbb{C}_{-} = \{ z \in \mathbb{C}/ \mathrm{Re}(z) < 0 \}$, for well chosen
$\Omega_{i} \in \mathbb{C}^{\ast}$. In general, these truncated Laplace transforms do not fulfill the equation
(\ref{SPE_shift_intro}) but they are constructed in a way that all differences $X_{i+1} - X_{i}$ define flat
functions w.r.t $s$ on the intersections $\mathcal{E}_{i+1} \cap \mathcal{E}_{i}$. We have shown the existence of a
formal power series $\hat{X}(s,z,\epsilon) = \sum_{l \geq 0} h_{l}(s,z) \epsilon^{l}$ with coefficients
$h_{l}$ determining bounded holomorphic functions on $(\mathcal{S} \cap \{ |s| > h \}) \times D(0,\delta)$ for some
$\delta>0$, which solves (\ref{SPE_shift_intro}) and represents the $1-$Gevrey asymptotic expansion of each
$X_{i}$ w.r.t $\epsilon$ on $\mathcal{E}_{i}$, $0 \leq i \leq \nu-1$ (see Definition 7). Besides a precised
hierarchy that involves actually two levels of asymptotics has been uncovered. Namely, each function $X_{i}$
can be split into a sum of a convergent series, a piece $X_{i}^{1}$ which possesses an asymptotic expansion
of Gevrey order 1 w.r.t $\epsilon$ and a part $X_{i}^{2}$ whose asymptotic expansion is of Gevrey order $1^{+}$ as
$\epsilon$ tends to 0 on $\mathcal{E}_{i}$ (see Definition 8). However two major drawbacks of this result may be
pointed out. Namely, some part of the family $\{ X_{i} \}_{0 \leq i \leq \nu-1}$ do not define solutions of
(\ref{SPE_shift_intro}) and no unicity information were obtained concerning the $1^{+}-$Gevrey asymptotic expansion
(related to so-called $1^{+}-$summability as defined in \cite{im1}, \cite{im2}, \cite{im3}).

In this work, our objective is similar to the former one in \cite{ma}. Namely, we plan to construct actual holomorphic
solutions $y_{k}(t,z,\epsilon)$, $k \in \llbracket -n,n \rrbracket$ (resp. $y_{d_p}(t,z,\epsilon)$,
$0 \leq p \leq \iota-1$ ) to the problem (\ref{SPEm_intro}), (\ref{SPEm_ic_1}), (\ref{SPEm_ic_2}) on domains
$\mathcal{T} \times D(0,\delta) \times \mathcal{E}_{HJ_n}^{k}$
(resp. $\mathcal{T} \times D(0,\delta) \times \mathcal{E}_{S_{d_p}}$) for some small radius $\delta>0$ and to analyze
the nature of their asymptotic expansions as $\epsilon$ approaches 0. The main novelty is that we can now build solutions
to (\ref{SPEm_intro}), (\ref{SPEm_ic_1}), (\ref{SPEm_ic_2}) on a full covering
$\underline{\mathcal{E}}$ of a neighborhood of 0 w.r.t $\epsilon$. Besides, a structure with two levels of
Gevrey $1$ and $1^{+}$ asymptotics is also observed and unicity information leading to $1^{+}-$summability
is achieved according to a refined version of the Ramis-Sibuya Theorem obtained in \cite{ma} and to the recent progress
on so-called summability for a strongly regular sequence obtained by the authors and J. Sanz in \cite{lamasa} and \cite{sa}.

The manufacturing of the solutions $y_{k}$ and $y_{d_p}$ is divided in two main parts and can be outlined as follows.

We first set the problem
\begin{equation}
\mathcal{P}_{1}(t,z,\epsilon,\{ m_{k,t,\epsilon} \}_{k \in I},\partial_{t},\partial_{z})u(t,z,\epsilon) = 0
\label{SPEm_1_intro}
\end{equation}
for the given Cauchy inputs
\begin{equation}
(\partial_{z}^{h}u)(t,0,\epsilon) =
                                    \begin{cases}
                                    \varphi_{h,k}(t,\epsilon) & \text{if } k \in \llbracket -n,n \rrbracket\\
                                    \varphi_{h,d_{p}}(t,\epsilon) & \text{if } 0 \leq p \leq \iota-1 \label{SPEm_1_ic} 
                                    \end{cases}
\end{equation}
for $0 \leq h \leq S-1$. Under the restriction (\ref{cond_SPCP_first}) and suitable control on the initial data
(displayed through (\ref{xi_larger_sigma}), (\ref{norm_w_initial_small}) and (\ref{norm_Sdp_initial_wj})), one can
build a first collection of actual solutions to (\ref{SPEm_1_intro}), (\ref{SPEm_1_ic}) as special Laplace transforms
$$ u_{k}(t,z,\epsilon) = \int_{P_k} w_{HJ_n}(u,z,\epsilon) \exp( - \frac{u}{\epsilon t} ) \frac{du}{u} $$
which are bounded holomorphic on $\mathcal{T} \times D(0,\delta) \times \mathcal{E}_{HJ_n}^{k}$, where
$w_{HJ_n}$ defines a holomorphic function on a domain
$HJ_{n} \times D(0,\delta) \times D(0,\epsilon_{0}) \setminus \{ 0 \}$ for some radii $\delta,\epsilon_{0}>0$ and 
$HJ_{n}$ represents the union of two sets of consecutively overlapping horizontal strips
$$ H_{k} = \{ z \in \mathbb{C} / a_{k} \leq \mathrm{Im}(z) \leq b_{k}, \ \ \mathrm{Re}(z) \leq 0 \} \ \ , \ \ 
J_{k} = \{ z \in \mathbb{C} / c_{k} \leq \mathrm{Im}(z) \leq d_{k}, \ \ \mathrm{Re}(z) \leq 0 \}$$
as described at the beginning of Subsection 3.1 and $P_{k}$ is the union of a segment joining 0 and some
well chosen point $A_{k} \in H_{k}$ and the horizontal halfline $\{ A_{k} - s/ s \geq 0 \}$, for
$k \in \llbracket -n,n \rrbracket$. Moreover, $w_{HJ_n}(\tau,z,\epsilon)$ has (at most) super exponential decay w.r.t $\tau$
on $H_{k}$ (see (\ref{bds_WHJn_Hk})) and (at most) super exponential growth w.r.t $\tau$ along
$J_{k}$ (see (\ref{bds_WHJn_Jk})), uniformly in $z \in D(0,\delta)$, provided that
$\epsilon \in D(0,\epsilon_{0}) \setminus \{ 0 \}$ (Theorem 1).

The idea of considering function spaces sharing both super exponential growth and decay on strips and Laplace
transforms along piecewise linear paths departs from the next example worked out by B. Braaksma, B. Faber and
G. Immink in \cite{brfaim} (see also \cite{fab}),
\begin{equation}
h(s+1) - as^{-1}h(s) = s^{-1} \label{DE_BFI}
\end{equation}
for a real number $a>0$, for which solutions are given as special Laplace transforms
$$ h_{n}(s) = \int_{C_{n}} e^{-s \tau} e^{\tau - a} e^{ae^{\tau}} d\tau $$
for each $n \in \mathbb{Z}$, where $C_{n}$ is a path connecting 0 and $+\infty + i\theta$ for some
$\theta \in ( \frac{\pi}{2} + 2n\pi, \frac{3\pi}{2} + 2n\pi)$ built up with the help of a segment and a horizontal
halfline as above for the path $P_{k}$. The function $\tau \mapsto e^{\tau-a} e^{ae^{\tau}}$ has super exponential decay
(resp. growth) on a set of strips $-H_{k}$ (resp. $-J_{k}$) as explained in the example after Definition 3.
Furthermore, the functions $h_{n}(s)$ possess an asymptotic expansion of Gevrey order 1,
$\hat{h}(s) = \sum_{l \geq 1} h_{l} s^{-l}$ that formally solves (\ref{DE_BFI}), as $s \rightarrow \infty$ on
$\mathbb{C}_{+}$.

On the other hand, a second set of solutions to (\ref{SPEm_1_intro}), (\ref{SPEm_1_ic}) can be found as usual
Laplace transforms
$$ u_{d_p}(t,z,\epsilon) = \int_{L_{\gamma_{d_p}}} w_{d_p}(u,z,\epsilon) \exp( -\frac{u}{\epsilon t} ) \frac{du}{u} $$
along halflines $L_{\gamma_{d_p}}=\mathbb{R}_{+}e^{\sqrt{-1}\gamma_{d_p}} \subset S_{d_p} \cup \{ 0 \}$, that define
bounded holomorphic functions on $\mathcal{T} \times D(0,\delta) \times \mathcal{E}_{S_{d_p}}$, where
$w_{d_p}(\tau,z,\epsilon)$ represents a holomorphic function on $(S_{d_p} \cup D(0,r)) \times D(0,\delta) \times
D(0,\epsilon_{0}) \setminus \{ 0 \}$ with (at most) exponential growth w.r.t $\tau$ on $S_{d_p}$, uniformly
in $z \in D(0,\delta)$, whenever $\epsilon \in D(0,\epsilon_{0}) \setminus \{ 0 \}$, $0 \leq p \leq \iota-1$ (Theorem 1).

In a second stage, we focus on both problems
\begin{equation}
\mathcal{P}_{2}(t,z,\epsilon,\partial_{t},\partial_{z})y(t,z,\epsilon) = u_{k}(t,z,\epsilon) \label{SPE_2_1}
\end{equation}
with Cauchy data
\begin{equation}
(\partial_{z}^{j}y)(t,0,\epsilon) = \psi_{j,k}(t,\epsilon) \label{SPE_2_1_i_c}
\end{equation}
for $0 \leq j \leq S_{\mathcal{B}}-1$, $k \in \llbracket -n,n \rrbracket$ and
\begin{equation}
\mathcal{P}_{2}(t,z,\epsilon,\partial_{t},\partial_{z})y(t,z,\epsilon) = u_{d_p}(t,z,\epsilon) \label{SPE_2_2}
\end{equation}
under the conditions
\begin{equation}
(\partial_{z}^{j}y)(t,0,\epsilon) = \psi_{j,d_{p}}(t,\epsilon) \label{SPE_2_2_i_c}
\end{equation}
for $0 \leq j \leq S_{\mathcal{B}}-1$, $0 \leq p \leq \iota-1$. We first observe that the coupling of the problems
(\ref{SPEm_1_intro}), (\ref{SPEm_1_ic}) together with (\ref{SPE_2_1}), (\ref{SPE_2_1_i_c}) and
(\ref{SPE_2_2}), (\ref{SPE_2_2_i_c}) is equivalent to our initial question of searching for solutions to
(\ref{SPEm_intro}) under the requirements (\ref{SPEm_ic_1}), (\ref{SPEm_ic_2}).

The approach which consists to consider equations presented in factorized form follows from a series of works by the same authors
\cite{lama1}, \cite{lama2}, \cite{lama3}. In our situation, the operator $\mathcal{P}_{1}$ cannot contain arbitrary
polynomials in $t$ neither general derivatives $\partial_{t}^{l_1}$, $l_{1} \geq 1$, since $w_{HJ_n}(\tau,z,\epsilon)$
would solve some equation of the form (\ref{1_aux_CP}) with exponential coefficients which would also contain convolution operators like those appearing in equation (\ref{ACP_forcterm_w}). But the spaces of functions with super exponential decay
are not stable under the action of these integral transforms. Those specific Banach spaces are however crucial to get
bounded (or at least with exponential growth) solutions $w_{HJ_n}(\tau,z,\epsilon)$ to (\ref{1_aux_CP})
leading to the existence of the special Laplace transforms $u_{k}(t,z,\epsilon)$ along the paths $P_{k}$. In order
to deal with more general sets of equations, we compose $\mathcal{P}_{1}$ with suitable differential operators
$\mathcal{P}_{2}$ which do not enmesh Moebius transforms. In this work, we have decided to focus only on linear problems.
We postpone the study of nonlinear equations for future investigation.

Taking for granted that the constraints (\ref{SB_underline_l_constraints}) and (\ref{d_l01_defin}) are observed, under
adequate handling on the Cauchy inputs (\ref{SPE_2_1_i_c}), (\ref{SPE_2_2_i_c}) (detailed in
(\ref{normRHJ_vj_Ivj}), (\ref{normSdp_vj_Ivj})), one can exhibit a foremost set of actual solutions to
(\ref{SPE_2_1}), (\ref{SPE_2_1_i_c}) as special Laplace transforms
$$ y_{k}(t,z,\epsilon) = \int_{P_{k}} v_{HJ_n}(u,z,\epsilon) \exp( -\frac{u}{\epsilon t} ) \frac{du}{u} $$
that define bounded holomorphic functions on $\mathcal{T}\times D(0,\delta) \times \mathcal{E}_{HJ_n}^{k}$
where $v_{HJ_n}(\tau,z,\epsilon)$ represents a holomorphic function on $HJ_{n} \times D(0,\delta) \times
D(0,\epsilon_{0}) \setminus \{ 0 \}$ with (at most) exponential growth w.r.t $\tau$ along $H_{k}$
(see (\ref{bds_vHJn_Hk})) and withstanding (at most) super exponential growth w.r.t $\tau$ within $J_{k}$
(see (\ref{bds_vHJn_Jk})), uniformly in $z \in D(0,\delta)$ when $\epsilon \in D(0,\epsilon_{0}) \setminus \{ 0 \}$,
$k \in \llbracket -n,n \rrbracket$ (Theorem 2).

Furthermore, a second group of solutions to (\ref{SPE_2_2}), (\ref{SPE_2_2_i_c}) is achieved through usual Laplace
transforms
$$ y_{d_p}(t,z,\epsilon) = \int_{L_{\gamma_{d_p}}} v_{d_p}(u,z,\epsilon) \exp( -\frac{u}{\epsilon t} ) \frac{du}{u} $$
defining holomorphic bounded functions on $\mathcal{T} \times D(0,\delta) \times \mathcal{E}_{S_{d_p}}$, where
$v_{d_p}(\tau,z,\epsilon)$ stands for a holomorphic function on $(S_{d_p} \cup D(0,r)) \times D(0,\delta)
\times D(0,\epsilon_{0}) \setminus \{ 0 \}$ with (at most) exponential growth w.r.t $\tau$ on
$S_{d_p}$, uniformly in $z \in D(0,\delta)$, for all $\epsilon \in D(0,\epsilon_{0}) \setminus \{ 0 \}$,
$0 \leq p \leq \iota-1$ (Theorem 2).

As a result, the merged family $\{ y_{k} \}_{k \in \llbracket -n,n \rrbracket}$ and
$\{ y_{d_p} \}_{0 \leq p \leq \iota - 1}$ defines a set of solutions on a full covering
$\underline{\mathcal{E}}$ of some neighborhood of 0 w.r.t $\epsilon$. It remains to describe the structure of their
asymptotic expansions as $\epsilon$ tend to 0. As in our previous work, we see that a double layer of Gevrey
asymptotics arise. Namely, each function $y_{k}(t,z,\epsilon)$, $k \in \llbracket -n,n \rrbracket$
(resp. $y_{d_p}(t,z,\epsilon)$, $0 \leq p \leq \iota-1$) can be decomposed as a sum of a convergent power series
in $\epsilon$, a piece $y_{k}^{1}(t,z,\epsilon)$ (resp. $y_{d_p}^{1}(t,z,\epsilon))$
that possesses an asymptotic expansion $\hat{y}^{1}(t,z,\epsilon) =
\sum_{l \geq 0} y_{l}^{1}(t,z) \epsilon^{l}$ of Gevrey order 1 w.r.t $\epsilon$ on $\mathcal{E}_{HJ_n}^{k}$
(resp. on $\mathcal{E}_{S_{d_p}}$) and a last tail $y_{k}^{2}(t,z,\epsilon)$ (resp. $y_{d_p}^{2}(t,z,\epsilon)$)
whose asymptotic expansion
$\hat{y}^{2}(t,z,\epsilon) = \sum_{l \geq 0} y_{l}^{2}(t,z) \epsilon^{l}$ is of Gevrey order $1^{+}$ as
$\epsilon$ becomes close to 0 on $\mathcal{E}_{HJ_n}^{k}$ (resp. on $\mathcal{E}_{S_{d_p}}$).
Furthermore, the functions $y_{\pm n}^{2}(t,z,\epsilon)$ and $y_{d_p}^{2}(t,z,\epsilon)$ are the restrictions of a
common holomorphic function $y^{2}(t,z,\epsilon)$ on $\mathcal{T} \times D(0,\delta)
\times ( \mathcal{E}_{HJ_n}^{-n} \cup \mathcal{E}_{HJ_n}^{n} \cup_{p=0}^{\iota-1} \mathcal{E}_{S_{d_p}} )$
which is the unique asymptotic expansion of $\hat{y}^{2}(t,z,\epsilon)$ of order $1^{+}$ called
$1^{+}-$sum in this work that can be reconstructed through an analog of a Borel/Laplace transform in the
framework of $\mathbb{M}-$summability for the strongly regular sequence
$\mathbb{M} = (M_{n})_{n \geq 0}$ with $M_{n} = (n/\mathrm{Log}(n+2))^{n}$ (Definition 8). On the other hand, the functions $y_{d_p}^{1}(t,z,\epsilon)$ represent $1-$sums of $\hat{y}^{1}$ w.r.t $\epsilon$ on $\mathcal{E}_{S_{d_p}}$ whenvener its aperture is strictly larger than $\pi$ in the classical sense as defined in reference books such as \cite{ba1}, \cite{ba2} or \cite{co}
(Theorem 3). These informations regarding Gevrey asymptotics complemented by unicity features is achieved through
a refinement of a version of the Ramis-Sibuya theorem obtained in \cite{ma} (Proposition 23) and the flatness properties
(\ref{log_flat_difference_yk_plus_1_minus_yk_HJn}), (\ref{exp_flat_difference_yk_plus_1_minus_yk_Sdp}),
(\ref{difference_y_HJn_Sd0}) and (\ref{difference_y_HJn_Sdiota}) for the differences of neighboring functions among the
two families $\{ y_{k} \}_{k \in \llbracket -n,n \rrbracket}$ and $\{ y_{d_p} \}_{0 \leq p \leq \iota-1}$.\bigskip

\noindent The paper is organized as follows.\medskip

In Section 2, we consider a first ancillary Cauchy problem with exponentially growing coefficients. We construct holomorphic
solutions belonging to the Banach space of functions with super exponential growth (resp. decay) on horizontal strips and
exponential growth on unbounded sectors. These Banach spaces and their properties under the action of linear continuous maps are
described in Subsections 2.1 and 2.2.

In Section 3, we provide solutions to the problem (\ref{SPEm_1_intro}), (\ref{SPEm_1_ic}) with the help of the problem solved in
Section 2. Namely, in Section 3.1, we construct the solutions $u_{k}(t,z,\epsilon)$ as special Laplace transforms, along piecewise
linear paths, on the sectors $\mathcal{E}_{HJ_n}^{k}$ w.r.t $\epsilon$, $k \in \llbracket -n,n \rrbracket$. In Section 3.2,
we build up the solutions $u_{d_p}(t,z,\epsilon)$ as usual Laplace transforms along halflines provided that $\epsilon$ belongs
to the sectors $\mathcal{E}_{S_{d_p}}$, $0 \leq p \leq \iota-1$. In Section 3.3, we combine both families
$\{ u_{k} \}_{k \in \llbracket -n,n \rrbracket}$ and $\{ u_{d_p} \}_{0 \leq p \leq \iota-1}$ in order to get a set of
solutions on a full covering $\underline{\mathcal{E}}$ of the origin in $\mathbb{C}^{\ast}$ and we provide bounds for the
differences of consecutive solutions (Theorem 1).

In Section 4, we concentrate on a second auxiliary convolution Cauchy problem with polynomial coefficients and forcing term that solves
the problem stated in Section 2. We establish the existence of holomorphic solutions which are part of the Banach spaces of
functions with super exponential (resp. exponential) growth on $L-$shaped domains and exponential growth on unbounded sectors.
A description of these Banach spaces and the action of integral operators on them are provided in Subsections 4.1, 4.2 and 4.3.

In Section 5, we present solutions for the problems (\ref{SPE_2_1}), (\ref{SPE_2_1_i_c}) and (\ref{SPE_2_2}), (\ref{SPE_2_2_i_c}) 
displayed as special and usual Laplace transforms forming a collection of functions on a full covering
$\underline{\mathcal{E}}$ of the origin in $\mathbb{C}^{\ast}$ (Theorem 2).

In Section 6, the structure of the asymptotic expansions of the solutions $u_{k}$, $y_{k}$ and
$u_{d_p}$, $y_{d_p}$ w.r.t $\epsilon$ (stated in Theorem 3) is described with the help of a version of the Ramis-Sibuya Theorem which entails
two Gevrey levels 1 and $1^{+}$ disclosed in Subsection 6.1. 

\section{A first auxiliary Cauchy problem with exponential coefficients}
\subsection{Banach spaces of holomorphic functions with super-exponential decay on horizontal strips}

Let $\bar{D}(0,r)$ be the closed disc centered at 0 and with radius $r>0$ and let
$\dot{D}(0,\epsilon_{0}) = D(0,\epsilon_{0}) \setminus \{ 0 \}$ be the punctured disc centered at 0 with radius
$\epsilon_{0}>0$ in $\mathbb{C}$. We consider a
closed horizontal strip $H$ described as
\begin{equation}
H = \{ z \in \mathbb{C} / a \leq \mathrm{Im}(z) \leq b, \ \ \mathrm{Re}(z) \leq 0 \} \label{defin_strip_H}
\end{equation}
for some real numbers $a<b$. For any open set $\mathcal{D} \subset \mathbb{C}$, we denote
$\mathcal{O}(\mathcal{D})$ the vector space of holomorphic functions on $\mathcal{D}$. Let
$b>1$ be a real number, we define $\zeta(b) = \sum_{n=0}^{+\infty} 1/(n+1)^{b}$. Let $M$ be a
positive real number such that $M > \zeta(b)$. We introduce the sequences $r_{b}(\beta) = 
\sum_{n = 0}^{\beta} \frac{1}{(n+1)^{b}}$ and $s_{b}(\beta) = M - r_{b}(\beta)$
for all $\beta \geq 0$.

\begin{defin} Let $\underline{\sigma} = (\sigma_{1},\sigma_{2},\sigma_{3})$ where
$\sigma_{1},\sigma_{2},\sigma_{3}>0$ be positive real numbers and $\beta \geq 0$ an
integer. Let $\epsilon \in \dot{D}(0,\epsilon_{0})$. We denote
$SED_{(\beta,\underline{\sigma},H,\epsilon)}$ the vector space of holomorphic functions $v(\tau)$ on
$\mathring{H}$ (which stands for the interior of $H$) and continuous on $H$
such that
$$ ||v(\tau)||_{(\beta,\underline{\sigma},H,\epsilon)} =  \sup_{\tau \in H} \frac{|v(\tau)|}{|\tau|}
\exp \left(-\frac{\sigma_{1}}{|\epsilon|} r_{b}(\beta)|\tau| + \sigma_{2} s_{b}(\beta) \exp( \sigma_{3}|\tau| ) \right) $$
is finite. Let $\delta>0$ be a real number. We define
$SED_{(\underline{\sigma},H,\epsilon,\delta)}$
to be the vector space of all formal series $v(\tau,z) = \sum_{\beta \geq 0} v_{\beta}(\tau) z^{\beta}/\beta!$
with coefficients $v_{\beta}(\tau) \in SED_{(\beta,\underline{\sigma},H,\epsilon)}$, for
$\beta \geq 0$ and such that
$$ ||v(\tau,z)||_{(\underline{\sigma},H,\epsilon,\delta)} = \sum_{\beta \geq 0}
||v_{\beta}(\tau)||_{(\beta,\underline{\sigma},H,\epsilon)} \frac{\delta^{\beta}}{\beta !}$$
is finite. One can ascertain that
$SED_{(\underline{\sigma},H,\epsilon,\delta)}$ equipped with the norm
$||.||_{(\underline{\sigma},H,\epsilon,\delta)}$ turns out to be a Banach space.
\end{defin}
In the next proposition, we show that the formal series belonging to the latter Banach spaces define
actual holomorphic functions that are convergent on a disc w.r.t $z$ and with super exponential decay on
the strip $H$ w.r.t $\tau$.

\begin{prop} Let $v(\tau,z) \in SED_{(\underline{\sigma},H,\epsilon,\delta)}$. Let
$0 < \delta_{1} < 1$. Then, there
exists a constant $C_{0}>0$ (depending on $||v||_{(\underline{\sigma},H,\epsilon,\delta)}$ and
$\delta_{1}$) such that
\begin{equation}
|v(\tau,z)| \leq C_{0} |\tau| \exp \left(\frac{\sigma_{1}}{|\epsilon|} \zeta(b) |\tau|
- \sigma_{2}(M-\zeta(b)) \exp( \sigma_{3} |\tau| ) \right) \label{v_up_bds}
\end{equation}
for all $\tau \in H$, all $z \in \mathbb{C}$ with $\frac{|z|}{\delta} < \delta_{1}$.
\end{prop}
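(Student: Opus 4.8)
The plan is to deduce the bound \eqref{v_up_bds} by summing the defining norm of $v$ term by term, using the elementary inequality $\frac{|z|^\beta}{\beta!} < \delta_1^\beta \frac{\delta^\beta}{\beta!}$ on the region $|z|/\delta < \delta_1$, together with the monotonicity of the sequences $r_b(\beta)$ and $s_b(\beta)$. First I would write, for $\tau \in H$ and $z$ with $|z|/\delta < \delta_1$,
\[
|v(\tau,z)| \leq \sum_{\beta \geq 0} |v_\beta(\tau)| \frac{|z|^\beta}{\beta!}
\leq \sum_{\beta \geq 0} \| v_\beta \|_{(\beta,\underline{\sigma},H,\epsilon)}\, |\tau|\,
\exp\!\Bigl( \tfrac{\sigma_1}{|\epsilon|} r_b(\beta) |\tau| - \sigma_2 s_b(\beta) \exp(\sigma_3 |\tau|) \Bigr) \frac{|z|^\beta}{\beta!},
\]
which is just the definition of $\|\cdot\|_{(\beta,\underline{\sigma},H,\epsilon)}$ read as an upper bound for $|v_\beta(\tau)|$.

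The key step is then to control the $\beta$-dependent exponential factor uniformly in $\beta$. Since $0 \leq r_b(\beta) \leq \zeta(b)$ for every $\beta \geq 0$ and $r_b$ is nondecreasing, we have $\frac{\sigma_1}{|\epsilon|} r_b(\beta) |\tau| \leq \frac{\sigma_1}{|\epsilon|} \zeta(b) |\tau|$; and since $s_b(\beta) = M - r_b(\beta) \geq M - \zeta(b) > 0$ and $\exp(\sigma_3|\tau|) \geq 0$ on $H$, we have $-\sigma_2 s_b(\beta) \exp(\sigma_3|\tau|) \leq -\sigma_2 (M-\zeta(b)) \exp(\sigma_3|\tau|)$. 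Hence the whole $\beta$-indexed exponential is bounded above by $\exp\bigl( \frac{\sigma_1}{|\epsilon|}\zeta(b)|\tau| - \sigma_2(M-\zeta(b))\exp(\sigma_3|\tau|)\bigr)$, which is exactly the factor appearing on the right-hand side of \eqref{v_up_bds} and is independent of $\beta$. Pulling it out of the sum leaves
\[
|v(\tau,z)| \leq |\tau|\, \exp\!\Bigl( \tfrac{\sigma_1}{|\epsilon|} \zeta(b) |\tau| - \sigma_2(M-\zeta(b)) \exp(\sigma_3 |\tau|) \Bigr) \sum_{\beta \geq 0} \| v_\beta \|_{(\beta,\underline{\sigma},H,\epsilon)} \frac{|z|^\beta}{\beta!}.
\]

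Finally I would bound the remaining series: using $|z|^\beta < \delta_1^\beta \delta^\beta$ and $\delta_1 < 1$,
\[
\sum_{\beta \geq 0} \| v_\beta \|_{(\beta,\underline{\sigma},H,\epsilon)} \frac{|z|^\beta}{\beta!}
\leq \sum_{\beta \geq 0} \| v_\beta \|_{(\beta,\underline{\sigma},H,\epsilon)} \delta_1^\beta \frac{\delta^\beta}{\beta!}
\leq \sum_{\beta \geq 0} \| v_\beta \|_{(\beta,\underline{\sigma},H,\epsilon)} \frac{\delta^\beta}{\beta!}
= \| v \|_{(\underline{\sigma},H,\epsilon,\delta)} =: C_0,
\]
so that $C_0$ depends only on $\|v\|_{(\underline{\sigma},H,\epsilon,\delta)}$ (one may keep the factor $\sum_\beta \delta_1^\beta = 1/(1-\delta_1)$ if a dependence on $\delta_1$ is preferred, but it is not needed). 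This also shows en passant that the series defining $v(\tau,z)$ converges normally on $H \times \{|z| < \delta_1 \delta\}$, hence defines a holomorphic function there. There is no real obstacle here: the only thing to be slightly careful about is that the estimate must hold for all $\tau \in H$ including $\tau$ with large modulus, which is precisely where the sign of $\mathrm{Re}(\tau) \leq 0$ is irrelevant and only $|\tau| \geq 0$ and the positivity of $M - \zeta(b)$ matter; the monotonicity of $r_b$ and $s_b$ does the rest uniformly.
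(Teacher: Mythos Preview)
Your argument is correct and follows essentially the same route as the paper's own proof: both use the pointwise bound $|v_\beta(\tau)|\leq \|v_\beta\|_{(\beta,\underline{\sigma},H,\epsilon)}\,|\tau|\exp(\cdots)$, replace $r_b(\beta)$ and $s_b(\beta)$ by their uniform bounds $\zeta(b)$ and $M-\zeta(b)$, and then sum in $\beta$. The only difference is cosmetic: the paper first deduces the coarse coefficient estimate $\|v_\beta\|\leq \|v\|\,\beta!\,\delta^{-\beta}$ and then sums the resulting geometric series $\sum_\beta \delta_1^{\beta}=1/(1-\delta_1)$, obtaining $C_0=\|v\|/(1-\delta_1)$, whereas you keep the weighted sum $\sum_\beta \|v_\beta\|\,\delta^\beta/\beta!$ intact and recognize it directly as $\|v\|$, yielding the slightly sharper $C_0=\|v\|$ with no dependence on $\delta_1$.
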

\begin{proof}
Let $v(\tau,z) = \sum_{\beta \geq 0} v_{\beta}(\tau) z^{\beta}/\beta! \in
SED_{(\underline{\sigma},H,\epsilon,\delta)}$.
By construction, there exists a constant $c_{0}>0$ (depending on
$||v||_{(\underline{\sigma},H,\epsilon,\delta)}$) with
\begin{equation}
|v_{\beta}(\tau)| \leq c_{0} |\tau| \exp( \frac{\sigma_{1}}{|\epsilon|}r_{b}(\beta)|\tau| -
\sigma_{2} s_{b}(\beta) \exp( \sigma_{3} |\tau| ) ) \beta!
(\frac{1}{\delta})^{\beta}
\end{equation}
for all $\beta \geq 0$, all $\tau \in H$. Take $0 < \delta_{1} < 1$. Departing from the
definition of $\zeta(b)$, we deduce that
\begin{multline}
|v(\tau,z)| \leq c_{0} |\tau| \sum_{\beta \geq 0}
\exp(\frac{\sigma_{1}}{|\epsilon|}r_{b}(\beta)|\tau| - \sigma_{2} s_{b}(\beta) \exp( \sigma_{3} |\tau|) )
(\delta_{1})^{\beta}\\
\leq c_{0}|\tau| \exp( \frac{\sigma_{1}}{|\epsilon|}\zeta(b)|\tau| - \sigma_{2}(M - \zeta(b))
\exp( \sigma_{3} |\tau|) ) \frac{1}{1 - \delta_{1}}
\label{v_up_bds_in_proof}
\end{multline}
for all $z \in \mathbb{C}$ such that $\frac{|z|}{\delta} < \delta_{1} < 1$, all $\tau \in H$. Therefore
(\ref{v_up_bds}) is a consequence of (\ref{v_up_bds_in_proof}).
\end{proof}

In the next three propositions, we study the action of linear operators constructed as multiplication
by exponential and polynomial functions and by bounded holomorphic functions on the Banach spaces introduced above.

\begin{prop} Let $k_{0},k_{2} \geq 0$ and $k_{1} \geq 1$ be integers. Assume that the next condition
\begin{equation}
k_{1} \geq bk_{0} + \frac{bk_{2}}{\sigma_{3}} \label{multipl_operators_continuity_cond_SED} 
\end{equation}
holds. Then, for all $\epsilon \in \dot{D}(0,\epsilon_{0})$, the operator
$v(\tau,z) \mapsto \tau^{k_0}\exp(-k_{2} \tau) \partial_{z}^{-k_1}v(\tau,z)$ is a bounded linear
operator from
$(SED_{(\underline{\sigma},H,\epsilon,\delta)}, ||.||_{(\underline{\sigma},H,\epsilon,\delta)}$ into itself.
Moreover, there exists a constant $C_{1}>0$ (depending on $k_{0},k_{1},k_{2},\underline{\sigma},b$),
independent of $\epsilon$, such that
\begin{equation}
|| \tau^{k_0} \exp(-k_{2} \tau) \partial_{z}^{-k_{1}}v(\tau,z) ||_{(\underline{\sigma},H,\epsilon,\delta)}
\leq C_{1}|\epsilon|^{k_0} \delta^{k_1} ||v(\tau,z)||_{(\underline{\sigma},H,\epsilon,\delta)}
\label{multipl_operators_exp_continuity_SED}
\end{equation}
for all $v \in SED_{(\underline{\sigma},H,\epsilon,\delta)}$, all $\epsilon \in \dot{D}(0,\epsilon_{0})$.
\end{prop}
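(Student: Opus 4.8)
The plan is to argue coefficientwise on the expansion $v(\tau,z)=\sum_{\beta\ge 0}v_{\beta}(\tau)z^{\beta}/\beta!$. Since $\partial_{z}^{-k_{1}}$ raises the power of $z$ by $k_{1}$, one has $\tau^{k_{0}}\exp(-k_{2}\tau)\partial_{z}^{-k_{1}}v(\tau,z)=\sum_{m\ge k_{1}}\tau^{k_{0}}\exp(-k_{2}\tau)v_{m-k_{1}}(\tau)\,z^{m}/m!$, so that, by definition of $||\cdot||_{(\underline{\sigma},H,\epsilon,\delta)}$ and after reindexing $m=\beta+k_{1}$, the whole matter reduces to proving
\[
||\tau^{k_{0}}\exp(-k_{2}\tau)v_{\beta}(\tau)||_{(\beta+k_{1},\underline{\sigma},H,\epsilon)}\,\frac{\beta!}{(\beta+k_{1})!}\le C_{1}|\epsilon|^{k_{0}}\,||v_{\beta}(\tau)||_{(\beta,\underline{\sigma},H,\epsilon)}
\]
with $C_{1}$ independent of $\beta$ and $\epsilon$; summing against $\delta^{\beta+k_{1}}$ and factoring out $\delta^{k_{1}}$ then yields (\ref{multipl_operators_exp_continuity_SED}), while holomorphy and continuity of the image are immediate because $\tau\mapsto\tau^{k_{0}}\exp(-k_{2}\tau)$ is entire.

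First I would set $\Delta_{\beta}:=r_{b}(\beta+k_{1})-r_{b}(\beta)=\sum_{n=\beta+1}^{\beta+k_{1}}(n+1)^{-b}$ and record that $0<\Delta_{\beta}\le \zeta(b)$ and $\Delta_{\beta}\ge k_{1}/(\beta+k_{1}+1)^{b}$, together with the elementary fact that on $H$ one has $\mathrm{Re}(\tau)\le 0$, hence $|\exp(-k_{2}\tau)|\le\exp(k_{2}|\tau|)$. Inserting the bound $|v_{\beta}(\tau)|\le||v_{\beta}||_{(\beta,\underline{\sigma},H,\epsilon)}\,|\tau|\exp(\tfrac{\sigma_{1}}{|\epsilon|}r_{b}(\beta)|\tau|-\sigma_{2}s_{b}(\beta)\exp(\sigma_{3}|\tau|))$ into the weight defining $||\cdot||_{(\beta+k_{1},\underline{\sigma},H,\epsilon)}$ and using $s_{b}(\beta+k_{1})-s_{b}(\beta)=-\Delta_{\beta}$, all the $\beta$-dependent exponential factors telescope up to the defect $\Delta_{\beta}$ and one is left with
\[
||\tau^{k_{0}}\exp(-k_{2}\tau)v_{\beta}||_{(\beta+k_{1},\underline{\sigma},H,\epsilon)}\le||v_{\beta}||_{(\beta,\underline{\sigma},H,\epsilon)}\,\mathcal{I}_{\beta},\qquad \mathcal{I}_{\beta}:=\sup_{x\ge 0}x^{k_{0}}\exp\!\Big(k_{2}x-\tfrac{\sigma_{1}\Delta_{\beta}}{|\epsilon|}x-\sigma_{2}\Delta_{\beta}e^{\sigma_{3}x}\Big).
\]

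The core estimate is on this one–variable supremum, and I would bound it by the product of $\sup_{x\ge0}x^{k_{0}}\exp(-\tfrac{\sigma_{1}\Delta_{\beta}}{|\epsilon|}x)=(k_{0}|\epsilon|/(e\sigma_{1}\Delta_{\beta}))^{k_{0}}$ — which is exactly where the factor $|\epsilon|^{k_{0}}$ originates, along with a power $\Delta_{\beta}^{-k_{0}}$ — and of $\sup_{x\ge0}\exp(k_{2}x-\sigma_{2}\Delta_{\beta}e^{\sigma_{3}x})$, which a direct calculus computation (checking whether the critical point $\tfrac{1}{\sigma_{3}}\log\tfrac{k_{2}}{\sigma_{2}\sigma_{3}\Delta_{\beta}}$ is nonnegative, and using $\Delta_{\beta}\le\zeta(b)$ otherwise) bounds by $C'\Delta_{\beta}^{-k_{2}/\sigma_{3}}$. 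This gives $\mathcal{I}_{\beta}\le C''|\epsilon|^{k_{0}}\Delta_{\beta}^{-(k_{0}+k_{2}/\sigma_{3})}$ with $C''$ depending only on $k_{0},k_{2},\underline{\sigma},b$. Finally, invoking $\Delta_{\beta}\ge k_{1}/(\beta+k_{1}+1)^{b}$ together with the inequalities $\beta!/(\beta+k_{1})!\le(\beta+1)^{-k_{1}}$ and $\beta+k_{1}+1\le(k_{1}+1)(\beta+1)$, one arrives at
\[
\mathcal{I}_{\beta}\,\frac{\beta!}{(\beta+k_{1})!}\le C''|\epsilon|^{k_{0}}\,\frac{(k_{1}+1)^{b(k_{0}+k_{2}/\sigma_{3})}}{k_{1}^{\,k_{0}+k_{2}/\sigma_{3}}}\,(\beta+1)^{\,b(k_{0}+k_{2}/\sigma_{3})-k_{1}},
\]
and condition (\ref{multipl_operators_continuity_cond_SED}), namely $k_{1}\ge bk_{0}+bk_{2}/\sigma_{3}$, is precisely what makes the exponent of $(\beta+1)$ nonpositive, so the right-hand side is $\le C_{1}|\epsilon|^{k_{0}}$ uniformly in $\beta\ge 0$, closing the argument. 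The only genuinely delicate part is the bookkeeping inside $\mathcal{I}_{\beta}$: one must keep the $|\epsilon|^{k_{0}}$ factor while tracking the exact negative power of $\Delta_{\beta}$, so that the loss $\Delta_{\beta}^{-(k_{0}+k_{2}/\sigma_{3})}$ is compensated by the gain $\beta!/(\beta+k_{1})!$ exactly under (\ref{multipl_operators_continuity_cond_SED}); the degenerate cases $k_{0}=0$ or $k_{2}=0$ are covered by the same formulas with the conventions $0^{0}=1$ and $\Delta_{\beta}^{0}=1$.
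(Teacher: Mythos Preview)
Your proof is correct and follows essentially the same approach as the paper's: a coefficientwise reduction, the same telescoping of the weights via $r_b(\beta+k_1)-r_b(\beta)$ (the paper writes it as $r_b(\beta)-r_b(\beta-k_1)$ after a trivial reindexing), the same splitting of the one-variable supremum into a polynomial factor yielding $|\epsilon|^{k_0}$ and an exponential factor yielding the $\Delta_\beta^{-k_2/\sigma_3}$ loss, and finally the same absorption of the resulting power of $\beta+1$ by the factorial ratio under condition~(\ref{multipl_operators_continuity_cond_SED}). The only cosmetic difference is that you keep $\Delta_\beta$ abstract until the end, whereas the paper substitutes the explicit lower bound $k_1/(\beta+1)^b$ inside the supremum from the outset.
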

\begin{proof} Let $v(\tau,z) = \sum_{\beta \geq 0} v_{\beta}(\tau) z^{\beta}/\beta!$ belonging to
$SED_{(\underline{\sigma},H,\epsilon,\delta)}$. By definition,
\begin{equation}
|| \tau^{k_0} \exp(-k_{2} \tau) \partial_{z}^{-k_{1}}v(\tau,z) ||_{(\underline{\sigma},H,\epsilon,\delta)}
= \sum_{\beta \geq k_{1}}
|| \tau^{k_0} \exp(-k_{2} \tau) v_{\beta - k_{1}}(\tau) ||_{(\beta,\underline{\sigma},H,\epsilon)}
\frac{\delta^{\beta}}{\beta!}. \label{norm_exp_int_zk1_v}
\end{equation}
\begin{lemma} There exists a constant $C_{1.1}>0$ (depending on
$k_{0},k_{1},k_{2},\underline{\sigma},b$) such that
\begin{equation}
|| \tau^{k_0}\exp(-k_{2} \tau)v_{\beta - k_{1}}(\tau) ||_{(\beta,\underline{\sigma},H,\epsilon)}
\leq C_{1.1}|\epsilon|^{k_0}(\beta + 1)^{bk_{0} + \frac{k_{2}b}{\sigma_{3}}}
||v_{\beta - k_{1}}(\tau)||_{(\beta - k_{1},\underline{\sigma},H,\epsilon)}
\label{bds_norm_exp_v_beta_k1}
\end{equation}
for all $\beta \geq k_{1}$.
\end{lemma}
\begin{proof} First, we perform the next factorization
\begin{multline}
|\tau^{k_0}\exp(-k_{2}\tau) v_{\beta - k_{1}}(\tau)| \frac{1}{|\tau|}
\exp \left(-\frac{\sigma_{1}}{|\epsilon|} r_{b}(\beta)|\tau| + \sigma_{2} s_{b}(\beta)
\exp( \sigma_{3}|\tau| ) \right)\\
= \frac{|v_{\beta - k_{1}}(\tau)|}{|\tau|}
\exp \left(-\frac{\sigma_{1}}{|\epsilon|} r_{b}(\beta-k_{1})|\tau| + \sigma_{2} s_{b}(\beta-k_{1})
\exp( \sigma_{3}|\tau| ) \right)\\
\times \left( |\tau^{k_0}\exp( -k_{2} \tau)|
\exp( - \frac{\sigma_{1}}{|\epsilon|}( r_{b}(\beta) - r_{b}(\beta - k_{1}) )|\tau| )
\exp( \sigma_{2}( s_{b}(\beta) - s_{b}(\beta - k_{1}) ) \exp(\sigma_{3}|\tau|) ) \right)
\label{factor_v_beta_k1}
\end{multline}
On the other hand, by construction, we observe that
\begin{equation}
r_{b}(\beta) - r_{b}(\beta - k_{1}) \geq \frac{k_{1}}{(\beta + 1)^{b}} \ \ , \ \
s_{b}(\beta) - s_{b}(\beta - k_{1}) \leq -\frac{k_{1}}{(\beta + 1)^{b}} \label{difference_s_b_r_b}
\end{equation}
for all $\beta \geq k_{1}$. According to (\ref{factor_v_beta_k1}) and
(\ref{difference_s_b_r_b}), we deduce that
\begin{equation}
|| \tau^{k_0}\exp(-k_{2} \tau)v_{\beta - k_{1}}(\tau) ||_{(\beta,\underline{\sigma},H,\epsilon)}
\leq A(\beta) ||v_{\beta - k_{1}}(\tau)||_{(\beta - k_{1},\underline{\sigma},H,\epsilon)}
\label{norm_exp_v_beta_k1<norm_v_beta_k1}
\end{equation}
where
\begin{multline*}
A(\beta) = \sup_{ \tau \in H} |\tau|^{k_0} \exp(k_{2}|\tau|)
\exp( -\frac{\sigma_{1}}{|\epsilon|} \frac{k_1}{(\beta + 1)^{b}} |\tau| )\\
\times
\exp( -\sigma_{2} \frac{k_{1}}{(\beta + 1)^{b}} \exp( \sigma_{3} |\tau| ) ) \leq A_{1}(\beta)A_{2}(\beta)
\end{multline*}
with
$$ A_{1}(\beta) = \sup_{x \geq 0} x^{k_0}
\exp( -\frac{\sigma_{1}}{|\epsilon|} \frac{k_1}{(\beta + 1)^{b}} x )
$$
and 
$$ A_{2}(\beta) = \sup_{x \geq 0} \exp(k_{2}x)
\exp( -\sigma_{2} \frac{k_{1}}{(\beta + 1)^{b}} \exp( \sigma_{3} x ) )
$$
for all $\beta \geq k_{1}$. In the next step, we provide estimates for $A_{1}(\beta)$. Namely, from the
classical bounds for exponential functions
\begin{equation}
\sup_{x \geq 0} x^{m_1} \exp( -m_{2} x) \leq (\frac{m_1}{m_2})^{m_1} \exp( -m_{1} ) \label{xm1_expm2x<}
\end{equation}
for any integers $m_{1} \geq 0$ and $m_{2} > 0$, we get that
\begin{multline}
A_{1}(\beta) = |\epsilon|^{k_0} \sup_{x \geq 0} (\frac{x}{|\epsilon|})^{k_0}
\exp( - \frac{\sigma_{1} k_{1}}{(\beta + 1)^{b}} \frac{x}{|\epsilon|} )\\
\leq |\epsilon|^{k_0} \sup_{X \geq 0} X^{k_0} \exp( - \frac{\sigma_{1}k_{1}}{(\beta + 1)^{b}} X ) =
|\epsilon|^{k_0} (\frac{k_0}{\sigma_{1}k_{1}})^{k_0} \exp( -k_{0} ) (\beta + 1)^{bk_{0}} \label{A_1_bounds}
\end{multline}
for all $\beta \geq k_{1}$. In the last part, we focus on the sequence $A_{2}(\beta)$. First of all,
if $k_{2} = 0$, we observe that $A_{2}(\beta) \leq 1$ for all $\beta \geq k_{1}$.
Now, we assume that $k_{2} \geq 1$. Again, we need the
help of classical bounds for exponential functions
$$ \sup_{x \geq 0} cx - a\exp(bx) \leq \frac{c}{b}( \log(\frac{c}{ab}) - 1) $$
for all positive integers $a,b,c>0$ provided that $c > ab$. We deduce that
$$
A_{2}(\beta) \leq \exp( \frac{k_2}{\sigma_{3}}(
\log( \frac{k_{2}(\beta + 1)^{b}}{\sigma_{3} \sigma_{2} k_{1}} ) - 1) = 
\exp( -\frac{k_2}{\sigma_{3}} + \frac{k_2}{\sigma_3}\log( \frac{k_2}{\sigma_{3}\sigma_{2}k_{1}} ) )
(\beta + 1)^{\frac{k_{2}b}{\sigma_{3}}}
$$
whenever $\beta \geq k_{1}$ and $(\beta + 1)^{b} > \sigma_{2}\sigma_{3}k_{1}/k_{2}$. Besides, we also get a
constant $C_{1.0}>0$ (depending on $k_{2},\sigma_{2},k_{1},b,\sigma_{3}$) such that
$$
A_{2}(\beta) \leq C_{1.0}(\beta + 1)^{\frac{k_{2}b}{\sigma_3}} 
$$
for all $\beta \geq k_{1}$ with $(\beta + 1)^{b} \leq \sigma_{2}\sigma_{3}k_{1}/k_{2}$. In summary, we get
a constant $\tilde{C}_{1.0}>0$ (depending on $k_{2},\sigma_{2},k_{1},b,\sigma_{3}$) with
\begin{equation}
A_{2}(\beta) \leq \tilde{C}_{1.0}(\beta + 1)^{\frac{k_{2}b}{\sigma_3}} \label{A_2_bounds}
\end{equation}
for all $\beta \geq k_{1}$. Finally, gathering (\ref{norm_exp_v_beta_k1<norm_v_beta_k1}),
(\ref{A_1_bounds}) and (\ref{A_2_bounds}) yields (\ref{bds_norm_exp_v_beta_k1}).
\end{proof}
Bearing in mind the definition of the norm (\ref{norm_exp_int_zk1_v}) and the upper bounds
(\ref{bds_norm_exp_v_beta_k1}), we deduce that
\begin{multline}
|| \tau^{k_0} \exp(-k_{2} \tau) \partial_{z}^{-k_{1}}v(\tau,z) ||_{(\underline{\sigma},H,\epsilon,\delta)}
\leq \sum_{\beta \geq k_{1}} C_{1.1}|\epsilon|^{k_0}(1 + \beta)^{bk_{0} + \frac{bk_{2}}{\sigma_{3}}}\\
\times
\frac{(\beta - k_{1})!}{\beta !} ||v_{\beta - k_{1}}(\tau)||_{(\beta - k_{1},\underline{\sigma},H,\epsilon)}
\delta^{k_1} \frac{\delta^{\beta - k_{1}}}{(\beta - k_{1})!}. \label{bds_norm_exp_intz_v}
\end{multline}
In accordance with the assumption (\ref{multipl_operators_continuity_cond_SED}), we get a constant $C_{1.2}>0$
(depending on $k_{0},k_{1},k_{2},b,\sigma_{3}$) such that
\begin{equation}
(1 + \beta)^{bk_{0} + \frac{bk_{2}}{\sigma_{3}}} \frac{(\beta - k_{1})!}{\beta!} \leq C_{1.2}
\label{power_beta_factorial_C12}
\end{equation}
for all $\beta \geq k_{1}$. Lastly, clustering (\ref{bds_norm_exp_intz_v}) and
(\ref{power_beta_factorial_C12}) furnishes (\ref{multipl_operators_exp_continuity_SED}).
\end{proof}

\begin{prop} Let $k_{0},k_{2} \geq 0$ be integers. Let $\underline{\sigma} =
(\sigma_{1},\sigma_{2},\sigma_{3})$ and $\underline{\sigma}' = (\sigma_{1}',\sigma_{2}',\sigma_{3}')$ with
$\sigma_{j} > 0$ and $\sigma_{j}'>0$ for $j=1,2,3$, such that
\begin{equation}
\sigma_{1} > \sigma_{1}' \ \ , \ \ \sigma_{2} < \sigma_{2}' \ \ , \ \ \sigma_{3} = \sigma_{3}'. \label{cond_sigma_sigma'}
\end{equation}
Then, for all $\epsilon \in \dot{D}(0,\epsilon_{0})$, the operator $v(\tau,z)
\mapsto \tau^{k_0}\exp(-k_{2}\tau)v(\tau,z)$ is a bounded linear map from
$(SED_{(\underline{\sigma}',H,\epsilon,\delta)},||.||_{(\underline{\sigma}',H,\epsilon,\delta)})$ into
$(SED_{(\underline{\sigma},H,\epsilon,\delta)},||.||_{(\underline{\sigma},H,\epsilon,\delta)})$. Moreover,
there exists a constant $\check{C}_{1}>0$ (depending on
$k_{0},k_{2},\underline{\sigma},\underline{\sigma}',M,b$) such that
\begin{equation}
|| \tau^{k_0}\exp(-k_{2}\tau)v(\tau,z)||_{(\underline{\sigma},H,\epsilon,\delta)} \leq
\check{C}_{1}|\epsilon|^{k_0}||v(\tau,z)||_{(\underline{\sigma}',H,\epsilon,\delta)}
\end{equation}
for all $v \in SED_{(\underline{\sigma}',H,\epsilon,\delta)}$.
\end{prop}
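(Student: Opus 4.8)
The plan is to imitate the proof of Proposition 3, the only difference being that here no arithmetic condition linking $k_{0},k_{2}$ to $\underline{\sigma}$ is needed: the slack created by $\sigma_{1}'<\sigma_{1}$ and $\sigma_{2}'>\sigma_{2}$ (with $\sigma_{3}'=\sigma_{3}$ unchanged) will by itself absorb the multipliers $\tau^{k_0}$ and $\exp(-k_{2}\tau)$. Writing $v(\tau,z)=\sum_{\beta\geq0}v_{\beta}(\tau)z^{\beta}/\beta!\in SED_{(\underline{\sigma}',H,\epsilon,\delta)}$, by definition of the two norms it is enough to prove, for each fixed $\beta\geq0$, an estimate of the form $||\tau^{k_0}\exp(-k_{2}\tau)v_{\beta}(\tau)||_{(\beta,\underline{\sigma},H,\epsilon)}\leq\check{C}_{1}|\epsilon|^{k_0}||v_{\beta}(\tau)||_{(\beta,\underline{\sigma}',H,\epsilon)}$ with $\check{C}_{1}$ independent of $\beta$ and $\epsilon$, and then to multiply by $\delta^{\beta}/\beta!$ and sum over $\beta$.

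First I would carry out the factorization analogous to (\ref{factor_v_beta_k1}). Using that $|\exp(-k_{2}\tau)|=\exp(-k_{2}\,\mathrm{Re}(\tau))\leq\exp(k_{2}|\tau|)$ on $H$ (since $\mathrm{Re}(\tau)\leq0$ there), the quantity $\frac{|\tau^{k_0}\exp(-k_{2}\tau)v_{\beta}(\tau)|}{|\tau|}\exp\!\big(-\frac{\sigma_{1}}{|\epsilon|}r_{b}(\beta)|\tau|+\sigma_{2}s_{b}(\beta)\exp(\sigma_{3}|\tau|)\big)$ is bounded by the product of $\frac{|v_{\beta}(\tau)|}{|\tau|}\exp\!\big(-\frac{\sigma_{1}'}{|\epsilon|}r_{b}(\beta)|\tau|+\sigma_{2}'s_{b}(\beta)\exp(\sigma_{3}|\tau|)\big)$ --- itself $\leq||v_{\beta}||_{(\beta,\underline{\sigma}',H,\epsilon)}$ --- and the corrective factor $|\tau|^{k_0}\exp(k_{2}|\tau|)\exp\!\big(-\frac{\sigma_{1}-\sigma_{1}'}{|\epsilon|}r_{b}(\beta)|\tau|\big)\exp\!\big(-(\sigma_{2}'-\sigma_{2})s_{b}(\beta)\exp(\sigma_{3}|\tau|)\big)$. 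Here the crucial elementary facts are $\sigma_{1}-\sigma_{1}'>0$, $\sigma_{2}'-\sigma_{2}>0$, $s_{b}(\beta)=M-r_{b}(\beta)\geq M-\zeta(b)>0$ and $r_{b}(\beta)\geq r_{b}(0)=1$ for every $\beta\geq0$, so the corrective factor is genuinely decaying in $|\tau|$.

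The core computation is then to bound $B(\beta):=\sup_{x\geq0}x^{k_0}\exp(k_{2}x)\exp\!\big(-\frac{(\sigma_{1}-\sigma_{1}')r_{b}(\beta)}{|\epsilon|}x\big)\exp\!\big(-(\sigma_{2}'-\sigma_{2})s_{b}(\beta)\exp(\sigma_{3}x)\big)$. I would split it as $B(\beta)\leq B_{1}(\beta)B_{2}(\beta)$ with $B_{1}(\beta)=\sup_{x\geq0}x^{k_0}\exp\!\big(-\frac{(\sigma_{1}-\sigma_{1}')r_{b}(\beta)}{|\epsilon|}x\big)$ and $B_{2}(\beta)=\sup_{x\geq0}\exp\!\big(k_{2}x-(\sigma_{2}'-\sigma_{2})(M-\zeta(b))\exp(\sigma_{3}x)\big)$, using $s_{b}(\beta)\geq M-\zeta(b)$. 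For $B_{1}(\beta)$, the substitution $X=x/|\epsilon|$ together with (\ref{xm1_expm2x<}) gives $B_{1}(\beta)\leq|\epsilon|^{k_0}\big(\frac{k_{0}}{(\sigma_{1}-\sigma_{1}')r_{b}(\beta)}\big)^{k_0}\exp(-k_{0})\leq|\epsilon|^{k_0}\big(\frac{k_{0}}{\sigma_{1}-\sigma_{1}'}\big)^{k_0}\exp(-k_{0})$ thanks to $r_{b}(\beta)\geq1$; this single step produces both the factor $|\epsilon|^{k_0}$ and the independence of $\beta$. As for $B_{2}(\beta)$: it equals $1$ if $k_{2}=0$, and if $k_{2}\geq1$ the elementary bound $\sup_{x\geq0}(cx-ae^{bx})\leq\frac{c}{b}(\log\frac{c}{ab}-1)$ (valid for positive reals $c,a,b$, with the supremum attained at $x=0$ when $c\leq ab$) gives a constant depending only on $k_{2},\sigma_{2},\sigma_{2}',M,b,\sigma_{3}$. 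Hence $B(\beta)\leq\check{C}_{1}|\epsilon|^{k_0}$ uniformly in $\beta\geq0$ and $\epsilon\in\dot{D}(0,\epsilon_{0})$. Combining this with the factorization gives the per-$\beta$ estimate, and multiplying by $\delta^{\beta}/\beta!$ and summing over $\beta\geq0$ yields $||\tau^{k_0}\exp(-k_{2}\tau)v(\tau,z)||_{(\underline{\sigma},H,\epsilon,\delta)}\leq\check{C}_{1}|\epsilon|^{k_0}||v(\tau,z)||_{(\underline{\sigma}',H,\epsilon,\delta)}$, which proves at once that the operator is well defined and bounded. There is no real obstacle; the only delicate point is the bookkeeping that keeps $\check{C}_{1}$ free of $\beta$ and $\epsilon$, and this works without any Diophantine-type restriction on $k_{0},k_{2}$ precisely because the gain here comes from the fixed gaps $\sigma_{1}-\sigma_{1}'$ and $\sigma_{2}'-\sigma_{2}$ rather than from the vanishing differences $r_{b}(\beta)-r_{b}(\beta-k_{1})$ exploited in Proposition 3.
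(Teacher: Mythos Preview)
Your argument is correct and follows essentially the same route as the paper: reduce to a per-$\beta$ estimate via Lemma~2, perform the same factorization, split the corrective factor into $\check{A}_{1}(\beta)\check{A}_{2}(\beta)$, and bound each piece using $r_{b}(\beta)\geq 1$ and $s_{b}(\beta)\geq M-\zeta(b)$ exactly as you do. One small slip: you say you ``imitate the proof of Proposition~3,'' but the statement you are proving \emph{is} Proposition~3; you presumably mean Proposition~2.
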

\begin{proof} Take $v(\tau,z) = \sum_{\beta \geq 0} v_{\beta}(\tau) \frac{z^{\beta}}{\beta!}$ within
$SED_{(\underline{\sigma}',H,\epsilon,\delta)}$. According to Definition 1, we see that
$$ ||\tau^{k_0}\exp(-k_{2}\tau)v(\tau,z)||_{(\underline{\sigma},H,\epsilon,\delta)}
= \sum_{\beta \geq 0} ||\tau^{k_0}\exp(-k_{2}\tau)v_{\beta}(\tau)||_{(\beta,\underline{\sigma},H,\epsilon)}
\frac{\delta^{\beta}}{\beta !}
$$
\begin{lemma}
There exists a constant $\check{C}_{1}>0$ (depending on $k_{0},k_{2},\underline{\sigma},\underline{\sigma}',M,b$) such that
$$ ||\tau^{k_0}\exp(-k_{2}\tau)v_{\beta}(\tau)||_{(\beta,\underline{\sigma},H,\epsilon)} \leq
\check{C}_{1}|\epsilon|^{k_0}||v_{\beta}(\tau)||_{(\beta,\underline{\sigma}',H,\epsilon)}$$
\end{lemma}
\begin{proof} We operate the next factorization
\begin{multline*}
|\tau^{k_0}\exp(-k_{2}\tau)v_{\beta}(\tau)| \frac{1}{|\tau|}\exp \left(-\frac{\sigma_{1}}{|\epsilon|}r_{b}(\beta)|\tau|
+ \sigma_{2}s_{b}(\beta)\exp(\sigma_{3}|\tau|) \right)\\
= |v_{\beta}(\tau)|\frac{1}{|\tau|} \exp \left(-\frac{\sigma_{1}'}{|\epsilon|}r_{b}(\beta)|\tau|
+ \sigma_{2}'s_{b}(\beta)\exp(\sigma_{3}'|\tau|) \right)\\
\times |\tau^{k_0}\exp(-k_{2}\tau)| \exp( - \frac{\sigma_{1} - \sigma_{1}'}{|\epsilon|}r_{b}(\beta)|\tau| )
\exp \left( (\sigma_{2} - \sigma_{2}')s_{b}(\beta) \exp( \sigma_{3}|\tau| ) \right).
\end{multline*}
We deduce that
$$ ||\tau^{k_0}\exp(-k_{2}\tau)v_{\beta}(\tau)||_{(\beta,\underline{\sigma},H,\epsilon)} \leq
\check{A}(\beta)||v_{\beta}(\tau)||_{(\beta,\underline{\sigma}',H,\epsilon)}$$
where
\begin{multline*}
\check{A}(\beta) = \sup_{\tau \in H} |\tau^{k_0}\exp(-k_{2}\tau)|
\exp( - \frac{\sigma_{1} - \sigma_{1}'}{|\epsilon|}r_{b}(\beta)|\tau| )
\exp \left( (\sigma_{2} - \sigma_{2}')s_{b}(\beta) \exp( \sigma_{3}|\tau| ) \right) \\
\leq \check{A}_{1}(\beta) \check{A}_{2}(\beta)
\end{multline*}
with
$$ \check{A}_{1}(\beta) = \sup_{x \geq 0}
x^{k_0}\exp( - \frac{\sigma_{1} - \sigma_{1}'}{|\epsilon|}r_{b}(\beta)x ) \ \ , \ \
\check{A}_{2}(\beta) = \sup_{x \geq 0} \exp(k_{2}x)
\exp \left( (\sigma_{2} - \sigma_{2}')s_{b}(\beta) \exp( \sigma_{3}x ) \right).$$
Since $r_{b}(\beta) \geq 1$ for all $\beta \geq 0$, we deduce from (\ref{xm1_expm2x<}) that
\begin{equation}
\check{A}_{1}(\beta) \leq |\epsilon|^{k_0} \sup_{x \geq 0} (\frac{x}{|\epsilon|})^{k_0}
\exp( - (\sigma_{1} - \sigma_{1}') \frac{x}{|\epsilon|} )
\leq |\epsilon|^{k_0}( \frac{k_{0} e^{-1}}{\sigma_{1} - \sigma_{1}'})^{k_0}. \label{checkA1_bds}
\end{equation}
In order to handle the sequence $\check{A}_{2}(\beta)$, we observe that
$s_{b}(\beta) \geq M - \zeta(b) > 0$, for all $\beta \geq 0$. Therefore, we see that
$$ \check{A}_{2}(\beta) \leq \sup_{x \geq 0} \exp \left( k_{2}x + (\sigma_{2} - \sigma_{2}')(M - \zeta(b))
\exp(\sigma_{3}x) \right) $$
which is a finite upper bound for all $\beta \geq 0$.
\end{proof}
As a consequence, Proposition 3 follows directly from Lemma 2.
\end{proof}

\begin{prop} Let $c(\tau,z,\epsilon)$ be a holomorphic function on $\mathring{H} \times D(0,\rho) \times 
D(0,\epsilon_{0})$,
continuous on $H \times D(0,\rho) \times D(0,\epsilon_{0})$, for some $\rho>0$, bounded by a constant $M_{c}>0$ on
$H \times D(0,\rho) \times D(0,\epsilon_{0})$. Let $0 < \delta < \rho$. Then, the linear map
$v(\tau,z) \mapsto c(\tau,z,\epsilon)v(\tau,z)$ is bounded from
$(SED_{(\underline{\sigma},H,\epsilon,\delta)},||.||_{(\underline{\sigma},H,\epsilon,\delta)})$ into itself, for
all $\epsilon \in \dot{D}(0,\epsilon_{0})$. Furthermore, one can choose a constant $\breve{C}_{1}>0$ (depending on
$M_{c},\delta,\rho$) independent of $\epsilon$ such that
\begin{equation}
||c(\tau,z,\epsilon)v(\tau,z)||_{(\underline{\sigma},H,\epsilon,\delta)} \leq \breve{C}_{1}
||v(\tau,z)||_{(\underline{\sigma},H,\epsilon,\delta)}
\end{equation}
for all $v \in SED_{(\underline{\sigma},H,\epsilon,\delta)}$.
\end{prop}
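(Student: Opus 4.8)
The plan is to expand the coefficient $c$ in powers of $z$, reduce the bound to a convolution estimate on the Taylor coefficients, and exploit the monotonicity of the sequences $r_{b}$ and $s_{b}$ to absorb the change of weight, in the same coefficient-wise spirit as the proofs of Propositions 2 and 3.

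First I would introduce the Taylor expansion with respect to $z$, namely $c(\tau,z,\epsilon) = \sum_{\beta \geq 0} c_{\beta}(\tau,\epsilon) z^{\beta}/\beta!$ with $c_{\beta}(\tau,\epsilon) = (\partial_{z}^{\beta}c)(\tau,0,\epsilon)$, where each $c_{\beta}$ is holomorphic in $\tau$ on $\mathring{H}$ and continuous on $H \times \dot{D}(0,\epsilon_{0})$. Applying Cauchy's integral formula in the variable $z$ on circles of radius $\rho' < \rho$ and letting $\rho' \to \rho$, the bound $|c| \leq M_{c}$ gives the pointwise estimate
$$ |c_{\beta}(\tau,\epsilon)| \leq M_{c}\,\beta!\,\rho^{-\beta} \qquad \text{for all } \tau \in H, \ \epsilon \in \dot{D}(0,\epsilon_{0}), $$
the continuity hypothesis on $H$ ensuring this stays valid up to the boundary of the strip. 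For $v(\tau,z) = \sum_{\beta \geq 0} v_{\beta}(\tau) z^{\beta}/\beta! \in SED_{(\underline{\sigma},H,\epsilon,\delta)}$, the product $c(\tau,z,\epsilon)v(\tau,z)$ is then the formal series whose $\beta$-th coefficient is $(cv)_{\beta}(\tau) = \sum_{\beta_{1}=0}^{\beta} \binom{\beta}{\beta_{1}} c_{\beta_{1}}(\tau,\epsilon)\,v_{\beta-\beta_{1}}(\tau)$, and each term $c_{\beta_{1}}v_{\beta-\beta_{1}}$ is holomorphic on $\mathring{H}$ and continuous on $H$.

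The key step is a pointwise comparison of the weights defining the norms $\|\cdot\|_{(\beta,\underline{\sigma},H,\epsilon)}$ and $\|\cdot\|_{(\beta-\beta_{1},\underline{\sigma},H,\epsilon)}$. Since $r_{b}$ is nondecreasing and $s_{b} = M - r_{b}$ is nonincreasing, one has $r_{b}(\beta) \geq r_{b}(\beta-\beta_{1})$ and $s_{b}(\beta) \leq s_{b}(\beta-\beta_{1})$, hence for every $\tau \in H$
$$ \exp\!\left(-\tfrac{\sigma_{1}}{|\epsilon|}r_{b}(\beta)|\tau| + \sigma_{2}s_{b}(\beta)\exp(\sigma_{3}|\tau|)\right) \leq \exp\!\left(-\tfrac{\sigma_{1}}{|\epsilon|}r_{b}(\beta-\beta_{1})|\tau| + \sigma_{2}s_{b}(\beta-\beta_{1})\exp(\sigma_{3}|\tau|)\right). $$
Multiplying this by $|c_{\beta_{1}}(\tau,\epsilon)|\,|v_{\beta-\beta_{1}}(\tau)|/|\tau|$, inserting the bound on $c_{\beta_{1}}$ and taking the supremum over $\tau \in H$ yields $\|c_{\beta_{1}}(\tau,\epsilon)v_{\beta-\beta_{1}}(\tau)\|_{(\beta,\underline{\sigma},H,\epsilon)} \leq M_{c}\,\beta_{1}!\,\rho^{-\beta_{1}}\,\|v_{\beta-\beta_{1}}(\tau)\|_{(\beta-\beta_{1},\underline{\sigma},H,\epsilon)}$.

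Finally I would collect the estimates. Using $\binom{\beta}{\beta_{1}}\beta_{1}! = \beta!/(\beta-\beta_{1})!$ and the triangle inequality, then multiplying by $\delta^{\beta}/\beta!$, summing over $\beta \geq 0$ and interchanging the order of summation (permissible since all terms are nonnegative) through the substitution $\beta_{2} = \beta - \beta_{1}$, the double series factorizes:
$$ \|c(\tau,z,\epsilon)v(\tau,z)\|_{(\underline{\sigma},H,\epsilon,\delta)} \leq M_{c}\left(\sum_{\beta_{1} \geq 0}\left(\frac{\delta}{\rho}\right)^{\beta_{1}}\right)\left(\sum_{\beta_{2} \geq 0}\|v_{\beta_{2}}(\tau)\|_{(\beta_{2},\underline{\sigma},H,\epsilon)}\frac{\delta^{\beta_{2}}}{\beta_{2}!}\right) = \frac{M_{c}\rho}{\rho-\delta}\,\|v(\tau,z)\|_{(\underline{\sigma},H,\epsilon,\delta)}, $$
the geometric series converging because $\delta < \rho$. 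This proves the statement with $\breve{C}_{1} = M_{c}\rho/(\rho-\delta)$, which is visibly independent of $\epsilon$. There is no genuine obstacle here; the only points deserving a line of care are the simultaneous monotonicity of the $\sigma_{1}$- and $\sigma_{2}$-contributions to the weight when passing from index $\beta$ to $\beta-\beta_{1}$, and the uniform validity of the Cauchy estimate for $c_{\beta}$ up to the boundary of $H$, which follows from the assumed continuity of $c$ there.
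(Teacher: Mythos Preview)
Your proof is correct and follows essentially the same route as the paper's own argument: Taylor expand $c$ in $z$, control the coefficients $c_{\beta}$ by Cauchy's estimates, use the monotonicity of $r_{b}$ and $s_{b}$ to pass from the weight at level $\beta$ to the weight at level $\beta-\beta_{1}$, and conclude by a geometric series. The only cosmetic difference is that the paper fixes an intermediate radius $\delta'$ with $\delta<\delta'<\rho$ and keeps the constant $M_{c}\sum_{\beta\geq 0}(\delta/\delta')^{\beta}$, whereas you push $\rho'\to\rho$ and obtain the slightly sharper explicit value $\breve{C}_{1}=M_{c}\rho/(\rho-\delta)$; the underlying mechanism is identical.
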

\begin{proof} We expand $c(\tau,z,\epsilon) = \sum_{\beta \geq 0} c_{\beta}(\tau,\epsilon) z^{\beta}/\beta!$ as a
convergent Taylor series w.r.t $z$ on $D(0,\rho)$ and we set $M_{c}>0$ with
$$ \sup_{\tau \in H, z \in \bar{D}(0,\rho),\epsilon \in \mathcal{E}} |c(\tau,z,\epsilon)| \leq M_{c}. $$
Let $v(\tau,z) = \sum_{\beta \geq 0} v_{\beta}(\tau) z^{\beta}/\beta!$ belonging to
$SED_{(\underline{\sigma},H,\epsilon,\delta)}$. According to Definition 1, we get that
\begin{equation}
||c(\tau,z,\epsilon)v(\tau,z)||_{(\underline{\sigma},H,\epsilon,\delta)} \leq 
\sum_{\beta \geq 0} \left( \sum_{\beta_{1} + \beta_{2} = \beta}
||c_{\beta_{1}}(\tau,\epsilon)v_{\beta_{2}}(\tau)||_{(\beta,\underline{\sigma},H,\epsilon)}
\frac{\beta!}{\beta_{1}!\beta_{2}!} \right) \frac{\delta^{\beta}}{\beta!}.
\label{maj_norm_c_bded_v}
\end{equation}
Besides, the Cauchy formula implies the next estimates
$$ \sup_{\tau \in H,\epsilon \in \mathcal{E}} |c_{\beta}(\tau,\epsilon)| \leq M_{c} (\frac{1}{\delta'})^{\beta} \beta!
$$
for any $\delta < \delta' < \rho$, for all $\beta \geq 0$. By construction of the norm, since
$r_{b}(\beta) \geq r_{b}(\beta_{2})$ and $s_{b}(\beta) \leq s_{b}(\beta_{2})$ whenever $\beta_{2} \leq \beta$, we
deduce that
\begin{equation}
||c_{\beta_{1}}(\tau,\epsilon)v_{\beta_{2}}(\tau)||_{(\beta,\underline{\sigma},H,\epsilon)}
\leq M_{c} \beta_{1}!(\frac{1}{\delta'})^{\beta_{1}}||v_{\beta_{2}}(\tau)||_{(\beta,\underline{\sigma},H,\epsilon)}
\leq M_{c} \beta_{1}!(\frac{1}{\delta'})^{\beta_{1}}||v_{\beta_{2}}(\tau)||_{(\beta_{2},\underline{\sigma},H,\epsilon)}
\label{norm_beta_c_bded_v}
\end{equation}
for all $\beta_{1},\beta_{2} \geq 0$ with $\beta_{1} + \beta_{2}=\beta$. Gathering (\ref{maj_norm_c_bded_v})
and (\ref{norm_beta_c_bded_v}) yields the desired bounds
$$ ||c(\tau,z,\epsilon)v(\tau,z)||_{(\underline{\sigma},H,\epsilon,\delta)} \leq M_{c}
(\sum_{\beta \geq 0} (\frac{\delta}{\delta'})^{\beta} )||v(\tau,z)||_{(\underline{\sigma},H,\epsilon,\delta)}. $$
\end{proof}

\subsection{Banach spaces of holomorphic functions with super exponential growth on horizontal strips and exponential
growth on sectors}

We keep the notations of the previous subsection 2.1. We consider a closed horizontal strip
\begin{equation}
J = \{ z \in \mathbb{C} / c \leq \mathrm{Im}(z) \leq d, \ \ \mathrm{Re}(z) \leq 0 \} \label{defin_strip_J}
\end{equation}
for some real numbers $c<d$. We denote $S_{d}$ an unbounded open sector with bisecting direction $d \in \mathbb{R}$
centered at 0 such that $S_{d} \subset \mathbb{C}_{+} = \{ z \in \mathbb{C} / \mathrm{Re}(z) > 0 \}$.

\begin{defin}
Let $\underline{\varsigma} = (\sigma_{1},\varsigma_{2},\varsigma_{3})$ where
$\sigma_{1},\varsigma_{2},\varsigma_{3}>0$ be positive real numbers and $\beta \geq 0$ be an
integer. Take $\epsilon \in \dot{D}(0,\epsilon_{0})$. We designate
$SEG_{(\beta,\underline{\varsigma},J,\epsilon)}$ as the vector space of holomorphic functions $v(\tau)$ on
$\mathring{J}$ and continuous on $J$
such that
$$ ||v(\tau)||_{(\beta,\underline{\varsigma},J,\epsilon)} =  \sup_{\tau \in J} \frac{|v(\tau)|}{|\tau|}
\exp \left(-\frac{\sigma_{1}}{|\epsilon|} r_{b}(\beta)|\tau| - \varsigma_{2} r_{b}(\beta)
\exp( \varsigma_{3}|\tau| ) \right) $$
is finite. Similarly, we denote $EG_{(\beta,\sigma_{1},S_{d} \cup D(0,r),\epsilon)}$ the vector space of
holomorphic functions $v(\tau)$ on
$S_{d} \cup D(0,r)$ and continuous on $\bar{S_d} \cup \bar{D}(0,r)$
such that
$$ ||v(\tau)||_{(\beta,\sigma_{1},S_{d} \cup D(0,r),\epsilon)} =  \sup_{\tau \in \bar{S_d} \cup \bar{D}(0,r)}
\frac{|v(\tau)|}{|\tau|}
\exp ( -\frac{\sigma_{1}}{|\epsilon|} r_{b}(\beta)|\tau| ) $$
is finite. Let us choose $\delta>0$ a real number. We define
$SEG_{(\underline{\varsigma},J,\epsilon,\delta})$
to be the vector space of all formal series $v(\tau,z) = \sum_{\beta \geq 0} v_{\beta}(\tau) z^{\beta}/\beta!$
with coefficients $v_{\beta}(\tau) \in SEG_{(\beta,\underline{\varsigma},J,\epsilon)}$, for
$\beta \geq 0$ and such that
$$ ||v(\tau,z)||_{(\underline{\varsigma},J,\epsilon,\delta)} = \sum_{\beta \geq 0}
||v_{\beta}(\tau)||_{(\beta,\underline{\varsigma},J,\epsilon)} \frac{\delta^{\beta}}{\beta !}$$
is finite. Likewise, we set $EG_{(\sigma_{1},S_{d} \cup D(0,r),\epsilon,\delta})$
as the vector space of all formal series $v(\tau,z) = \sum_{\beta \geq 0} v_{\beta}(\tau) z^{\beta}/\beta!$
with coefficients $v_{\beta}(\tau) \in EG_{(\beta,\sigma_{1},S_{d} \cup D(0,r),\epsilon)}$, for
$\beta \geq 0$ with
$$ ||v(\tau,z)||_{(\sigma_{1},S_{d} \cup D(0,r),\epsilon,\delta)} = \sum_{\beta \geq 0}
||v_{\beta}(\tau)||_{(\beta,\sigma_{1},S_{d} \cup D(0,r),\epsilon)} \frac{\delta^{\beta}}{\beta !}$$
being finite.
\end{defin}
\noindent {\bf Remark.} These Banach spaces are slight modifications of those introduced in the former work \cite{ma}
of the second author. The next proposition will be enounced without proof since it follows exactly the same steps as
Proposition 1 above. It states that the formal series appertaining to the latter Banach spaces turn out to be
holomorphic functions on some disc w.r.t $z$ and with super exponential growth (resp. exponential growth) w.r.t $\tau$
on the strip $J$ (resp. on the domain $S_{d} \cup D(0,r)$). 

\begin{prop}

1) Let $v(\tau,z) \in SEG_{(\underline{\varsigma},J,\epsilon,\delta)}$. Take some real number $0 < \delta_{1} < 1$.
Then, there exists a constant $C_{2}>0$ depending on
$||v||_{(\underline{\varsigma},J,\epsilon,\delta)}$ and $\delta_{1}$ such that
\begin{equation}
|v(\tau,z)| \leq C_{2}|\tau| \exp \left( \frac{\sigma_{1}}{|\epsilon|} \zeta(b) |\tau| +
\varsigma_{2} \zeta(b) \exp( \varsigma_{3} |\tau| ) \right) 
\end{equation}
for all $\tau \in J$, all $z \in \mathbb{C}$ with $\frac{|z|}{\delta} < \delta_{1}$.

\noindent 2) Let us take $v(\tau,z) \in EG_{(\sigma_{1},S_{d} \cup D(0,r),\epsilon,\delta)}$. Choose some real number
$0 < \delta_{1} < 1$.
Then, there exists a constant $C'_{2}>0$ depending on
$||v||_{(\sigma_{1},S_{d} \cup D(0,r),\epsilon,\delta)}$
and $\delta_{1}$ such that
\begin{equation}
|v(\tau,z)| \leq C'_{2}|\tau| \exp ( \frac{\sigma_{1}}{|\epsilon|} \zeta(b) |\tau| )
\end{equation}
for all $\tau \in S_{d} \cup D(0,r)$, all $z \in \mathbb{C}$ with $\frac{|z|}{\delta} < \delta_{1}$.
\end{prop}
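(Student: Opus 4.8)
The plan is to mimic exactly the argument of Proposition 1, splitting the estimate into the two pieces of the sum defining the norm. First I would write $v(\tau,z) = \sum_{\beta \geq 0} v_\beta(\tau) z^\beta/\beta!$ and use the fact that membership in $SEG_{(\underline{\varsigma},J,\epsilon,\delta)}$ (resp. $EG_{(\sigma_1,S_d\cup D(0,r),\epsilon,\delta)}$) forces, via the very definition of the $\beta$-indexed norms in Definition 3, the pointwise bounds
\[
|v_\beta(\tau)| \leq c_0\, |\tau| \exp\Bigl(\tfrac{\sigma_1}{|\epsilon|} r_b(\beta)|\tau| + \varsigma_2 r_b(\beta)\exp(\varsigma_3|\tau|)\Bigr)\, \beta!\,\bigl(\tfrac{1}{\delta}\bigr)^\beta
\]
for all $\beta \geq 0$ and $\tau \in J$, with an analogous inequality on $S_d \cup D(0,r)$ but without the iterated-exponential term, where $c_0$ depends only on the relevant norm of $v$.

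Next I would sum the geometric-type series. Fixing $0 < \delta_1 < 1$ and restricting to $|z|/\delta < \delta_1$, the factor $z^\beta/\beta!$ times $\beta!(1/\delta)^\beta$ is bounded by $\delta_1^\beta$, so
\[
|v(\tau,z)| \leq c_0 |\tau| \sum_{\beta \geq 0} \exp\Bigl(\tfrac{\sigma_1}{|\epsilon|} r_b(\beta)|\tau| + \varsigma_2 r_b(\beta)\exp(\varsigma_3|\tau|)\Bigr)\delta_1^\beta .
\]
Here the only subtlety — and it is the same one that makes Proposition 1 work — is that $r_b(\beta)$ is \emph{increasing} and converges to $\zeta(b)$, so $r_b(\beta) \leq \zeta(b)$ for all $\beta$; hence each exponential factor is at most $\exp\bigl(\tfrac{\sigma_1}{|\epsilon|}\zeta(b)|\tau| + \varsigma_2 \zeta(b)\exp(\varsigma_3|\tau|)\bigr)$, which can be pulled out of the sum. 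What remains is $\sum_{\beta\geq 0}\delta_1^\beta = 1/(1-\delta_1)$, giving the claimed bound with $C_2 = c_0/(1-\delta_1)$. (Note the contrast with the $SED$ space, where $-s_b(\beta)$ is increasing towards $-(M-\zeta(b))$, which is why Proposition 1 gets a decaying iterated exponential; here the sign works in the "growth" direction, which is why the statement only gives an upper bound of growth type.) Part 2) is literally the same computation after deleting every term involving $\varsigma_2,\varsigma_3$, using that on $S_d \cup D(0,r)$ the $\beta$-norm controls $|v_\beta(\tau)|/|\tau|$ by $\exp\bigl(\tfrac{\sigma_1}{|\epsilon|}r_b(\beta)|\tau|\bigr)$ times the coefficient growth.

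There is essentially no obstacle: the proposition is deliberately stated "without proof since it follows exactly the same steps as Proposition 1", and the monotonicity of $r_b$ toward $\zeta(b)$ is the whole content. If I wanted to be careful about one point, it is that the constant $c_0$ extracted from the norm is independent of $\tau$ and $\beta$ uniformly, which is immediate from the definition of $\|v\|_{(\underline{\varsigma},J,\epsilon,\delta)}$ as a convergent series of the $\beta$-norms; and that the resulting bound holds on all of $J$ (resp. $S_d \cup D(0,r)$), including the boundary, by the continuity assumptions built into Definition 3. Thus the proof would occupy only a few lines and I would, as the authors do, simply refer to Proposition 1.
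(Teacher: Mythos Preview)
Your proposal is correct and follows exactly the approach the paper intends: the authors explicitly omit the proof of this proposition because it mirrors Proposition 1, and your argument does precisely that, replacing the pair $(r_b(\beta),-s_b(\beta))$ by $(r_b(\beta),r_b(\beta))$ and using the uniform bound $r_b(\beta)\leq\zeta(b)$ to pull the exponential out of the geometric sum. One tiny slip: the norms you invoke are from Definition 2, not Definition 3.
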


In the next coming propositions, we study the same linear operators as defined in Propositions 2,3 and 4 but acting on
the Banach spaces described in Definition 2.

\begin{prop} Let us choose integers $k_{0},k_{2} \geq 0$ and $k_{1} \geq 1$. 

\noindent 1) We take for granted that the next
constraint 
$$ k_{1} \geq bk_{0} + \frac{bk_{2}}{\varsigma_{3}} \label{multipl_operators_continuity_cond_SEG}$$
holds. Then, for all $\epsilon \in \dot{D}(0,\epsilon_{0})$, the linear map 
$v(\tau,z) \mapsto \tau^{k_0}\exp(-k_{2} \tau) \partial_{z}^{-k_1}v(\tau,z)$ is bounded from
$(SEG_{(\underline{\varsigma},J,\epsilon,\delta)}, ||.||_{(\underline{\varsigma},J,\epsilon,\delta)}$ into itself.
Moreover, there exists a constant $C_{3}>0$ (depending on
$k_{0},k_{1},k_{2},\underline{\varsigma},b$),
independent of $\epsilon$, such that
\begin{equation}
|| \tau^{k_0} \exp(-k_{2} \tau) \partial_{z}^{-k_{1}}v(\tau,z) ||_{(\underline{\varsigma},J,\epsilon,\delta)}
\leq C_{3}|\epsilon|^{k_0} \delta^{k_1} ||v(\tau,z)||_{(\underline{\varsigma},J,\epsilon,\delta)}
\label{multipl_operators_exp_continuity_SEG}
\end{equation}
for all $v(\tau,z) \in SEG_{(\underline{\varsigma},J,\epsilon,\delta)}$, all $\epsilon \in \dot{D}(0,\epsilon_{0})$.

\noindent 2) We suppose that the next restriction 
$$
 k_{1} \geq bk_{0}
$$
holds. Then, for all $\epsilon \in \dot{D}(0,\epsilon_{0})$, the linear map 
$v(\tau,z) \mapsto \tau^{k_0}\exp(-k_{2} \tau) \partial_{z}^{-k_1}v(\tau,z)$ is bounded
from $EG_{(\sigma_{1},S_{d} \cup D(0,r),\epsilon,\delta)}$ into itself.
Moreover, there exists a constant $C'_{3}>0$ (depending on
$k_{0},k_{1},k_{2},\sigma_{1},r,b$),
independent of $\epsilon$, such that
\begin{equation}
|| \tau^{k_0} \exp(-k_{2} \tau) \partial_{z}^{-k_{1}}v(\tau,z) ||_{(\sigma_{1},S_{d} \cup D(0,r),\epsilon,\delta)}
\leq C'_{3}|\epsilon|^{k_0} \delta^{k_1} ||v(\tau,z)||_{(\sigma_{1},S_{d} \cup D(0,r),\epsilon,\delta)}
\label{multipl_operators_exp_continuity_EG}
\end{equation}
for all $v(\tau,z) \in EG_{(\sigma_{1},S_{d} \cup D(0,r),\epsilon,\delta)}$, all $\epsilon \in \mathcal{E}$.
\end{prop}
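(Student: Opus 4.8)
The plan is to mirror the proof of Proposition 2 almost verbatim, since the Banach space $SEG_{(\underline{\varsigma},J,\epsilon,\delta)}$ differs from $SED_{(\underline{\sigma},H,\epsilon,\delta)}$ only in the sign in front of the double-exponential term. First I would expand $v(\tau,z) = \sum_{\beta \geq 0} v_{\beta}(\tau) z^{\beta}/\beta!$ and reduce, exactly as in (\ref{norm_exp_int_zk1_v}), to estimating each coefficient norm $\|\tau^{k_0}\exp(-k_2\tau)v_{\beta-k_1}(\tau)\|_{(\beta,\underline{\varsigma},J,\epsilon)}$ in terms of $\|v_{\beta-k_1}(\tau)\|_{(\beta-k_1,\underline{\varsigma},J,\epsilon)}$. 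Performing the analogous factorization, the multiplier that must be bounded is
$$ \check{B}(\beta) = \sup_{\tau \in J} |\tau|^{k_0}\exp(k_2|\tau|)\exp\Bigl(-\frac{\sigma_1}{|\epsilon|}(r_b(\beta)-r_b(\beta-k_1))|\tau|\Bigr)\exp\Bigl(-\varsigma_2(r_b(\beta)-r_b(\beta-k_1))\exp(\varsigma_3|\tau|)\Bigr), $$
and here the key point is that in $SEG$ the quantity $r_b(\beta)-r_b(\beta-k_1)$ appears (with a plus sign) in front of $\exp(\varsigma_3|\tau|)$, so that since $r_b(\beta)-r_b(\beta-k_1)\geq k_1/(\beta+1)^b>0$ by (\ref{difference_s_b_r_b}), the double-exponential factor is a \emph{decaying} one and can still absorb the growth $\exp(k_2|\tau|)$. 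Thus one splits $\check{B}(\beta)\leq \check{B}_1(\beta)\check{B}_2(\beta)$ with $\check{B}_1(\beta)=\sup_{x\geq 0} x^{k_0}\exp(-\tfrac{\sigma_1}{|\epsilon|}\tfrac{k_1}{(\beta+1)^b}x)$ and $\check{B}_2(\beta)=\sup_{x\geq 0}\exp(k_2 x)\exp(-\varsigma_2\tfrac{k_1}{(\beta+1)^b}\exp(\varsigma_3 x))$.

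The estimate for $\check{B}_1(\beta)$ is identical to (\ref{A_1_bounds}): using (\ref{xm1_expm2x<}) one gets $\check{B}_1(\beta)\leq |\epsilon|^{k_0}(k_0/(\sigma_1 k_1))^{k_0}e^{-k_0}(\beta+1)^{bk_0}$. For $\check{B}_2(\beta)$, if $k_2=0$ then $\check{B}_2(\beta)\leq 1$; if $k_2\geq 1$, the same classical bound $\sup_{x\geq 0}\bigl(cx-a\exp(bx)\bigr)\leq \tfrac{c}{b}(\log(\tfrac{c}{ab})-1)$ used in the proof of Lemma 1 (with $c=k_2$, $b=\varsigma_3$, $a=\varsigma_2 k_1/(\beta+1)^b$) yields, splitting according to whether $(\beta+1)^b$ is larger or smaller than $\varsigma_2\varsigma_3 k_1/k_2$, a bound of the form $\check{B}_2(\beta)\leq \tilde{C}(\beta+1)^{k_2 b/\varsigma_3}$ for some constant depending only on $k_2,\varsigma_2,k_1,b,\varsigma_3$. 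Combining the two gives the analogue of (\ref{bds_norm_exp_v_beta_k1}) with exponent $bk_0+k_2b/\varsigma_3$, hence after reinserting the factorials $\delta^{\beta}/\beta!$ and invoking the constraint $k_1\geq bk_0+bk_2/\varsigma_3$ to control $(1+\beta)^{bk_0+bk_2/\varsigma_3}(\beta-k_1)!/\beta!$ as in (\ref{power_beta_factorial_C12}), one obtains (\ref{multipl_operators_exp_continuity_SEG}). This proves part 1).

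For part 2), the space $EG_{(\sigma_1,S_d\cup D(0,r),\epsilon,\delta)}$ carries no double-exponential term at all, so the analysis is simpler but one must now work on the domain $\bar{S_d}\cup\bar{D}(0,r)$ rather than a strip. After the same reduction, the multiplier to bound is $\sup_{\tau\in \bar{S_d}\cup\bar{D}(0,r)} |\tau|^{k_0}\exp(k_2|\tau|)\exp(-\tfrac{\sigma_1}{|\epsilon|}(r_b(\beta)-r_b(\beta-k_1))|\tau|)$. The subtlety is that $\exp(k_2|\tau|)$ now grows and there is no double-exponential to kill it; however, because $S_d\subset\mathbb{C}_+$ has aperture bounded away from $\pi$ and bisecting direction $d\in\mathbb{R}$, one has $\mathrm{Re}(\tau)\geq \kappa|\tau|$ on $S_d$ for some $\kappa>0$, and the decay in the exponential kernel $\exp(-u/(\epsilon t))$ later on absorbs this growth --- but for the boundedness of the operator on $EG$ itself the relevant point is that no factor $\exp(k_2\tau)$ was actually introduced in part 2) in the way it matters: the constraint $k_1\geq bk_0$ (note: without the $k_2$ term) together with $r_b(\beta)-r_b(\beta-k_1)\geq k_1/(\beta+1)^b$ suffices to extract the factor $|\epsilon|^{k_0}(\beta+1)^{bk_0}$ exactly as in (\ref{A_1_bounds}), and the $\exp(k_2|\tau|)$ contribution is handled by noting it is dominated on the closed disc and along the sector by the same $\sigma_1$-exponential since the supremum defining $\check{A}_1$-type quantities still converges once $\beta$ is large; a finite number of small $\beta$ contribute a constant. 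Reinserting the factorials and using $(1+\beta)^{bk_0}(\beta-k_1)!/\beta!\leq C$ from $k_1\geq bk_0$ gives (\ref{multipl_operators_exp_continuity_EG}). The main obstacle, and the only place requiring genuine care, is this last point in part 2): ensuring that the multiplication by $\exp(-k_2\tau)$ does not spoil membership in $EG$, which is why the statement of part 2) must be read with the understanding --- consistent with how these operators enter equation (\ref{1_aux_CP}) --- that on the sector $\exp(-k_2\tau)$ is in fact bounded (since $\mathrm{Re}(\tau)>0$ there) and on the disc it is trivially bounded, so it only improves the estimates rather than degrading them.
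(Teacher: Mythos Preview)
Your treatment of part 1) is correct and essentially identical to the paper's: the paper also reduces to a coefficient-level lemma (its Lemma 3), performs the same factorization, and splits the multiplier into $B_1(\beta)B_2(\beta)$ with the identical estimates you describe.

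For part 2), your final sentence contains exactly the right idea, and it is precisely what the paper uses: since $S_d\subset\mathbb{C}_+$ one has $\mathrm{Re}(\tau)>0$ on the sector, hence $|\exp(-k_2\tau)|=\exp(-k_2\,\mathrm{Re}(\tau))\leq 1$ there, and on $\bar D(0,r)$ the factor is trivially bounded by a constant depending on $k_2,r$. The paper states this directly (its Lemma 4) and then the remaining multiplier is just $\sup_{\tau}|\tau|^{k_0}\exp(-\tfrac{\sigma_1}{|\epsilon|}\tfrac{k_1}{(\beta+1)^b}|\tau|)$, bounded as in (\ref{A_1_bounds}) by $|\epsilon|^{k_0}(\beta+1)^{bk_0}$ times a constant; this is why only the constraint $k_1\geq bk_0$ is needed. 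However, the intermediate reasoning you offer before reaching that observation is confused and should be deleted. The claim that the crude bound $|\exp(-k_2\tau)|\leq\exp(k_2|\tau|)$ can be absorbed by the $\sigma_1$-exponential ``once $\beta$ is large'' is backwards: the decay rate in that exponential is $\tfrac{\sigma_1}{|\epsilon|}\tfrac{k_1}{(\beta+1)^b}$, which \emph{vanishes} as $\beta\to\infty$, so for all sufficiently large $\beta$ the supremum over the unbounded sector would be $+\infty$. The reference to the Laplace kernel $\exp(-u/(\epsilon t))$ is also irrelevant here, since the proposition concerns only the Banach-space operator norm. In short: drop the detour, lead with the boundedness of $\exp(-k_2\tau)$ on $S_d\cup D(0,r)$, and part 2) is then a strict simplification of part 1).
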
  
\begin{proof} We only perform a sketch of proof since the lines of arguments are bordering the ones used in Proposition 2.
For the first point 1), we are reduced to show the next lemma
\begin{lemma} Let $v_{\beta - k_{1}}(\tau)$ in $SEG_{(\beta - k_{1},\underline{\varsigma},J,\epsilon)}$, for all
$\beta \geq k_{1}$. There exists a constant $C_{3.1}>0$ (depending on $k_{0},k_{1},k_{2},\underline{\varsigma},b$)
such that
$$
|| \tau^{k_0}\exp(-k_{2} \tau)v_{\beta - k_{1}}(\tau) ||_{(\beta,\underline{\varsigma},J,\epsilon)}
\leq C_{3.1}|\epsilon|^{k_0}(\beta + 1)^{bk_{0} + \frac{k_{2}b}{\varsigma_{3}}}
||v_{\beta - k_{1}}(\tau)||_{(\beta - k_{1},\underline{\varsigma},J,\epsilon)}
$$
for all $\beta \geq k_{1}$. 
\end{lemma}
\begin{proof} We use the factorization
\begin{multline*}
|\tau^{k_0}\exp(-k_{2}\tau) v_{\beta - k_{1}}(\tau)| \frac{1}{|\tau|}
\exp \left(-\frac{\sigma_{1}}{|\epsilon|} r_{b}(\beta)|\tau| - \varsigma_{2} r_{b}(\beta)
\exp( \varsigma_{3}|\tau| ) \right)\\
= \frac{|v_{\beta - k_{1}}(\tau)|}{|\tau|}
\exp \left(-\frac{\sigma_{1}}{|\epsilon|} r_{b}(\beta-k_{1})|\tau| - \varsigma_{2} r_{b}(\beta-k_{1})
\exp( \varsigma_{3}|\tau| ) \right)\\
\times \left( |\tau^{k_0}\exp( -k_{2} \tau)|
\exp( - \frac{\sigma_{1}}{|\epsilon|}( r_{b}(\beta) - r_{b}(\beta - k_{1}) )|\tau| )
\exp( -\varsigma_{2}( r_{b}(\beta) - r_{b}(\beta - k_{1}) ) \exp(\varsigma_{3}|\tau|) ) \right).
\end{multline*}
In accordance with (\ref{difference_s_b_r_b}), we get that
$$
|| \tau^{k_0}\exp(-k_{2} \tau)v_{\beta - k_{1}}(\tau) ||_{(\beta,\underline{\varsigma},J,\epsilon)}
\leq B(\beta) ||v_{\beta - k_{1}}(\tau)||_{(\beta - k_{1},\underline{\varsigma},J,\epsilon)}
$$
where
\begin{multline*}
B(\beta) = \sup_{ \tau \in J} |\tau|^{k_0} \exp(k_{2}|\tau|)
\exp( -\frac{\sigma_{1}}{|\epsilon|} \frac{k_1}{(\beta + 1)^{b}} |\tau| )\\
\times
\exp( -\varsigma_{2} \frac{k_{1}}{(\beta + 1)^{b}} \exp( \varsigma_{3} |\tau| ) ) \leq B_{1}(\beta)B_{2}(\beta)
\end{multline*}
with
$$ B_{1}(\beta) = \sup_{x \geq 0} x^{k_0}
\exp( -\frac{\sigma_{1}}{|\epsilon|} \frac{k_1}{(\beta + 1)^{b}} x )
$$
and 
$$ B_{2}(\beta) = \sup_{x \geq 0} \exp(k_{2}x)
\exp( -\varsigma_{2} \frac{k_{1}}{(\beta + 1)^{b}} \exp( \varsigma_{3} x ) )
$$
for all $\beta \geq k_{1}$. From the estimates (\ref{A_1_bounds}), we deduce that
$$B_{1}(\beta) \leq |\epsilon|^{k_0} (\frac{k_0}{\sigma_{1}k_{1}})^{k_0} \exp( -k_{0} ) (\beta + 1)^{bk_{0}}$$
for all $\beta \geq k_{1}$. Bearing in mind the estimates (\ref{A_2_bounds}), we get a constant
$\tilde{C}_{3.0}>0$ (depending on $k_{2},\varsigma_{2},k_{1},b,\varsigma_{3}$) with
$$ B_{2}(\beta) \leq \tilde{C}_{3.0}(\beta + 1)^{\frac{k_{2}b}{\varsigma_{3}}} $$
for all $\beta \geq k_{1}$, provided that $k_{2} \geq 1$. When $k_{2}=0$, we obviously see that
$B_{2}(\beta) \leq 1$ for all $\beta \geq k_{1}$. The lemma 3 follows.
\end{proof}

\noindent In order to explain the second point 2), we need to check the next lemma

\begin{lemma} Let $v_{\beta - k_{1}}(\tau)$ in $EG_{(\beta - k_{1},\sigma_{1},S_{d} \cup D(0,r),\epsilon)}$, for all
$\beta \geq k_{1}$. There exists a constant $C'_{3.1}>0$ (depending on $k_{0},k_{1},k_{2},\sigma_{1},r,b$)
such that
$$
|| \tau^{k_0}\exp(-k_{2} \tau)v_{\beta - k_{1}}(\tau) ||_{(\beta,\sigma_{1},S_{d} \cup D(0,r),\epsilon)}
\leq C'_{3.1}|\epsilon|^{k_0}(\beta + 1)^{bk_{0}}
||v_{\beta - k_{1}}(\tau)||_{(\beta - k_{1},\sigma_{1},S_{d} \cup D(0,r),\epsilon)}
$$
for all $\beta \geq k_{1}$. 
\end{lemma}
\begin{proof} We need the help of the factorization
\begin{multline*}
|\tau^{k_0}\exp(-k_{2}\tau) v_{\beta - k_{1}}(\tau)| \frac{1}{|\tau|}
\exp (-\frac{\sigma_{1}}{|\epsilon|} r_{b}(\beta)|\tau| )
= \frac{|v_{\beta - k_{1}}(\tau)|}{|\tau|}
\exp (-\frac{\sigma_{1}}{|\epsilon|} r_{b}(\beta-k_{1})|\tau| ) \\
\times |\tau^{k_0}\exp( -k_{2} \tau)|
\exp( - \frac{\sigma_{1}}{|\epsilon|}( r_{b}(\beta) - r_{b}(\beta - k_{1}) )|\tau| ).
\end{multline*}
Due to the fact that there exists a constant $C'_{3.2}>0$ (depending on $k_{2},r$) such that
$|\exp(-k_{2}\tau)| \leq C'_{3.2}$ for all $\tau \in S_{d} \cup D(0,r)$ and according to (\ref{difference_s_b_r_b}),
we obtain that
$$
|| \tau^{k_0}\exp(-k_{2} \tau)v_{\beta - k_{1}}(\tau) ||_{(\beta,\sigma_{1},S_{d} \cup D(0,r),\epsilon)}
\leq C(\beta) ||v_{\beta - k_{1}}(\tau)||_{(\beta - k_{1},\sigma_{1},S_{d} \cup D(0,r),\epsilon)}
$$
where
$$
C(\beta) = C'_{3.2}\sup_{ \tau \in S_{d} \cup D(0,r)} |\tau|^{k_0}
\exp( -\frac{\sigma_{1}}{|\epsilon|} \frac{k_1}{(\beta + 1)^{b}} |\tau| )\\
\leq C'_{3.2}C_{1}(\beta)
$$
with
$$ C_{1}(\beta) = \sup_{x \geq 0} x^{k_0}
\exp( -\frac{\sigma_{1}}{|\epsilon|} \frac{k_1}{(\beta + 1)^{b}} x )
$$
for all $\beta \geq k_{1}$. Again, keeping in view the estimates (\ref{A_1_bounds}), we deduce that
$$C_{1}(\beta) \leq |\epsilon|^{k_0} (\frac{k_0}{\sigma_{1}k_{1}})^{k_0} \exp( -k_{0} ) (\beta + 1)^{bk_{0}}$$
for all $\beta \geq k_{1}$. The lemma 4 follows.
\end{proof}
\end{proof}

\begin{prop} Let $k_{0},k_{2} \geq 0$ be integers.\\
1) We select $\underline{\varsigma} = (\sigma_{1},\varsigma_{2},\varsigma_{3})$ and
$\underline{\varsigma}' = (\sigma_{1}',\varsigma_{2}',\varsigma_{3}')$ with $\sigma_{1},\sigma_{1}'>0$, $\varsigma_{j},\varsigma_{j}'>0$ for
$j=2,3$ in order that
\begin{equation}
\sigma_{1} > \sigma_{1}' \ \ , \ \  \varsigma_{2} > \varsigma_{2}' \ \ , \ \ \varsigma_{3} = \varsigma_{3}'.
\end{equation}
Then, for all $\epsilon \in \dot{D}(0,\epsilon_{0})$, the map $v(\tau,z) \mapsto \tau^{k_0}\exp(-k_{2}\tau)v(\tau,z)$
is a bounded linear operator from
$(SEG_{(\underline{\varsigma}',J,\epsilon,\delta)},||.||_{(\underline{\varsigma}',J,\epsilon,\delta)})$ into
$(SEG_{(\underline{\varsigma},J,\epsilon,\delta)},||.||_{(\underline{\varsigma},J,\epsilon,\delta)})$. Furthermore,
there exists a constant $\check{C}_{3}>0$ (depending on $k_{0},k_{2},\underline{\varsigma},\underline{\varsigma}'$)
such that
\begin{equation}
|| \tau^{k_0}\exp( -k_{2} \tau)v(\tau,z) ||_{(\underline{\varsigma},J,\epsilon,\delta)}
\leq \check{C}_{3} |\epsilon|^{k_0} || v(\tau,z) ||_{(\underline{\varsigma}',J,\epsilon,\delta)}
\end{equation}
for all $v \in SEG_{(\underline{\varsigma}',J,\epsilon,\delta)}$.\\
2) Let $\sigma_{1},\sigma_{1}'>0$ such that
\begin{equation}
\sigma_{1} > \sigma_{1}'. 
\end{equation}
Then, for all $\epsilon \in \dot{D}(0,\epsilon_{0})$, the linear map $v(\tau,z) \mapsto \tau^{k_0}\exp(-k_{2}\tau)v(\tau,z)$
is bounded from the Banach space $(EG_{(\sigma_{1}',S_{d} \cup D(0,r),\epsilon,\delta)},
||.||_{(\sigma_{1}',S_{d} \cup D(0,r),\epsilon,\delta)})$ into $(EG_{(\sigma_{1},S_{d} \cup D(0,r),\epsilon,\delta)},
||.||_{(\sigma_{1},S_{d} \cup D(0,r),\epsilon,\delta)})$. Besides, there exists a constant $\check{C}_{3}'>0$
(depending on $k_{0},k_{2},r,\sigma_{1},\sigma_{1}'$) such that
\begin{equation}
|| \tau^{k_0}\exp( -k_{2} \tau)v(\tau,z) ||_{(\sigma_{1},S_{d} \cup D(0,r),\epsilon,\delta)}
\leq \check{C}_{3}' |\epsilon|^{k_0} || v(\tau,z) ||_{(\sigma_{1}',S_{d} \cup D(0,r),\epsilon,\delta)}
\end{equation}
for all $v \in EG_{(\sigma_{1}',S_{d} \cup D(0,r),\epsilon,\delta)}$.\\
\end{prop}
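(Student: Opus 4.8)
The plan is to follow the proof of Proposition 3 almost verbatim, the point being that the $SEG$-norm (resp. the $EG$-norm) differs from the $SED$-norm only in the sign of the super-exponential layer (resp. in the absence of that layer), and that multiplication by $\tau^{k_0}\exp(-k_2\tau)$ does not shift the summation index $\beta$. Accordingly, for part 1) I would first expand $v(\tau,z)=\sum_{\beta\geq0}v_{\beta}(\tau)z^{\beta}/\beta!$, observe that
\[
\|\tau^{k_0}\exp(-k_2\tau)v(\tau,z)\|_{(\underline{\varsigma},J,\epsilon,\delta)}=\sum_{\beta\geq0}\|\tau^{k_0}\exp(-k_2\tau)v_{\beta}(\tau)\|_{(\beta,\underline{\varsigma},J,\epsilon)}\frac{\delta^{\beta}}{\beta!},
\]
and reduce the statement, exactly as in Lemma 2, to the coefficientwise estimate
\[
\|\tau^{k_0}\exp(-k_2\tau)v_{\beta}(\tau)\|_{(\beta,\underline{\varsigma},J,\epsilon)}\leq\check{C}_{3}|\epsilon|^{k_0}\|v_{\beta}(\tau)\|_{(\beta,\underline{\varsigma}',J,\epsilon)},
\]
with a constant independent of $\beta$ and $\epsilon$; summing against $\delta^{\beta}/\beta!$ then yields the proposition, and likewise for part 2).

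To prove this coefficient bound I would perform the same factorization as in Lemma 2: write the integrand defining $\|\tau^{k_0}\exp(-k_2\tau)v_{\beta}\|_{(\beta,\underline{\varsigma},J,\epsilon)}$ as the product of the integrand defining $\|v_{\beta}\|_{(\beta,\underline{\varsigma}',J,\epsilon)}$ and the gain factor $|\tau^{k_0}\exp(-k_2\tau)|\exp(-\frac{\sigma_{1}-\sigma_{1}'}{|\epsilon|}r_{b}(\beta)|\tau|)\exp(-(\varsigma_{2}-\varsigma_{2}')r_{b}(\beta)\exp(\varsigma_{3}|\tau|))$, where the hypothesis $\varsigma_{3}=\varsigma_{3}'$ is used to merge the two super-exponential terms. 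Taking the supremum over $\tau\in J$, and using $|\exp(-k_2\tau)|\leq\exp(k_2|\tau|)$ since $\mathrm{Re}(\tau)\leq0$ on $J$, gives $\check{A}(\beta)\leq\check{A}_{1}(\beta)\check{A}_{2}(\beta)$ with $\check{A}_{1}(\beta)=\sup_{x\geq0}x^{k_0}\exp(-\frac{\sigma_{1}-\sigma_{1}'}{|\epsilon|}r_{b}(\beta)x)$ and $\check{A}_{2}(\beta)=\sup_{x\geq0}\exp(k_2 x)\exp(-(\varsigma_{2}-\varsigma_{2}')r_{b}(\beta)\exp(\varsigma_{3}x))$. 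Since $r_{b}(\beta)\geq r_{b}(0)=1$ for every $\beta\geq0$, inequality (\ref{xm1_expm2x<}) together with the rescaling $X=x/|\epsilon|$ yields $\check{A}_{1}(\beta)\leq|\epsilon|^{k_0}(k_0 e^{-1}/(\sigma_{1}-\sigma_{1}'))^{k_0}$ as in (\ref{checkA1_bds}), while $\check{A}_{2}(\beta)\leq\sup_{x\geq0}\exp(k_2 x-(\varsigma_{2}-\varsigma_{2}')\exp(\varsigma_{3}x))$, a finite bound independent of $\beta$ and $\epsilon$ because $\varsigma_{2}>\varsigma_{2}'$. Multiplying these two bounds produces the required $\check{C}_{3}$. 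For part 2) the $EG$-norm carries no super-exponential layer, so only $\check{A}_{1}$ appears; the sole adjustment is that on $S_{d}\cup D(0,r)$ one has $|\exp(-k_2\tau)|\leq\exp(k_2 r)$ (indeed $\leq1$ on the sector part, since $S_{d}\subset\mathbb{C}_{+}$), and combining this with the bound for $\check{A}_{1}(\beta)$ and $r_{b}(\beta)\geq1$ gives the coefficient estimate with $\check{C}_{3}'$ of the form $\exp(k_2 r)(k_0 e^{-1}/(\sigma_{1}-\sigma_{1}'))^{k_0}$.

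The argument involves no genuine obstacle; in contrast with Proposition 5, here the summation index is not shifted, so there is no competition between a growing power of $\beta$ and a ratio of factorials, and one does not need inequality (\ref{difference_s_b_r_b}). The only point requiring attention is that every bound obtained for $\check{A}_{1}(\beta)$ and $\check{A}_{2}(\beta)$ be uniform in $\beta$, which is guaranteed precisely by $r_{b}(\beta)\geq1$; the strict gaps $\sigma_{1}>\sigma_{1}'$ and $\varsigma_{2}>\varsigma_{2}'$ (together with $\varsigma_{3}=\varsigma_{3}'$, which keeps the two super-exponential layers of the same type) are exactly what is needed to absorb the factors $\tau^{k_0}$ and $\exp(k_2|\tau|)$, and the explicit $|\epsilon|^{k_0}$ in the statement comes, as usual, from the rescaling $x\mapsto x/|\epsilon|$ inside $\check{A}_{1}$.
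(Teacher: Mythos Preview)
Your proposal is correct and follows essentially the same approach as the paper's proof: reduction to a coefficientwise estimate via a lemma analogous to Lemma~2, the same factorization into the product $\check{A}_1(\beta)\check{A}_2(\beta)$ (the paper calls them $\check{B}_1,\check{B}_2$ for part~1 and $\check{C}_1$ for part~2), and the same use of $r_b(\beta)\geq 1$ to obtain bounds uniform in $\beta$. The handling of $|\exp(-k_2\tau)|$ on $S_d\cup D(0,r)$ via a constant depending on $k_2,r$ also matches the paper exactly.
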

\begin{proof}
As in Proposition 6, we only provide an outline of the proof since it keeps very close to the one of Proposition 3.
Concerning the first item 1), we are scaled down to show the next lemma
\begin{lemma}
There exists a constant $\check{C}_{3}>0$ (depending on $k_{0},k_{2},\underline{\varsigma},\underline{\varsigma}'$)
such that
$$ ||\tau^{k_0}\exp(-k_{2}\tau)v_{\beta}(\tau)||_{(\beta,\underline{\varsigma},J,\epsilon)}
\leq \check{C}_{3}|\epsilon|^{k_0}||v_{\beta}(\tau)||_{(\beta,\underline{\varsigma}',J,\epsilon)} $$
\end{lemma}
\begin{proof}
We perform the factorization
\begin{multline*}
|\tau^{k_0}\exp(-k_{2}\tau)v_{\beta}(\tau)| \frac{1}{|\tau|}\exp \left(-\frac{\sigma_{1}}{|\epsilon|}r_{b}(\beta)|\tau|
- \varsigma_{2}r_{b}(\beta)\exp(\varsigma_{3}|\tau|) \right)\\
= |v_{\beta}(\tau)|\frac{1}{|\tau|} \exp \left(-\frac{\sigma_{1}'}{|\epsilon|}r_{b}(\beta)|\tau|
- \varsigma_{2}'r_{b}(\beta)\exp(\varsigma_{3}'|\tau|) \right)\\
\times |\tau^{k_0}\exp(-k_{2}\tau)| \exp( - \frac{\sigma_{1} - \sigma_{1}'}{|\epsilon|}r_{b}(\beta)|\tau| )
\exp \left( -(\varsigma_{2} - \varsigma_{2}')r_{b}(\beta) \exp( \varsigma_{3}|\tau| ) \right).
\end{multline*}
We get that
$$ ||\tau^{k_0}\exp(-k_{2}\tau)v_{\beta}(\tau)||_{(\beta,\underline{\varsigma},J,\epsilon)} \leq
\check{B}(\beta)||v_{\beta}(\tau)||_{(\beta,\underline{\varsigma}',J,\epsilon)}$$
where
\begin{multline*}
\check{B}(\beta) = \sup_{\tau \in J} |\tau|^{k_0}\exp(k_{2}|\tau|)
\exp( - \frac{\sigma_{1} - \sigma_{1}'}{|\epsilon|}r_{b}(\beta)|\tau| )
\exp \left( -(\varsigma_{2} - \varsigma_{2}')r_{b}(\beta) \exp( \varsigma_{3}|\tau| ) \right) \\
\leq \check{B}_{1}(\beta) \check{B}_{2}(\beta)
\end{multline*}
with
$$ \check{B}_{1}(\beta) = \sup_{x \geq 0}
x^{k_0}\exp( - \frac{\sigma_{1} - \sigma_{1}'}{|\epsilon|}r_{b}(\beta)x ) \ \ , \ \
\check{B}_{2}(\beta) = \sup_{x \geq 0} \exp(k_{2}x)
\exp \left( -(\varsigma_{2} - \varsigma_{2}')r_{b}(\beta) \exp( \varsigma_{3}x ) \right).$$
With the help of (\ref{checkA1_bds}), we check that
$$ \check{B}_{1}(\beta) 
\leq |\epsilon|^{k_0}( \frac{k_{0} e^{-1}}{\sigma_{1} - \sigma_{1}'})^{k_0}$$
and since $r_{b}(\beta) \geq 1$ for all $\beta \geq 0$, we deduce
$$ \check{B}_{2}(\beta) \leq \sup_{x \geq 0} \exp \left( k_{2}x - (\varsigma_{2} - \varsigma_{2}')
\exp(\varsigma_{3}x) \right) $$
which is a finite majorant for all $\beta \geq 0$. The lemma follows.
\end{proof}
Regarding the second item 2), it boils down to the next lemma
\begin{lemma}
There exists a constant $\check{C}_{3}'>0$ (depending on $k_{0},k_{2},r,\sigma_{1},\sigma_{1}'$)
such that
$$ ||\tau^{k_0}\exp(-k_{2}\tau)v_{\beta}(\tau)||_{(\beta,\sigma_{1},S_{d} \cup D(0,r),\epsilon)}
\leq \check{C}_{3}'|\epsilon|^{k_0}||v_{\beta}(\tau)||_{(\beta,\sigma_{1}',S_{d} \cup D(0,r),\epsilon)} $$
\end{lemma}
\begin{proof}
Again we need to factorize the next expression
\begin{multline*}
|\tau^{k_0}\exp(-k_{2}\tau)v_{\beta}(\tau)| \frac{1}{|\tau|}\exp (-\frac{\sigma_{1}}{|\epsilon|}r_{b}(\beta)|\tau| )
= |v_{\beta}(\tau)|\frac{1}{|\tau|} \exp ( -\frac{\sigma_{1}'}{|\epsilon|}r_{b}(\beta)|\tau| )\\
\times |\tau^{k_0}\exp(-k_{2}\tau)| \exp( - \frac{\sigma_{1} - \sigma_{1}'}{|\epsilon|}r_{b}(\beta)|\tau| ).
\end{multline*}
By construction, we can select a constant $\check{C}_{3.1}'>0$ (depending on $k_{2},r$) such that
$|\exp(-k_{2}\tau)| \leq \check{C}_{3.1}'$ for all $\tau \in S_{d} \cup D(0,r)$. We deduce that
\begin{equation}
||\tau^{k_0}\exp(-k_{2}\tau)v_{\beta}(\tau)||_{(\beta,\sigma_{1},S_{d} \cup D(0,r),\epsilon)}
\leq \check{C}(\beta) ||v_{\beta}(\tau)||_{(\beta,\sigma_{1}',S_{d} \cup D(0,r),\epsilon)}
\end{equation}
where
$$ \check{C}(\beta) \leq \check{C}_{3.1}' \sup_{\tau \in S_{d} \cup D(0,r)} |\tau|^{k_0}
\exp( - \frac{\sigma_{1} - \sigma_{1}'}{|\epsilon|}r_{b}(\beta)|\tau| ) \leq
\check{C}_{3.1}'\check{C}_{1}(\beta) $$
with
$$ \check{C}_{1}(\beta) = \sup_{x \geq 0}
x^{k_0}\exp( - \frac{\sigma_{1} - \sigma_{1}'}{|\epsilon|}r_{b}(\beta)x ).$$
Through (\ref{checkA1_bds}) we notice that
$$ \check{C}_{1}(\beta) 
\leq |\epsilon|^{k_0}( \frac{k_{0} e^{-1}}{\sigma_{1} - \sigma_{1}'})^{k_0}$$
for all $\beta \geq 0$. This yields the lemma.
\end{proof}
\end{proof}
The next proposition will be stated without proof since its explanation can be disclosed following exactly the same steps
and arguments as in Proposition 4.
\begin{prop}
1) Consider a holomorphic function $c(\tau,z,\epsilon)$ on $\mathring{J} \times D(0,\rho) \times D(0,\epsilon_{0})$,
continuous
on $J \times D(0,\rho) \times D(0,\epsilon_{0})$, for some $\rho>0$, bounded by a constant $M_{c}>0$ on
$J \times D(0,\rho) \times D(0,\epsilon_{0})$. We set $0 < \delta < \rho$. Then, the operator
$v(\tau,z) \mapsto c(\tau,z,\epsilon)v(\tau,z)$ is bounded from
$(SEG_{(\underline{\varsigma},J,\epsilon,\delta)},||.||_{(\underline{\varsigma},J,\epsilon,\delta)})$ into itself, for
all $\epsilon \in \dot{D}(0,\epsilon_{0})$. Besides, one can select a constant $\breve{C}_{3}>0$
(depending on $M_{c},\delta,\rho$) such that
$$ ||c(\tau,z,\epsilon)v(\tau,z)||_{(\underline{\varsigma},J,\epsilon,\delta)} \leq
\breve{C}_{3}||v(\tau,z)||_{(\underline{\varsigma},J,\epsilon,\delta)} $$
for all $v \in SEG_{(\underline{\varsigma},J,\epsilon,\delta)}$.\\
2) Let us take a function $c(\tau,z,\epsilon)$ holomorphic on $(S_{d} \cup D(0,r)) \times D(0,\rho) \times
D(0,\epsilon_{0})$,
continuous on $(\bar{S_{d}} \cup \bar{D}(0,r)) \times D(0,\rho) \times D(0,\epsilon_{0})$, for some $\rho>0$ and
bounded by a constant $M_{c}>0$ on $(\bar{S_{d}} \cup \bar{D}(0,r)) \times D(0,\rho) \times D(0,\epsilon_{0})$. Let
$0 < \delta < \rho$. Then, the linear map $v(\tau,z) \mapsto c(\tau,z,\epsilon)v(\tau,z)$ is bounded from
$(EG_{(\sigma_{1},S_{d} \cup D(0,r),\epsilon,\delta)},||.||_{(\sigma_{1},S_{d} \cup D(0,r),\epsilon,\delta)})$
into itself, for all $\epsilon \in \dot{D}(0,\epsilon_{0})$. Furthermore, one can sort a constant $\breve{C}_{3}'>0$
(depending on $M_{c},\delta,\rho$) with
$$ ||c(\tau,z,\epsilon)v(\tau,z)||_{(\sigma_{1},S_{d} \cup D(0,r),\epsilon,\delta)} \leq
\breve{C}_{3}'||v(\tau,z)||_{(\sigma_{1},S_{d} \cup D(0,r),\epsilon,\delta)} $$
for all $v \in EG_{(\sigma_{1},S_{d} \cup D(0,r),\epsilon,\delta)}$.\\
\end{prop}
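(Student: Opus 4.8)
The plan is to mirror exactly the proof of Proposition 4, which handles multiplication by a bounded holomorphic function on the space $SED_{(\underline{\sigma},H,\epsilon,\delta)}$; the only differences lie in replacing the strip $H$ by the strip $J$ (resp.\ by the domain $S_{d} \cup D(0,r)$) and in using the defining weights of $SEG$ and $EG$ from Definition 2 rather than those of $SED$. So for item 1), I would expand $c(\tau,z,\epsilon) = \sum_{\beta \geq 0} c_{\beta}(\tau,\epsilon) z^{\beta}/\beta!$ as a convergent Taylor series in $z$ on $D(0,\rho)$, write $v(\tau,z) = \sum_{\beta \geq 0} v_{\beta}(\tau) z^{\beta}/\beta!$, and estimate the norm of the product through the Cauchy product, exactly as in (\ref{maj_norm_c_bded_v}).

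The key analytic point is the monotonicity of the weights in $\beta$. For the $SEG$ norm one needs, whenever $\beta_{2} \leq \beta$, that $r_{b}(\beta) \geq r_{b}(\beta_{2})$ (which is immediate from the definition of $r_{b}$, since the added terms $1/(n+1)^{b}$ are positive); this gives simultaneously $\exp(-\frac{\sigma_{1}}{|\epsilon|} r_{b}(\beta)|\tau|) \leq \exp(-\frac{\sigma_{1}}{|\epsilon|} r_{b}(\beta_{2})|\tau|)$ and $\exp(-\varsigma_{2} r_{b}(\beta)\exp(\varsigma_{3}|\tau|)) \leq \exp(-\varsigma_{2} r_{b}(\beta_{2})\exp(\varsigma_{3}|\tau|))$, so that $\|c_{\beta_{1}}(\tau,\epsilon)v_{\beta_{2}}(\tau)\|_{(\beta,\underline{\varsigma},J,\epsilon)} \leq \|v_{\beta_{2}}(\tau)\|_{(\beta_{2},\underline{\varsigma},J,\epsilon)} \sup_{\tau \in J}|c_{\beta_{1}}(\tau,\epsilon)|$, exactly the analogue of (\ref{norm_beta_c_bded_v}). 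Then the Cauchy formula yields $\sup_{\tau \in J,\epsilon} |c_{\beta}(\tau,\epsilon)| \leq M_{c}(1/\delta')^{\beta}\beta!$ for any $\delta < \delta' < \rho$, and summing over $\beta_{1}+\beta_{2}=\beta$ and then over $\beta$ collapses, as in Proposition 4, to $\|c v\|_{(\underline{\varsigma},J,\epsilon,\delta)} \leq M_{c}\bigl(\sum_{\beta \geq 0}(\delta/\delta')^{\beta}\bigr)\|v\|_{(\underline{\varsigma},J,\epsilon,\delta)}$, which is finite since $\delta/\delta' < 1$. For item 2), the argument is literally the same with the weight $\exp(-\frac{\sigma_{1}}{|\epsilon|}r_{b}(\beta)|\tau|)$, the strip $J$ replaced by $S_{d} \cup D(0,r)$, and the needed monotonicity reducing to $r_{b}(\beta) \geq r_{b}(\beta_{2})$ once more; here one uses the supremum over $\bar{S_{d}} \cup \bar{D}(0,r)$ and the boundedness of $c$ on that closure.

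I do not anticipate a genuine obstacle: the only place one must be a little careful is checking that each of the two factors in the $SEG$ weight is monotone in the correct direction, but both go the same way because $r_{b}$ is increasing and both exponentials carry a minus sign in front of $r_{b}$. (This is in contrast to the $SED$ case of Proposition 4, where $r_{b}$ increasing and $s_{b}$ decreasing conspire with the opposite signs, but the conclusion is identical.) Consequently the proposition follows by repeating verbatim the proof of Proposition 4 in each of the two Banach-space settings, and, as the statement already indicates, it may be recorded without a detailed proof.
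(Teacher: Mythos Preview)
Your proposal is correct and follows exactly the approach the paper takes: the paper itself states this proposition without proof, noting that ``its explanation can be disclosed following exactly the same steps and arguments as in Proposition 4.'' Your identification of the key monotonicity point (that $r_{b}$ is increasing and both exponential factors in the $SEG$ weight carry a minus sign in front of $r_{b}$) is precisely the only detail one must verify when transcribing the Proposition 4 argument, and you have handled it correctly.
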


\subsection{An auxiliary Cauchy problem whose coefficients suffer exponential growth on strips
and polynomial growth on unbounded sectors}

We start this subsection by introducing some notations. Let $\mathcal{A}$ be a finite subset of
$\mathbb{N}^{3}$. For all $\underline{k} = (k_{0},k_{1},k_{2}) \in \mathcal{A}$, we consider a bounded
holomorphic function $c_{\underline{k}}(z,\epsilon)$ on a polydisc $D(0,\rho) \times D(0,\epsilon_{0})$ for some
radii $\rho,\epsilon_{0}>0$. Let $S \geq 1$ be an integer and let $P(\tau)$ be a polynomial (not identically equal to 0) with complex coefficients
whose roots belong to the open right halfplane $\mathbb{C}_{+} = \{ z \in \mathbb{C} / \mathrm{Re}(z) > 0 \}$.

We consider the following equation
\begin{equation}
\partial_{z}^{S}w(\tau,z,\epsilon) = \sum_{\underline{k}=(k_{0},k_{1},k_{2}) \in \mathcal{A}}
\frac{c_{\underline{k}}(z,\epsilon)}{P(\tau)} \epsilon^{-k_0}\tau^{k_0} \exp(- k_{2}\tau) \partial_{z}^{k_1}
w(\tau,z,\epsilon) \label{1_aux_CP}
\end{equation}
Let us now enounce the principal statement of this subsection.

\begin{prop} 1) We impose the next requirements\\
a) There exist $\underline{\sigma} = (\sigma_{1},\sigma_{2},\sigma_{3})$ for $\sigma_{1},\sigma_{2},\sigma_{3}>0$
and $b>1$ being real numbers such that for all $\underline{k} = (k_{0},k_{1},k_{2}) \in \mathcal{A}$, we have
\begin{equation}
S \geq k_{1} + bk_{0} + \frac{b k_{2}}{\sigma_{3}} \ \ , \ \ S > k_{1} \label{cond_ex_uniq_sol_1_aux_CP_SED_H}
\end{equation}
b) For all $0 \leq j \leq S-1$, we consider a function $\tau \mapsto w_{j}(\tau,\epsilon)$ that belong to
the Banach space $SED_{(0,\underline{\sigma}',H,\epsilon)}$ for all $\epsilon \in \dot{D}(0,\epsilon_{0})$,
for some closed horizontal strip $H$ described in (\ref{defin_strip_H}) and for a tuple
$\underline{\sigma}'=(\sigma_{1}',\sigma_{2}',\sigma_{3}')$ with $\sigma_{1}>\sigma_{1}'>0$,
$\sigma_{2}<\sigma_{2}'$ and $\sigma_{3}=\sigma_{3}'$.

Then, there exist some constants $I,R>0$ and $0 < \delta < \rho$ (independent of $\epsilon$) such that if one assumes that
\begin{equation}
\sum_{j=0}^{S-1-h} ||w_{j+h}(\tau,\epsilon)||_{(0,\underline{\sigma}',H,\epsilon)} \frac{\delta^{j}}{j!} \leq I
\label{initial_data_1_aux_CP_small_H}
\end{equation}
for all $0 \leq h \leq S-1$, for all $\epsilon \in \dot{D}(0,\epsilon_{0})$, the equation
(\ref{1_aux_CP}) with initial data
\begin{equation}
(\partial_{z}^{j}w)(\tau,0,\epsilon) = w_{j}(\tau,\epsilon) \ \ , \ \ 0 \leq j \leq S-1,
\label{1_aux_CP_initial_data}
\end{equation}
has a unique solution $w(\tau,z,\epsilon)$ in the space $SED_{(\underline{\sigma},H,\epsilon,\delta)}$, for all
$\epsilon \in \dot{D}(0,\epsilon_{0})$ and satisfies furthermore the estimates
\begin{equation}
||w(\tau,z,\epsilon)||_{(\underline{\sigma},H,\epsilon,\delta)} \leq \delta^{S}R + I
\label{norm_w_bd_in_epsilon_H}
\end{equation}
for all $\epsilon \in \dot{D}(0,\epsilon_{0})$.\\
2) We demand the next restrictions\\
a) There exist $\underline{\varsigma}=(\sigma_{1},\varsigma_{2},\varsigma_{3})$ where
$\sigma_{1},\varsigma_{2},\varsigma_{3}>0$ and $b>1$ real numbers taken in way that all
$\underline{k}=(k_{0},k_{1},k_{2}) \in \mathcal{A}$ we have
\begin{equation}
S \geq k_{1} + bk_{0} + \frac{bk_{2}}{\varsigma_{3}} \ \ , \ \ S > k_{1}. 
\end{equation}
b) For all $0 \leq j \leq S-1$, we choose a function $\tau \mapsto w_{j}(\tau,\epsilon)$ belonging to
the Banach space $SEG_{(0,\underline{\varsigma}',J,\epsilon)}$ for all $\epsilon \in \dot{D}(0,\epsilon_{0})$,
for some closed horizontal strip $J$ displayed in (\ref{defin_strip_J}) and for a tuple
$\underline{\varsigma}'=(\sigma_{1}',\varsigma_{2}',\varsigma_{3}')$ with $\sigma_{1}>\sigma_{1}'>0$,
$\varsigma_{2}>\varsigma_{2}'>0$ and $\varsigma_{3}=\varsigma_{3}'$.

Then, there exist some constants $I,R>0$ and $0 < \delta < \rho$ (independent of $\epsilon$) such that if one
takes for granted that
\begin{equation}
\sum_{j=0}^{S-1-h} ||w_{j+h}(\tau,\epsilon)||_{(0,\underline{\varsigma}',J,\epsilon)} \frac{\delta^{j}}{j!} \leq I
\label{initial_data_1_aux_CP_small_J}
\end{equation}
for all $0 \leq h \leq S-1$, for all $\epsilon \in \dot{D}(0,\epsilon_{0})$, the equation
(\ref{1_aux_CP}) with initial data (\ref{1_aux_CP_initial_data})
has a unique solution $w(\tau,z,\epsilon)$ in the space $SEG_{(\underline{\varsigma},J,\epsilon,\delta)}$, for all
$\epsilon \in \dot{D}(0,\epsilon_{0})$ and fulfills the next constraint
\begin{equation}
||w(\tau,z,\epsilon)||_{(\underline{\varsigma},J,\epsilon,\delta)} \leq \delta^{S}R + I
\label{norm_w_bd_in_epsilon_J}
\end{equation}
for all $\epsilon \in \dot{D}(0,\epsilon_{0})$.\\
3) We ask for the next conditions.\\
a) We fix some real number $\sigma_{1}>0$ and assume the existence of $b>1$ a real number such that for all
$\underline{k}=(k_{0},k_{1},k_{2}) \in \mathcal{A}$ we have
\begin{equation}
S \geq k_{1} + bk_{0} \ \ , \ \ S > k_{1}. 
\end{equation}
b) For all $0 \leq j \leq S-1$, we select a function $\tau \mapsto w_{j}(\tau,\epsilon)$ that belong to
the Banach space $EG_{(0,\sigma_{1}',S_{d} \cup D(0,r),\epsilon)}$ for all
$\epsilon \in \dot{D}(0,\epsilon_{0})$,
for some open unbounded sector $S_{d}$ with bisecting direction $d$ with $S_{d} \subset \mathbb{C}_{+}$
and $D(0,r)$ a disc centered at 0 with radius $r$, for some $0 < \sigma_{1}' < \sigma_{1}$. The sector
$S_{d}$ and the disc $D(0,r)$ are chosen in a way that $\bar{S}_{d} \cup \bar{D}(0,r)$ does not contain any root of
the polynomial $P(\tau)$.

Then, some constants $I,R>0$ and $0 < \delta < \rho$ (independent of $\epsilon$) can be sorted if one
accepts that
\begin{equation}
\sum_{j=0}^{S-1-h} ||w_{j+h}(\tau,\epsilon)||_{(0,\sigma_{1}',S_{d} \cup D(0,r),\epsilon)} \frac{\delta^{j}}{j!} \leq I
\label{initial_data_1_aux_CP_small_S}
\end{equation}
for all $0 \leq h \leq S-1$, for all $\epsilon \in \dot{D}(0,\epsilon_{0})$, the equation
(\ref{1_aux_CP}) with initial data (\ref{1_aux_CP_initial_data})
has a unique solution $w(\tau,z,\epsilon)$ in the space $EG_{(\sigma_{1},S_{d} \cup D(0,r),\epsilon,\delta)}$, for all
$\epsilon \in \dot{D}(0,\epsilon_{0})$, with the bounds
\begin{equation}
||w(\tau,z,\epsilon)||_{(\sigma_{1},S_{d} \cup D(0,r),\epsilon,\delta)} \leq \delta^{S}R + I
\label{norm_w_bd_in_epsilon_S}
\end{equation}
for all $\epsilon \in \dot{D}(0,\epsilon_{0})$.\\
\end{prop}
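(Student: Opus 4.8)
\medskip
The three assertions 1), 2), 3) are proved by one and the same fixed point scheme, run inside the Banach spaces $SED_{(\underline{\sigma},H,\epsilon,\delta)}$, $SEG_{(\underline{\varsigma},J,\epsilon,\delta)}$ and $EG_{(\sigma_{1},S_{d}\cup D(0,r),\epsilon,\delta)}$ respectively; I describe 1) and indicate the cosmetic changes afterwards. Integrating \eqref{1_aux_CP} $S$ times in $z$ from $0$ and feeding in the Cauchy data \eqref{1_aux_CP_initial_data}, the problem is equivalent, for functions holomorphic in $z$ near $0$, to the fixed point equation $w=\mathcal{F}_{\epsilon}(w)$ with
$$ \mathcal{F}_{\epsilon}(w) = \mathbf{w}_{0} + \mathcal{L}_{\epsilon}(w), \qquad \mathcal{L}_{\epsilon}(w) = \sum_{\underline{k}=(k_{0},k_{1},k_{2})\in\mathcal{A}} \partial_{z}^{-S}\!\left( \frac{c_{\underline{k}}(z,\epsilon)}{P(\tau)}\,\epsilon^{-k_{0}}\tau^{k_{0}}e^{-k_{2}\tau}\,\partial_{z}^{k_{1}}w \right), $$
where $\mathbf{w}_{0}(\tau,z,\epsilon)=\sum_{j=0}^{S-1}w_{j}(\tau,\epsilon)z^{j}/j!$ and $\mathcal{L}_{\epsilon}$ is linear. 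Since $\partial_{z}^{-S}\partial_{z}^{k_{1}}$ has net order $-(S-k_{1})$ with $S-k_{1}\ge1$, no genuine $z$-derivative survives in $\mathcal{L}_{\epsilon}$; this is exactly where the constraint $S>k_{1}$ of \eqref{cond_ex_uniq_sol_1_aux_CP_SED_H} enters. As elements of $SED_{(\underline{\sigma},H,\epsilon,\delta)}$ are holomorphic (hence smooth) in $z$ on a disc by Proposition 1, any fixed point of $\mathcal{F}_{\epsilon}$ solves \eqref{1_aux_CP}, \eqref{1_aux_CP_initial_data}, and conversely.

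First I would check that $\mathbf{w}_{0}$ lies in the target space with controlled norm. Because $r_{b}(j)\ge r_{b}(0)=1$ and $s_{b}(j)\le s_{b}(0)$ for $j\ge0$, the conditions $\sigma_{1}>\sigma_{1}'>0$, $\sigma_{2}<\sigma_{2}'$, $\sigma_{3}=\sigma_{3}'$ of hypothesis b) yield the pointwise domination of the corresponding weights, hence a continuous inclusion $SED_{(0,\underline{\sigma}',H,\epsilon)}\hookrightarrow SED_{(j,\underline{\sigma},H,\epsilon)}$ of norm $\le1$ for every $j\ge0$. Consequently $\|\mathbf{w}_{0}\|_{(\underline{\sigma},H,\epsilon,\delta)}=\sum_{j=0}^{S-1}\|w_{j}\|_{(j,\underline{\sigma},H,\epsilon)}\delta^{j}/j!\le\sum_{j=0}^{S-1}\|w_{j}\|_{(0,\underline{\sigma}',H,\epsilon)}\delta^{j}/j!\le I$ by \eqref{initial_data_1_aux_CP_small_H} with $h=0$; the instances $h=k_{1}$ of \eqref{initial_data_1_aux_CP_small_H} serve, in the same way, to bound $\partial_{z}^{k_{1}}\mathbf{w}_{0}$ when estimating the first iterate $\mathcal{L}_{\epsilon}(\mathbf{w}_{0})$.

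The heart of the matter is the bound $\|\mathcal{L}_{\epsilon}\|_{(\underline{\sigma},H,\epsilon,\delta)}\le K\,\delta^{\,S-k_{1,\max}}$ with $K$ and the exponent $S-k_{1,\max}\ge1$ both independent of $\epsilon\in\dot{D}(0,\epsilon_{0})$. One route: reproduce the coefficient estimate of Lemma 1 almost verbatim, the only differences being that the index shift is now $S-k_{1}+\beta_{1}\ge S-k_{1}$ (where $\beta_{1}$ indexes the Taylor coefficients of $c_{\underline{k}}$) and that an extra Cauchy product over those coefficients arises, handled exactly as in Proposition 4. A more compact route exploits that $\tau^{k_{0}}e^{-k_{2}\tau}$ and $\epsilon^{-k_{0}}$ do not depend on $z$, hence commute with $\partial_{z}^{-m}$, while a Leibniz expansion peels $c_{\underline{k}}$ off $\partial_{z}^{k_{1}}w$; each summand of $\mathcal{L}_{\epsilon}$ then becomes a finite combination (plus harmless $z$-polynomial corrections) of operators of the form $\epsilon^{-k_{0}}\tau^{k_{0}}e^{-k_{2}\tau}\,\partial_{z}^{-(S-k_{1}+\ell)}\!\circ(\text{multiplication by }\partial_{z}^{\ell}c_{\underline{k}}/P(\tau))$, $0\le\ell\le k_{1}$. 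Here Proposition 4 controls the multiplier (note that $1/P$ is bounded on $H$ since $P$ has no root with $\mathrm{Re}\le0$, and $\partial_{z}^{\ell}c_{\underline{k}}$ is bounded on $D(0,\delta)$ for $\delta<\rho$ by Cauchy estimates), and Proposition 2, applied with $k_{1}$ replaced by $S-k_{1}+\ell$ — licit because $S-k_{1}\ge bk_{0}+bk_{2}/\sigma_{3}$ and $S-k_{1}+\ell\ge1$, i.e. precisely condition \eqref{multipl_operators_continuity_cond_SED} for the shifted index — gives through \eqref{multipl_operators_exp_continuity_SED} the factor $C_{1}|\epsilon|^{k_{0}}\delta^{S-k_{1}+\ell}$. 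The decisive point is that this $|\epsilon|^{k_{0}}$ cancels the singular prefactor $|\epsilon|^{-k_{0}}$ exactly, so that all constants are $\epsilon$-uniform on $\dot{D}(0,\epsilon_{0})$, and that the spurious powers $(\beta+1)^{bk_{0}+bk_{2}/\sigma_{3}}$ produced by $\tau^{k_{0}}e^{-k_{2}\tau}$ in the weighted norm are swallowed by the factorial quotients as in \eqref{power_beta_factorial_C12}, again thanks to $S-k_{1}\ge bk_{0}+bk_{2}/\sigma_{3}$.

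Finally I would fix $\delta\in(0,\rho)$ so small that $K\delta^{\,S-k_{1,\max}}\le\tfrac12$, choose $R>0$ large, and then take $I$ small with $I\le\delta^{S}R$. For $w$ in the closed ball $\bar B(0,\delta^{S}R+I)$ of $SED_{(\underline{\sigma},H,\epsilon,\delta)}$ one gets $\|\mathcal{F}_{\epsilon}(w)\|\le\|\mathbf{w}_{0}\|+\tfrac12(\delta^{S}R+I)\le I+\tfrac12(\delta^{S}R+I)\le\delta^{S}R+I$, so $\mathcal{F}_{\epsilon}$ maps that ball into itself, and $\|\mathcal{F}_{\epsilon}(w_{1})-\mathcal{F}_{\epsilon}(w_{2})\|=\|\mathcal{L}_{\epsilon}(w_{1}-w_{2})\|\le\tfrac12\|w_{1}-w_{2}\|$ makes it a strict contraction; the Banach fixed point theorem then provides a unique $w(\tau,z,\epsilon)$ in that ball, satisfying \eqref{norm_w_bd_in_epsilon_H}, and it is the unique solution in the whole space since any two solutions differ by a fixed point of the contraction $\mathcal{L}_{\epsilon}$. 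Uniformity in $\epsilon$ is automatic, no constant above depending on $\epsilon$. Parts 2) and 3) are obtained word for word, replacing Propositions 2, 3, 4 by part 1) of Propositions 6, 7, 8 for $SEG_{(\underline{\varsigma},J,\epsilon,\delta)}$ and by part 2) of Propositions 6, 7, 8 for $EG_{(\sigma_{1},S_{d}\cup D(0,r),\epsilon,\delta)}$; in the sectorial case the boundedness of $1/P$ follows from the hypothesis that $\bar S_{d}\cup\bar D(0,r)$ avoids the roots of $P$, and the milder structural condition $S\ge k_{1}+bk_{0}$ is exactly what the sectorial analogue of Proposition 6 demands. The step I expect to be the real obstacle is the boundedness estimate for $\mathcal{L}_{\epsilon}$: the operator $v\mapsto\tau^{k_{0}}e^{-k_{2}\tau}v$ is \emph{not} bounded on these spaces on its own, it becomes bounded only once tied to the net $(S-k_{1})$-fold $z$-integration, so one cannot simply compose the estimates of Section 2 termwise and must run the combined weighted estimate carefully, tracking the interplay between $S-k_{1}$, $bk_{0}+bk_{2}/\sigma_{3}$ and the compensation of the $\epsilon^{-k_{0}}$ prefactor.
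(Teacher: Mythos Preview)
Your fixed-point scheme is correct, but the paper organizes it differently and thereby avoids the obstacle you flag at the end. Rather than running the contraction on $w$, the paper substitutes $w=\partial_{z}^{-S}U+W_{S}$ (with $W_{S}$ your $\mathbf{w}_{0}$) and applies the Banach fixed point to $U$. The map on $U$ reads
\[
A_{\epsilon}(U)=\sum_{\underline{k}\in\mathcal{A}}\frac{c_{\underline{k}}(z,\epsilon)}{P(\tau)}\,\epsilon^{-k_{0}}\tau^{k_{0}}e^{-k_{2}\tau}\bigl(\partial_{z}^{k_{1}-S}U+\partial_{z}^{k_{1}}W_{S}\bigr),
\]
so the linear-in-$U$ part is \emph{literally} the composition of Proposition~4 (multiplication by the bounded $c_{\underline{k}}/P$) with Proposition~2 (applied with its ``$k_{1}$'' equal to $S-k_{1}$); the $z$-dependent multiplier now sits \emph{outside} the block $\tau^{k_{0}}e^{-k_{2}\tau}\partial_{z}^{k_{1}-S}$, so no Leibniz commutation or coefficient-level reworking is needed. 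The inhomogeneous piece $\tau^{k_{0}}e^{-k_{2}\tau}\partial_{z}^{k_{1}}W_{S}$ is bounded via Proposition~3, and this is exactly where the hypothesis $\underline{\sigma}\neq\underline{\sigma}'$ of b) enters; in your write-up that hypothesis is spent on the inclusion $SED_{(0,\underline{\sigma}')}\hookrightarrow SED_{(j,\underline{\sigma})}$, which is fine but obscures the role of Proposition~3. Your Route~1, once unpacked at the coefficient level, is the paper's argument in disguise; your Route~2 is shakier than you indicate, since the ``harmless $z$-polynomial corrections'' from iterated integration by parts are evaluations $\partial_{z}^{m}w(\tau,0,\epsilon)=w_{m}(\tau,\epsilon)$ that reinject the data and must themselves be pushed through $\tau^{k_{0}}e^{-k_{2}\tau}$ --- doable, but the substitution $U=\partial_{z}^{S}(w-W_{S})$ isolates that contribution once and for all.
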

\begin{proof}
Within the proof, we only plan to provide a detailed description of the point 1) since the same lines of arguments
apply for the points 2) and 3) by making use of Propositions 6,7 and 8 instead of Propositions 2,3 and 4. We consider
the function
$$ W_{S}(\tau,z,\epsilon) = \sum_{j=0}^{S-1} w_{j}(\tau,\epsilon) \frac{z^j}{j!} $$
where $w_{j}(\tau,\epsilon)$ is displayed in 1)b) above. We introduce a map $A_{\epsilon}$ defined as
\begin{multline*}
A_{\epsilon}(U(\tau,z)) :=  \sum_{\underline{k}=(k_{0},k_{1},k_{2}) \in \mathcal{A}}
\frac{c_{\underline{k}}(z,\epsilon)}{P(\tau)} \epsilon^{-k_0}\tau^{k_0} \exp(- k_{2}\tau) \partial_{z}^{k_{1}-S}
U(\tau,z)\\
+ \sum_{\underline{k}=(k_{0},k_{1},k_{2}) \in \mathcal{A}}
\frac{c_{\underline{k}}(z,\epsilon)}{P(\tau)} \epsilon^{-k_0}\tau^{k_0} \exp(- k_{2}\tau) \partial_{z}^{k_1}
W_{S}(\tau,z,\epsilon).
\end{multline*}
In the forthcoming lemma, we show that $A_{\epsilon}$ represents a Lipschitz shrinking map from and into a small
ball centered at the origin in the space $SED_{(\underline{\sigma},H,\epsilon,\delta)}$.
\begin{lemma}
Under the constraint (\ref{cond_ex_uniq_sol_1_aux_CP_SED_H}), let us consider a positive real number $I>0$ such that
$$ \sum_{j=0}^{S-1-h} ||w_{j+h}(\tau,\epsilon)||_{(0,\underline{\sigma}',H,\epsilon)} \frac{\delta^j}{j!}
\leq I
$$
for all $0 \leq h \leq S-1$, for $\epsilon \in \dot{D}(0,\epsilon_{0})$. Then, for an appropriate
choice of $I$,\\
a) There exists a constant $R>0$ (independent of $\epsilon$) such that
\begin{equation}
||A_{\epsilon}(U(\tau,z))||_{(\underline{\sigma},H,\epsilon,\delta)} \leq R \label{A_epsilon_ball_in_ball}
\end{equation}
for all $U(\tau,z) \in B(0,R)$, for all $\epsilon \in \dot{D}(0,\epsilon_{0})$, where
$B(0,R)$ is the closed ball centered at 0 with radius $R$ in $SED_{(\underline{\sigma},H,\epsilon,\delta)}$.\\
b) The next inequality
\begin{equation}
||A_{\epsilon}(U_{1}(\tau,z)) - A_{\epsilon}(U_{2}(\tau,z))||_{(\underline{\sigma},H,\epsilon,\delta)} \leq
\frac{1}{2}||U_{1}(\tau,z) - U_{2}(\tau,z)||_{(\underline{\sigma},H,\epsilon,\delta)}
\label{A_epsilon_shrink}
\end{equation}
holds for all $U_{1},U_{2} \in B(0,R)$, all $\epsilon \in \dot{D}(0,\epsilon_{0})$.
\end{lemma}
\begin{proof} Since $r_{b}(\beta) \geq r_{b}(0)$ and $s_{b}(\beta) \leq s_{b}(0)$ for all $\beta \geq 0$, we notice
that for any $0 \leq h \leq S-1$ and $0 \leq j \leq S-1-h$,
$$ ||w_{j+h}(\tau,\epsilon)||_{(j,\underline{\sigma}',H,\epsilon)} \leq
||w_{j+h}(\tau,\epsilon)||_{(0,\underline{\sigma}',H,\epsilon)} $$
holds. We deduce that $\partial_{z}^{h}W_{S}(\tau,z,\epsilon)$ belongs to
$SED_{(\underline{\sigma}',H,\epsilon,\delta)}$ and moreover that
\begin{equation}
||\partial_{z}^{h}W_{S}(\tau,z,\epsilon)||_{(\underline{\sigma}',H,\epsilon,\delta)}
\leq \sum_{j=0}^{S-1-h} ||w_{j+h}(\tau,\epsilon)||_{(0,\underline{\sigma}',H,\epsilon)} \frac{\delta^j}{j!}
\leq I, \label{norm_partial_z_WS}
\end{equation}
for all $0 \leq h \leq S-1$. We start by focusing our attention to the estimates (\ref{A_epsilon_ball_in_ball}). Let $U(\tau,z)$ belonging
to $SED_{(\underline{\sigma},H,\epsilon,\delta)}$ with $||U(\tau,z)||_{(\underline{\sigma},H,\epsilon,\delta)} \leq R$.
Assume that $0 < \delta < \rho$. We put
$$ M_{\underline{k}} = \sup_{\tau \in H,z \in D(0,\rho), \epsilon \in D(0,\epsilon_{0})}
\left| \frac{c_{\underline{k}}(z,\epsilon)}{P(\tau)} \right|
$$
for all $\underline{k} \in \mathcal{A}$. Taking for granted the assumption (\ref{cond_ex_uniq_sol_1_aux_CP_SED_H}) and
according to Propositions 2 and 4, for all $\underline{k} \in \mathcal{A}$, we get two constants $C_{1}>0$
(depending on $k_{0},k_{1},k_{2},S,\underline{\sigma},b$) and $\breve{C}_{1}>0$ (depending on $M_{\underline{k}}$,
$\delta$,$\rho$) such that
\begin{multline}
|| \frac{c_{\underline{k}}(z,\epsilon)}{P(\tau)} \epsilon^{-k_0} \tau^{k_0} \exp( -k_{2}\tau )
\partial_{z}^{k_{1}-S}U(\tau,z) ||_{(\underline{\sigma},H,\epsilon,\delta)} \\
\leq
\breve{C}_{1}C_{1} \delta^{S-k_{1}}
|| U(\tau,z) ||_{(\underline{\sigma},H,\epsilon,\delta)} = \breve{C}_{1}C_{1} \delta^{S-k_{1}}R
\label{A_epsilon_ball_in_ball_lin_part}
\end{multline}
On the other hand, in agreement with Propositions 3 and 4 and with the help of
(\ref{norm_partial_z_WS}), we obtain two constants $\check{C}_{1}>0$ (depending on
$k_{0},k_{2},\underline{\sigma},\underline{\sigma}',M,b$) and $\breve{C}_{1}>0$ (depending on
$M_{\underline{k}},\delta,\rho$) with
\begin{multline}
|| \frac{c_{\underline{k}}(z,\epsilon)}{P(\tau)} \epsilon^{-k_0} \tau^{k_0} \exp( -k_{2}\tau )
\partial_{z}^{k_1}W_{S}(\tau,z,\epsilon) ||_{(\underline{\sigma},H,\epsilon,\delta)} \\
\leq
\breve{C}_{1}\check{C}_{1}
|| \partial_{z}^{k_1}W_{S}(\tau,z,\epsilon) ||_{(\underline{\sigma}',H,\epsilon,\delta)} \leq \breve{C}_{1}\check{C}_{1}I
\label{A_epsilon_ball_in_ball_nonhomog_part} 
\end{multline}
Now, we choose $\delta,R,I>0$ in such a way that
\begin{equation}
\sum_{\underline{k} \in \mathcal{A}} (\breve{C}_{1}C_{1} \delta^{S-k_{1}}R + \breve{C}_{1}\check{C}_{1}I) \leq R
\label{cond_A_epsilon_ball_in_ball}
\end{equation}
holds. Assembling (\ref{A_epsilon_ball_in_ball_lin_part}) and (\ref{A_epsilon_ball_in_ball_nonhomog_part}) under 
(\ref{cond_A_epsilon_ball_in_ball}) allows (\ref{A_epsilon_ball_in_ball}) to hold.

In a second part, we turn to the estimates (\ref{A_epsilon_shrink}). Let $R>0$ with $U_{1},U_{2}$ belonging
to $SED_{(\underline{\sigma},H,\epsilon,\delta)}$ inside the ball $B(0,R)$. By means of
(\ref{A_epsilon_ball_in_ball_lin_part}), we see that
\begin{multline}
|| \frac{c_{\underline{k}}(z,\epsilon)}{P(\tau)} \epsilon^{-k_0} \tau^{k_0} \exp( -k_{2}\tau )
\partial_{z}^{k_{1}-S}(U_{1}(\tau,z) - U_{2}(\tau,z)) ||_{(\underline{\sigma},H,\epsilon,\delta)} \\
\leq
\breve{C}_{1}C_{1} \delta^{S-k_{1}}
||U_{1}(\tau,z) - U_{2}(\tau,z)||_{(\underline{\sigma},H,\epsilon,\delta)}
\label{A_epsilon_ball_in_ball_shrink}
\end{multline}
where $C_{1},\breve{C}_{1}>0$ are given above. We select $\delta>0$ small enough in order that
\begin{equation}
\sum_{\underline{k} \in \mathcal{A}} \breve{C}_{1}C_{1}\delta^{S-k_{1}} \leq 1/2. \label{cond_A_epsilon_shrink}
\end{equation}
Therefore, (\ref{A_epsilon_ball_in_ball_shrink}) under (\ref{cond_A_epsilon_shrink}) supports that
(\ref{A_epsilon_shrink}) holds.

At last, we sort $\delta,R,I$ in a way that both (\ref{cond_A_epsilon_ball_in_ball}) and (\ref{cond_A_epsilon_shrink})
hold at the same time. Lemma 7 follows.
\end{proof}
Let the constraint (\ref{cond_ex_uniq_sol_1_aux_CP_SED_H}) be fulfilled. We choose the constants $I,R,\delta$ as in Lemma 7.
We select the initial data $w_{j}(\tau,\epsilon)$, $0 \leq j \leq S-1$ and a tuple $\underline{\sigma}'$ in a way that
the restriction (\ref{initial_data_1_aux_CP_small_H}) holds. Owing to Lemma 7 and to the classical contractive mapping
theorem on complete metric spaces, we deduce that the map $A_{\epsilon}$ has a unique fixed point called
$U(\tau,z,\epsilon)$ (depending analytically on $\epsilon \in \dot{D}(0,\epsilon_{0})$) in the closed
ball $B(0,R) \subset SED_{(\underline{\sigma},H,\epsilon,\delta)}$, for all $\epsilon \in
\dot{D}(0,\epsilon_{0})$. This means that
$A_{\epsilon}(U(\tau,z,\epsilon)) = U(\tau,z,\epsilon)$ with
$||U(\tau,z,\epsilon)||_{(\underline{\sigma},H,\epsilon,\delta)} \leq R$. As a result, we get that the next expression
$$ w(\tau,z,\epsilon) = \partial_{z}^{-S}U(\tau,z,\epsilon) + W_{S}(\tau,z,\epsilon) $$
solves the equation (\ref{1_aux_CP}) with initial data (\ref{1_aux_CP_initial_data}). It remains to show that
$w(\tau,z,\epsilon)$ belongs to $SED_{(\underline{\sigma},H,\epsilon,\delta)}$ and to check the bounds
(\ref{norm_w_bd_in_epsilon_H}). By application of Proposition 2 for $k_{0}=k_{2}=0$ and $k_{1}=S$ we check that
\begin{equation}
||\partial_{z}^{-S}U(\tau,z,\epsilon)||_{(\underline{\sigma},H,\epsilon,\delta)} \leq
\delta^{S}||U(\tau,z,\epsilon)||_{(\underline{\sigma},H,\epsilon,\delta)}
\label{norm_partial_z_minus_S_U}
\end{equation}
Gathering (\ref{norm_partial_z_WS}) and (\ref{norm_partial_z_minus_S_U}) yields the fact that
$w(\tau,z,\epsilon)$ belongs to $SED_{(\underline{\sigma},H,\epsilon,\delta)}$ through the bounds
(\ref{norm_w_bd_in_epsilon_H}).
\end{proof}

\section{Sectorial analytic solutions in a complex parameter of a singular perturbed Cauchy problem involving fractional linear transforms}

Let $\mathcal{A}$ be a finite subset of
$\mathbb{N}^{3}$. For all $\underline{k} = (k_{0},k_{1},k_{2}) \in \mathcal{A}$, we denote
$c_{\underline{k}}(z,\epsilon)$ a bounded holomorphic function on a polydisc $D(0,\rho) \times D(0,\epsilon_{0})$ for
given radii $\rho,\epsilon_{0}>0$. Let $S \geq 1$ be an integer and let $P(\tau)$ be
a polynomial (not identically equal to 0) with complex coefficients selected in a way that its
roots belong to the open right halfplane $\mathbb{C}_{+} = \{ z \in \mathbb{C} / \mathrm{Re}(z) > 0 \}$. We focus on the
following singularly perturbed Cauchy problem that incorporates fractional linear transforms
\begin{equation}
P(\epsilon t^{2}\partial_{t})\partial_{z}^{S}u(t,z,\epsilon)
= \sum_{\underline{k} = (k_{0},k_{1},k_{2}) \in \mathcal{A}}
c_{\underline{k}}(z,\epsilon) \left((t^{2}\partial_{t})^{k_0}\partial_{z}^{k_1}u \right)( \frac{t}{1 + k_{2}\epsilon t},
z,\epsilon) \label{SPCP_first}
\end{equation}
for given initial data
\begin{equation}
(\partial_{z}^{j}u)(t,0,\epsilon) = \varphi_{j}(t,\epsilon) \ \ , \ \ 0 \leq j \leq S-1. \label{SPCP_first_i_d} 
\end{equation}
We put the next assumption on the set $\mathcal{A}$. There exist two real numbers $\xi>0$ and $b>1$ such that for
all $\underline{k}=(k_{0},k_{1},k_{2}) \in \mathcal{A}$,
\begin{equation}
S \geq k_{1} + bk_{0} + \frac{bk_{2}}{\xi} \ \ , \ \ S > k_{1}. \label{cond_SPCP_first} 
\end{equation}

\subsection{Construction of holomorphic solutions on a prescribed sector w.r.t $\epsilon$ using Banach spaces of
functions with super exponential growth and decay on strips}

Let $n \geq 1$ be an integer. We denote $\llbracket -n,n \rrbracket$ the set of integers
$\{ j \in \mathbb{N}, -n \leq j \leq n \}$. We consider two sets of closed horizontal strips
$\{ H_{k} \}_{k \in \llbracket -n,n
\rrbracket }$ and $\{ J_{k} \}_{k \in \llbracket -n,n \rrbracket }$ fulfilling the next conditions. If one displays
the strips $H_{k}$ and $J_{k}$ as follows,
$$ H_{k} = \{ z \in \mathbb{C} / a_{k} \leq \mathrm{Im}(z) \leq b_{k}, \ \ \mathrm{Re}(z) \leq 0 \} \ \ , \ \ 
J_{k} = \{ z \in \mathbb{C} / c_{k} \leq \mathrm{Im}(z) \leq d_{k}, \ \ \mathrm{Re}(z) \leq 0 \}$$
then, the real numbers $a_{k},b_{k},c_{k},d_{k}$ are asked to fulfill the next constraints.\\
1) The origin 0 belongs to $(c_{0},d_{0})$.\\
2) We have $c_{k} < a_{k} < d_{k}$ and $c_{k+1} < b_{k} < d_{k+1}$ for $-n \leq k \leq n-1$ together with
$c_{n} < a_{n} < d_{n}$ and $b_{n} > d_{n}$. In other words the strips
$J_{-n},H_{-n},J_{-n+1},\ldots,J_{n-1},H_{n-1},J_{n},H_{n}$ are consecutively overlapping.\\
3) We have $a_{k+1}>b_{k}$ and $c_{k+1}>d_{k}$ for $-n \leq k \leq n-1$. Namely, the strips $H_{k}$ (resp. $J_{k}$)
are disjoints for $k \in \llbracket -n,n \rrbracket$.

We denote $HJ_{n} = \{ z \in \mathbb{C} / c_{-n} \leq \mathrm{Im}(z) \leq b_{n}, \mathrm{Re}(z) \leq 0 \}$. We notice
that $HJ_{n}$ can be written as the union $\cup_{k \in \llbracket -n,n \rrbracket} H_{k} \cup J_{k}$.

\begin{figure}
	\centering
		\includegraphics[width=0.4\textwidth]{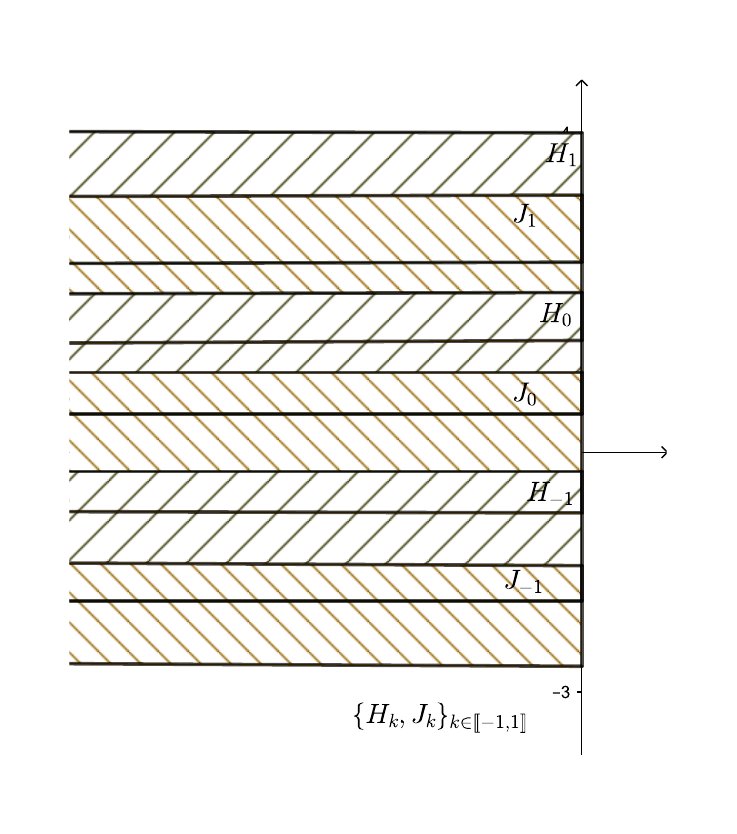}
		\caption{Example of configuration for the sets $H_{k}$ and $J_k$}
		\label{fig1}
\end{figure}

An example of configuration is shown in Figure~\ref{fig1}.

\begin{defin}
Let $n \geq 1$ be an integer. Let $w(\tau,\epsilon)$ be a holomorphic function on
$\mathring{HJ}_{n} \times \dot{D}(0,\epsilon_{0})$ (where $\mathring{HJ}_{n}$ denotes the interior
of $HJ_{n}$), continuous on $HJ_{n} \times \dot{D}(0,\epsilon_{0})$. Assume that for all
$\epsilon \in \dot{D}(0,\epsilon_{0})$, for all $k \in \llbracket -n,n \rrbracket$, the function
$\tau \mapsto w(\tau,\epsilon)$ belongs to the Banach spaces $SED_{(0,\underline{\sigma}',H_{k},\epsilon)}$
and $SEG_{(0,\underline{\varsigma}',J_{k},\epsilon)}$ with
$\underline{\sigma}' = (\sigma_{1}',\sigma_{2}',\sigma_{3}')$ and
$\underline{\varsigma}' = (\sigma_{1}',\varsigma_{2}',\varsigma_{3}')$ for some $\sigma_{1}'>0$ and
$\sigma_{j}',\varsigma_{j}'>0$ for $j=2,3$. Moreover, there exists a constant $I_{w}>0$ independent of
$\epsilon$, such that
\begin{equation}
||w(\tau,\epsilon)||_{(0,\underline{\sigma}',H_{k},\epsilon)} \leq I_{w} \ \ , \ \
||w(\tau,\epsilon)||_{(0,\underline{\varsigma}',J_{k},\epsilon)} \leq I_{w}, \label{bounds_w_initial}
\end{equation}
for all $k \in \llbracket -n,n \rrbracket$ and all $\epsilon \in \dot{D}(0,\epsilon_{0})$.

\noindent Let $\mathcal{E}_{HJ_{n}}$ be an open sector centered at 0 inside the disc $D(0,\epsilon_{0})$ with aperture
strictly less than $\pi$ and $\mathcal{T}$ be a bounded open sector centered at 0 with bisecting
direction $d=0$ chosen in a way that
\begin{equation}
\pi - \mathrm{arg}(t) - \mathrm{arg}(\epsilon) \in (-\frac{\pi}{2} + \delta_{HJ_{n}},\frac{\pi}{2} - \delta_{HJ_{n}})
\label{rel_t_epsilon_mathcal_E_T} 
\end{equation}
for some small $\delta_{HJ_{n}}>0$, for all $\epsilon \in \mathcal{E}_{HJ_{n}}$ and $t \in \mathcal{T}$.

\noindent We say that the set $(w(\tau,\epsilon),\mathcal{E}_{HJ_{n}},\mathcal{T})$ is
$(\underline{\sigma}',\underline{\varsigma}')-$admissible.
\end{defin}

\noindent {\bf Example:} Let $w(\tau,\epsilon) = \tau \exp(a\exp(-\tau))$ for some real number $a>0$. One can notice that 
$$ |w(\tau,\epsilon)| \leq |\tau| \exp \left( a \cos(\mathrm{Im}(\tau)) \exp(-\mathrm{Re}(\tau)) \right) $$
for all $\tau \in \mathbb{C}$, all $\epsilon \in \mathbb{C}$. For all $k \in \mathbb{Z}$, let $H_{k}$ be the closed
strip defined as
$$ H_{k} = \{ z \in \mathbb{C} / \ \  \frac{\pi}{2} + \eta + 2k\pi \leq \mathrm{Im}(z) \leq
\frac{3\pi}{2} - \eta + 2k \pi, \ \ \mathrm{Re}(z) \leq 0 \} $$
for some real number $\eta>0$ and let $J_{k}$ be the closed strip described as
$$ J_{k} = \{ z \in \mathbb{C} / \ \ \frac{3\pi}{2} - \eta - \eta_{1} + 2(k-1)\pi \leq \mathrm{Im}(z)
\leq \frac{\pi}{2} + \eta + \eta_{1} + 2k\pi, \ \ \mathrm{Re}(z) \leq 0 \} $$
for some $\eta_{1}>0$. Provided that $\eta$ and $\eta_{1}$ are small enough, we can check that all the constraints
1) to 3) listed above are fulfilled for any fixed $n \geq 1$, for $k \in \llbracket -n,n \rrbracket$.

By construction, we get a constant $\Delta_{\eta}>0$ (depending on $\eta$) with
$\cos(\mathrm{Im}(\tau)) \leq -\Delta_{\eta}$ provided that $\tau \in H_{k}$, for all $k \in \mathbb{Z}$.
Let $m>0$ be a fixed real number. We first show that there exists $K_{m,k}>0$ (depending on $m$ and $k$)
such that
$$ -\mathrm{Re}(\tau) \geq K_{m,k}|\tau| $$
for all $\mathrm{Re}(\tau) \leq -m$ provided that $\tau \in H_{k}$. Indeed, if one puts
$$ y_{k} = \max \{ |y| / y \in [ \frac{\pi}{2} + \eta + 2k\pi, \frac{3\pi}{2} - \eta + 2k\pi ] \} $$
then the next inequality holds
$$ \frac{-\mathrm{Re}(\tau)}{|\tau|} \geq \min_{x \geq m}
\frac{x}{(x^{2} + y_{k}^{2})^{1/2}} = K_{m,k} > 0 $$
for all $\tau \in \mathbb{C}$ such that $\mathrm{Re}(\tau) \leq -m$ and $\tau \in H_{k}$. Now, we set
$K_{m;n} = \min_{k \in \llbracket -n,n \rrbracket} K_{m,k}$. As a result, we deduce the
existence of a constant $\Omega_{m,k}>0$ (depending on $m$,$k$ and $a$) such that
$$ |w(\tau,\epsilon)| \leq \Omega_{m,k}|\tau|\exp( -a \Delta_{\eta} \exp( K_{m;n}|\tau| ) ) $$
for all $\tau \in H_{k}$.

On the
other hand, we only have the upper bound $\cos(\mathrm{Im}(\tau)) \leq 1$ when $\tau \in J_{k}$, for all
$k \in \mathbb{Z}$. Since $-\mathrm{Re}(\tau) \leq |\tau|$, for all $\tau \in \mathbb{C}$, we deduce that
$$ |w(\tau,\epsilon)| \leq |\tau| \exp( a \exp(|\tau|))$$
whenever $\tau$ belongs to $J_{k}$, for all $\epsilon \in \mathbb{C}$. As a result, the function
$w(\tau,\epsilon)$ fulfills all the requirements asked in
Definition 3 for
$$ \underline{\sigma}'=(\sigma_{1}',a \Delta_{\eta}/(M-1),K_{m;n}) \ \ , \ \ 
\underline{\varsigma}'=(\sigma_{1}',a,1)$$
for any given $\sigma_{1}'>0$.

Let $n \geq 1$ be an integer and let us take some integer $k \in \llbracket -n,n \rrbracket$.
For each $0 \leq j \leq S-1$ and each integer $k \in \llbracket -n,n \rrbracket$, let
$\{ w_{j}(\tau,\epsilon), \mathcal{E}_{HJ_{n}}^{k},\mathcal{T} \}$ be a
$(\underline{\sigma}',\underline{\varsigma}')-$admissible set. As initial data (\ref{SPCP_first_i_d}), we set
\begin{equation}
\varphi_{j,\mathcal{E}_{HJ_{n}}^{k}}(t,\epsilon) = \int_{P_k} w_{j}(u,\epsilon)
\exp( -\frac{u}{\epsilon t} ) \frac{du}{u} \label{SPCP_first_i_d_k}
\end{equation}
where the integration path $P_{k}$ is built as the union of two paths $P_{k,1}$ and $P_{k,2}$ described as follows.
$P_{k,1}$ is a segment joining the origin 0 and a prescribed point $A_{k} \in H_{k}$ and
$P_{k,2}$ is the horizontal line $\{ A_{k} - s / s \geq 0 \}$. According to (\ref{rel_t_epsilon_mathcal_E_T}),
we choose the point $A_{k}$ with
$|\mathrm{Re}(A_k)|$ suitably large in a way that
\begin{equation}
\mathrm{arg}(A_{k}) - \mathrm{arg}(\epsilon) - \mathrm{arg}(t) \in ( -\frac{\pi}{2} + \eta_{k},
\frac{\pi}{2} - \eta_{k} ) \label{choice_a_k}
\end{equation}
for some $\eta_{k}>0$ close to 0, provided that $\epsilon$ belongs to the sector
$\mathcal{E}_{HJ_{n}}^{k}$.
\begin{lemma} The function $\varphi_{j,\mathcal{E}_{HJ_{n}}^{k}}(t,\epsilon)$ defines a bounded holomorphic function
on $(\mathcal{T} \cap D(0,r_{\mathcal{T}})) \times \mathcal{E}_{HJ_{n}}^{k}$
for some well selected radius $r_{\mathcal{T}}>0$.
\end{lemma}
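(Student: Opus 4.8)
The plan is to cut the path as $P_k = P_{k,1}\cup P_{k,2}$ and estimate the integral on each piece separately, using that the integrand is absolutely integrable thanks to the factor $\exp(-u/(\epsilon t))$, whose real part I will force to be positive and comparable with $|u|/(|\epsilon||t|)$ all along $P_k$, provided $|t|$ is taken small enough; this last requirement is exactly what fixes the radius $r_{\mathcal{T}}$.

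First I would record the pointwise bounds stemming from the $(\underline{\sigma}',\underline{\varsigma}')$-admissibility of $\{w_j,\mathcal{E}_{HJ_n}^k,\mathcal{T}\}$. Since $r_b(0)=1$ and $s_b(0)=M-1$, Definitions 1 and 2 (with $\beta=0$) together with (\ref{bounds_w_initial}) give, for every $k'\in\llbracket -n,n\rrbracket$ and every $\epsilon\in\dot{D}(0,\epsilon_0)$,
$$\frac{|w_j(u,\epsilon)|}{|u|}\le I_w\exp\Big(\frac{\sigma_1'}{|\epsilon|}|u|-\sigma_2'(M-1)\exp(\sigma_3'|u|)\Big)\quad(u\in H_{k'}),$$
$$\frac{|w_j(u,\epsilon)|}{|u|}\le I_w\exp\Big(\frac{\sigma_1'}{|\epsilon|}|u|+\varsigma_2'\exp(\varsigma_3'|u|)\Big)\quad(u\in J_{k'}).$$
From the constraints 1)--3) on the strips one checks that $P_{k,2}\subset H_k$ and that the segment $P_{k,1}$ lies in $HJ_n=\bigcup_{k'}(H_{k'}\cup J_{k'})$; since $|u|\le |A_k|$ on the compact segment $P_{k,1}$, the two displays yield there a single bound $|w_j(u,\epsilon)|\le C_{A_k}|u|\exp(\sigma_1'|u|/|\epsilon|)$ with $C_{A_k}$ independent of $\epsilon$ and $t$.

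Next I would control $|\exp(-u/(\epsilon t))|=\exp(-\mathrm{Re}(u/(\epsilon t)))$. On $P_{k,1}$ one has $\mathrm{arg}(u)=\mathrm{arg}(A_k)$, so (\ref{choice_a_k}) gives $\mathrm{Re}(u/(\epsilon t))\ge\sin(\eta_k)\,|u|/(|\epsilon||t|)$. Along the horizontal half-line $P_{k,2}$ the (continuously tracked) argument $\mathrm{arg}(u)$ moves monotonically from $\mathrm{arg}(A_k)$ towards $\pm\pi$ (the sign of $\mathrm{Im}(A_k)$) as $|\mathrm{Re}(u)|\to+\infty$, sweeping a real interval of length $<\pi$; matching (\ref{choice_a_k}) at the endpoint $A_k$ with (\ref{rel_t_epsilon_mathcal_E_T}) at the other end (enlarging $|\mathrm{Re}(A_k)|$ if necessary so that the two admissible ranges of directions overlap, and using that $\cos$ is $2\pi$-periodic) shows that $\cos(\mathrm{arg}(u/(\epsilon t)))$ stays bounded below by a positive constant, whence $\mathrm{Re}(u/(\epsilon t))\ge\Delta_k\,|u|/(|\epsilon||t|)$ for some $\Delta_k>0$, uniformly for $u\in P_k$, $\epsilon\in\mathcal{E}_{HJ_n}^k$, $t\in\mathcal{T}$. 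This geometric bookkeeping — reconciling the two argument conditions along the whole unbounded path, which is precisely what dictates the admissible sizes of $\eta_k$, $\delta_{HJ_n}$ and $A_k$ — is the only step I expect to require genuine care; the rest is routine.

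Finally I would pick $r_{\mathcal{T}}>0$ so small that $\Delta_k/|t|\ge 2\sigma_1'$ for all $t\in\mathcal{T}$ with $|t|\le r_{\mathcal{T}}$, so that $\sigma_1'/|\epsilon|-\Delta_k/(|\epsilon||t|)\le -\sigma_1'/|\epsilon|$. Parametrizing $P_{k,1}$ by $u=rA_k$, $r\in[0,1]$, and $P_{k,2}$ by $u=A_k-s$, $s\ge 0$ (for which $|u|\ge s$ since $\mathrm{Re}(A_k)\le 0$), the estimates above give
$$\Big|\int_{P_{k,1}}w_j(u,\epsilon)\exp\Big(-\frac{u}{\epsilon t}\Big)\frac{du}{u}\Big|\le C_{A_k}\int_0^{|A_k|}\exp\Big(-\frac{\sigma_1'}{|\epsilon|}s\Big)\,ds\le\frac{C_{A_k}\,\epsilon_0}{\sigma_1'},$$
$$\Big|\int_{P_{k,2}}w_j(u,\epsilon)\exp\Big(-\frac{u}{\epsilon t}\Big)\frac{du}{u}\Big|\le I_w\int_0^{+\infty}\exp\Big(-\frac{\sigma_1'}{|\epsilon|}s\Big)\,ds\le\frac{I_w\,\epsilon_0}{\sigma_1'},$$
both uniformly in $(t,\epsilon)$; the super exponential decay of $w_j$ on $H_k$ is not even needed here, only for the absolute convergence of the tail. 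Hence $\varphi_{j,\mathcal{E}_{HJ_n}^k}$ is bounded on $(\mathcal{T}\cap D(0,r_{\mathcal{T}}))\times\mathcal{E}_{HJ_n}^k$. Holomorphy in $(t,\epsilon)$ then follows in the standard way: for each fixed $u\in P_k$ the integrand is holomorphic in $(t,\epsilon)$, and the bounds above, being locally uniform, permit differentiation under the integral sign (equivalently, an application of Morera's theorem together with Fubini), which finishes the proof.
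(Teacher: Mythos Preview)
Your proof is correct and follows the same overall strategy as the paper: split the integral along $P_{k,1}$ and $P_{k,2}$, control $\cos(\arg(u)-\arg(\epsilon t))$ from below on each piece, and choose $r_{\mathcal{T}}$ so small that the decay from $\exp(-u/(\epsilon t))$ dominates the exponential growth $\exp(\sigma_1'|u|/|\epsilon|)$. Your treatment is in fact slightly more explicit than the paper's in two places: you spell out why $\arg(A_k-s)-\arg(\epsilon t)$ stays in the admissible range for all $s\ge 0$ (by monotonicity of the argument along the horizontal half-line, matching (\ref{choice_a_k}) at $s=0$ with (\ref{rel_t_epsilon_mathcal_E_T}) as $s\to\infty$), whereas the paper simply asserts (\ref{argument_ak_minus_s}); and on $P_{k,1}$ you absorb the super-exponential factor into a constant $C_{A_k}$ using compactness of the segment, which is cleaner than carrying it through the integral as the paper does.
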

\begin{proof} We set
$$ \varphi_{j,\mathcal{E}_{HJ_{n}}^{k}}^{1}(t,\epsilon) = \int_{P_{k,1}} w_{j}(u,\epsilon)
\exp( -\frac{u}{\epsilon t} ) \frac{du}{u}
$$
Since the path $P_{k,1}$ crosses the domains $H_{q},J_{q}$ for some $q \in \llbracket -n,n \rrbracket$, due to
(\ref{bounds_w_initial}), we have the coarse
upper bounds
$$ |w_{j}(\tau,\epsilon)| \leq I_{w_{j}}|\tau| \exp \left( \frac{\sigma_{1}'}{|\epsilon|}|\tau| +
\varsigma_{2}'\exp( \varsigma_{3}' |\tau| ) \right)
$$
for all $\tau \in P_{k,1}$. We deduce the next estimates
\begin{multline*}
|\int_{P_{k,1}} w_{j}(u,\epsilon)
\exp( -\frac{u}{\epsilon t} ) \frac{du}{u}| \leq 
\int_{0}^{|A_{k}|} I_{w_j} \rho \exp \left( \frac{\sigma_{1}'}{|\epsilon|}\rho +
\varsigma_{2}'\exp( \varsigma_{3}' \rho ) \right)\\
\times
\exp( -\frac{\rho}{|\epsilon t|} \cos( \mathrm{arg}(A_{k}) - \mathrm{arg}(\epsilon t) ) ) \frac{d\rho}{\rho}.
\end{multline*}
From the choice of $A_{k}$ fulfilling (\ref{choice_a_k}), we can find some
real number $\delta_{1}>0$ with
$\cos( \mathrm{arg}(A_{k}) - \mathrm{arg}(\epsilon t) ) \geq \delta_{1}$
for all $\epsilon \in \mathcal{E}_{HJ_n}^{k}$. We choose $\delta_{2}>0$ and take $t \in \mathcal{T}$
with $|t| \leq \delta_{1}/(\delta_{2} + \sigma_{1}')$. Then, we get
$$
|\varphi_{j,\mathcal{E}_{HJ_{n}}^{k}}^{1}(t,\epsilon)| \leq
I_{w_j} \int_{0}^{|A_{k}|} \exp( \varsigma_{2}'\exp( \varsigma_{3}' \rho ) ) \exp( -\frac{\rho}{|\epsilon|}\delta_{2} )
d \rho
$$
which implies that $\varphi_{j,\mathcal{E}_{HJ_{n}}^{k}}^{1}(t,\epsilon)$ is bounded holomorphic on
$(\mathcal{T} \cap D(0,\frac{\delta_{1}}{\delta_{2} + \sigma_{1}'})) \times \mathcal{E}_{HJ_{n}}^{k}$.

In a second part, we put
$$ \varphi_{j,\mathcal{E}_{HJ_{n}}^{k}}^{2}(t,\epsilon) = \int_{P_{k,2}} w_{j}(u,\epsilon)
\exp( -\frac{u}{\epsilon t} ) \frac{du}{u}
$$
Since the path $P_{k,2}$ is enclosed in the strip $H_{k}$, using the hypothesis (\ref{bounds_w_initial}), we check the
next estimates
\begin{multline}
|\int_{P_{k,2}} w_{j}(u,\epsilon)
\exp( -\frac{u}{\epsilon t} ) \frac{du}{u}| \\
\leq 
\int_{0}^{+\infty} I_{w_j}|A_{k}-s|
\exp \left( \frac{\sigma_{1}'}{|\epsilon|}|A_{k}-s| - \sigma_{2}'(M-1)\exp(\sigma_{3}'|A_{k}-s|) \right)\\
\times \exp( -\frac{|A_{k}-s|}{|\epsilon t|} \cos( \mathrm{arg}(A_{k}-s) - \mathrm{arg}(\epsilon) - \mathrm{arg}(t) ) )
\frac{ds}{|A_{k}-s|} \label{int_Pk2_w_j}
\end{multline}
From the choice of $A_{k}$ fulfilling (\ref{choice_a_k}), we observe that
\begin{equation}
\mathrm{arg}(A_{k}-s) - \mathrm{arg}(\epsilon) - \mathrm{arg}(t) \in (-\frac{\pi}{2} + \eta_{k},
\frac{\pi}{2} - \eta_{k} ) \label{argument_ak_minus_s}
\end{equation}
for all $s \geq 0$, provided that $\epsilon \in \mathcal{E}_{HJ_{n}}^{k}$. Consequently, we can select some
$\delta_{1}>0$ with $\cos( \mathrm{arg}(A_{k}-s) - \mathrm{arg}(\epsilon) - \mathrm{arg}(t) ) > \delta_{1}$.  We sort
$\delta_{2}>0$ and take $t \in \mathcal{T}$ with $|t| \leq \delta_{1}/(\delta_{2} + \sigma_{1}')$. On the other hand, we may sort
a constant $K_{A_{k}}>0$ (depending on $A_{k}$) for which
$$ |A_{k} - s| \geq K_{A_k}(|A_{k}| + s) $$
whenever $s \geq 0$. Subsequently, we get
\begin{multline*}
|\varphi_{j,\mathcal{E}_{HJ_{n}}^{k}}^{2}(t,\epsilon)| \leq I_{w_j}
\int_{0}^{+\infty} \exp \left( -\sigma_{2}'(M-1) \exp( \sigma_{3}'|A_{k}-s| ) \right)
\exp(-\frac{|A_{k}-s|}{|\epsilon|}\delta_{2}) ds \\
\leq I_{w_j} \int_{0}^{+\infty} \exp( -\frac{K_{A_k}\delta_{2}}{|\epsilon|} (|A_{k}| + s) ) ds =
\frac{I_{w_j}}{K_{A_k}\delta_{2}} |\epsilon| \exp( -\frac{K_{A_k}\delta_{2}}{|\epsilon|} |A_{k}| ).
\end{multline*}
As a consequence, $\varphi_{j,\mathcal{E}_{HJ_{n}}^{k}}^{2}(t,\epsilon)$ represents a bounded holomorphic function
on $(\mathcal{T} \cap D(0, \delta_{1}/(\delta_{2} + \sigma_{1}'))) \times \mathcal{E}_{HJ_{n}}^{k}$. Lemma 8 follows.
\end{proof}

\begin{prop} We make the assumption that the real number $\xi$ introduced in (\ref{cond_SPCP_first}) conforms the next
inequality
\begin{equation}
\xi \leq \min(\sigma_{3}',\varsigma_{3}'). \label{xi_larger_sigma}
\end{equation}
1) There exist some constants $I,\delta>0$ (independent of $\epsilon$) selected in a way that if one assumes
that
\begin{equation}
\sum_{j=0}^{S-1-h} ||w_{j+h}(\tau,\epsilon)||_{(0,\underline{\sigma}',H_{k},\epsilon)}
\frac{\delta^j}{j!} \leq I \ \ , \ \ \sum_{j=0}^{S-1-h}
||w_{j+h}(\tau,\epsilon)||_{(0,\underline{\varsigma}',J_{k},\epsilon)}
\frac{\delta^j}{j!} \leq I \label{norm_w_initial_small}
\end{equation}
for all $0 \leq h \leq S-1$, all $\epsilon \in \dot{D}(0,\epsilon_{0})$, all
$k \in \llbracket -n,n \rrbracket$, then the Cauchy problem (\ref{SPCP_first}), (\ref{SPCP_first_i_d})
with initial data given by (\ref{SPCP_first_i_d_k}) has a solution $u_{\mathcal{E}_{HJ_{n}}^{k}}(t,z,\epsilon)$ which turns
out to be bounded and holomorphic on a domain
$(\mathcal{T} \cap D(0,r_{\mathcal{T}})) \times D(0,\delta\delta_{1}) \times \mathcal{E}_{HJ_{n}}^{k}$ for some fixed
radius $r_{\mathcal{T}}>0$ and $0 < \delta_{1} < 1$.

Furthermore,
$u_{\mathcal{E}_{HJ_{n}}^{k}}$ can be written as a special Laplace transform
\begin{equation}
u_{\mathcal{E}_{HJ_{n}}^{k}}(t,z,\epsilon) = \int_{P_{k}} w_{HJ_{n}}(u,z,\epsilon)
\exp( -\frac{u}{\epsilon t} ) \frac{du}{u} \label{u_E_HJn_k_Laplace}
\end{equation}
where $w_{HJ_n}(\tau,z,\epsilon)$ defines a holomorphic function on
$\mathring{HJ}_{n} \times D(0,\delta \delta_{1}) \times \dot{D}(0,\epsilon_{0})$, continuous
on $HJ_{n} \times D(0,\delta \delta_{1}) \times \dot{D}(0,\epsilon_{0})$ that fulfills the next
constraints. For any choice of two tuples $\underline{\sigma} = (\sigma_{1},\sigma_{2},\sigma_{3})$ and
$\underline{\varsigma} = (\sigma_{1},\varsigma_{2},\varsigma_{3})$ with
\begin{equation}
\sigma_{1} > \sigma_{1}', 0 < \sigma_{2} < \sigma_{2}', \sigma_{3}=\sigma_{3}',
\varsigma_{2}> \varsigma_{2}',\varsigma_{3}=\varsigma_{3}' \label{relations_sigma_sigma_prim}
\end{equation}
there exist a constant $C_{H_k}>0$ and
$C_{J_k}>0$ (independent of $\epsilon$) with
\begin{equation}
|w_{HJ_n}(\tau,z,\epsilon)| \leq C_{H_k}|\tau|
\exp \left( \frac{\sigma_{1}}{|\epsilon|} \zeta(b) |\tau| - \sigma_{2}(M-\zeta(b))\exp( \sigma_{3}|\tau| ) \right)
\label{bds_WHJn_Hk}
\end{equation}
for all $\tau \in H_{k}$, all $z \in D(0,\delta \delta_{1})$ and
\begin{equation}
|w_{HJ_n}(\tau,z,\epsilon)| \leq C_{J_k}|\tau|
\exp \left( \frac{\sigma_{1}}{|\epsilon|} \zeta(b) |\tau| + \varsigma_{2} \zeta(b) \exp( \varsigma_{3}|\tau| ) \right)
\label{bds_WHJn_Jk}
\end{equation}
for all $\tau \in J_{k}$, all $z \in D(0,\delta \delta_{1})$, provided that $\epsilon \in
\dot{D}(0,\epsilon_{0})$, for each $k \in \llbracket -n,n \rrbracket$.\\
2) Let $k \in \llbracket -n,n \rrbracket$ with $k \neq n$. Then, keeping $\epsilon_{0}$ and
$r_{\mathcal{T}}$ small enough, there exist constants $M_{k,1},M_{k,2}>0$ and $M_{k,3}>1$,
independent of $\epsilon$, such that
\begin{equation}
| u_{\mathcal{E}_{HJ_{n}}^{k+1}}(t,z,\epsilon) - u_{\mathcal{E}_{HJ_{n}}^{k}}(t,z,\epsilon) |
\leq M_{k,1} \exp( -\frac{M_{k,2}}{|\epsilon|} \mathrm{Log} \frac{M_{k,3}}{|\epsilon|} ) \label{log_flat_difference_uk_plus_1_minus_uk_HJn}
\end{equation}
for all $t \in \mathcal{T} \cap D(0,r_{\mathcal{T}})$, all $\epsilon \in \mathcal{E}_{HJ_{n}}^{k} \cap
\mathcal{E}_{HJ_{n}}^{k+1} \neq \emptyset$ and
all $z \in D(0,\delta \delta_{1})$.
\end{prop}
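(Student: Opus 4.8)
The plan is to transport the problem~(\ref{SPCP_first}),~(\ref{SPCP_first_i_d}) with the data~(\ref{SPCP_first_i_d_k}) to the auxiliary Cauchy problem~(\ref{1_aux_CP}),~(\ref{1_aux_CP_initial_data}) treated in Proposition 9, by means of the special Laplace transform along $P_{k}$, and then to read the growth and the flatness off the Banach space estimates of Section 2 together with a contour deformation. First I would record the operational identities attached to the kernel $\exp(-u/(\epsilon t))$: since $\epsilon t^{2}\partial_{t}[\exp(-u/(\epsilon t))]=u\exp(-u/(\epsilon t))$, one has $P(\epsilon t^{2}\partial_{t})[\exp(-u/(\epsilon t))]=P(u)\exp(-u/(\epsilon t))$ and $(t^{2}\partial_{t})^{k_{0}}[\exp(-u/(\epsilon t))]=\epsilon^{-k_{0}}u^{k_{0}}\exp(-u/(\epsilon t))$, while the fractional transform $t\mapsto t/(1+k_{2}\epsilon t)$ turns $\exp(-u/(\epsilon t))$ into $\exp(-k_{2}u)\exp(-u/(\epsilon t))$. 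Consequently a function $\int_{P_{k}}w(u,z,\epsilon)\exp(-u/(\epsilon t))\,du/u$ solves~(\ref{SPCP_first}) precisely when $w$, after multiplication by $P(\tau)$, satisfies~(\ref{1_aux_CP}); this accounts for the shape (the $\epsilon^{-k_{0}}$ and the $\exp(-k_{2}\tau)$) of the auxiliary equation.

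Next I would invoke Proposition 9. For each $0\le j\le S-1$ and each $k\in\llbracket -n,n\rrbracket$ the admissible data $w_{j}(\tau,\epsilon)$ lie in $SED_{(0,\underline{\sigma}',H_{k},\epsilon)}$ and in $SEG_{(0,\underline{\varsigma}',J_{k},\epsilon)}$, and by~(\ref{xi_larger_sigma}) the constraints~(\ref{cond_SPCP_first}) entail the hypotheses of point 1) of Proposition 9 on every $H_{k}$ and of point 2) on every $J_{k}$ (with $\sigma_{3}=\sigma_{3}'$, $\varsigma_{3}=\varsigma_{3}'$). Taking $\delta,I$ as the minimum of the finitely many constants these applications provide, I obtain for each $k$ a solution of~(\ref{1_aux_CP}),~(\ref{1_aux_CP_initial_data}) in $SED_{(\underline{\sigma},H_{k},\epsilon,\delta)}$ and one in $SEG_{(\underline{\varsigma},J_{k},\epsilon,\delta)}$. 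Since for fixed $(\tau,\epsilon)$ equation~(\ref{1_aux_CP}) is a linear ODE of order $S$ in $z$ with holomorphic coefficients and prescribed data at $z=0$, its holomorphic solution near the origin is unique; because the strips overlap consecutively and their union is $HJ_{n}$, all those local solutions are restrictions of a single holomorphic $w_{HJ_{n}}(\tau,z,\epsilon)$ on $\mathring{HJ}_{n}\times D(0,\delta)\times\dot{D}(0,\epsilon_{0})$, and Proposition 1 on each $H_{k}$ and Proposition 5 on each $J_{k}$ then yield the bounds~(\ref{bds_WHJn_Hk}),~(\ref{bds_WHJn_Jk}) on $|z|<\delta\delta_{1}$ for any $\underline{\sigma},\underline{\varsigma}$ as in~(\ref{relations_sigma_sigma_prim}).

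Then I would define $u_{\mathcal{E}_{HJ_{n}}^{k}}$ by~(\ref{u_E_HJn_k_Laplace}); its boundedness and holomorphy on $(\mathcal{T}\cap D(0,r_{\mathcal{T}}))\times D(0,\delta\delta_{1})\times\mathcal{E}_{HJ_{n}}^{k}$ follow from the argument already used in Lemma 8 — on the compact segment $P_{k,1}$ the super-exponential factor of~(\ref{bds_WHJn_Jk}) is bounded while~(\ref{choice_a_k}) makes the relevant cosine positive, so $\exp(-u/(\epsilon t))$ decays for $|t|$ small, and on $P_{k,2}\subset H_{k}$ the super-exponential decay in~(\ref{bds_WHJn_Hk}) dominates. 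Differentiating under the integral and using the operational identities above, one gets $\partial_{z}^{j}u_{\mathcal{E}_{HJ_{n}}^{k}}(t,0,\epsilon)=\varphi_{j,\mathcal{E}_{HJ_{n}}^{k}}(t,\epsilon)$ and that $u_{\mathcal{E}_{HJ_{n}}^{k}}$ solves~(\ref{SPCP_first}) (the integrand bracket that appears being exactly $P(u)$ times the left side of~(\ref{1_aux_CP}) minus its right side, hence $0$); the only delicate point is that after $t\mapsto t/(1+k_{2}\epsilon t)$ the new argument must still obey~(\ref{rel_t_epsilon_mathcal_E_T}) and stay in the disc of validity of the representation, which holds after shrinking $\mathcal{T},\epsilon_{0}$ since $\mathrm{arg}(1+k_{2}\epsilon t)\to 0$. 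This would settle part 1).

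For part 2) the crucial remark is that $u_{\mathcal{E}_{HJ_{n}}^{k+1}}$ and $u_{\mathcal{E}_{HJ_{n}}^{k}}$ are Laplace transforms of the \emph{same} $w_{HJ_{n}}$, so their difference equals $\int_{P_{k+1}-P_{k}}w_{HJ_{n}}(u,z,\epsilon)\exp(-u/(\epsilon t))\,du/u$, and since this value does not depend on the choice of $A_{k},A_{k+1}$ I would take them with a common real part $-\rho$, where $\rho=\rho(|\epsilon|)$ is to be optimized at the end. Deforming the contour through the bounded triangle with vertices $A_{k},0,A_{k+1}$, where the integrand is holomorphic (the factor $1/u$ being harmless since $w_{HJ_{n}}/u$ stays bounded near $u=0$ by~(\ref{bds_WHJn_Jk})), the difference becomes the integral along $-P_{k,2}$, then the vertical segment $[A_{k},A_{k+1}]$ at $\mathrm{Re}=-\rho$, then $P_{k+1,2}$. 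The two horizontal pieces lie in $H_{k},H_{k+1}$ and, by~(\ref{bds_WHJn_Hk}) for $|t|$ small, are $O(\exp(-\delta_{2}\rho/|\epsilon|-\sigma_{2}(M-\zeta(b))\exp(\sigma_{3}\rho)))$; the vertical segment crosses the strip $J_{k+1}$ separating $H_{k}$ from $H_{k+1}$, has bounded length, and is controlled by~(\ref{bds_WHJn_Jk}) as $O(\exp(\varsigma_{2}\zeta(b)\exp(\varsigma_{3}\rho)-\delta_{2}\rho/|\epsilon|))$, the dominant term. Minimizing $\varsigma_{2}\zeta(b)\exp(\varsigma_{3}\rho)-\delta_{2}\rho/|\epsilon|$ in $\rho$ forces $\rho\sim\varsigma_{3}^{-1}\mathrm{Log}(1/|\epsilon|)$ and leaves a minimal value of the form $-M_{k,2}|\epsilon|^{-1}\mathrm{Log}(M_{k,3}/|\epsilon|)$, which is precisely~(\ref{log_flat_difference_uk_plus_1_minus_uk_HJn}); the requirement $k\ne n$ is forced because $H_{k+1}$ must be among the given strips. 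The hard part is exactly this last estimate: one must absorb the super-exponentially \emph{growing} contribution coming from the intermediate strip $J_{k+1}$ on the vertical connecting segment while simultaneously placing the turning points $A_{k}$ at distance $\sim\varsigma_{3}^{-1}\mathrm{Log}(1/|\epsilon|)$ from the imaginary axis — this is the device (in the spirit of Immink's construction) that lifts the naive Gevrey-$1$ flatness $\exp(-c/|\epsilon|)$ to the Gevrey-$1^{+}$ flatness $\exp(-c|\epsilon|^{-1}\mathrm{Log}(1/|\epsilon|))$; in part 1) the only non-automatic points are, rather, the gluing of the local Banach-space solutions into one holomorphic $w_{HJ_{n}}$ by uniqueness in $z$ and the check that the fractional transform keeps $t/(1+k_{2}\epsilon t)$ inside the domain of validity of the Laplace representation.
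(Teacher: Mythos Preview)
Your proposal is correct and follows essentially the same route as the paper, including the three-piece contour deformation for part~2). The only cosmetic differences are that the paper builds $w_{HJ_n}$ globally on $HJ_n$ via the recursion~(\ref{recursion_w_beta}) (whose coefficients are already holomorphic on $\mathring{HJ}_n$, so no separate gluing step is needed), and in part~2) fixes the turning points explicitly as $h_q=-\varrho\,\mathrm{Log}\bigl(e^{i\chi_q}/(\epsilon t)\bigr)$ with $0<\varrho<1$ small, so that $\mathrm{Re}(h_q)=-\varrho\,\mathrm{Log}(1/|\epsilon t|)$ already has the right scale and no optimization in $\rho$ is required.
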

\begin{proof}
We consider the equation (\ref{1_aux_CP}) for the given initial data
\begin{equation}
(\partial_{z}^{j}w)(\tau,0,\epsilon) = w_{j}(\tau,\epsilon) \ \ , \ \ 0 \leq j \leq S-1 \label{1_aux_CP_i_d_admissible}
\end{equation}
where $w_{j}(\tau,\epsilon)$ are given above in order to construct the functions
$\varphi_{j,\mathcal{E}_{HJ_n}^{k}}(t,\epsilon)$ in (\ref{SPCP_first_i_d_k}).

In a first step, we check that the problem (\ref{1_aux_CP}), (\ref{1_aux_CP_i_d_admissible}) possesses a unique formal
solution
\begin{equation}
w_{HJ_n}(\tau,z,\epsilon) = \sum_{\beta \geq 0} w_{\beta}(\tau,\epsilon) \frac{z^{\beta}}{\beta !} \label{formal_wHJn} 
\end{equation}
where $w_{\beta}(\tau,\epsilon)$ are holomorphic on $\mathring{HJ}_{n} \times \dot{D}(0,\epsilon_{0})$,
continuous on $HJ_{n} \times \dot{D}(0,\epsilon_{0})$. Namely, if one expands
$c_{\underline{k}}(z,\epsilon) = \sum_{\beta \geq 0} c_{\underline{k},\beta}(\epsilon) z^{\beta}/\beta!$
as Taylor series at $z=0$, the formal series (\ref{formal_wHJn}) is solution of
(\ref{1_aux_CP}), (\ref{1_aux_CP_i_d_admissible}) if and only if the next recursion holds
\begin{equation}
w_{\beta + S}(\tau,\epsilon) = \sum_{\underline{k} = (k_{0},k_{1},k_{2}) \in \mathcal{A}}
\frac{\epsilon^{-k_{0}}\tau^{k_0}}{P(\tau)} \exp( -k_{2} \tau)
\left( \sum_{\beta_{1} + \beta_{2} = \beta} \frac{c_{\underline{k},\beta_{1}}(\epsilon)}{\beta_{1}!}
\frac{w_{\beta_{2}+k_{1}}(\tau,\epsilon)}{\beta_{2}!} \beta! \right) \label{recursion_w_beta}
\end{equation}
for all $\beta \geq 0$. Since the initial data $w_{j}(\tau,\epsilon)$, for $0 \leq j \leq S-1$ are assumed to
define holomorphic functions on $\mathring{HJ}_{n} \times \dot{D}(0,\epsilon_{0})$, continuous on
$HJ_{n} \times \dot{D}(0,\epsilon_{0})$, the recursion (\ref{recursion_w_beta}) implies in particular
that all $w_{n}(\tau,\epsilon)$ for $n \geq S$ are well defined and represent holomorphic functions on
$\mathring{HJ}_{n} \times \dot{D}(0,\epsilon_{0})$, continuous on
$HJ_{n} \times \dot{D}(0,\epsilon_{0})$.

According to the assumption (\ref{cond_SPCP_first}) together with (\ref{xi_larger_sigma}) and the restriction on the size
of the initial data (\ref{norm_w_initial_small}), we notice that the requirements 1)a)b) and 2)a)b) in Proposition 9
are realized. We deduce that\\
1) The formal solution $w_{HJ_n}(\tau,z,\epsilon)$ belongs to the Banach spaces
$SED_{(\underline{\sigma},H_{k},\epsilon,\delta)}$, for all $\epsilon \in \dot{D}(0,\epsilon_{0})$,
all $k \in \llbracket -n,n \rrbracket$, for any tuple
$\underline{\sigma} = (\sigma_{1},\sigma_{2},\sigma_{3})$ chosen as in
(\ref{relations_sigma_sigma_prim}), with an upper bound $\tilde{C}_{H_k}>0$ (independent of $\epsilon$) such that
\begin{equation}
||w_{HJ_n}(\tau,z,\epsilon)||_{(\underline{\sigma},H_{k},\epsilon,\delta)} \leq \tilde{C}_{H_k}, \label{norm_wHJn_Hk}
\end{equation}
for all $\epsilon \in \dot{D}(0,\epsilon_{0})$.\\
2) The formal series $w_{HJ_n}(\tau,z,\epsilon)$ belongs to the Banach spaces
$SEG_{(\underline{\varsigma},J_{k},\epsilon,\delta)}$, for all $\epsilon \in \dot{D}(0,\epsilon_{0})$,
all $k \in \llbracket -n,n \rrbracket$, for any tuple
$\underline{\varsigma} = (\sigma_{1},\varsigma_{2},\varsigma_{3})$ selected as in
(\ref{relations_sigma_sigma_prim}). Besides, we can get a constant $\tilde{C}_{J_k}>0$ (independent of $\epsilon$) with
\begin{equation}
||w_{HJ_n}(\tau,z,\epsilon)||_{(\underline{\varsigma},J_{k},\epsilon,\delta)} \leq \tilde{C}_{J_k}, \label{norm_wHJn_Jk}
\end{equation}
for all $\epsilon \in \dot{D}(0,\epsilon_{0})$.

Bearing in mind (\ref{norm_wHJn_Hk}) and (\ref{norm_wHJn_Jk}), the application of Proposition 1 and Proposition 5 1)
yields in particular the fact that the formal series $w_{HJ_n}(\tau,z,\epsilon)$ actually defines a holomorphic function
on $\mathring{HJ}_{n} \times D(0,\delta \delta_{1}) \times \dot{D}(0,\epsilon_{0})$, continuous on
$HJ_{n} \times D(0,\delta \delta_{1}) \times \dot{D}(0,\epsilon_{0})$, for some $0 < \delta_{1} < 1$, that
satisfies moreover the estimates (\ref{bds_WHJn_Hk}) and (\ref{bds_WHJn_Jk}).

Following the same steps as in the proof of Lemma 8, one can show that for each $k \in \llbracket -n,n \rrbracket$, the
function $u_{\mathcal{E}_{HJ_n}^{k}}$ defined as a special Laplace transform
$$ u_{\mathcal{E}_{HJ_n}^{k}}(t,z,\epsilon) = \int_{P_k} w_{HJ_n}(u,z,\epsilon)
\exp( -\frac{u}{\epsilon t} ) \frac{du}{u} $$
represents a bounded holomorphic function on
$(\mathcal{T} \cap D(0,r_{\mathcal{T}})) \times D(0,\delta_{1}\delta) \times \mathcal{E}_{HJ_{n}}^{k}$ for some fixed
radius $r_{\mathcal{T}}>0$ and $0 < \delta_{1} < 1$. Besides, by a direct computation, we can check that
$u_{\mathcal{E}_{HJ_n}^{k}}(t,z,\epsilon)$
solves the problem (\ref{SPCP_first}), (\ref{SPCP_first_i_d}) with initial data (\ref{SPCP_first_i_d_k})
on $(\mathcal{T} \cap D(0,r_{\mathcal{T}})) \times D(0,\delta_{1}\delta) \times \mathcal{E}_{HJ_{n}}^{k}$.\medskip

In a second part of the proof, we focus our attention to the point 2). Take some $k \in \llbracket -n,n \rrbracket$ with
$k \neq n$. Let us choose two complex numbers
$$ h_{q} = -\varrho \mathrm{Log}( \frac{1}{\epsilon t} e^{i \chi_{q}}) $$
for $q=k,k+1$, where $0 < \varrho < 1$ and where $\chi_{q} \in \mathbb{R}$ are directions selected in a way that
\begin{equation}
i \varrho( \mathrm{arg}(t) + \mathrm{arg}(\epsilon) - \chi_{q} ) \in H_{q} \label{cond_chi_q}
\end{equation}
for all $\epsilon \in \mathcal{E}_{HJ_n}^{k} \cap \mathcal{E}_{HJ_n}^{k+1}$, all $t \in \mathcal{T}$.
Notice that such
directions $\chi_{q}$ always exist for some $0 < \varrho < 1$ small enough since by definition the aperture of
$\mathcal{E}_{HJ_n}^{k} \cap \mathcal{E}_{HJ_n}^{k+1}$ is strictly less than $\pi$, the aperture of
$\mathcal{T}$ is close to 0.
By construction, we get that $h_{q}$ belongs to $H_{q}$ for $q=k,k+1$ since $h_{q}$ can be expressed as
$$ h_{q} = -\varrho \mathrm{Log}|\frac{1}{\epsilon t}| + i \varrho
(\mathrm{arg}(t) + \mathrm{arg}(\epsilon) - \chi_{q} ). $$

From the fact that $u \mapsto w_{HJ_n}(u,z,\epsilon) \exp( -\frac{u}{\epsilon t} )/u$ is holomorphic on
the strip $\mathring{HJ}_{n}$, for any fixed $z \in D(0,\delta \delta_{1})$ and
$\epsilon \in \mathcal{E}_{HJ_n}^{k} \cap \mathcal{E}_{HJ_n}^{k+1}$, by means of a path deformation
argument (according to the classical Cauchy theorem, the integral of a holomorphic function along a closed path is vanishing)
we can rewrite the difference $u_{\mathcal{E}_{HJ_{n}}^{k+1}} - u_{\mathcal{E}_{HJ_{n}}^{k}}$ as a sum of three integrals
\begin{multline}
u_{\mathcal{E}_{HJ_{n}}^{k+1}}(t,z,\epsilon) - u_{\mathcal{E}_{HJ_{n}}^{k}}(t,z,\epsilon) =
- \int_{L_{h_{k},\infty}} w_{HJ_n}(u,z,\epsilon)
\exp( -\frac{u}{\epsilon t} ) \frac{du}{u} \\
+ \int_{L_{h_{k},h_{k+1}}} w_{HJ_n}(u,z,\epsilon)
\exp( -\frac{u}{\epsilon t} ) \frac{du}{u} + \int_{L_{h_{k+1},\infty}} w_{HJ_n}(u,z,\epsilon)
\exp( -\frac{u}{\epsilon t} ) \frac{du}{u} \label{splitting_uk_plus_1_minus_uk}
\end{multline}
where $L_{h_{q},\infty} = \{ h_{q} - s / s \geq 0 \}$ for $q=k,k+1$ are horizontal halflines and
$L_{h_{k},h_{k+1}} = \{ (1-s)h_{k} + sh_{k+1} / s \in [0,1] \}$ is a segment joining $h_{k}$ and $h_{k+1}$. This situation is shown in Figure~\ref{fig2}.

\begin{figure}
	\centering
		\includegraphics[width=0.4\textwidth]{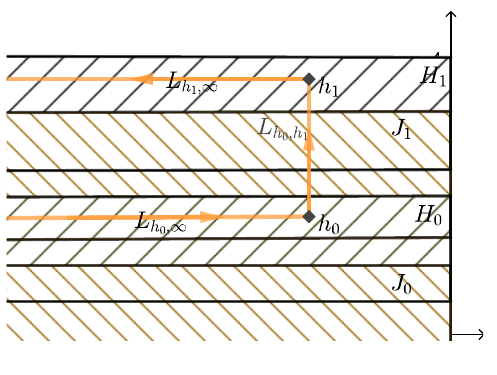}
		\caption{Integration path for the difference of solutions}
		\label{fig2}
\end{figure}

We first furnish estimates for
$$ I_{1} = \left| \int_{L_{h_{k},\infty}} w_{HJ_n}(u,z,\epsilon)
\exp( -\frac{u}{\epsilon t} ) \frac{du}{u} \right|. $$
Since the path $L_{h_{k},\infty}$ is contained inside the strip $H_{k}$, in accordance with the bounds
(\ref{bds_WHJn_Hk}), we reach the estimates
\begin{multline}
I_{1} \leq C_{H_k}\int_{0}^{+\infty} |h_{k}-s| \exp \left( \frac{\sigma_{1}}{|\epsilon|} \zeta(b) |h_{k} - s|
 - \sigma_{2}(M - \zeta(b)) \exp( \sigma_{3} |h_{k} - s| ) \right)\\
\times \exp \left( - \frac{|h_{k} - s|}{|\epsilon t|} \cos( \mathrm{arg}(h_{k} - s) - \mathrm{arg}(\epsilon) 
 - \mathrm{arg}(t) ) \right) \frac{ds}{|h_{k} - s|}
\end{multline}
Provided that $\epsilon_{0}>0$ is chosen small enough, $|\mathrm{Re}(h_{k})| = \varrho \mathrm{Log}(1/|\epsilon t|)$
becomes suitably large and implies the next range
$$ \mathrm{arg}(h_{k} - s) - \mathrm{arg}(\epsilon) - \mathrm{arg}(t) \in
(-\frac{\pi}{2} + \eta_{k}, \frac{\pi}{2} - \eta_{k}) $$
for some $\eta_{k}>0$ close to 0, according that $\epsilon$ belongs to
$\mathcal{E}_{HJ_n}^{k} \cap \mathcal{E}_{HJ_n}^{k+1}$
and $t$ is inside $\mathcal{T}$, for all $s \geq 0$. Consequently, we can select some $\delta_{1}>0$ with
\begin{equation}
 \cos( \mathrm{arg}(h_{k} - s) - \mathrm{arg}(\epsilon) - \mathrm{arg}(t) ) > \delta_{1} \label{low_bds_cos_hk_minus_s}
\end{equation}
for all $s \geq 0$, $t \in \mathcal{T}$ and
$\epsilon \in \mathcal{E}_{HJ_n}^{k} \cap \mathcal{E}_{HJ_n}^{k+1}$.
On the other hand, we can rewrite
\begin{multline*}
|h_{k} - s| = \left( ( \varrho \mathrm{Log}(\frac{1}{|\epsilon t|}) + s)^{2} + \varrho^{2}(
\mathrm{arg}(t) + \mathrm{arg}(\epsilon) - \chi_{k})^{2} \right)^{1/2}\\
= (\varrho \mathrm{Log}(\frac{1}{|\epsilon t|}) + s)
( 1 + \frac{\varrho^{2}(
\mathrm{arg}(t) + \mathrm{arg}(\epsilon) - \chi_{k})^{2}}{ ( \varrho \mathrm{Log}(\frac{1}{|\epsilon t|}) + s)^{2} })^{1/2}
\end{multline*}
provided that $|\epsilon t| < 1$ which holds if one assumes that $0 < \epsilon_{0} < 1$ and $0< r_{\mathcal{T}} < 1$.
For that reason, we get a constant $m_{k}>0$ (depending on $H_k$ and $\varrho$) such that
\begin{equation}
|h_{k} - s| \geq m_{k}( \varrho \mathrm{Log}( \frac{1}{|\epsilon t|} ) + s) \label{low_bds_hk_minus_s}
\end{equation}
for all $s \geq 0$, all $t \in \mathcal{T}$ and
$\epsilon \in \mathcal{E}_{HJ_n}^{k} \cap \mathcal{E}_{HJ_n}^{k+1}$. Now, we select $\delta_{2}>0$ and
take $t \in \mathcal{T}$ with $|t| \leq \delta_{1}/ ( \sigma_{1} \zeta(b) + \delta_{2})$. Then, gathering
(\ref{low_bds_cos_hk_minus_s}) and (\ref{low_bds_hk_minus_s}) yields
\begin{multline}
I_{1} \leq C_{H_k}\int_{0}^{+\infty} \exp \left( \frac{\sigma_{1}}{|\epsilon|} \zeta(b) |h_{k} - s| -
\frac{|h_{k} - s|}{|\epsilon t|} \delta_{1} \right) ds \leq C_{H_k}\int_{0}^{+\infty}
\exp( -\delta_{2} \frac{|h_{k} - s|}{|\epsilon|} ) ds\\
\leq C_{H_k} \exp \left(-\delta_{2} m_{k} \frac{\varrho}{|\epsilon|} \mathrm{Log}( \frac{1}{|\epsilon t|} ) \right)
\int_{0}^{+\infty} \exp( -\delta_{2} m_{k} \frac{s}{|\epsilon|}) ds\\
\leq C_{H_k} \frac{\epsilon_{0}}{\delta_{2}m_{k}}
\exp \left(-\delta_{2} m_{k} \frac{\varrho}{|\epsilon|} \mathrm{Log}( \frac{1}{|\epsilon|r_{\mathcal{T}}} ) \right) \label{I1<=}
\end{multline}
whenever $t \in \mathcal{T} \cap D(0, \delta_{1}/( \sigma_{1} \zeta(b) + \delta_{2}))$ and
$\epsilon \in \mathcal{E}_{HJ_n}^{k} \cap \mathcal{E}_{HJ_n}^{k+1}$.\medskip

Let
$$ I_{2} = \left| \int_{L_{h_{k+1},\infty}} w_{HJ_n}(u,z,\epsilon)
\exp( -\frac{u}{\epsilon t} ) \frac{du}{u} \right|. $$
In a similar manner, we can grab constants $\delta_{1},\delta_{2}>0$ and $m_{k+1}>0$ (depending on
$H_{k+1}$ and $\varrho$) with
\begin{equation}
I_{2} \leq C_{H_{k+1}} \frac{\epsilon_{0}}{\delta_{2}m_{k+1}}
\exp \left(-\delta_{2} m_{k+1} \frac{\varrho}{|\epsilon|} \mathrm{Log}( \frac{1}{|\epsilon|r_{\mathcal{T}}} ) \right) \label{I2<=}
\end{equation}
for all $t \in \mathcal{T} \cap D(0, \delta_{1}/( \sigma_{1} \zeta(b) + \delta_{2}))$ and
$\epsilon \in \mathcal{E}_{HJ_n}^{k} \cap \mathcal{E}_{HJ_n}^{k+1}$.\medskip

In a final step, we need to show estimates for
$$ I_{3} = \left| \int_{L_{h_{k},h_{k+1}}} w_{HJ_n}(u,z,\epsilon)
\exp( -\frac{u}{\epsilon t} ) \frac{du}{u} \right|. $$
We notice that the vertical segment $L_{h_{k},h_{k+1}}$ crosses the strips $H_{k},J_{k+1}$ and $H_{k+1}$ and belongs
to the union $H_{k} \cup J_{k+1} \cup H_{k+1}$. According to (\ref{bds_WHJn_Hk}) and (\ref{bds_WHJn_Jk}), we only have
the rough upper bounds
$$
|w_{HJ_{n}}(\tau,z,\epsilon)| \leq \max( C_{H_k}, C_{J_{k+1}}, C_{H_{k+1}} ) |\tau|
\exp \left( \frac{\sigma_{1}}{|\epsilon|} \zeta(b) |\tau| + \varsigma_{2} \zeta(b) \exp( \varsigma_{3}|\tau|)
\right)
$$
for all $\tau  \in H_{k} \cup J_{k+1} \cup H_{k+1}$, all $z \in D(0,\delta \delta_{1})$, all
$\epsilon \in \dot{D}(0,\epsilon_{0})$. We deduce that
\begin{multline}
I_{3} \leq \max( C_{H_k}, C_{J_{k+1}}, C_{H_{k+1}} )
\int_{0}^{1} |(1-s)h_{k} + sh_{k+1}|\\
\exp \left( \frac{\sigma_{1}}{|\epsilon|} \zeta(b) |(1-s)h_{k} + sh_{k+1}|
+ \varsigma_{2} \zeta(b) \exp( \varsigma_{3} |(1-s)h_{k} + sh_{k+1}| ) \right)\\
\times \exp \left( - \frac{ |(1-s)h_{k} + sh_{k+1}| }{ |\epsilon t| }
\cos( \mathrm{arg}( (1-s)h_{k} + sh_{k+1} ) - \mathrm{arg}(\epsilon) - \mathrm{arg}(t) ) \right)\\
\times
\frac{|h_{k+1} - h_{k}|}{|(1-s)h_{k} + sh_{k+1}|} ds \label{I3<=first}
\end{multline}
Taking for granted that $\epsilon_{0}>0$ is chosen small enough, the quantity
$|\mathrm{Re}((1-s)h_{k} + sh_{k+1})| = \varrho \mathrm{Log}(1 / |\epsilon t|)$ turns out to be large and leads to the next
variation of arguments
$$ \mathrm{arg}( (1-s)h_{k} + sh_{k+1} ) - \mathrm{arg}(\epsilon) - \mathrm{arg}(t) \in
(-\frac{\pi}{2} + \eta_{k,k+1}, \frac{\pi}{2} - \eta_{k,k+1}) $$
for some $\eta_{k,k+1}>0$ close to 0, as
$\epsilon \in \mathcal{E}_{HJ_n}^{k} \cap \mathcal{E}_{HJ_n}^{k+1}$, for $s \in [0,1]$. Therefore,
one can find $\delta_{1}>0$ with
\begin{equation}
\cos( \mathrm{arg}((1-s)h_{k} + sh_{k+1}) - \mathrm{arg}(\epsilon) - \mathrm{arg}(t) ) > \delta_{1}
\label{low_bds_cos_hk_hk_plus_1}
\end{equation}
for all $t \in \mathcal{T}$ and
$\epsilon \in \mathcal{E}_{HJ_n}^{k} \cap \mathcal{E}_{HJ_n}^{k+1}$, when $s \in [0,1]$.
Besides, we can compute the modulus
\begin{multline*}
|(1-s)h_{k} + sh_{k+1}| = \left( (\varrho \mathrm{Log}( \frac{1}{|\epsilon t|} ))^{2} +
\varrho^{2}( \mathrm{arg}(t) + \mathrm{arg}(\epsilon) - (1-s)\chi_{k} - s \chi_{k+1} )^{2} \right)^{1/2}\\
= \varrho \mathrm{Log}( \frac{1}{|\epsilon t|} )( 1 +
\frac{ ( \mathrm{arg}(t) + \mathrm{arg}(\epsilon) - (1-s)\chi_{k} - s \chi_{k+1} )^{2} }{
(\mathrm{Log}( \frac{1}{|\epsilon t|} ))^{2} } )^{1/2}
\end{multline*}
as long as $|\epsilon t|<1$, which occurs whenever $0<\epsilon_{0}<1$ and $0<r_{\mathcal{T}}<1$. Then, when
$\epsilon_{0}$ is taken small enough, we obtain two constants $m_{k,k+1}>0$ and $M_{k,k+1}>0$ with
\begin{equation}
\varrho m_{k,k+1} \mathrm{Log}( \frac{1}{|\epsilon t|} ) \leq
|(1-s)h_{k} + sh_{k+1}| \leq \varrho  M_{k,k+1} \mathrm{Log}( \frac{1}{|\epsilon t|} ) \label{bds_hk_hk_plus_1}
\end{equation}
for all $s \in [0,1]$, when $t \in \mathcal{T}$ and
$\epsilon \in \mathcal{E}_{HJ_n}^{k} \cap \mathcal{E}_{HJ_n}^{k+1}$. Moreover, we remark that
$|h_{k+1} - h_{k}| = \varrho |\chi_{k+1} - \chi_{k}|$. Bearing in mind (\ref{low_bds_cos_hk_hk_plus_1}) together with 
(\ref{bds_hk_hk_plus_1}), we deduce from (\ref{I3<=first}) that the next inequality holds
\begin{multline*}
I_{3} \leq \max( C_{H_k}, C_{J_{k+1}}, C_{H_{k+1}} ) \varrho |\chi_{k+1} - \chi_{k}|\\
\times \exp \left( \frac{\sigma_{1}}{|\epsilon|}\zeta(b) \varrho M_{k,k+1} \mathrm{Log}( \frac{1}{|\epsilon t|} )
+ \varsigma_{2} \zeta(b) \exp( \varsigma_{3} \varrho M_{k,k+1} \mathrm{Log}(\frac{1}{|\epsilon t|}) ) \right) \\
\times \exp \left( -\varrho m_{k,k+1} \frac{1}{|\epsilon t|} \mathrm{Log}( \frac{1}{|\epsilon t|}) \delta_{1} \right)
\end{multline*}
for any $t \in \mathcal{T}$ and $\epsilon \in \mathcal{E}_{HJ_n}^{k} \cap \mathcal{E}_{HJ_n}^{k+1}$.
We choose $0 < \varrho < 1$ in a way that $\varsigma_{3} \varrho M_{k,k+1} \leq 1$. Let $\psi(x) =
\varsigma_{2} \zeta(b) x^{\varsigma_{3}\varrho M_{k,k+1}} - \varrho m_{k,k+1} \delta_{1} x \mathrm{Log}(x)$. Then,
we can check that there exists $B>0$ (depending on $\zeta(b),\varrho,\varsigma_{2},\varsigma_{3},M_{k,k+1},m_{k,k+1},\delta_{1}$) such that
$$ \psi(x) \leq - \frac{ \varrho m_{k,k+1} \delta_{1} }{2} x \mathrm{Log}(x) + B $$
for all $x \geq 1$. We deduce that
\begin{multline*}
I_{3} \leq \max( C_{H_k}, C_{J_{k+1}}, C_{H_{k+1}} ) \varrho |\chi_{k+1} - \chi_{k}|\\
\times \exp \left( \frac{\sigma_{1}}{|\epsilon|}\zeta(b) \varrho M_{k,k+1} \mathrm{Log}( \frac{1}{|\epsilon t|} )
- \frac{\varrho}{2} m_{k,k+1} \delta_{1} \frac{1}{|\epsilon t|} \mathrm{Log}( \frac{1}{|\epsilon t|} ) + B \right)
\end{multline*}
whenever $t \in \mathcal{T}$ and $\epsilon \in \mathcal{E}_{HJ_n}^{k} \cap \mathcal{E}_{HJ_n}^{k+1}$. We select
$\delta_{2}>0$ and take $t \in \mathcal{T}$ with the constraint
$|t| \leq d_{k,k+1}$ where
$$ d_{k,k+1} = \frac{\varrho m_{k,k+1} \delta_{1} / 2}{ \sigma_{1} \zeta(b) \varrho M_{k,k+1} + \delta_{2} }. $$
This last choice implies in particular that
\begin{multline}
I_{3} \leq \max( C_{H_k}, C_{J_{k+1}}, C_{H_{k+1}} ) \varrho |\chi_{k+1} - \chi_{k}|
\exp \left( - \frac{\delta_{2}}{|\epsilon|} \mathrm{Log}( \frac{1}{|\epsilon t|} ) + B \right)\\
\leq \max( C_{H_k}, C_{J_{k+1}}, C_{H_{k+1}} ) \varrho |\chi_{k+1} - \chi_{k}| e^{B}
\exp \left( - \frac{\delta_{2}}{|\epsilon|} \mathrm{Log}( \frac{1}{|\epsilon|r_{\mathcal{T}}}) \right) \label{I3<=}
\end{multline}
provided that $\epsilon \in \mathcal{E}_{HJ_n}^{k} \cap \mathcal{E}_{HJ_n}^{k+1}$.

Finally, starting from the splitting (\ref{splitting_uk_plus_1_minus_uk}) and gathering the upper bounds for the three pieces of this
decomposition (\ref{I1<=}), (\ref{I2<=}) and (\ref{I3<=}), we obtain the anticipated estimates
(\ref{log_flat_difference_uk_plus_1_minus_uk_HJn}).
\end{proof}

\subsection{Construction of sectorial holomorphic solutions in the parameter $\epsilon$ with the help of Banach spaces
with exponential growth on sectors}

In the next definition, we introduce the notion of $\sigma_{1}'-$admissible set in a similar way as in Definition 3.

\begin{defin} We consider an unbounded sector $S_{d}$ with bisecting direction $d \in \mathbb{R}$ with $S_{d} \subset \mathbb{C}_{+}$ and
$D(0,r)$ a disc centered at 0 with radius $r>0$ with the property that no root of $P(\tau)$ belongs to
$\bar{S}_{d} \cup \bar{D}(0,r)$. Let $w(\tau,\epsilon)$ be a holomorphic function on
$(S_{d} \cup D(0,r)) \times \dot{D}(0,\epsilon_{0})$, continuous on $(\bar{S}_{d} \cup \bar{D}(0,r)) \times
\dot{D}(0,\epsilon_{0})$. We assume that for all $\epsilon \in \dot{D}(0,\epsilon_{0})$, the function
$\tau \mapsto w(\tau,\epsilon)$ belongs to the Banach space $EG_{(0,\sigma_{1}',S_{d} \cup D(0,r),\epsilon)}$ for given
$\sigma_{1}'>0$. Besides, the take for granted that some constant $I_{w}>0$, independent of $\epsilon$, exists with the bounds
\begin{equation}
||w(\tau,\epsilon)||_{(0,\sigma_{1}',S_{d} \cup D(0,r),\epsilon)} \leq I_{w} \label{EG_norms_w_Iw}
\end{equation}
for all $\epsilon \in \dot{D}(0,\epsilon_{0})$.

We denote $\mathcal{E}_{S_{d}}$ an open sector centered at 0 within the disc $D(0,\epsilon_{0})$, and let $\mathcal{T}$ be a bounded open sector centered at 0 with bisecting direction $d=0$ suitably chosen in a way that for all
$t \in \mathcal{T}$, all $\epsilon \in \mathcal{E}_{S_d}$, there exists a direction $\gamma_{d}$ (depending on $t$,$\epsilon$) such that
$\exp( \sqrt{-1} \gamma_{d}) \in S_{d}$
with
\begin{equation}
\gamma_{d} - \mathrm{arg}(t) - \mathrm{arg}(\epsilon) \in (-\frac{\pi}{2} + \eta, \frac{\pi}{2}- \eta) \label{relation_gamma_epsilon_t} 
\end{equation}
for some $\eta > 0$ close to 0.

The data $(w(\tau,\epsilon), \mathcal{E}_{S_d}, \mathcal{T})$ are said to be $\sigma_{1}'-$admissible.
\end{defin}

For all $0 \leq j \leq S-1$, all $0 \leq p \leq \iota - 1$ for some integer $\iota \geq 2$, we sort directions $d_{p} \in \mathbb{R}$,
unbounded sectors $S_{d_p}$ and corresponding bounded sectors $\mathcal{E}_{S_{d_p}}$, $\mathcal{T}$ such that the next given sets
$(w_{j}(\tau,\epsilon), \mathcal{E}_{S_{d_p}}, \mathcal{T})$ are $\sigma_{1}'-$admissible for some $\sigma_{1}'>0$. We assume moreover
that for each $0 \leq j \leq S-1$, $\tau \mapsto w_{j}(\tau,\epsilon)$ restricted to $S_{d_p}$ is an analytic continuation of a common
holomorphic function $\tau \mapsto w_{j}(\tau,\epsilon)$ on $D(0,r)$, for all $0 \leq p \leq \iota-1$. We adopt the convention that
$d_{p} < d_{p+1}$ and $S_{d_p} \cap S_{d_{p+1}} = \emptyset$ for all $0 \leq p \leq \iota-2$. As initial data
(\ref{SPCP_first_i_d}), we put
\begin{equation}
\varphi_{j,\mathcal{E}_{S_{d_p}}}(t,\epsilon) = \int_{L_{\gamma_{d_p}}} w_{j}(u,\epsilon) \exp( - \frac{u}{\epsilon t} ) \frac{du}{u}
\label{Laplace_varphi_j_along_halfline}
\end{equation}
where the integration path $L_{\gamma_{d_p}} = \mathbb{R}_{+}\exp(\sqrt{-1} \gamma_{d_p})$ is a halfline in direction $\gamma_{d_p}$ defined in
(\ref{relation_gamma_epsilon_t}).
\begin{lemma}
For all $0 \leq j \leq S-1$, $0 \leq p \leq \iota-1$, the Laplace integral $\varphi_{j,\mathcal{E}_{S_{d_p}}}(t,\epsilon)$
determines a bounded holomorphic function on $(\mathcal{T} \cap D(0,r_{\mathcal{T}})) \times \mathcal{E}_{S_{d_p}}$ for some
suitable radius $r_{\mathcal{T}}>0$.
\end{lemma}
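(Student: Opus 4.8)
The plan is to follow closely the argument used for Lemma 8, the present situation being simpler since the contour $L_{\gamma_{d_p}}$ is a single halfline rather than the broken path $P_k$ and since $w_j$ here has at most \emph{exponential} (not super-exponential) growth. First I would convert the Banach-norm control (\ref{EG_norms_w_Iw}) into a pointwise estimate: from the definition of $||\cdot||_{(0,\sigma_1',S_{d_p}\cup D(0,r),\epsilon)}$, using $r_b(0)=1$, one obtains
\[
|w_j(u,\epsilon)| \leq I_{w_j}\,|u|\,\exp\!\left(\frac{\sigma_1'}{|\epsilon|}\,|u|\right)
\]
for all $u \in \bar{S}_{d_p}\cup\bar{D}(0,r)$ and all $\epsilon \in \dot{D}(0,\epsilon_0)$.

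Next I would carry out the Laplace estimate. For fixed $t\in\mathcal{T}$, $\epsilon\in\mathcal{E}_{S_{d_p}}$, pick the admissible direction $\gamma_{d_p}$ furnished by (\ref{relation_gamma_epsilon_t}), so that $\gamma_{d_p}-\arg(t)-\arg(\epsilon)\in(-\tfrac{\pi}{2}+\eta,\tfrac{\pi}{2}-\eta)$ and hence $\cos(\gamma_{d_p}-\arg(\epsilon t))\geq \sin\eta=:\delta_1>0$. Parametrising $u=\rho\,e^{\sqrt{-1}\gamma_{d_p}}$, $\rho\geq 0$, and using $|\exp(-u/(\epsilon t))|=\exp(-\tfrac{\rho}{|\epsilon t|}\cos(\gamma_{d_p}-\arg(\epsilon t)))$ together with the bound on $w_j$, I get
\[
|\varphi_{j,\mathcal{E}_{S_{d_p}}}(t,\epsilon)| \leq I_{w_j}\int_{0}^{+\infty}\exp\!\left(\frac{\sigma_1'}{|\epsilon|}\rho-\frac{\delta_1}{|\epsilon|\,|t|}\rho\right)d\rho .
\]
Choosing $\delta_2>0$ and restricting to $|t|\leq \delta_1/(\delta_2+\sigma_1')=:r_{\mathcal{T}}$ forces the exponent to be $\leq -\delta_2\rho/|\epsilon|$, so the integral converges and is majorised by $I_{w_j}\,|\epsilon|/\delta_2\leq I_{w_j}\,\epsilon_0/\delta_2$, a bound independent of $(t,\epsilon)$. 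This gives boundedness on $(\mathcal{T}\cap D(0,r_{\mathcal{T}}))\times\mathcal{E}_{S_{d_p}}$.

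Finally I would check holomorphy. For $u$ fixed the integrand is holomorphic in $(t,\epsilon)$, and on any compact subset of the domain the same computation yields a uniform integrable majorant for the integrand and its first partials (with $\delta_1,\delta_2$ chosen uniformly over that compact set), so differentiation under the integral sign is legitimate (alternatively one combines Morera's theorem with Fubini). The one point deserving care is that $\gamma_{d_p}$ is allowed to depend on $(t,\epsilon)$; but since $w_j$ is holomorphic on all of $S_{d_p}$ with the exponential control above, Cauchy's theorem permits deforming the path between any two admissible directions without changing the value of $\varphi_{j,\mathcal{E}_{S_{d_p}}}$ (the contribution of the arc at infinity vanishes because the $\exp(-u/(\epsilon t))$ factor dominates), so the function is well defined and, being locally representable by a fixed-direction Laplace integral, is holomorphic. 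I expect no genuine obstacle in this lemma; this path-independence remark is the only subtlety. This proves Lemma 10.
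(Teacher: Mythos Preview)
Your proof is correct and follows essentially the same route as the paper's own argument: extract the pointwise exponential bound from the norm $||\cdot||_{(0,\sigma_1',S_{d_p}\cup D(0,r),\epsilon)}$, use the admissibility condition (\ref{relation_gamma_epsilon_t}) to bound the cosine from below by some $\delta_1>0$, and then restrict $|t|\leq \delta_1/(\delta_2+\sigma_1')$ to obtain the uniform bound $I_{w_j}|\epsilon|/\delta_2$. Your additional remarks on holomorphy (differentiation under the integral / Morera) and on the path-independence when $\gamma_{d_p}$ varies with $(t,\epsilon)$ are more explicit than the paper, which simply asserts holomorphy; these are legitimate clarifications, not a different method. (Minor slip: the statement is Lemma 9 in the paper's numbering, not Lemma 10.)
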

\begin{proof} According to (\ref{EG_norms_w_Iw}), each function $w_{j}(\tau,\epsilon)$ satisfies the upper bounds
\begin{equation}
|w_{j}(\tau,\epsilon)| \leq I_{w_j} |\tau| \exp \left( \frac{\sigma_{1}'}{|\epsilon|} |\tau| \right) \label{bds_w_j_varsigma} 
\end{equation}
for some constant $I_{w_j}>0$, whenever $\tau \in \bar{S}_{d_p} \cup \bar{D}(0,r)$, $\epsilon \in \dot{D}(0,\epsilon_{0})$.
Besides, due to (\ref{relation_gamma_epsilon_t}), we can grasp a constant $\delta_{1}>0$ with
\begin{equation}
\cos( \gamma_{d_p} - \mathrm{arg}(t) - \mathrm{arg}(\epsilon) ) \geq \delta_{1} \label{cos >= delta1} 
\end{equation}
for any $t \in \mathcal{T}$, $\epsilon \in \mathcal{E}_{S_{d_p}}$. We choose $\delta_{2}>0$ and take $t \in \mathcal{T}$ with
$|t| \leq \frac{\delta_{1}}{\delta_{2} + \sigma_{1}'}$. Then, collecting (\ref{bds_w_j_varsigma}) and (\ref{cos >= delta1}) allows us to
write
\begin{multline}
|\varphi_{j,\mathcal{E}_{S_{d_p}}}(t,\epsilon)| \leq \int_{0}^{+\infty} I_{w_j} \rho \exp( \frac{\sigma_{1}'}{|\epsilon|} \rho)
\exp( -\frac{\rho}{|\epsilon t|} \cos( \gamma_{d_p} - \mathrm{arg}(t) - \mathrm{arg}(\epsilon) ) \frac{d \rho}{\rho}\\
\leq I_{w_j} \int_{0}^{+\infty} \exp( -\frac{\rho}{|\epsilon|} \delta_{2} ) d\rho = I_{w_j} \frac{|\epsilon|}{\delta_{2}}
\end{multline}
which implies in particular that $\varphi_{j,\mathcal{E}_{S_{d_p}}}(t,\epsilon)$ is holomorphic and bounded on
$(\mathcal{T} \cap D(0, \frac{\delta_{1}}{\delta_{2} + \sigma_{1}'})) \times \mathcal{E}_{S_{d_p}}$.
\end{proof}

In the next proposition, we construct actual holomorphic solutions of the problem (\ref{SPCP_first}), (\ref{SPCP_first_i_d})
as Laplace transforms along halflines.
\begin{prop} 1) There exist two constants $I,\delta>0$ (independent of $\epsilon$) such that if one takes for granted that
\begin{equation}
\sum_{j=0}^{S-1-h} ||w_{j+h}(\tau,\epsilon)||_{(0,\sigma_{1}',S_{d_p} \cup D(0,r),\epsilon)} \frac{\delta^j}{j!} \leq I
\label{norm_Sdp_initial_wj}
\end{equation}
for all $0 \leq h \leq S-1$, all $\epsilon \in \dot{D}(0,\epsilon_{0})$, all $0 \leq p \leq \iota-1$, then the
Cauchy problem (\ref{SPCP_first}), (\ref{SPCP_first_i_d}) for initial conditions given by
(\ref{Laplace_varphi_j_along_halfline}) possesses a solution $u_{\mathcal{E}_{S_{d_p}}}(t,z,\epsilon)$ which represents a
bounded holomorphic function on a domain $(\mathcal{T} \cap D(0,r_{\mathcal{T}})) \times D(0,\delta_{1}\delta) \times
\mathcal{E}_{S_{d_p}}$, for suitable radius $r_{\mathcal{T}}>0$ and with $0 < \delta_{1} < 1$. Additionally,
$u_{\mathcal{E}_{S_{d_p}}}$ turns out to be a Laplace transform
\begin{equation}
u_{\mathcal{E}_{S_{d_p}}}(t,z,\epsilon) = \int_{L_{\gamma_{d_p}}} w_{S_{d_p}}(u,z,\epsilon) \exp( -\frac{u}{\epsilon t} ) \frac{du}{u}
\label{u_E_Sdp_Laplace}
\end{equation}
where $w_{S_{d_p}}(u,z,\epsilon)$ stands for a holomorphic function on $(S_{d_p} \cup D(0,r)) \times D(0,\delta \delta_{1})
\times \dot{D}(0,\epsilon_{0})$, continuous on $(\bar{S}_{d_p} \cup \bar{D}(0,r)) \times D(0,\delta \delta_{1})
\times \dot{D}(0,\epsilon_{0})$ which obeys the following restriction : for any choice of $\sigma_{1} > \sigma_{1}'$, we can find
a constant $C_{S_{d_p}}>0$ (independent of $\epsilon$) with
\begin{equation}
|w_{S_{d_p}}(\tau,z,\epsilon)| \leq C_{S_{d_p}} |\tau| \exp( \frac{\sigma_{1}}{|\epsilon|} \zeta(b) |\tau| )
\label{bds_w_Sdp}
\end{equation}
for all $\tau \in S_{d_p} \cup D(0,r)$, all $z \in D(0,\delta \delta_{1})$, whenever $\epsilon \in \dot{D}(0,\epsilon_{0})$.

\noindent 2) Let $0 \leq p \leq \iota-2$. Provided that $r_{\mathcal{T}}>0$ is taken small enough, there exist two constants
$M_{p,1},M_{p,2}>0$ (independent of $\epsilon$) such that
\begin{equation}
|u_{\mathcal{E}_{S_{d_{p+1}}}}(t,z,\epsilon) - u_{\mathcal{E}_{S_{d_p}}}(t,z,\epsilon)| \leq M_{p,1}\exp( - \frac{M_{p,2}}{|\epsilon|} )
\label{difference_u_Sdp_exp_small}
\end{equation}
for all $t \in \mathcal{T} \cap D(0,r_{\mathcal{T}})$, all $\epsilon \in \mathcal{E}_{S_{d_{p+1}}} \cap \mathcal{E}_{S_{d_p}} \neq \emptyset$
and all $z \in D(0,\delta \delta_{1})$.
\end{prop}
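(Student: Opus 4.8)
The plan is to mimic, step by step, the construction already carried out for the families $u_{\mathcal{E}_{HJ_{n}}^{k}}$ in Proposition 10, trading the Banach spaces $SED$ and $SEG$ for the space $EG$ of Definition 2. First I would plug the initial data $w_{j}(\tau,\epsilon)$ used in (\ref{Laplace_varphi_j_along_halfline}) into the auxiliary equation (\ref{1_aux_CP}) with Cauchy conditions (\ref{1_aux_CP_initial_data}) and observe that the recursion (\ref{recursion_w_beta}) determines a unique formal solution $w_{S_{d_{p}}}(\tau,z,\epsilon)=\sum_{\beta\geq0}w_{\beta}(\tau,\epsilon)z^{\beta}/\beta!$. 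Since by hypothesis no root of $P(\tau)$ belongs to $\bar{S}_{d_{p}}\cup\bar{D}(0,r)$, each $w_{\beta}$ is holomorphic on $(S_{d_{p}}\cup D(0,r))\times\dot{D}(0,\epsilon_{0})$ and continuous up to the boundary; moreover, since the $w_{j}$ restricted to the different $S_{d_{p}}$ are analytic continuations of common functions on $D(0,r)$ and the right-hand side of (\ref{recursion_w_beta}) is built only from $P(\tau)$, $\tau^{k_{0}}$, $\exp(-k_{2}\tau)$ and the $w_{\beta}$ themselves, the functions $w_{S_{d_{p}}}$ and $w_{S_{d_{p+1}}}$ agree on $D(0,r)\times D(0,\delta\delta_{1})\times\dot{D}(0,\epsilon_{0})$, a fact I will need for part 2).

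Then I would invoke point 3) of Proposition 9. Assumption (\ref{cond_SPCP_first}) gives, for every $\underline{k}\in\mathcal{A}$, $S\geq k_{1}+bk_{0}$ and $S>k_{1}$ (because $bk_{2}/\xi\geq0$), which is exactly hypothesis 3)a); the smallness requirement (\ref{norm_Sdp_initial_wj}) is precisely (\ref{initial_data_1_aux_CP_small_S}). Therefore, for a suitable choice of $I,R,\delta>0$ independent of $\epsilon$, the series $w_{S_{d_{p}}}$ lies in $EG_{(\sigma_{1},S_{d_{p}}\cup D(0,r),\epsilon,\delta)}$ for every $\sigma_{1}>\sigma_{1}'$, with $\|w_{S_{d_{p}}}\|_{(\sigma_{1},S_{d_{p}}\cup D(0,r),\epsilon,\delta)}\leq\delta^{S}R+I$ uniformly in $\epsilon$, by (\ref{norm_w_bd_in_epsilon_S}); the operator estimates feeding the fixed point argument there are furnished by point 2) of Propositions 6, 7 and 8. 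Point 2) of Proposition 5 then turns $w_{S_{d_{p}}}$ into a genuine holomorphic function on $(S_{d_{p}}\cup D(0,r))\times D(0,\delta\delta_{1})\times\dot{D}(0,\epsilon_{0})$, continuous up to the boundary, satisfying the bound (\ref{bds_w_Sdp}).

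Next, repeating the argument of the preceding lemma, I would check that (\ref{u_E_Sdp_Laplace}) defines a bounded holomorphic function on $(\mathcal{T}\cap D(0,r_{\mathcal{T}}))\times D(0,\delta_{1}\delta)\times\mathcal{E}_{S_{d_{p}}}$: the exponential growth $\exp(\sigma_{1}\zeta(b)|u|/|\epsilon|)$ coming from (\ref{bds_w_Sdp}) is dominated by $|\exp(-u/(\epsilon t))|\leq\exp(-\delta_{1}|u|/|\epsilon t|)$, which holds thanks to (\ref{relation_gamma_epsilon_t}), as soon as $|t|\leq\delta_{1}/(\sigma_{1}\zeta(b)+\delta_{2})$. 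That $u_{\mathcal{E}_{S_{d_{p}}}}$ solves the Cauchy problem (\ref{SPCP_first}), (\ref{SPCP_first_i_d}) with data (\ref{Laplace_varphi_j_along_halfline}) is then a direct verification, resting on the two elementary identities $\epsilon t^{2}\partial_{t}\big(\int_{L}f(u)e^{-\frac{u}{\epsilon t}}\frac{du}{u}\big)=\int_{L}u\,f(u)e^{-\frac{u}{\epsilon t}}\frac{du}{u}$ and $\exp\big(-\frac{u}{\epsilon}\cdot\frac{1+k_{2}\epsilon t}{t}\big)=e^{-k_{2}u}\exp\big(-\frac{u}{\epsilon t}\big)$; the second one is exactly why the Moebius operator $m_{k_{2},t,\epsilon}$ in (\ref{SPCP_first}) translates into the factor $\exp(-k_{2}\tau)$ in (\ref{1_aux_CP}), and evaluating at $z=0$ recovers (\ref{Laplace_varphi_j_along_halfline}). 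This finishes part 1).

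For part 2) I would fix $0\leq p\leq\iota-2$ and use that $w_{S_{d_{p}}}=w_{S_{d_{p+1}}}$ on $D(0,r)$. Choosing a radius $0<r'<r$, I would split each halfline $L_{\gamma_{d_{q}}}$, $q=p,p+1$, at $|u|=r'$; the two inner segments can be deformed onto one another inside the disc $D(0,r)$, where the common Borel map is holomorphic, so Cauchy's theorem yields
\begin{multline*}
u_{\mathcal{E}_{S_{d_{p+1}}}}(t,z,\epsilon)-u_{\mathcal{E}_{S_{d_{p}}}}(t,z,\epsilon)
=\int_{L_{\gamma_{d_{p+1}},r',\infty}}w_{S_{d_{p+1}}}(u,z,\epsilon)e^{-\frac{u}{\epsilon t}}\frac{du}{u}\\
-\int_{L_{\gamma_{d_{p}},r',\infty}}w_{S_{d_{p}}}(u,z,\epsilon)e^{-\frac{u}{\epsilon t}}\frac{du}{u}
+\int_{C_{r'}}w_{S_{d_{p}}}(u,z,\epsilon)e^{-\frac{u}{\epsilon t}}\frac{du}{u},
\end{multline*}
where $L_{\gamma_{d_{q}},r',\infty}=\{se^{\sqrt{-1}\gamma_{d_{q}}}/s\geq r'\}$ and $C_{r'}$ is the circular arc of radius $r'$ joining $r'e^{\sqrt{-1}\gamma_{d_{p}}}$ to $r'e^{\sqrt{-1}\gamma_{d_{p+1}}}$. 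On the two halfline tails, (\ref{bds_w_Sdp}) together with $|\exp(-u/(\epsilon t))|\leq\exp(-\delta_{1}|u|/|\epsilon t|)$ and $|t|\leq\delta_{1}/(\sigma_{1}\zeta(b)+\delta_{2})$ bounds the integrand by $C\exp(-\delta_{2}|u|/|\epsilon|)$ for $|u|\geq r'$, hence a contribution $\leq C'|\epsilon|\exp(-\delta_{2}r'/|\epsilon|)$; on $C_{r'}$ the same estimates bound the integrand by $C\exp(-\delta_{2}r'/|\epsilon|)$ over a path of finite length $r'|\gamma_{d_{p+1}}-\gamma_{d_{p}}|$. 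Adding the three pieces yields (\ref{difference_u_Sdp_exp_small}). Everything apart from this last deformation is routine: the contraction argument and the operator bounds are already available in Propositions 6--9, and the Laplace-integral estimates copy the preceding lemma. The one point demanding care is that, since the sectors $S_{d_{p}}$ are pairwise disjoint, $L_{\gamma_{d_{p}}}$ cannot be moved directly onto $L_{\gamma_{d_{p+1}}}$, so the deformation must be routed through the central disc $D(0,r)$ where the two Borel maps are known to coincide.
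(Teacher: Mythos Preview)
Your proposal is correct and follows essentially the same route as the paper's own proof: you build the formal solution $w_{S_{d_p}}$ from the recursion (\ref{recursion_w_beta}), apply Proposition 9 3) together with Proposition 5 2) to obtain (\ref{bds_w_Sdp}), verify (\ref{u_E_Sdp_Laplace}) is bounded holomorphic as in Lemma 9, and for part 2) you perform the very same decomposition into two halfline tails starting at radius $r'$ plus a connecting circular arc inside $D(0,r)$, which is exactly the paper's splitting (\ref{decomp_difference_u_Sdp}) with $r'=r/2$. Your explicit mention of the two Laplace identities behind the direct verification, and of why the deformation must pass through $D(0,r)$, are details the paper leaves implicit but does not do differently.
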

\begin{proof}
The first step follows the one performed in Proposition 10. Namely, we can check that the problem (\ref{1_aux_CP}) with initial data
\begin{equation}
(\partial_{z}^{j}w)(\tau,0,\epsilon) = w_{j}(\tau,\epsilon) \ \ , \ \ 0 \leq j \leq S-1 \label{1_aux_CP_i_d_Sd} 
\end{equation}
given above in the $\sigma_{1}'-$admissible sets appearing in the Laplace integrals (\ref{Laplace_varphi_j_along_halfline}), owns a unique
formal solution
\begin{equation}
w_{S_{d_p}}(\tau,z,\epsilon) = \sum_{\beta \geq 0} w_{\beta}(\tau,\epsilon) \frac{z^{\beta}}{\beta !} \label{defin_w_S_dp}
\end{equation}
where $w_{\beta}(\tau,\epsilon)$ define holomorphic functions on $(S_{d} \cup D(0,r)) \times \dot{D}(0,\epsilon_{0})$,
continuous on $(\bar{S}_{d} \cup \bar{D}(0,r)) \times \dot{D}(0,\epsilon_{0})$. Namely, the formal expansion
(\ref{defin_w_S_dp}) solves (\ref{1_aux_CP}) together with (\ref{1_aux_CP_i_d_Sd}) if and only if the recursion
(\ref{recursion_w_beta}) holds. As a result, it implies that all the coefficients $w_{n}(\tau,\epsilon)$ for $n \geq S$ represent
holomorphic functions on $(S_{d_p} \cup D(0,r)) \times \dot{D}(0,\epsilon_{0})$, continuous on
$(\bar{S}_{d_p} \cup \bar{D}(0,r)) \times \dot{D}(0,\epsilon_{0})$ since this property already holds for the initial data
$w_{j}(\tau,\epsilon)$, $0 \leq j \leq S-1$, under our assumption (\ref{EG_norms_w_Iw}).

The assumption (\ref{cond_SPCP_first}) and the control on the norm range of the initial data (\ref{norm_Sdp_initial_wj}), let us
figure out that the demands 3)a)b) in Proposition 9 are scored. In particular, the formal series $w_{S_{d_p}}(\tau,z,\epsilon)$ is located
in the Banach space $EG_{(\sigma_{1},S_{d_p} \cup D(0,r),\epsilon,\delta)}$, for all
$\epsilon \in \dot{D}(0,\epsilon_{0})$, for any real number $\sigma_{1}>\sigma_{1}'$, with a constant $\tilde{C}_{S_{d_p}}>0$
(independent of $\epsilon$) for which
$$ ||w_{S_{d_p}}(\tau,z,\epsilon)||_{(\sigma_{1},S_{d_p} \cup D(0,r),\epsilon,\delta)} \leq \tilde{C}_{S_{d_p}} $$
holds for all $\epsilon \in \dot{D}(0,\epsilon_{0})$. With the help of Proposition 5 2), we notice that the formal expansion
$w_{S_{d_p}}(\tau,z,\epsilon)$ turns out to be an actual holomorphic function on $(S_{d_p} \cup D(0,r)) \times D(0,\delta \delta_{1})
\times \dot{D}(0,\epsilon_{0})$, continuous on $(\bar{S}_{d_p} \cup \bar{D}(0,r)) \times D(0,\delta \delta_{1})
\times \dot{D}(0,\epsilon_{0})$ for some $0 < \delta_{1} < 1$, that conforms to the bounds (\ref{bds_w_Sdp}).

By proceeding with the same lines of arguments as in Lemma 9, one can see that the function $u_{\mathcal{E}_{S_{d_p}}}$ defined as Laplace
transform
$$ u_{\mathcal{E}_{S_{d_p}}}(t,z,\epsilon) = \int_{L_{\gamma_{d_p}}} w_{S_{d_p}}(u,z,\epsilon) \exp( -\frac{u}{\epsilon t} ) \frac{du}{u} $$
represents a bounded holomorphic function on $(\mathcal{T} \cap D(0,r_{\mathcal{T}})) \times D(0,\delta \delta_{1}) \times
\mathcal{E}_{S_{d_p}}$, for suitably small radius $r_{\mathcal{T}}>0$ and given $0 < \delta_{1} < 1$. Furthermore, by direct inspection,
one can testify that $u_{\mathcal{E}_{S_{d_p}}}(t,z,\epsilon)$ solves the problem (\ref{SPCP_first}), (\ref{SPCP_first_i_d}) for initial
conditions (\ref{Laplace_varphi_j_along_halfline}) on $(\mathcal{T} \cap D(0,r_{\mathcal{T}})) \times D(0,\delta \delta_{1}) \times
\mathcal{E}_{S_{d_p}}$.

In the last part of the proof, we concentrate on the second point 2). Let $0 \leq p \leq \iota-2$. We depart from the observation that the
maps $u \mapsto w_{S_{d_q}}(u,z,\epsilon) \exp( -\frac{u}{\epsilon t} )/u$, for $q=p,p+1$, represent analytic continuations on the
sectors $S_{d_q}$ of a common analytic function defined on $D(0,r)$ (since
$w_{S_{d_p}}(u,z,\epsilon) = w_{S_{d_{p+1}}}(u,z,\epsilon)$ for $u \in D(0,r)$), for all fixed $z \in D(0,\delta \delta_{1})$ and
$\epsilon \in \mathcal{E}_{S_{d_p}} \cap \mathcal{E}_{S_{d_{p+1}}}$. Therefore, by carrying out a path deformation inside the domain
$S_{d_p} \cup S_{d_{p+1}} \cup D(0,r)$, we can recast the difference $u_{\mathcal{E}_{S_{d_{p+1}}}} - u_{\mathcal{E}_{S_{d_p}}}$ as a sum of
three paths integrals
\begin{multline}
u_{\mathcal{E}_{S_{d_{p+1}}}}(t,z,\epsilon) - u_{\mathcal{E}_{S_{d_p}}}(t,z,\epsilon) =\\
-\int_{L_{\gamma_{d_p},r/2}} w_{S_{d_p}}(u,z,\epsilon) \exp( -\frac{u}{\epsilon t} ) \frac{du}{u}
+ \int_{C_{\gamma_{d_p},\gamma_{d_{p+1}},r/2}} w_{S_{d_p}}(u,z,\epsilon) \exp( -\frac{u}{\epsilon t} ) \frac{du}{u} \\
+ \int_{L_{\gamma_{d_{p+1}},r/2}} w_{S_{d_{p+1}}}(u,z,\epsilon) \exp( -\frac{u}{\epsilon t} ) \frac{du}{u} \label{decomp_difference_u_Sdp}
\end{multline}
where $L_{\gamma_{d_p},r/2} = [r/2,+\infty)\exp( \sqrt{-1} \gamma_{d_q})$ are unbounded segments for $q=p,p+1$, 
$C_{\gamma_{d_p},\gamma_{d_{p+1}},r/2}$ stands for the arc of circle with radius $r/2$ joining the points
$\frac{r}{2}\exp(\sqrt{-1}\gamma_{d_p})$ and $\frac{r}{2}\exp(\sqrt{-1}\gamma_{d_{p+1}})$.

As an initial step, we provide estimates for
$$ I_{1} = \left| \int_{L_{\gamma_{d_p},r/2}} w_{S_{d_p}}(u,z,\epsilon) \exp( -\frac{u}{\epsilon t} ) \frac{du}{u} \right|. $$
Due to the bounds (\ref{bds_w_Sdp}), we check that
$$ I_{1} \leq \int_{r/2}^{+\infty} C_{S_{d_p}} \rho \exp( \frac{\sigma_{1}}{|\epsilon|} \zeta(b) \rho )
\exp( - \frac{\rho}{|\epsilon t|} \cos( \gamma_{d_p} - \mathrm{arg}(t) - \mathrm{arg}(\epsilon) ) ) \frac{d\rho}{\rho} $$
for all $t \in \mathcal{T}$, $\epsilon \in \mathcal{E}_{S_{d_p}} \cap \mathcal{E}_{S_{d_{p+1}}}$. Besides, the lower bounds
(\ref{cos >= delta1}) hold for some constant
$\delta_{1}>0$ when $t \in \mathcal{T}$ and $\epsilon \in \mathcal{E}_{S_{d_p}} \cap \mathcal{E}_{S_{d_{p+1}}}$. Hence, if we select
$\delta_{2}>0$ and choose
$t \in \mathcal{T}$ with $|t| \leq \frac{\delta_{1}}{\delta_{2} + \sigma_{1} \zeta(b)}$, we get
\begin{equation}
I_{1} \leq C_{S_{d_p}} \int_{r/2}^{+\infty} \exp( -\frac{\rho}{|\epsilon|} \delta_{2} ) d\rho =
C_{S_{d_p}} \frac{|\epsilon|}{\delta_{2}} \exp( -\frac{r \delta_{2}}{2 |\epsilon|} ) \label{I1_Sdp}
\end{equation}
for all $\epsilon \in \mathcal{E}_{S_{d_{p+1}}} \cap \mathcal{E}_{S_{d_p}}$. Now, let
$$ I_{2} = \left| \int_{L_{\gamma_{d_{p+1}},r/2}} w_{S_{d_{p+1}}}(u,z,\epsilon) \exp( -\frac{u}{\epsilon t} ) \frac{du}{u} \right|. $$
With a comparable approach, we can obtain two constants $\delta_{1},\delta_{2}>0$ with
\begin{equation}
I_{2} \leq C_{S_{d_{p+1}}} \frac{|\epsilon|}{\delta_{2}} \exp( -\frac{r \delta_{2}}{2 |\epsilon|} ) \label{I2_Sdp} 
\end{equation}
for $t \in \mathcal{T} \cap D(0,\frac{\delta_{1}}{\delta_{2} + \sigma_{1} \zeta(b)})$ and
$\epsilon \in \mathcal{E}_{S_{d_{p+1}}} \cap \mathcal{E}_{S_{d_p}}$.

In a closing step, we focus on
$$ I_{3} = \left| \int_{C_{\gamma_{d_p},\gamma_{d_{p+1}},r/2}} w_{S_{d_p}}(u,z,\epsilon) \exp( -\frac{u}{\epsilon t} ) \frac{du}{u} \right|. $$
Again, according to (\ref{bds_w_Sdp}), we guarantee that
$$ I_{3} \leq C_{S_{d_p}} \int_{\gamma_{d_p}}^{\gamma_{d_{p+1}}} \frac{r}{2} \exp( \frac{\sigma_{1}}{|\epsilon|} \zeta(b) \frac{r}{2})
\exp( -\frac{r/2}{|\epsilon t|} \cos( \theta - \mathrm{arg}(t) -\mathrm{arg}(\epsilon) ) ) d\theta. $$
By construction, we also get a constant $\delta_{1}>0$ for which
$$ \cos( \theta - \mathrm{arg}(t) - \mathrm{arg}(\epsilon) ) \geq \delta_{1} $$
when $\epsilon \in \mathcal{E}_{S_{d_{p+1}}} \cap \mathcal{E}_{S_{d_p}}$, $t \in \mathcal{T}$ and $\theta \in (\gamma_{d_p},\gamma_{d_{p+1}})$.
As a consequence, if one takes $\delta_{2}>0$ and selects $t \in \mathcal{T}$ with
$|t| \leq \frac{\delta_{1}}{\sigma_{1} \zeta(b) + \delta_{2}}$. Then,
\begin{equation}
I_{3} \leq C_{S_{d_p}} (\gamma_{d_{p+1}} - \gamma_{d_{p}}) \frac{r}{2} \exp( -\frac{r \delta_{2}}{2 |\epsilon|} ) \label{I3_Sdp}
\end{equation}
for all $\epsilon \in \mathcal{E}_{S_{d_{p+1}}} \cap \mathcal{E}_{S_{d_p}}$.

At last, departing from the decomposition (\ref{decomp_difference_u_Sdp}) and clustering the bounds (\ref{I1_Sdp}), (\ref{I2_Sdp}) and
(\ref{I3_Sdp}), we reach our expected estimates (\ref{difference_u_Sdp_exp_small}).
\end{proof}

\subsection{Construction of a finite set of holomorphic solutions when the parameter $\epsilon$ belongs to a good covering of the
origin in $\mathbb{C}^{\ast}$}

Let $n \geq 1$ and $\iota \geq 2$ be integers. We consider two collections of open bounded sectors
$\{ \mathcal{E}_{HJ_n}^{k} \}_{k \in \llbracket -n, n \rrbracket}$, $\{ \mathcal{E}_{S_{d_p}} \}_{0 \leq p \leq \iota-1}$ and
a bounded sector $\mathcal{T}$ with bisecting direction $d=0$ together with a family of functions $w_{j}(\tau,\epsilon)$,
$0 \leq j \leq S-1$ for which the data $(w_{j}(\tau,\epsilon), \mathcal{E}_{HJ_n}^{k}, \mathcal{T})$ are
$(\underline{\sigma}',\underline{\varsigma}')-$admissible in the sense of Definition 3 for some tuples
$\underline{\sigma}' = (\sigma_{1}',\sigma_{2}',\sigma_{3}')$
and $\underline{\varsigma}' = (\sigma_{1}',\varsigma_{2}',\varsigma_{3}')$ (where $\sigma_{1}'>0$, $\sigma_{j}',\varsigma_{j}'>0$ for $j=2,3$)
for $k \in \llbracket -n,n \rrbracket$
and $(w_{j}(\tau,\epsilon), \mathcal{E}_{S_{d_p}}, \mathcal{T})$ are $\sigma_{1}'-$admissible according to Definition 4 for
$0 \leq p \leq \iota-1$.\medskip

\noindent We make the next additional assumptions:\medskip

\noindent 1) For each $0 \leq j \leq S-1$, the map $\tau \mapsto w_{j}(\tau,\epsilon)$ restricted to $S_{d_p}$, for $0 \leq p \leq \iota-1$
and to $\mathring{HJ}_{n}$ is the analytic continuation of a common holomorphic function $\tau \mapsto w_{j}(\tau,\epsilon)$
on $D(0,r)$, for all $\epsilon \in \dot{D}(0,\epsilon_{0})$. Moreover, the radius $r$ is taken small enough such that
$D(0,r) \cap \{ z \in \mathbb{C} / \mathrm{Re}(z) \leq 0 \} \subset J_{0}$.\\
2) We assume that $d_{p} < d_{p+1}$ and $S_{d_p} \cap S_{d_{p+1}} = \emptyset$ for $0 \leq p \leq \iota-2$.\\
3) We take for granted that\\
3.1) $\mathcal{E}_{HJ_n}^{k} \cap \mathcal{E}_{HJ_n}^{k+1} \neq \emptyset$ for $-n \leq k \leq n-1$.\\
3.2) $\mathcal{E}_{S_{d_{p+1}}} \cap \mathcal{E}_{S_{d_p}} \neq \emptyset$ for $0 \leq p \leq \iota-2$.\\
3.3) $\mathcal{E}_{HJ_n}^{-n} \cap \mathcal{E}_{S_{d_0}} \neq \emptyset$ and
$\mathcal{E}_{HJ_n}^{n} \cap \mathcal{E}_{S_{d_{\iota-1}}} \neq \emptyset$.\\
4) We ask that
$$ ( \bigcup_{k=-n}^{n} \mathcal{E}_{HJ_n}^{k} ) \cup ( \bigcup_{p=0}^{\iota-1} \mathcal{E}_{S_{d_p}} ) = \mathcal{U}
\setminus \{ 0 \} $$
where $\mathcal{U}$ stands for some neighborhood of 0 in $\mathbb{C}$.\\
5) Among the set of sectors $\underline{\mathcal{E}} = \{ \mathcal{E}_{HJ_n}^{k} \}_{k \in \llbracket -n, n \rrbracket} \bigcup
\{ \mathcal{E}_{S_{d_p}} \}_{0 \leq p \leq \iota-1}$, every tuple of three sectors has empty intersection.\medskip

In the literature, when the requirements 3),4) and 5) hold, the set $\underline{\mathcal{E}}$ is called a good covering in
$\mathbb{C}^{\ast}$, see for instance \cite{ba1} or \cite{hssi}. An example of a good covering for $n=1$ and $\iota=2$ is displayed in Figure~\ref{fig3}

\begin{figure}
	\centering
		\includegraphics[width=0.4\textwidth]{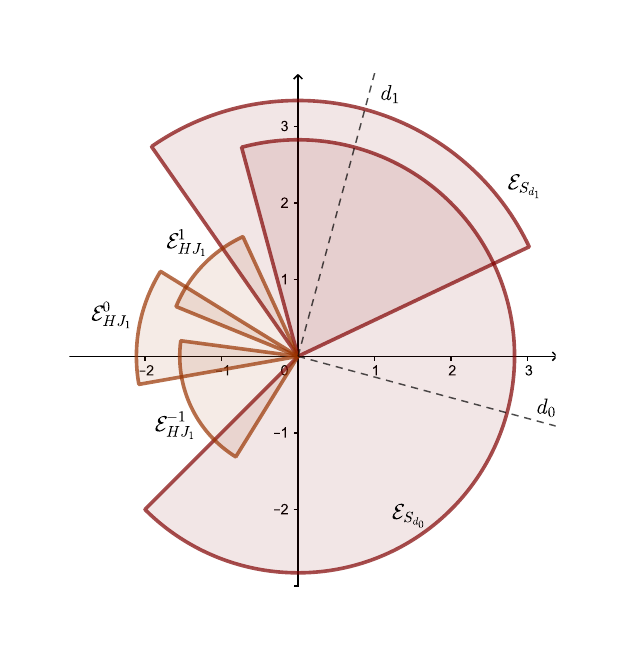}
		\caption{Example of good covering, $n=1$ and $\iota=2$}
		\label{fig3}
\end{figure}

We can state the first main result of our work.

\begin{theo}
Under the claim that the control on the initial data (\ref{norm_w_initial_small}) in Proposition 10 and
(\ref{norm_Sdp_initial_wj}) in Proposition 11 holds together with the restrictions (\ref{cond_SPCP_first}),
(\ref{xi_larger_sigma}), the next statements come forth.

1) The Cauchy problem (\ref{SPCP_first}), (\ref{SPCP_first_i_d}) with initial data given by (\ref{SPCP_first_i_d_k}) has a bounded holomorphic
solution $u_{\mathcal{E}_{HJ_n}^{k}}(t,z,\epsilon)$ on a domain
$(\mathcal{T} \cap D(0,r_{\mathcal{T}})) \times D(0,\delta \delta_{1}) \times \mathcal{E}_{HJ_n}^{k}$ for some radius $r_{\mathcal{T}}>0$
taken small enough. Furthermore, $u_{\mathcal{E}_{HJ_n}^{k}}$ can be written as a special Laplace transform (\ref{u_E_HJn_k_Laplace}) of a
function $w_{HJ_{n}}(\tau,z,\epsilon)$ fulfilling the bounds (\ref{bds_WHJn_Hk}), (\ref{bds_WHJn_Jk}). Besides, the
logarithmic tameness constraints
(\ref{log_flat_difference_uk_plus_1_minus_uk_HJn}) hold for all consecutive sectors $\mathcal{E}_{HJ_n}^{k}$,
$\mathcal{E}_{HJ_n}^{k+1}$ for $-n \leq k \leq n-1$.

2) The Cauchy problem (\ref{SPCP_first}), (\ref{SPCP_first_i_d}) for initial conditions (\ref{Laplace_varphi_j_along_halfline}) owns
a solution $u_{\mathcal{E}_{S_{d_p}}}(t,z,\epsilon)$ which is bounded and holomorphic on
$(\mathcal{T} \cap D(0,r_{\mathcal{T}})) \times D(0,\delta \delta_{1}) \times \mathcal{E}_{S_{d_p}}$ for some well chosen radius
$r_{\mathcal{T}}>0$. Moreover, $u_{\mathcal{E}_{S_{d_p}}}$ can be expressed through a Laplace transform (\ref{u_E_Sdp_Laplace})
of a function $w_{S_{d_p}}(\tau,z,\epsilon)$ that undergoes (\ref{bds_w_Sdp}). Conjointly, the flatness estimates
(\ref{difference_u_Sdp_exp_small}) occur for any neighboring sectors $\mathcal{E}_{S_{d_{p+1}}}$,
$\mathcal{E}_{S_{d_{p}}}$, $0 \leq p \leq \iota-2$.

3) Provided that $r_{\mathcal{T}}>0$ is close to 0, there exist constants $M_{n,1},M_{n,2}>0$ (independent of $\epsilon$) with
\begin{equation}
| u_{\mathcal{E}_{HJ_n}^{-n}}(t,z,\epsilon) - u_{\mathcal{E}_{S_{d_0}}}(t,z,\epsilon) | \leq M_{n,1} \exp( -\frac{M_{n,2}}{|\epsilon|} )
\label{difference_u_HJn_Sd0}
\end{equation}
for all $\epsilon \in \mathcal{E}_{HJ_n}^{-n} \cap \mathcal{E}_{S_{d_0}}$ and
\begin{equation}
| u_{\mathcal{E}_{HJ_n}^{n}}(t,z,\epsilon) - u_{\mathcal{E}_{S_{d_{\iota-1}}}}(t,z,\epsilon) | \leq M_{n,1} \exp( -\frac{M_{n,2}}{|\epsilon|} )
\label{difference_u_HJn_Sdiota}
\end{equation}
for all $\epsilon \in \mathcal{E}_{HJ_n}^{n} \cap \mathcal{E}_{S_{d_{\iota-1}}}$ whenever $t \in \mathcal{T} \cap D(0, r_{\mathcal{T}})$ and
$z \in D(0,\delta \delta_{1})$.
\end{theo}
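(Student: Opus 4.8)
The plan is to obtain the three assertions in order, the first two being essentially a repetition of Propositions 10 and 11. Indeed, once the admissibility hypotheses collected at the beginning of Subsection 3.3 and the smallness constraints (\ref{norm_w_initial_small}), (\ref{norm_Sdp_initial_wj}) are granted together with (\ref{cond_SPCP_first}), (\ref{xi_larger_sigma}), Proposition 10 yields the solutions $u_{\mathcal{E}_{HJ_n}^{k}}$ of (\ref{SPCP_first}), (\ref{SPCP_first_i_d}), (\ref{SPCP_first_i_d_k}) as special Laplace transforms (\ref{u_E_HJn_k_Laplace}) of $w_{HJ_n}$ obeying (\ref{bds_WHJn_Hk}), (\ref{bds_WHJn_Jk}), together with the logarithmic tameness (\ref{log_flat_difference_uk_plus_1_minus_uk_HJn}) on consecutive sectors; this is point 1). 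Likewise, Proposition 11 produces the solutions $u_{\mathcal{E}_{S_{d_p}}}$ of (\ref{SPCP_first}), (\ref{SPCP_first_i_d}) for the data (\ref{Laplace_varphi_j_along_halfline}) as ordinary Laplace transforms (\ref{u_E_Sdp_Laplace}) of $w_{S_{d_p}}$ obeying (\ref{bds_w_Sdp}), together with the exponential flatness (\ref{difference_u_Sdp_exp_small}); this is point 2). Hence the genuinely new work is point 3), the bridging estimates at the two ends $\mathcal{E}_{HJ_n}^{-n}\cap\mathcal{E}_{S_{d_0}}$ and $\mathcal{E}_{HJ_n}^{n}\cap\mathcal{E}_{S_{d_{\iota-1}}}$; I describe the argument for the first, the second being identical after the obvious relabeling.

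The first step is to check that $w_{HJ_n}$ and $w_{S_{d_0}}$ are restrictions of one and the same holomorphic function. By assumption 1) of Subsection 3.3 the Cauchy data $w_j(\tau,\epsilon)$, $0\le j\le S-1$, all arise by analytic continuation of a common germ holomorphic on $D(0,r)$, and the recursion (\ref{recursion_w_beta}) is a pointwise identity valid wherever $P(\tau)\neq 0$, hence on the connected set $D(0,r)\cup\mathring{HJ}_{n}\cup S_{d_0}$ since the roots of $P$ lie in $\mathbb{C}_{+}$ and avoid $\bar D(0,r)\cup\bar S_{d_0}$. An induction on $\beta$ then shows that each $w_{\beta}(\tau,\epsilon)$ extends holomorphically to that set and that the series $w_{HJ_n}$ and $w_{S_{d_0}}$ coincide on the overlap $D(0,r)\cap\mathring{HJ}_{n}\subset J_{0}$. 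Therefore $w_{HJ_n}$ and $w_{S_{d_0}}$ patch into a single holomorphic function $W(\tau,z,\epsilon)$ on $(D(0,r)\cup\mathring{HJ}_{n}\cup S_{d_0})\times D(0,\delta\delta_1)\times\dot D(0,\epsilon_0)$, still satisfying (\ref{bds_WHJn_Hk}), (\ref{bds_WHJn_Jk}) on $HJ_n$ and (\ref{bds_w_Sdp}) on $S_{d_0}\cup D(0,r)$; moreover $W(\tau,z,\epsilon)/\tau$ has a removable singularity at $\tau=0$ because $|W|\le C|\tau|$ near $0$.

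The second step is a path deformation. Fix $\epsilon\in\mathcal{E}_{HJ_n}^{-n}\cap\mathcal{E}_{S_{d_0}}$, $t\in\mathcal{T}\cap D(0,r_{\mathcal{T}})$, $z\in D(0,\delta\delta_1)$, and choose $0<\rho<r$. Let $Q=\rho e^{\sqrt{-1}\mathrm{arg}(A_{-n})}$ be the point of $P_{-n,1}$ at distance $\rho$ from $0$ and $R=\rho e^{\sqrt{-1}\gamma_{d_0}}$ the point of $L_{\gamma_{d_0}}$ at distance $\rho$. From (\ref{choice_a_k}) and (\ref{relation_gamma_epsilon_t}) one gets $|\mathrm{arg}(A_{-n})-\gamma_{d_0}|<\pi$, so the short arc $\mathcal{C}$ of $\{|u|=\rho\}$ joining $Q$ to $R$ lies in $D(0,r)$, and along $\mathcal{C}$ the quantity $\mathrm{arg}(u)-\mathrm{arg}(\epsilon)-\mathrm{arg}(t)$ remains in $(-\tfrac{\pi}{2}+\eta',\tfrac{\pi}{2}-\eta')$ for some $\eta'>0$. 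Applying the Cauchy theorem to $u\mapsto W(u,z,\epsilon)\exp(-\tfrac{u}{\epsilon t})/u$ on the circular sector bounded by $[0,Q]$, $\mathcal{C}$ and $[R,0]$, and writing $P_{-n}=[0,Q]\cup P'$ with $P'=[Q,A_{-n}]\cup\{A_{-n}-s:s\ge 0\}$, $L_{\gamma_{d_0}}=[0,R]\cup L'$ with $L'=[R,+\infty)e^{\sqrt{-1}\gamma_{d_0}}$, one obtains
$$ u_{\mathcal{E}_{HJ_n}^{-n}}(t,z,\epsilon)-u_{\mathcal{E}_{S_{d_0}}}(t,z,\epsilon)=-\int_{\mathcal{C}}W e^{-\frac{u}{\epsilon t}}\frac{du}{u}+\int_{P'}W e^{-\frac{u}{\epsilon t}}\frac{du}{u}-\int_{L'}W e^{-\frac{u}{\epsilon t}}\frac{du}{u}. $$

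The last step estimates the three integrals, choosing $r_{\mathcal{T}}$ small so that $|t|\le r_{\mathcal{T}}$ forces $\delta_1/|t|>\sigma_1\zeta(b)+\delta_2$. On $\mathcal{C}$, $|W|\le C\rho\exp(\tfrac{\sigma_1}{|\epsilon|}\zeta(b)\rho)$ by (\ref{bds_w_Sdp}) while the angular control gives $|e^{-u/(\epsilon t)}|\le\exp(-\rho\delta_1/(|\epsilon||t|))$, so this term is $\le M\exp(-\delta_2\rho/|\epsilon|)$. On $P'$ one argues as in point 2) of Proposition 10: on the bounded segment $[Q,A_{-n}]$ the rough bound (\ref{bds_WHJn_Jk}) is a constant and $|u|\ge\rho$ pushes the kernel below $\exp(-\delta_2\rho/|\epsilon|)$, while on $\{A_{-n}-s\}\subset H_{-n}$ the super exponential decay in (\ref{bds_WHJn_Hk}) dominates and $|A_{-n}-s|\ge K_{A_{-n}}(|A_{-n}|+s)$ makes the contribution $\le\tfrac{|\epsilon|}{\delta_2 K_{A_{-n}}}\exp(-\delta_2 K_{A_{-n}}|A_{-n}|/|\epsilon|)$. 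On $L'\subset S_{d_0}$, (\ref{bds_w_Sdp}) against $|e^{-u/(\epsilon t)}|\le\exp(-|u|\delta_1/(|\epsilon||t|))$ integrates to $\le\tfrac{C|\epsilon|}{\delta_2}\exp(-\delta_2\rho/|\epsilon|)$. Summing these yields (\ref{difference_u_HJn_Sd0}), and (\ref{difference_u_HJn_Sdiota}) follows verbatim with $-n$, $\gamma_{d_0}$, $S_{d_0}$ replaced by $n$, $\gamma_{d_{\iota-1}}$, $S_{d_{\iota-1}}$. The main obstacle is precisely the angular bookkeeping of the second step: one must ensure that $\mathcal{C}$ and the segment $[Q,A_{-n}]$ remain in the domain of $W$ and keep $\cos(\mathrm{arg}(u)-\mathrm{arg}(\epsilon)-\mathrm{arg}(t))$ bounded below by a positive constant, which is where the constraints (\ref{rel_t_epsilon_mathcal_E_T}), (\ref{choice_a_k}), (\ref{relation_gamma_epsilon_t}) and the smallness of $r_{\mathcal{T}}$ are essential.
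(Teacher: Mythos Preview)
Your proof is correct and follows essentially the same approach as the paper. For points 1) and 2) you invoke Propositions 10 and 11 exactly as the paper does; for point 3) your path deformation into $-\int_{\mathcal C}+\int_{P'}-\int_{L'}$ is the same four-piece decomposition the paper uses (arc $C_{\gamma_{d_0},P_{-n,1},r/2}$, segment $P_{-n,1,r/2}$, horizontal halfline $P_{-n,2}$, tail $L_{\gamma_{d_0},r/2}$), merely with $\rho$ in place of $r/2$ and with the two parts of $P'$ grouped before being estimated separately, and the individual bounds match the paper's $J_1$--$J_4$ estimates.
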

\begin{proof}
The first two points 1) and 2) merely rephrase the statements already obtained in Propositions 10 and 11. It remains to show that the two
exponential bounds (\ref{difference_u_HJn_Sd0}) and (\ref{difference_u_HJn_Sdiota}) hold. We aim our attention only at the first estimates
(\ref{difference_u_HJn_Sd0}), the second ones (\ref{difference_u_HJn_Sdiota}) being of the same nature.

By construction, according to our additional assumption 1) described above, the functions
$\tau \mapsto w_{HJ_n}(\tau,z,\epsilon)$ on $\mathring{HJ}_{n}$ and $\tau \mapsto w_{S_{d_0}}(\tau,z,\epsilon)$
on $S_{d_0}$ are the restrictions of an holomorphic function denoted $\tau \mapsto w_{HJ_{n},S_{d_0}}(\tau,z,\epsilon)$
on $\mathring{HJ}_{n} \cup D(0,r) \cup S_{d_0}$, for all $z \in D(0,\delta \delta_{1})$, $\epsilon \in \dot{D}(0,\epsilon_{0})$.
As a consequence, we can realize a path deformation within the domain $\mathring{HJ}_{n} \cup D(0,r) \cup S_{d_0}$ and break up the difference
$u_{\mathcal{E}_{HJ_n}^{-n}} - u_{\mathcal{E}_{S_{d_0}}}$ into a sum of four path integrals
\begin{multline}
u_{\mathcal{E}_{HJ_n}^{-n}}(t,z,\epsilon) - u_{\mathcal{E}_{S_{d_0}}}(t,z,\epsilon)
= -\int_{L_{\gamma_{d_0},r/2}} w_{S_{d_0}}(u,z,\epsilon) \exp( -\frac{u}{\epsilon t} ) \frac{du}{u}\\
+ \int_{C_{\gamma_{d_0},P_{-n,1},r/2}} w_{S_{d_0}}(u,z,\epsilon) \exp( -\frac{u}{\epsilon t} ) \frac{du}{u}
+ \int_{P_{-n,1,r/2}} w_{HJ_n}(u,z,\epsilon) \exp( -\frac{u}{\epsilon t} ) \frac{du}{u} \\+
\int_{P_{-n,2}} w_{HJ_n}(u,z,\epsilon) \exp( -\frac{u}{\epsilon t} ) \frac{du}{u} \label{difference_u_HJn_Sd0_decomposition}
\end{multline}
where $L_{\gamma_{d_0},r/2} = [r/2,+\infty) \exp( \sqrt{-1} \gamma_{d_0})$ is an unbounded segment,
$C_{\gamma_{d_0},P_{-n,1},r/2}$ represents an arc of circle with radius $r/2$ joining the two points
$(r/2)\exp( \sqrt{-1}\gamma_{d_0})$ and \\
$(r/2)\exp( \sqrt{-1} \mathrm{arg}(A_{-n}))$, $P_{-n,1,r/2}$ stands for the segment
linking $(r/2)\exp( \sqrt{-1} \mathrm{arg}(A_{-n}))$ and $A_{-n}$ and finally as introduced earlier $P_{-n,2}$ denotes the horizontal
line $\{ A_{-n} - s / s \geq 0 \}$. An illustrative example is shown in Figure~\ref{fig4}.

\begin{figure}
	\centering
		\includegraphics[width=0.5\textwidth]{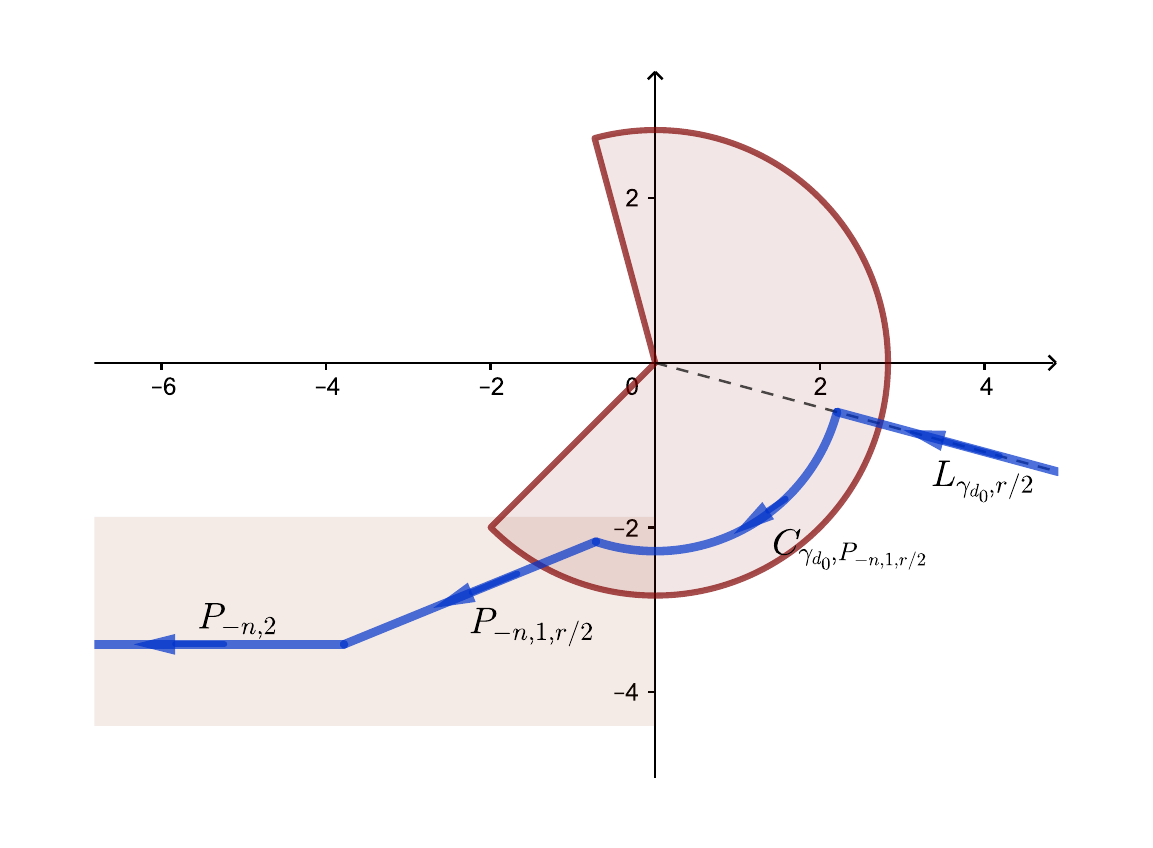}
		\caption{Deformation of the integration path}
		\label{fig4}
\end{figure}

Let
$$ J_{1} = \left| \int_{L_{\gamma_{d_0},r/2}} w_{S_{d_0}}(u,z,\epsilon) \exp( -\frac{u}{\epsilon t} ) \frac{du}{u} \right|.$$
In accordance with the bounds (\ref{I1_Sdp}), we can select $\delta_{2}>0$ and find $\delta_{1}>0$ with a constant
$C_{S_{d_0}}>0$ (independent of $\epsilon$) for which
\begin{equation}
J_{1} \leq C_{S_{d_0}} \frac{|\epsilon|}{\delta_{2}} \exp( -\frac{r \delta_{2}}{2 |\epsilon|} ) \label{J1_bds}
\end{equation}
holds whenever $t \in \mathcal{T} \cap D(0, \frac{\delta_{1}}{\delta_{2} + \sigma_{1} \zeta(b)})$ and
$\epsilon \in \mathcal{E}_{HJ_n}^{-n} \cap \mathcal{E}_{S_{d_0}}$.

Now, consider
$$ J_{2} = \left| \int_{C_{\gamma_{d_0},P_{-n,1},r/2}} w_{S_{d_0}}(u,z,\epsilon) \exp( -\frac{u}{\epsilon t} ) \frac{du}{u} \right|. $$
The function $w_{S_{d_0}}(\tau,z,\epsilon)$ suffers both the bounds (\ref{bds_w_Sdp}) since
$C_{\gamma_{d_0},P_{-n,1},r/2} \subset D(0,r)$ and also (\ref{bds_WHJn_Jk}) when
$\tau \in C_{\gamma_{d_0},P_{-n,1},r/2} \cap J_{0}$. We deduce a constant $C_{J_{0},S_{d_0}}>0$ (independent of $\epsilon$) such that
$$ |w_{S_{d_0}}(\tau,z,\epsilon)| \leq C_{J_{0},S_{d_0}} |\tau| \exp( \frac{\sigma_{1}}{|\epsilon|} \zeta(b) |\tau| ) $$
for all $\tau \in C_{\gamma_{d_0},P_{-n,1},r/2}$, $z \in D(0,\delta \delta_{1})$ and $\epsilon \in
\dot{D}(0,\epsilon_{0})$. Hence,
$$ J_{2} \leq C_{J_{0},S_{d_0}} \left| \int_{\mathrm{arg}(A_{-n})}^{\gamma_{d_0}} \frac{r}{2}
\exp( \frac{\sigma_{1}}{|\epsilon|} \zeta(b) \frac{r}{2} )
\exp( -\frac{r/2}{|\epsilon t|} \cos( \theta - \mathrm{arg}(t) - \mathrm{arg}(\epsilon) ) ) d\theta \right|.
$$
The sectors $\mathcal{E}_{HJ_n}^{-n}$ and $\mathcal{E}_{S_{d_0}}$ are suitably chosen in a way that
$\cos( \theta - \mathrm{arg}(t) - \mathrm{arg}(\epsilon) ) \geq \delta_{1}$ for some constant $\delta_{1}>0$, when
$\epsilon \in \mathcal{E}_{HJ_n}^{-n} \cap \mathcal{E}_{S_{d_0}}$, for $t \in \mathcal{T}$ and
$\theta \in (\mathrm{arg}(A_{-n}),\gamma_{d_0})$. As an issue,
\begin{equation}
J_{2} \leq C_{J_{0},S_{d_0}} |\gamma_{d_0} - \mathrm{arg}(A_{-n})| \frac{r}{2} \exp(- \frac{r \delta_{2}}{2 |\epsilon|} ) \label{J2_bds}
\end{equation}
when $\epsilon \in \mathcal{E}_{HJ_n}^{-n} \cap \mathcal{E}_{S_{d_0}}$, $t \in \mathcal{T} \cap
D(0, \frac{\delta_1}{\sigma_{1} \zeta(b) + \delta_{2}})$, for some fixed $\delta_{2}>0$.

We put
$$ J_{3} = \left| \int_{P_{-n,1,r/2}} w_{HJ_n}(u,z,\epsilon) \exp( -\frac{u}{\epsilon t} ) \frac{du}{u} \right|.$$
Owing to the fact that the path $P_{-n,1,r/2}$ lies across the domains $H_{q},J_{q}$ for $-n \leq q \leq 0$, the bounds
(\ref{bds_WHJn_Hk}) and (\ref{bds_WHJn_Jk}) entail that
$$ |w_{HJ_n}(\tau,z,\epsilon) \leq \max_{q \in \llbracket -n,0 \rrbracket}(C_{H_{q}},C_{J_{q}})
|\tau| \exp \left( \frac{\sigma_{1}}{|\epsilon|}\zeta(b) |\tau| + \varsigma_{2} \zeta(b) \exp( \varsigma_{3}|\tau| ) \right)
$$
for $\tau \in P_{-n,1,r/2}$, all $z \in D(0,\delta \delta_{1})$, all $\epsilon \in \dot{D}(0,\epsilon_{0})$. Therefore,
\begin{multline*}
J_{3} \leq \int_{r/2}^{|A_{-n}|} \max_{q \in \llbracket -n,0 \rrbracket}(C_{H_{q}},C_{J_{q}})
\rho \exp \left( \frac{\sigma_{1}}{|\epsilon|} \zeta(b) \rho + \varsigma_{2} \zeta(b) \exp( \varsigma_{3} \rho ) \right)\\
\times
\exp( -\frac{\rho}{|\epsilon t|} \cos( \mathrm{arg}(A_{-n}) - \mathrm{arg}(\epsilon t) ) ) \frac{d\rho}{\rho}.
\end{multline*}
Besides, according to (\ref{choice_a_k}), there exists some $\delta_{1}>0$ with
$\cos( \mathrm{arg}(A_{-n}) - \mathrm{arg}(\epsilon t) ) \geq \delta_{1}$ for
$\epsilon \in \mathcal{E}_{HJ_n}^{-n} \cap \mathcal{E}_{S_{d_0}}$. Let $\delta_{2}>0$ and take $t \in \mathcal{T}$ with
$|t| \leq \frac{\delta_{1}}{\delta_{2} + \sigma_{1} \zeta(b) }$. We obtain
\begin{multline}
J_{3} \leq \max_{q \in \llbracket -n,0 \rrbracket}(C_{H_{q}},C_{J_{q}})
\int_{r/2}^{|A_{-n}|} \exp( \varsigma_{2} \zeta(b) \exp( \varsigma_{3} \rho ) ) \exp( -\frac{\rho}{|\epsilon|} \delta_{2} ) d\rho\\
\leq \max_{q \in \llbracket -n,0 \rrbracket}(C_{H_{q}},C_{J_{q}})
\exp( \varsigma_{2} \zeta(b) \exp( \varsigma_{3}|A_{-n}| ))
\frac{|\epsilon|}{\delta_{2}} \exp( -\frac{r}{2 |\epsilon|} \delta_{2} ) \label{J3_bds}
\end{multline}
provided that $\epsilon \in \mathcal{E}_{HJ_n}^{-n} \cap \mathcal{E}_{S_{d_0}}$.

Ultimately, let
$$ J_{4} = \left| \int_{P_{-n,2}} w_{HJ_n}(u,z,\epsilon) \exp( -\frac{u}{\epsilon t} ) \frac{du}{u} \right|. $$
For the reason that the path $P_{-n,2}$ belongs to the strip $H_{-n}$, we can use the estimates
(\ref{bds_WHJn_Hk}) in order to get
\begin{multline*}
J_{4} \leq \int_{0}^{+\infty} C_{H_{-n}} |A_{-n} - s|
\exp \left( \frac{\sigma_{1}}{|\epsilon|} \zeta(b) |A_{-n}-s| - \sigma_{2}(M - \zeta(b))\exp( \sigma_{3}|A_{-n} - s| ) \right)\\
\times \exp \left( - \frac{|A_{-n} - s|}{|\epsilon t|} \cos( \mathrm{arg}(A_{-n} - s) - \mathrm{arg}(\epsilon) - \mathrm{arg}(t) ) \right)
\frac{ds}{|A_{-n} - s|}.
\end{multline*}
From the controlled variation of arguments (\ref{argument_ak_minus_s}), we can pick up some constant $\delta_{1}>0$ for which
$$ \cos( \mathrm{arg}(A_{-n} - s) - \mathrm{arg}(\epsilon) - \mathrm{arg}(t) ) > \delta_{1} $$
for $\epsilon \in \mathcal{E}_{HJ_n}^{-n} \cap \mathcal{E}_{S_{d_0}}$ and $t \in \mathcal{T}$. We take $\delta_{2}>0$ and restrict $t$ inside
$\mathcal{T}$ in a way that $|t| \leq \frac{\delta_{1}}{\delta_{2} + \sigma_{1}\zeta(b)}$. Besides, we can find a constant
$K_{A_{-n}}>0$ (depending on $A_{-n}$) such that
$$ |A_{-n} - s| \geq K_{A_{-n}} (|A_{-n}| + s) $$
for all $s \geq 0$. Henceforth, we obtain
\begin{multline}
J_{4} \leq C_{H_{-n}} \int_{0}^{+\infty} \exp \left( - \sigma_{2}(M - \zeta(b)) \exp( \sigma_{3}|A_{-n} - s| ) \right)
\exp( - \frac{|A_{-n} -s|}{|\epsilon|} \delta_{2} ) ds\\
\leq C_{H_{-n}} \int_{0}^{+\infty} \exp \left( -\frac{K_{A_{-n}} \delta_{2}}{|\epsilon|} (|A_{-n}| + s) \right) ds
= \frac{C_{H_{-n}}|\epsilon|}{K_{A_{-n}} \delta_{2}} \exp \left( -\frac{K_{A_{-n}} \delta_{2} |A_{-n}|}{|\epsilon|} \right)
\label{J4_bds}
\end{multline}
for all $\epsilon \in \mathcal{E}_{HJ_n}^{-n} \cap \mathcal{E}_{S_{d_0}}$.

In conclusion, bearing in mind the splitting (\ref{difference_u_HJn_Sd0_decomposition}) and collecting the upper bounds
(\ref{J1_bds}), (\ref{J2_bds}), (\ref{J3_bds}) and (\ref{J4_bds}) yields the forseen estimates (\ref{difference_u_HJn_Sd0}).
\end{proof}

\section{A second auxiliary convolution Cauchy problem}

\subsection{Banach spaces of holomorphic functions with exponential growth on $L-$shaped domains}

We keep the same notations as in Section 3.1. We consider a closed horizontal strip $H$ as defined in (\ref{defin_strip_H}) with $a \neq 0$
which belongs to the set of strips $\{ H_{k} \}_{k \in \llbracket -n,n \rrbracket}$ described at the beginning of the subsection 3.1
and we single out a closed
rectangle $R_{a,b,\upsilon}$ defined as follows:\\
If $a>0$, then
\begin{equation}
R_{a,b,\upsilon} = \{ z \in \mathbb{C} / \upsilon \leq \mathrm{Re}(z) \leq 0, 0 \leq \mathrm{Im}(z) \leq b \} \label{R_ab_u_a_positive}
\end{equation} 
and if $a < 0$
\begin{equation}
R_{a,b,\upsilon} = \{ z \in \mathbb{C} / \upsilon \leq \mathrm{Re}(z) \leq 0, a \leq \mathrm{Im}(z) \leq 0 \} \label{R_ab_u_a_negative}
\end{equation}
for some negative real number $\upsilon < 0$. We denote $RH_{a,b,\upsilon}$ the $L-$shaped domain $H \cup R_{a,b,\upsilon}$. See Figure~\ref{fig51}.

\begin{figure}
	\centering
		\includegraphics[width=0.4\textwidth]{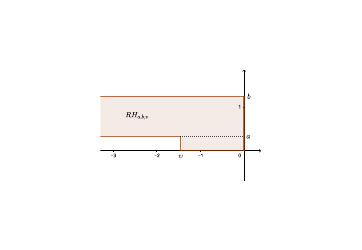}
		\includegraphics[width=0.4\textwidth]{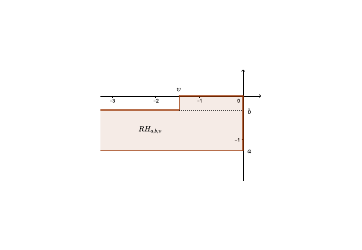}
		\caption{Examples of sets $RH_{a,b,\upsilon}=H\cup R_{a,b,\upsilon}$}
		\label{fig51}
\end{figure}

\begin{defin}
Let $\sigma_{1}>0$ be a positive real number and $\beta \geq 0$ be an integer. Let $\epsilon \in \dot{D}(0,\epsilon_{0})$. We set
$EG_{(\beta,\sigma_{1},RH_{a,b,\upsilon},\epsilon)}$ as the vector space of holomorphic functions $v(\tau)$ on the interior domain
$\mathring{RH}_{a,b,\upsilon}$, continuous on $RH_{a,b,\upsilon}$ such that the norm
$$ ||v(\tau)||_{(\beta,\sigma_{1},RH_{a,b,\upsilon},\epsilon)} = \sup_{\tau \in RH_{a,b,\upsilon}}
\frac{|v(\tau)|}{|\tau|} \exp \left( -\frac{\sigma_{1}}{|\epsilon|}r_{b}(\beta)|\tau| \right) $$
is finite. Let us take some positive real number $\delta>0$. We define
$EG_{(\sigma_{1},RH_{a,b,\upsilon},\epsilon,\delta)}$ as the vector space of all formal series
$v(\tau,z) = \sum_{\beta \geq 0} v_{\beta}(\tau) z^{\beta}/\beta!$ with coefficients $v_{\beta}(\tau)$ inside
$EG_{(\beta,\sigma_{1},RH_{a,b,\upsilon},\epsilon)}$ for all $\beta \geq 0$ and for which the norm
$$ ||v(\tau,z)||_{(\sigma_{1},RH_{a,b,\upsilon},\epsilon,\delta)} = \sum_{\beta \geq 0}
||v_{\beta}(\tau)||_{(\beta,\sigma_{1},RH_{a,b,\upsilon},\epsilon)} \frac{\delta^{\beta}}{\beta !}$$
is finite. It turns out that $EG_{(\sigma_{1},RH_{a,b,\upsilon},\epsilon,\delta)}$ endowed with the latter norm defines a Banach space.
\end{defin}
In the next proposition, we testify that the formal series belonging to the Banach space discussed above represent holomorphic functions
that are convergent in the vicinity of 0 w.r.t $z$ and with exponential growth on $RH_{a,b,\upsilon}$ regarding $\tau$. Its proof follows
the one of Proposition 1 in a straightforward manner.
\begin{prop} Let $v(\tau,z)$ chosen in $EG_{(\sigma_{1},RH_{a,b,\upsilon},\epsilon,\delta)}$. Take some $0 < \delta_{1} < 1$. Then, one can
get a constant $C_{4}>0$ (depending on $||v||_{(\sigma_{1},RH_{a,b,\upsilon},\epsilon,\delta)}$ and $\delta_{1}$) such that
\begin{equation}
|v(\tau,z)| \leq C_{4}|\tau| \exp \left( \frac{\sigma_{1}}{|\epsilon|} \zeta(b) |\tau| \right) 
\end{equation}
for all $\tau \in RH_{a,b,\upsilon}$, all $z \in D(0,\delta_{1}\delta)$.
\end{prop}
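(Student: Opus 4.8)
The plan is to mirror, almost verbatim, the argument of Proposition~1, the simplification being that the Banach space $EG_{(\sigma_{1},RH_{a,b,\upsilon},\epsilon,\delta)}$ carries no super-exponential correction term, so only a single exponential factor must be controlled. First I would write $v(\tau,z) = \sum_{\beta \geq 0} v_{\beta}(\tau) z^{\beta}/\beta!$ with $v_{\beta} \in EG_{(\beta,\sigma_{1},RH_{a,b,\upsilon},\epsilon)}$. From the finiteness of $\|v\|_{(\sigma_{1},RH_{a,b,\upsilon},\epsilon,\delta)}$ one extracts a constant $c_{0}>0$ (depending on this norm) such that, for every $\beta \geq 0$ and every $\tau \in RH_{a,b,\upsilon}$,
$$ |v_{\beta}(\tau)| \leq c_{0}\, |\tau|\, \exp\!\Big( \frac{\sigma_{1}}{|\epsilon|} r_{b}(\beta)|\tau| \Big)\, \beta!\, \delta^{-\beta}, $$
exactly as in the proof of Proposition~1.

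Next, for $z$ with $|z|/\delta < \delta_{1} < 1$, I would sum the Taylor series termwise:
$$ |v(\tau,z)| \leq c_{0}\, |\tau| \sum_{\beta \geq 0} \exp\!\Big( \frac{\sigma_{1}}{|\epsilon|} r_{b}(\beta)|\tau| \Big) \delta_{1}^{\beta}. $$
Here I use the monotonicity $r_{b}(\beta) = \sum_{n=0}^{\beta}(n+1)^{-b} \leq \zeta(b)$, valid for all $\beta \geq 0$, which bounds each exponential factor uniformly by $\exp\big( \frac{\sigma_{1}}{|\epsilon|}\zeta(b)|\tau| \big)$. Pulling this out and summing the geometric series $\sum_{\beta \geq 0}\delta_{1}^{\beta} = 1/(1-\delta_{1})$ yields
$$ |v(\tau,z)| \leq \frac{c_{0}}{1-\delta_{1}}\, |\tau|\, \exp\!\Big( \frac{\sigma_{1}}{|\epsilon|}\zeta(b)|\tau| \Big), $$
so the claimed bound holds with $C_{4} = c_{0}/(1-\delta_{1})$. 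The same estimate shows that the series converges uniformly on compact subsets of $\{ |z| < \delta_{1}\delta \}$, whence $v$ is genuinely holomorphic in $(\tau,z)$ on $\mathring{RH}_{a,b,\upsilon} \times D(0,\delta_{1}\delta)$.

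I do not foresee any real obstacle here. Unlike the $SED$ and $SEG$ settings, where the sequences $s_{b}(\beta)$ (respectively $r_{b}(\beta)$) multiplying the double exponential had to be kept bounded away from $0$ (respectively below $\zeta(b)$) so that the super-exponential term survives in the final estimate, in the present case the only sequence entering the norm is $r_{b}(\beta)$ inside a single exponential, and its uniform bound by $\zeta(b)$ is immediate. The argument is therefore a one-line adaptation of Proposition~1, and this is exactly why the paper states its proof \emph{follows the one of Proposition~1 in a straightforward manner}.
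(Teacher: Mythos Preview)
Your proof is correct and follows exactly the approach the paper intends: the paper explicitly states that the proof of this proposition ``follows the one of Proposition~1 in a straightforward manner'' and provides no further details, so your termwise estimate using $r_{b}(\beta)\leq \zeta(b)$ and the geometric sum $\sum_{\beta\geq 0}\delta_{1}^{\beta}=1/(1-\delta_{1})$ is precisely the intended argument.
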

In the sequel, through the proposal of the next three propositions, we investigate the action of linear maps built as convolution products and
multiplication by bounded holomorphic functions on the Banach spaces defined above.

For all $\tau \in RH_{a,b,\upsilon}$, we denote
$L_{0,\tau}$ the path formed by the union of the segments $[0,c_{RH}(\tau)] \cup [c_{RH}(\tau),\tau]$, where $c_{RH}(\tau)$ is chosen in a way that
\begin{equation}
L_{0,\tau} \subset RH_{a,b,\upsilon}, \ \ c_{RH}(\tau) \in R_{a,b,\upsilon}, \ \ |c_{RH}(\tau)| \leq |\tau| \label{defin_cRH}  
\end{equation}
for all $\tau \in RH_{a,b,\upsilon}$.
\begin{prop} Let $\gamma_{0},\gamma_{1} \geq 0$ and $\gamma_{2} \geq 1$ be integers. We take for granted that
\begin{equation}
\gamma_{2} \geq b(\gamma_{0} + \gamma_{1} + 2) \label{cond_gamma012} 
\end{equation}
holds. Then, for any $\epsilon$ given in $\dot{D}(0,\epsilon_{0})$, the map
$v(\tau,z) \mapsto \tau \int_{L_{0,\tau}} (\tau - s)^{\gamma_0} s^{\gamma_1} \partial_{z}^{-\gamma_{2}} v(s,z) ds$
is a bounded linear operator from $EG_{(\sigma_{1},RH_{a,b,\upsilon},\epsilon,\delta)}$ into itself. Furthermore, we get a constant
$C_{5}>0$ (depending on $\gamma_{0},\gamma_{1},\gamma_{2}$, $\sigma_{1}$ and $b$) independent of $\epsilon$, such that
\begin{equation}
|| \tau \int_{L_{0,\tau}} (\tau - s)^{\gamma_0} s^{\gamma_1} \partial_{z}^{-\gamma_{2}} v(s,z) ds ||_{(\sigma_{1},RH_{a,b,\upsilon},\epsilon,\delta)}
\leq C_{5}|\epsilon|^{\gamma_{0}+\gamma_{1}+2} \delta^{\gamma_2} ||v(\tau,z)||_{(\sigma_{1},RH_{a,b,\upsilon},\epsilon,\delta)}
\label{norm_conv_partialz_v_C5}
\end{equation}
for all $v(\tau,z) \in EG_{(\sigma_{1},RH_{a,b,\upsilon},\epsilon,\delta)}$, all $\epsilon \in \dot{D}(0,\epsilon_{0})$.
\end{prop}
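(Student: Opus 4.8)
The plan is to mimic the structure of the proof of Proposition 2: reduce the estimate on $EG_{(\sigma_{1},RH_{a,b,\upsilon},\epsilon,\delta)}$ to a coefficient-wise lemma, and then resum. First I would expand $v(\tau,z)=\sum_{\beta\ge 0}v_{\beta}(\tau)z^{\beta}/\beta!$, so that the operator sends $v$ to $\sum_{\beta\ge\gamma_{2}}\bigl(\tau\int_{L_{0,\tau}}(\tau-s)^{\gamma_0}s^{\gamma_1}v_{\beta-\gamma_{2}}(s)\,ds\bigr)z^{\beta}/\beta!$, whence by Definition 5,
$$\Bigl\|\tau\int_{L_{0,\tau}}(\tau-s)^{\gamma_0}s^{\gamma_1}\partial_{z}^{-\gamma_{2}}v(\tau,z)\,ds\Bigr\|_{(\sigma_{1},RH_{a,b,\upsilon},\epsilon,\delta)}=\sum_{\beta\ge\gamma_{2}}\Bigl\|\tau\int_{L_{0,\tau}}(\tau-s)^{\gamma_0}s^{\gamma_1}v_{\beta-\gamma_{2}}(s)\,ds\Bigr\|_{(\beta,\sigma_{1},RH_{a,b,\upsilon},\epsilon)}\frac{\delta^{\beta}}{\beta!}.$$

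The core is a lemma producing a constant $C_{5.1}>0$ (depending on $\gamma_{0},\gamma_{1},\gamma_{2},\sigma_{1},b$) with
$$\Bigl\|\tau\int_{L_{0,\tau}}(\tau-s)^{\gamma_0}s^{\gamma_1}v_{\beta-\gamma_{2}}(s)\,ds\Bigr\|_{(\beta,\sigma_{1},RH_{a,b,\upsilon},\epsilon)}\le C_{5.1}|\epsilon|^{\gamma_0+\gamma_1+2}(\beta+1)^{b(\gamma_0+\gamma_1+2)}\|v_{\beta-\gamma_{2}}(\tau)\|_{(\beta-\gamma_{2},\sigma_{1},RH_{a,b,\upsilon},\epsilon)}.$$
To prove it, fix $\tau\in RH_{a,b,\upsilon}$ and use the decomposition $L_{0,\tau}=[0,c_{RH}(\tau)]\cup[c_{RH}(\tau),\tau]$. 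From the geometric constraint (\ref{defin_cRH}), namely $|c_{RH}(\tau)|\le|\tau|$ and $L_{0,\tau}\subset RH_{a,b,\upsilon}$, one gets $|s|\le|\tau|$ for every $s\in L_{0,\tau}$, hence $|\tau-s|\le 2|\tau|$ along the path and the length of $L_{0,\tau}$ does not exceed $3|\tau|$. Bounding $|v_{\beta-\gamma_{2}}(s)|$ by means of its norm and using that $\exp\bigl(\tfrac{\sigma_{1}}{|\epsilon|}r_{b}(\beta-\gamma_{2})|s|\bigr)$ is increasing in $|s|\le|\tau|$ yields
$$\Bigl|\tau\int_{L_{0,\tau}}(\tau-s)^{\gamma_0}s^{\gamma_1}v_{\beta-\gamma_{2}}(s)\,ds\Bigr|\le 3\cdot 2^{\gamma_0}\|v_{\beta-\gamma_{2}}(\tau)\|_{(\beta-\gamma_{2},\sigma_{1},RH_{a,b,\upsilon},\epsilon)}\,|\tau|^{\gamma_0+\gamma_1+3}\exp\Bigl(\frac{\sigma_{1}}{|\epsilon|}r_{b}(\beta-\gamma_{2})|\tau|\Bigr).$$
Dividing by $|\tau|$ and multiplying by $\exp\bigl(-\tfrac{\sigma_{1}}{|\epsilon|}r_{b}(\beta)|\tau|\bigr)$, then invoking $r_{b}(\beta)-r_{b}(\beta-\gamma_{2})\ge\gamma_{2}/(\beta+1)^{b}$ exactly as in (\ref{difference_s_b_r_b}), I am reduced to estimating $\sup_{x\ge 0}x^{\gamma_0+\gamma_1+2}\exp\bigl(-\tfrac{\sigma_{1}\gamma_{2}}{(\beta+1)^{b}}\tfrac{x}{|\epsilon|}\bigr)$; the rescaling trick of (\ref{A_1_bounds}) together with the elementary bound (\ref{xm1_expm2x<}) gives this quantity a majorant of the form (constant)$\times|\epsilon|^{\gamma_0+\gamma_1+2}(\beta+1)^{b(\gamma_0+\gamma_1+2)}$, which is the lemma.

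Finally I plug the lemma into the series, split $\delta^{\beta}=\delta^{\gamma_{2}}\delta^{\beta-\gamma_{2}}$ and $1/\beta!=\tfrac{(\beta-\gamma_{2})!}{\beta!}\cdot\tfrac{1}{(\beta-\gamma_{2})!}$, and use hypothesis (\ref{cond_gamma012}) — which forces $(\beta+1)^{b(\gamma_0+\gamma_1+2)}\le(\beta+1)^{\gamma_{2}}$ — to obtain a uniform bound $(\beta+1)^{b(\gamma_0+\gamma_1+2)}\tfrac{(\beta-\gamma_{2})!}{\beta!}\le C_{5.2}$ for all $\beta\ge\gamma_{2}$, in the same spirit as (\ref{power_beta_factorial_C12}). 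Resumming over $\beta$ produces (\ref{norm_conv_partialz_v_C5}) with $C_{5}=C_{5.1}C_{5.2}$, and the boundedness and linearity of the operator follow at once. The only genuinely delicate point, compared with Proposition 2, is the geometric control of the path $L_{0,\tau}$: one must extract from (\ref{defin_cRH}) the uniform inequalities $|s|\le|\tau|$ on $L_{0,\tau}$ and $\mathrm{length}(L_{0,\tau})\le 3|\tau|$ over the whole $L$-shaped domain, after which every remaining estimate is a rescaling of the exponential computations already performed earlier.
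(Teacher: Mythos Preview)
Your argument is correct and follows essentially the same route as the paper's proof: the same series reduction, the same coefficient-wise lemma with growth $(\beta+1)^{b(\gamma_0+\gamma_1+2)}$, the same use of $r_b(\beta)-r_b(\beta-\gamma_2)\ge\gamma_2/(\beta+1)^b$ and the rescaled exponential bound, and the same final factorial comparison under (\ref{cond_gamma012}). The only cosmetic difference is that the paper parametrizes the two segments $[0,c_{RH}(\tau)]$ and $[c_{RH}(\tau),\tau]$ separately and bounds each integral via its explicit parametrization, whereas you extract once and for all the uniform inequalities $|s|\le|\tau|$, $|\tau-s|\le 2|\tau|$ and $\mathrm{length}(L_{0,\tau})\le 3|\tau|$ from (\ref{defin_cRH}) and bound the integral in one stroke; this yields a slightly larger constant but the same structure.
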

\begin{proof}
Take $v(\tau,z) = \sum_{\beta \geq 0} v_{\beta}(\tau) z^{\beta}/\beta!$ in $EG_{(\sigma_{1},RH_{a,b,\upsilon},\epsilon,\delta)}$. In view of
Definition 5,
\begin{multline}
|| \tau \int_{L_{0,\tau}} (\tau - s)^{\gamma_0} s^{\gamma_1} \partial_{z}^{-\gamma_{2}}
v(s,z) ds ||_{(\sigma_{1},RH_{a,b,\upsilon},\epsilon,\delta)}\\
= \sum_{\beta \geq \gamma_{2}}
|| \tau \int_{L_{0,\tau}} (\tau - s)^{\gamma_0} s^{\gamma_1} v_{\beta - \gamma_{2}}(s) ds ||_{(\beta,\sigma_{1},RH_{a,b,\upsilon},\epsilon)}
\delta^{\beta}/\beta ! \label{defin_norm_convolution_partial_z_v}
\end{multline}
\begin{lemma}
One can choose a constant $C_{5.1}>0$ (depending on $\gamma_{0},\gamma_{1},\gamma_{2}$ and $\sigma_{1}$) such that
\begin{multline}
|| \tau \int_{L_{0,\tau}} (\tau - s)^{\gamma_0} s^{\gamma_1} v_{\beta - \gamma_{2}}(s) ds ||_{(\beta,\sigma_{1},RH_{a,b,\upsilon},\epsilon)}
\\
\leq C_{5.1} |\epsilon|^{\gamma_{0} + \gamma_{1} + 2}(\beta + 1)^{b(\gamma_{0}+\gamma_{1}+2)}
||v_{\beta - \gamma_{2}}(\tau) ||_{(\beta - \gamma_{2},\sigma_{1},RH_{a,b,\upsilon},\epsilon)} \label{norm_conv_v_beta_minus_gamma2}
\end{multline}
for all $\beta \geq \gamma_{2}$.
\end{lemma}
\begin{proof} By construction of $L_{0,\tau}$, we can split the integral in two parts
\begin{multline*}
\tau \int_{L_{0,\tau}} (\tau - s)^{\gamma_0} s^{\gamma_1} v_{\beta - \gamma_{2}}(s) ds =
\tau \int_{0}^{c_{RH}(\tau)} (\tau - s)^{\gamma_0} s^{\gamma_1} v_{\beta - \gamma_{2}}(s) ds \\
+
\tau \int_{c_{RH}(\tau)}^{\tau} (\tau - s)^{\gamma_0} s^{\gamma_1} v_{\beta - \gamma_{2}}(s) ds
\end{multline*}
We first provide estimates for
$$ L_{1} = || \tau \int_{0}^{c_{RH}(\tau)} (\tau - s)^{\gamma_0} s^{\gamma_1}
v_{\beta - \gamma_{2}}(s) ds ||_{(\beta,\sigma_{1},RH_{a,b,\upsilon},\epsilon)}. $$
We carry out the next factorization
\begin{multline*}
\frac{1}{|\tau|} \exp \left( -\frac{\sigma_{1}}{|\epsilon|}r_{b}(\beta) |\tau| \right) |\tau|
\left| \int_{0}^{c_{RH}(\tau)} (\tau - s)^{\gamma_0} s^{\gamma_1}
v_{\beta - \gamma_{2}}(s) ds \right|\\
= \exp \left( -\frac{\sigma_{1}}{|\epsilon|}r_{b}(\beta) |\tau| \right) \left| \int_{0}^{c_{RH}(\tau)} (\tau - s)^{\gamma_0} s^{\gamma_1}
\{ \frac{1}{|s|} \exp \left( - \frac{\sigma_{1}}{|\epsilon|}r_{b}(\beta - \gamma_{2})|s| \right) v_{\beta - \gamma_{2}}(s) \} \right. \\
\left. \times 
|s| \exp \left( \frac{\sigma_{1}}{|\epsilon|}r_{b}(\beta - \gamma_{2})|s| \right) ds \right|.
\end{multline*}
We deduce that
\begin{equation}
L_{1} \leq C_{5.1.1}(\beta,\epsilon) ||v_{\beta - \gamma_{2}}(\tau)||_{(\beta - \gamma_{2},\sigma_{1},RH_{a,b,\upsilon},\epsilon)} 
\end{equation}
where
\begin{multline*}
C_{5.1.1}(\beta,\epsilon) = \sup_{\tau \in RH_{a,b,\upsilon}}
\exp \left( -\frac{\sigma_{1}}{|\epsilon|}r_{b}(\beta) |\tau| \right) \int_{0}^{1} |\tau - c_{RH}(\tau)u|^{\gamma_0}
|c_{RH}(\tau)|^{\gamma_{1} + 2} u^{\gamma_{1}+1}\\
\times \exp \left( \frac{\sigma_{1}}{|\epsilon|}r_{b}(\beta - \gamma_{2}) |c_{RH}(\tau)u| \right) du.
\end{multline*}
As a consequence of the shape of $L_{0,\tau}$ through (\ref{defin_cRH}), according to the inequalities
(\ref{difference_s_b_r_b}), (\ref{A_1_bounds}) and taking account of the rough estimates
$|\tau - c_{RH}(\tau)u|^{\gamma_0} \leq 2^{\gamma_{0}}|\tau|^{\gamma_0}$
for $0 \leq u \leq 1$, we get
\begin{multline}
C_{5.1.1}(\beta,\epsilon) \leq 2^{\gamma_0} \sup_{\tau \in RH_{a,b,\upsilon}} |\tau|^{\gamma_{0}+\gamma_{1}+2}
\exp \left( -\frac{\sigma_{1}}{|\epsilon|}(r_{b}(\beta) - r_{b}(\beta - \gamma_{2})) |\tau| \right)\\
\leq 2^{\gamma_0} \sup_{x \geq 0} x^{\gamma_{0}+\gamma_{1}+2}
\exp \left( -\frac{\sigma_{1}}{|\epsilon|}\frac{\gamma_2}{(\beta + 1)^{b}} x \right)\\
\leq
2^{\gamma_0} |\epsilon|^{\gamma_{0}+\gamma_{1}+2}
\left( \frac{\gamma_{0}+\gamma_{1}+2}{\sigma_{1} \gamma_{2}} \right)^{\gamma_{0}+\gamma_{1}+2} \exp( -(\gamma_{0}+\gamma_{1}+2) )
(\beta + 1)^{b(\gamma_{0}+\gamma_{1}+2)} \label{C511_bds}
\end{multline}
for all $\beta \geq \gamma_{2}$, all $\epsilon \in \dot{D}(0,\epsilon_{0})$.

In a second part, we seek bounds for 
$$ L_{2} = ||\tau \int_{c_{RH}(\tau)}^{\tau} (\tau - s)^{\gamma_0}
s^{\gamma_1} v_{\beta - \gamma_{2}}(s) ds||_{(\beta,\sigma_{1},RH_{a,b,\upsilon},\epsilon)}. $$
As above, we achieve the factorization
\begin{multline*}
\frac{1}{|\tau|} \exp \left( -\frac{\sigma_{1}}{|\epsilon|}r_{b}(\beta) |\tau| \right) |\tau|
\left| \int_{c_{RH}(\tau)}^{\tau} (\tau - s)^{\gamma_0} s^{\gamma_1}
v_{\beta - \gamma_{2}}(s) ds \right|\\
= \exp \left( -\frac{\sigma_{1}}{|\epsilon|}r_{b}(\beta) |\tau| \right) \left| \int_{c_{RH}(\tau)}^{\tau} (\tau - s)^{\gamma_0} s^{\gamma_1}
\{ \frac{1}{|s|} \exp \left( - \frac{\sigma_{1}}{|\epsilon|}r_{b}(\beta - \gamma_{2})|s| \right) v_{\beta - \gamma_{2}}(s) \} \right. \\
\left. \times 
|s| \exp \left( \frac{\sigma_{1}}{|\epsilon|}r_{b}(\beta - \gamma_{2})|s| \right) ds \right|.
\end{multline*}
It follows that
\begin{equation}
L_{2} \leq C_{5.1.2}(\beta,\epsilon) ||v_{\beta - \gamma_{2}}(\tau)||_{(\beta - \gamma_{2},\sigma_{1},RH_{a,b,\upsilon},\epsilon)} 
\end{equation}
with
\begin{multline*}
C_{5.1.2}(\beta,\epsilon) = \sup_{\tau \in RH_{a,b,\upsilon}}
\exp \left( -\frac{\sigma_{1}}{|\epsilon|}r_{b}(\beta) |\tau| \right)
\int_{0}^{1} |\tau - c_{RH}(\tau)|^{\gamma_{0}+1}(1-u)^{\gamma_{0}}\\
\times |(1-u)c_{RH}(\tau) + u\tau|^{\gamma_{1}+1}
\exp \left( \frac{\sigma_{1}}{|\epsilon|}r_{b}(\beta - \gamma_{2})|(1-u)c_{RH}(\tau) + u\tau| \right) du.
\end{multline*}
By construction of the path $L_{0,\tau}$ by means of (\ref{defin_cRH}), bearing in mind
(\ref{difference_s_b_r_b}), (\ref{A_1_bounds}) and owing to the bounds
$|\tau - c_{RH}(\tau)|^{\gamma_{0}+1} \leq 2^{\gamma_{0}+1}|\tau|^{\gamma_{0}+1}$ with
$|(1-u)c_{RH}(\tau) + u\tau| \leq |\tau|$ for $0 \leq u \leq 1$, we obtain
\begin{multline}
C_{5.1.2}(\beta,\epsilon) \leq 2^{\gamma_{0}+1} \sup_{\tau \in RH_{a,b,\upsilon}} |\tau|^{\gamma_{0}+\gamma_{1}+2}
\exp \left( -\frac{\sigma_{1}}{|\epsilon|}(r_{b}(\beta) - r_{b}(\beta - \gamma_{2})) |\tau| \right)\\
\leq 2^{\gamma_{0}+1} |\epsilon|^{\gamma_{0}+\gamma_{1}+2}
\left( \frac{\gamma_{0}+\gamma_{1}+2}{\sigma_{1} \gamma_{2}} \right)^{\gamma_{0}+\gamma_{1}+2} \exp( -(\gamma_{0}+\gamma_{1}+2) )
(\beta + 1)^{b(\gamma_{0}+\gamma_{1}+2)}
\end{multline}
for all $\beta \geq \gamma_{2}$, all $\epsilon \in \dot{D}(0,\epsilon_{0})$. The lemma 10 follows.
\end{proof}
Gathering the expansion (\ref{defin_norm_convolution_partial_z_v}) and the upper bounds
(\ref{norm_conv_v_beta_minus_gamma2}), we get
\begin{multline}
|| \tau \int_{L_{0,\tau}} (\tau - s)^{\gamma_0} s^{\gamma_1} \partial_{z}^{-\gamma_{2}}
v(s,z) ds ||_{(\sigma_{1},RH_{a,b,\upsilon},\epsilon,\delta)}\\
\leq \sum_{\beta \geq \gamma_{2}} C_{5.1} |\epsilon|^{\gamma_{0}+\gamma_{1}+2}(\beta + 1)^{b(\gamma_{0}+\gamma_{1}+2)}
\frac{(\beta-\gamma_{2})!}{\beta!} ||v_{\beta - \gamma_{2}}(\tau)||_{(\beta - \gamma_{2},\sigma_{1},RH_{a,b,\upsilon},\epsilon)}
\delta^{\gamma_{2}} \frac{\delta^{\beta - \gamma_{2}}}{(\beta - \gamma_{2})!} \label{norm_conv_partialz_v_C51}
\end{multline}
Keeping in mind the guess (\ref{cond_gamma012}), we obtain a constant $C_{5.2}>0$ (depending on $\gamma_{0},\gamma_{1},\gamma_{2}$ and $b$)
for which
\begin{equation}
(\beta + 1)^{b(\gamma_{0}+\gamma_{1}+2)} \frac{(\beta - \gamma_{2})!}{\beta !} \leq C_{5.2} \label{bds_beta_gamma012}
\end{equation}
holds for all $\beta \geq \gamma_{2}$. Piling up (\ref{norm_conv_partialz_v_C51}) and (\ref{bds_beta_gamma012}) grants the result 
(\ref{norm_conv_partialz_v_C5}).
\end{proof}

\begin{prop}
Let $\gamma_{0},\gamma_{1} \geq 0$ be integers. Let $\sigma_{1},\sigma_{1}'>0$ be real numbers such that $\sigma_{1} > \sigma_{1}'$.
Then, for all $\epsilon \in \dot{D}(0,\epsilon_{0})$, the linear operator
$v(\tau,z) \mapsto \tau \int_{L_{0,\tau}} (\tau - s)^{\gamma_0}s^{\gamma_1}v(s,z) ds$ is bounded from
$(EG_{(\sigma_{1}',RH_{a,b,\upsilon},\epsilon,\delta)}, ||.||_{(\sigma_{1}',RH_{a,b,\upsilon},\epsilon,\delta)})$ into
$(EG_{(\sigma_{1},RH_{a,b,\upsilon},\epsilon,\delta)}, ||.||_{(\sigma_{1},RH_{a,b,\upsilon},\epsilon,\delta)})$. In addition, we can select
a constant $\check{C}_{5}>0$ (depending on $\gamma_{0},\gamma_{1},\sigma_{1}$ and $\sigma_{1}'$) with
\begin{equation}
|| \tau \int_{L_{0,\tau}} (\tau - s)^{\gamma_0}s^{\gamma_1}v(s,z) ds ||_{(\sigma_{1},RH_{a,b,\upsilon},\epsilon,\delta)} \leq
\check{C}_{5} |\epsilon|^{\gamma_{0}+\gamma_{1}+2} ||v(\tau,z)||_{(\sigma_{1}',RH_{a,b,\upsilon},\epsilon,\delta)}
\end{equation}
for all $v(\tau,z) \in EG_{(\sigma_{1}',RH_{a,b,\upsilon},\epsilon,\delta)}$, for all $\epsilon \in \dot{D}(0,\epsilon_{0})$.
\end{prop}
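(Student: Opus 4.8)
The plan is to mimic the proof of Proposition 15, with two simplifications: there is no $z$-antiderivative, hence no shift of the summation index $\beta$, and the decaying factor in the exponential will now be produced by the gap $\sigma_{1}-\sigma_{1}'>0$ rather than by the difference $r_{b}(\beta)-r_{b}(\beta-\gamma_{2})$. First I would expand $v(\tau,z)=\sum_{\beta\geq 0} v_{\beta}(\tau) z^{\beta}/\beta!$ and, using Definition 5, reduce the claim to a coefficientwise estimate: there exists $\check{C}_{5}>0$ (depending only on $\gamma_{0},\gamma_{1},\sigma_{1},\sigma_{1}'$) such that
$$\Big\| \tau \int_{L_{0,\tau}} (\tau-s)^{\gamma_0}s^{\gamma_1} v_{\beta}(s)\,ds \Big\|_{(\beta,\sigma_{1},RH_{a,b,\upsilon},\epsilon)} \leq \check{C}_{5}\, |\epsilon|^{\gamma_{0}+\gamma_{1}+2}\, \| v_{\beta}(\tau) \|_{(\beta,\sigma_{1}',RH_{a,b,\upsilon},\epsilon)}$$
for all $\beta\geq 0$. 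Because the index is unshifted, summing this against $\delta^{\beta}/\beta!$ immediately yields the asserted bound; in particular no analogue of the constraint (\ref{cond_gamma012}) is required.

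To prove the coefficientwise estimate I would split $L_{0,\tau}=[0,c_{RH}(\tau)]\cup[c_{RH}(\tau),\tau]$ as in Lemma 10 and treat the two pieces separately. On the first segment I parametrize $s=c_{RH}(\tau)u$ with $u\in[0,1]$, on the second $s=(1-u)c_{RH}(\tau)+u\tau$ with $u\in[0,1]$. In each case I factor out $|v_{\beta}(s)|\,|s|^{-1}\exp(-\tfrac{\sigma_{1}'}{|\epsilon|}r_{b}(\beta)|s|)\leq \|v_{\beta}\|_{(\beta,\sigma_{1}',RH_{a,b,\upsilon},\epsilon)}$, so that the remaining supremum over $\tau\in RH_{a,b,\upsilon}$ is, for the first piece,
$$C(\beta,\epsilon)=\sup_{\tau\in RH_{a,b,\upsilon}} \exp\!\Big(-\tfrac{\sigma_{1}}{|\epsilon|}r_{b}(\beta)|\tau|\Big)\int_{0}^{1} |\tau-c_{RH}(\tau)u|^{\gamma_0}\,|c_{RH}(\tau)|^{\gamma_1+2}\,u^{\gamma_1+1}\exp\!\Big(\tfrac{\sigma_{1}'}{|\epsilon|}r_{b}(\beta)|c_{RH}(\tau)u|\Big)du,$$
and an entirely similar quantity for the second piece. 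Using $|c_{RH}(\tau)|\leq |\tau|$ and $|\tau-c_{RH}(\tau)u|\leq 2|\tau|$ (respectively $|\tau-c_{RH}(\tau)|\leq 2|\tau|$ and $|(1-u)c_{RH}(\tau)+u\tau|\leq|\tau|$ for the second piece) from (\ref{defin_cRH}), I bound $C(\beta,\epsilon)\leq 2^{\gamma_0}\sup_{\tau}\, |\tau|^{\gamma_{0}+\gamma_{1}+2}\exp(-\tfrac{\sigma_{1}-\sigma_{1}'}{|\epsilon|}r_{b}(\beta)|\tau|)$.

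Finally, since $r_{b}(\beta)\geq r_{b}(0)=1$ for all $\beta\geq 0$, this last supremum is at most $\sup_{x\geq 0} x^{\gamma_{0}+\gamma_{1}+2}\exp(-\tfrac{\sigma_{1}-\sigma_{1}'}{|\epsilon|}x)$, and the change of variable $X=x/|\epsilon|$ together with the classical bound (\ref{xm1_expm2x<}) gives
$$C(\beta,\epsilon)\leq 2^{\gamma_0}|\epsilon|^{\gamma_{0}+\gamma_{1}+2}\Big(\tfrac{\gamma_{0}+\gamma_{1}+2}{\sigma_{1}-\sigma_{1}'}\Big)^{\gamma_{0}+\gamma_{1}+2}e^{-(\gamma_{0}+\gamma_{1}+2)},$$
a bound independent of $\beta$; the same estimate holds for the contribution of $[c_{RH}(\tau),\tau]$. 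This proves the coefficientwise lemma with $\check{C}_{5}$ equal to twice the displayed constant, and the proposition follows by summation over $\beta$. I do not anticipate any real obstacle here: the only point deserving attention is the bookkeeping that guarantees the constant is genuinely independent of $\beta$ and of $\epsilon$, which is exactly where the inequality $r_{b}(\beta)\geq 1$ and the homogeneity in $|\epsilon|$ are used.
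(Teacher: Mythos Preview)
Your proposal is correct and follows essentially the same approach as the paper: reduce to a coefficientwise estimate (the paper's Lemma 11), split the integral along $[0,c_{RH}(\tau)]\cup[c_{RH}(\tau),\tau]$, factor out the $(\beta,\sigma_1',\ldots)$-norm, and control the leftover supremum via $r_b(\beta)\geq 1$ and the bound \eqref{xm1_expm2x<}. The only cosmetic discrepancies are that the model proposition you mean is Proposition 13 (not 15), and the paper records $2^{\gamma_0+1}$ rather than $2^{\gamma_0}$ for the second segment, so the final constant is $3\cdot 2^{\gamma_0}$ times the base quantity rather than twice it.
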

\begin{proof} Pick up some $v(\tau,z) = \sum_{\beta \geq 0} v_{\beta}(\tau) z^{\beta}/\beta!$ in
$EG_{(\sigma_{1}',RH_{a,b,\upsilon},\epsilon,\delta)}$. Owing to Definition 5,
\begin{multline}
|| \tau \int_{L_{0,\tau}} (\tau - s)^{\gamma_0} s^{\gamma_1}
v(s,z) ds ||_{(\sigma_{1},RH_{a,b,\upsilon},\epsilon,\delta)}\\
= \sum_{\beta \geq 0}
|| \tau \int_{L_{0,\tau}} (\tau - s)^{\gamma_0} s^{\gamma_1} v_{\beta}(s) ds ||_{(\beta,\sigma_{1},RH_{a,b,\upsilon},\epsilon)}
\delta^{\beta}/\beta ! \label{defin_norm_convolution_v_sigma1}
\end{multline}
\begin{lemma}
One can assign a constant $\check{C}_{5}>0$ (depending on $\gamma_{0},\gamma_{1},\sigma_{1}$ and $\sigma_{1}'$) such that
\begin{equation}
|| \tau \int_{L_{0,\tau}} (\tau - s)^{\gamma_0} s^{\gamma_1} v_{\beta}(s) ds ||_{(\beta,\sigma_{1},RH_{a,b,\upsilon},\epsilon)}
\leq \check{C}_{5} |\epsilon|^{\gamma_{0} + \gamma_{1} + 2}
||v_{\beta}(\tau) ||_{(\beta,\sigma_{1}',RH_{a,b,\upsilon},\epsilon)} \label{norm_conv_v_beta_sigma1_sigma1_prim}
\end{equation}
for all $\beta \geq 0$.
\end{lemma}
\begin{proof} As above, we first cut the integral into two pieces
$$
\tau \int_{L_{0,\tau}} (\tau - s)^{\gamma_0} s^{\gamma_1} v_{\beta}(s) ds =
\tau \int_{0}^{c_{RH}(\tau)} (\tau - s)^{\gamma_0} s^{\gamma_1} v_{\beta}(s) ds
+
\tau \int_{c_{RH}(\tau)}^{\tau} (\tau - s)^{\gamma_0} s^{\gamma_1} v_{\beta}(s) ds
$$
We first request estimates for
$$ \check{L}_{1} = || \tau \int_{0}^{c_{RH}(\tau)} (\tau - s)^{\gamma_0} s^{\gamma_1}
v_{\beta}(s) ds ||_{(\beta,\sigma_{1},RH_{a,b,\upsilon},\epsilon)}. $$
We do the next factorization
\begin{multline*}
\frac{1}{|\tau|} \exp \left( -\frac{\sigma_{1}}{|\epsilon|}r_{b}(\beta) |\tau| \right) |\tau|
\left| \int_{0}^{c_{RH}(\tau)} (\tau - s)^{\gamma_0} s^{\gamma_1}
v_{\beta}(s) ds \right|\\
= \exp \left( -\frac{\sigma_{1}}{|\epsilon|}r_{b}(\beta) |\tau| \right) \left| \int_{0}^{c_{RH}(\tau)} (\tau - s)^{\gamma_0} s^{\gamma_1}
\{ \frac{1}{|s|} \exp \left( - \frac{\sigma_{1}'}{|\epsilon|}r_{b}(\beta)|s| \right) v_{\beta}(s) \} \right. \\
\left. \times 
|s| \exp \left( \frac{\sigma_{1}'}{|\epsilon|}r_{b}(\beta)|s| \right) ds \right|.
\end{multline*}
which leads to
\begin{equation}
\check{L}_{1} \leq \check{C}_{5.1}(\beta,\epsilon) ||v_{\beta}(\tau)||_{(\beta,\sigma_{1}',RH_{a,b,\upsilon},\epsilon)} 
\end{equation}
where
\begin{multline*}
\check{C}_{5.1}(\beta,\epsilon) = \sup_{\tau \in RH_{a,b,\upsilon}}
\exp \left( -\frac{\sigma_{1}}{|\epsilon|}r_{b}(\beta) |\tau| \right) \int_{0}^{1} |\tau - c_{RH}(\tau)u|^{\gamma_0}
|c_{RH}(\tau)|^{\gamma_{1} + 2} u^{\gamma_{1}+1}\\
\times \exp \left( \frac{\sigma_{1}'}{|\epsilon|}r_{b}(\beta) |c_{RH}(\tau)u| \right) du.
\end{multline*}
Due to the constraints (\ref{defin_cRH}) and keeping in view the bounds (\ref{checkA1_bds}), we see that
\begin{multline}
\check{C}_{5.1}(\beta,\epsilon) \leq 2^{\gamma_0} \sup_{\tau \in RH_{a,b,\upsilon}} |\tau|^{\gamma_{0}+\gamma_{1}+2}
\exp \left( -\frac{\sigma_{1} - \sigma_{1}'}{|\epsilon|} r_{b}(\beta) |\tau| \right)\\
\leq
2^{\gamma_0} \sup_{x \geq 0} x^{\gamma_{0}+\gamma_{1}+2} \exp \left( -\frac{\sigma_{1} - \sigma_{1}'}{|\epsilon|} r_{b}(\beta) x \right)
\leq 2^{\gamma_0} |\epsilon|^{\gamma_{0}+\gamma_{1}+2}
\left( \frac{(\gamma_{0}+\gamma_{1}+2)e^{-1}}{\sigma_{1} - \sigma_{1}'} \right)^{\gamma_{0}+\gamma_{1}+2}
\end{multline}
for all $\beta \geq 0$, $\epsilon \in \dot{D}(0,\epsilon_{0})$.

Next in order, we point at 
$$ \check{L}_{2} = ||\tau \int_{c_{RH}(\tau)}^{\tau} (\tau - s)^{\gamma_0}
s^{\gamma_1} v_{\beta}(s) ds||_{(\beta,\sigma_{1},RH_{a,b,\upsilon},\epsilon)}. $$
As before, we accomplish a factorization
\begin{multline*}
\frac{1}{|\tau|} \exp \left( -\frac{\sigma_{1}}{|\epsilon|}r_{b}(\beta) |\tau| \right) |\tau|
\left| \int_{c_{RH}(\tau)}^{\tau} (\tau - s)^{\gamma_0} s^{\gamma_1}
v_{\beta}(s) ds \right|\\
= \exp \left( -\frac{\sigma_{1}}{|\epsilon|}r_{b}(\beta) |\tau| \right) \left| \int_{c_{RH}(\tau)}^{\tau} (\tau - s)^{\gamma_0} s^{\gamma_1}
\{ \frac{1}{|s|} \exp \left( - \frac{\sigma_{1}'}{|\epsilon|}r_{b}(\beta)|s| \right) v_{\beta}(s) \} \right. \\
\left. \times 
|s| \exp \left( \frac{\sigma_{1}'}{|\epsilon|}r_{b}(\beta)|s| \right) ds \right|
\end{multline*}
which entails
\begin{equation}
\check{L}_{2} \leq \check{C}_{5.2}(\beta,\epsilon) ||v_{\beta}(\tau)||_{(\beta,\sigma_{1}',RH_{a,b,\upsilon},\epsilon)} 
\end{equation}
with
\begin{multline*}
\check{C}_{5.2}(\beta,\epsilon) = \sup_{\tau \in RH_{a,b,\upsilon}}
\exp \left( -\frac{\sigma_{1}}{|\epsilon|}r_{b}(\beta) |\tau| \right)
\int_{0}^{1} |\tau - c_{RH}(\tau)|^{\gamma_{0}+1}(1-u)^{\gamma_{0}}\\
\times |(1-u)c_{RH}(\tau) + u\tau|^{\gamma_{1}+1}
\exp \left( \frac{\sigma_{1}'}{|\epsilon|}r_{b}(\beta)|(1-u)c_{RH}(\tau) + u\tau| \right) du.
\end{multline*}
By reason of the restriction (\ref{defin_cRH}) and by taking a glance at the bounds (\ref{checkA1_bds}), we deduce
\begin{multline}
\check{C}_{5.2}(\beta,\epsilon) \leq 2^{\gamma_{0}+1} \sup_{\tau \in RH_{a,b,\upsilon}} |\tau|^{\gamma_{0}+\gamma_{1}+2}
\exp \left( -\frac{\sigma_{1} - \sigma_{1}'}{|\epsilon|} r_{b}(\beta) |\tau| \right)\\
\leq 2^{\gamma_{0}+1} |\epsilon|^{\gamma_{0}+\gamma_{1}+2}
\left( \frac{(\gamma_{0}+\gamma_{1}+2)e^{-1}}{\sigma_{1} - \sigma_{1}'} \right)^{\gamma_{0}+\gamma_{1}+2}
\end{multline}
provided that $\beta \geq 0$, $\epsilon \in \dot{D}(0,\epsilon_{0})$. Hence, Lemma 11 is verified. 
\end{proof}
Finally, according to (\ref{defin_norm_convolution_v_sigma1}) we notice that Proposition 14 is just a byproduct of the lemma 11 above.
\end{proof}

The proof of the next proposition mirrors in a genuine way the one of Proposition 4.

\begin{prop} Let us consider some holomorphic function $c(\tau,z,\epsilon)$ on $\mathring{RH}_{a,b,\upsilon} \times D(0,\rho) \times 
D(0,\epsilon_{0})$, continuous on $RH_{a,b,\upsilon} \times D(0,\rho) \times D(0,\epsilon_{0})$, for a radius $\rho>0$, bounded therein
by a constant $M_{c}>0$. Fix some $0 < \delta < \rho$. Then, the linear operator
$v(\tau,z) \mapsto c(\tau,z,\epsilon)v(\tau,z)$ is bounded from
$(EG_{(\sigma_{1},RH_{a,b,\upsilon},\epsilon,\delta)},||.||_{(\sigma_{1},RH_{a,b,\upsilon},\epsilon,\delta)})$ into itself,
provided that $\epsilon \in \dot{D}(0,\epsilon_{0})$. Additionally, a constant $C_{6}>0$ (depending on
$M_{c},\delta,\rho$) independent of $\epsilon$ exists in a way that
\begin{equation}
||c(\tau,z,\epsilon)v(\tau,z)||_{(\sigma_{1},RH_{a,b,\upsilon},\epsilon,\delta)} \leq C_{6}
||v(\tau,z)||_{(\sigma_{1},RH_{a,b,\upsilon},\epsilon,\delta)}
\end{equation}
for all $v \in EG_{(\sigma_{1},RH_{a,b,\upsilon},\epsilon,\delta)}$.
\end{prop}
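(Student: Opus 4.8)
The plan is to imitate, step by step, the proof of Proposition 4, the only structural difference being that the Banach space $EG_{(\sigma_{1},RH_{a,b,\upsilon},\epsilon,\delta)}$ carries only the weight $\exp(-\frac{\sigma_{1}}{|\epsilon|}r_{b}(\beta)|\tau|)$, with no super exponential factor to keep track of. First I would expand the multiplier as a convergent Taylor series $c(\tau,z,\epsilon)=\sum_{\beta\geq 0}c_{\beta}(\tau,\epsilon)z^{\beta}/\beta!$ on $D(0,\rho)$ and, applying the Cauchy integral formula on a circle of radius $\delta'$ with $\delta<\delta'<\rho$, record the estimates $\sup_{\tau\in RH_{a,b,\upsilon},\,\epsilon\in D(0,\epsilon_{0})}|c_{\beta}(\tau,\epsilon)|\leq M_{c}(1/\delta')^{\beta}\beta!$ for all $\beta\geq 0$.

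Next, for $v(\tau,z)=\sum_{\beta\geq 0}v_{\beta}(\tau)z^{\beta}/\beta!$ in $EG_{(\sigma_{1},RH_{a,b,\upsilon},\epsilon,\delta)}$, I would write out the Cauchy product and bound, by Definition 5, $\|c(\tau,z,\epsilon)v(\tau,z)\|_{(\sigma_{1},RH_{a,b,\upsilon},\epsilon,\delta)}\leq\sum_{\beta\geq 0}\big(\sum_{\beta_{1}+\beta_{2}=\beta}\|c_{\beta_{1}}(\tau,\epsilon)v_{\beta_{2}}(\tau)\|_{(\beta,\sigma_{1},RH_{a,b,\upsilon},\epsilon)}\frac{\beta!}{\beta_{1}!\beta_{2}!}\big)\frac{\delta^{\beta}}{\beta!}$. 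The crucial point, exactly as in Proposition 4, is the monotonicity $r_{b}(\beta)\geq r_{b}(\beta_{2})$ whenever $\beta_{2}\leq\beta$, which on $RH_{a,b,\upsilon}$ forces $\exp(-\frac{\sigma_{1}}{|\epsilon|}r_{b}(\beta)|\tau|)\leq\exp(-\frac{\sigma_{1}}{|\epsilon|}r_{b}(\beta_{2})|\tau|)$ and hence $\|c_{\beta_{1}}(\tau,\epsilon)v_{\beta_{2}}(\tau)\|_{(\beta,\sigma_{1},RH_{a,b,\upsilon},\epsilon)}\leq M_{c}\beta_{1}!(1/\delta')^{\beta_{1}}\|v_{\beta_{2}}(\tau)\|_{(\beta_{2},\sigma_{1},RH_{a,b,\upsilon},\epsilon)}$ after using the pointwise bound on $c_{\beta_{1}}$.

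Inserting this estimate, the factorial $\beta!/(\beta_{1}!\beta_{2}!)$ cancels against the remaining $\beta_{1}!$ and the $1/\beta!$, and rearranging the resulting double sum as a product of two series yields $\|c(\tau,z,\epsilon)v(\tau,z)\|_{(\sigma_{1},RH_{a,b,\upsilon},\epsilon,\delta)}\leq M_{c}\big(\sum_{\beta_{1}\geq 0}(\delta/\delta')^{\beta_{1}}\big)\|v(\tau,z)\|_{(\sigma_{1},RH_{a,b,\upsilon},\epsilon,\delta)}$, so one may take $C_{6}=M_{c}/(1-\delta/\delta')$, which depends only on $M_{c},\delta,\rho$ and not on $\epsilon$; in particular $c\cdot v$ indeed belongs to the space. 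I do not expect any genuine obstacle here: the argument is entirely routine once the weight monotonicity in $\beta$ is noted to hold verbatim on the $L$-shaped domain $RH_{a,b,\upsilon}$ (the weight depending on $\beta$ only, not on the geometry), the sole mild care being the choice of the auxiliary radius $\delta'$ with $\delta<\delta'<\rho$ guaranteeing convergence of the geometric series.
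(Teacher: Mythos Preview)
Your proposal is correct and follows precisely the approach the paper intends: the paper itself omits the proof of this proposition, stating only that it ``mirrors in a genuine way the one of Proposition 4.'' Your argument is exactly that mirror, with the correct simplification that only the monotonicity $r_{b}(\beta)\geq r_{b}(\beta_{2})$ is needed here since the weight in $EG_{(\sigma_{1},RH_{a,b,\upsilon},\epsilon,\delta)}$ carries no $s_{b}(\beta)$ factor.
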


\subsection{Banach spaces of holomorphic functions with super exponential growth on $L-$shaped domains}

We will refer to the notations of Sections 3.1 and 4.1 within this subsection. Namely, we set a closed horizontal strip $J$ as defined
in (\ref{defin_strip_J}) where $c$ is chosen different from 0 among the family of sectors $\{ J_{k} \}_{k \in \llbracket -n,n \rrbracket}$
built up at the onset of the subsection 3.1 and a closed rectangle $R_{c,d,\upsilon}$ as displayed in 
(\ref{R_ab_u_a_positive}) and (\ref{R_ab_u_a_negative}) for some negative $\upsilon>0$. The set $RJ_{c,d,\upsilon}$ stands for the
$L-$shaped domain $J \cup R_{c,d,\upsilon}$. See Figure~\ref{fig52}.

\begin{figure}
	\centering
		\includegraphics[width=0.4\textwidth]{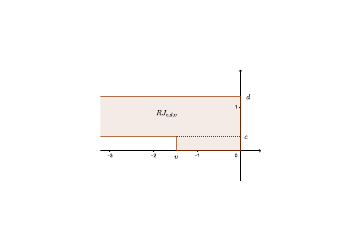}
		\includegraphics[width=0.4\textwidth]{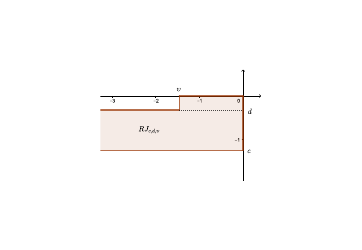}
		\caption{Examples of sets $RJ_{c,d,\upsilon}=J\cup R_{c,d,\upsilon}$}
		\label{fig52}
\end{figure}

\begin{defin}
Let $\underline{\varsigma} = (\sigma_{1},\varsigma_{2},\varsigma_{3})$ where $\sigma_{1},\varsigma_{2},\varsigma_{3}>0$ are assumed to be
positive real numbers and let $\beta \geq 0$ be an integer. For all $\epsilon \in \dot{D}(0,\epsilon_{0})$, we define
$SEG_{(\beta,\underline{\varsigma},RJ_{c,d,\upsilon},\epsilon)}$ as the vector space of holomorphic functions
$v(\tau)$ on $\mathring{RJ}_{c,d,\upsilon}$, continuous on $RJ_{c,d,\upsilon}$ for which
$$ ||v(\tau)||_{(\beta,\underline{\varsigma},RJ_{c,d,\upsilon},\epsilon)} =
\sup_{\tau \in RJ_{c,d,\upsilon}} \frac{|v(\tau)|}{|\tau|} \exp \left( -\frac{\sigma_1}{|\epsilon|} r_{b}(\beta) |\tau|
- \varsigma_{2}r_{b}(\beta)\exp( \varsigma_{3}|\tau| ) \right) 
$$
is finite. Let $\delta>0$ be some positive number. The set $SEG_{(\underline{\varsigma},RJ_{c,d,\upsilon},\epsilon,\delta)}$ stands
for the vector space of all formal series $v(\tau,z) = \sum_{\beta \geq 0} v_{\beta}(\tau) z^{\beta}/\beta!$ with coefficients
$v_{\beta}(\tau)$ belonging to $SEG_{(\beta,\underline{\varsigma},RJ_{c,d,\upsilon},\epsilon)}$ and whose norm
$$ ||v(\tau,z)||_{(\underline{\varsigma},RJ_{c,d,\upsilon},\epsilon,\delta)} = \sum_{\beta \geq 0}
||v_{\beta}(\tau)||_{(\beta,\underline{\varsigma},RJ_{c,d,\upsilon},\epsilon)} \frac{\delta^{\beta}}{\beta !}
$$
is finite. The space $SEG_{(\underline{\varsigma},RJ_{c,d,\upsilon},\epsilon,\delta)}$ equipped with this norm is a Banach space.
\end{defin}
The next statement can be checked exactly in the same manner as Proposition 5 1).

\begin{prop}
Let $v(\tau,z) \in SEG_{(\underline{\varsigma},RJ_{c,d,\upsilon},\epsilon,\delta)}$. Fix some $0 < \delta_{1} < 1$. Then, we get a constant
$C_{7}>0$ (depending on $||v||_{(\underline{\varsigma},RJ_{c,d,\upsilon},\epsilon,\delta)}$ and $\delta_{1}$) fulfilling
\begin{equation}
|v(\tau,z)| \leq C_{7}|\tau| \exp \left( \frac{\sigma_1}{|\epsilon|}\zeta(b)|\tau| + \varsigma_{2}\zeta(b) \exp(\varsigma_{3}|\tau|) \right) 
\end{equation}
for all $\tau \in RJ_{c,d,\upsilon}$, all $z \in D(0,\delta_{1}\delta)$.
\end{prop}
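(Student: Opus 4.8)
## Proof proposal for Proposition 17

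The plan is to follow verbatim the structure of the proof of Proposition 5 1), since the Banach space $SEG_{(\underline{\varsigma},RJ_{c,d,\upsilon},\epsilon,\delta)}$ differs from $SEG_{(\underline{\varsigma},J,\epsilon,\delta)}$ only in that the underlying domain is the $L$-shaped set $RJ_{c,d,\upsilon}=J\cup R_{c,d,\upsilon}$ rather than the strip $J$ alone, and the defining weight $\exp\!\big(-\tfrac{\sigma_1}{|\epsilon|}r_b(\beta)|\tau|-\varsigma_2 r_b(\beta)\exp(\varsigma_3|\tau|)\big)$ has exactly the same shape. Write $v(\tau,z)=\sum_{\beta\geq0}v_\beta(\tau)z^\beta/\beta!\in SEG_{(\underline{\varsigma},RJ_{c,d,\upsilon},\epsilon,\delta)}$. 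By the very definition of the norm $\|\cdot\|_{(\beta,\underline{\varsigma},RJ_{c,d,\upsilon},\epsilon)}$ there is a constant $c_0>0$, depending only on $\|v\|_{(\underline{\varsigma},RJ_{c,d,\upsilon},\epsilon,\delta)}$, with
$$
|v_\beta(\tau)|\leq c_0\,|\tau|\exp\!\Big(\tfrac{\sigma_1}{|\epsilon|}r_b(\beta)|\tau|+\varsigma_2 r_b(\beta)\exp(\varsigma_3|\tau|)\Big)\,\beta!\,\big(\tfrac1\delta\big)^\beta
$$
for all $\beta\geq0$ and all $\tau\in RJ_{c,d,\upsilon}$.

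Next I would fix $0<\delta_1<1$ and sum the series pointwise for $z$ with $|z|/\delta<\delta_1$: using the inequalities $r_b(\beta)\leq\zeta(b)$ for every $\beta\geq0$ (which hold because $r_b$ is increasing with limit $\zeta(b)$), one pulls the $\beta$-dependence out of the exponentials and is left with a geometric series in $\delta_1$. Concretely,
$$
|v(\tau,z)|\leq c_0\,|\tau|\sum_{\beta\geq0}\exp\!\Big(\tfrac{\sigma_1}{|\epsilon|}r_b(\beta)|\tau|+\varsigma_2 r_b(\beta)\exp(\varsigma_3|\tau|)\Big)\delta_1^\beta
\leq c_0\,|\tau|\exp\!\Big(\tfrac{\sigma_1}{|\epsilon|}\zeta(b)|\tau|+\varsigma_2\zeta(b)\exp(\varsigma_3|\tau|)\Big)\frac{1}{1-\delta_1},
$$
valid for all $\tau\in RJ_{c,d,\upsilon}$ and all $z$ with $|z|/\delta<\delta_1$. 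Setting $C_7=c_0/(1-\delta_1)$ gives the claimed bound; the holomorphy of $v(\tau,z)$ on $\mathring{RJ}_{c,d,\upsilon}\times D(0,\delta_1\delta)$ and continuity up to the boundary follow from the normal convergence of the series just established, exactly as in Proposition 1.

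There is essentially no obstacle here: the argument is a routine Weierstrass $M$-test combined with the monotonicity $r_b(\beta)\nearrow\zeta(b)$, and the passage from the strip $J$ to the $L$-shaped domain $RJ_{c,d,\upsilon}$ changes nothing in the estimates because every bound is taken as a supremum over $\tau$ in the relevant closed set and the weight function is identical in form. The only point worth a line of care is to note that $RJ_{c,d,\upsilon}$ is connected and that $0$ lies on its boundary (it is the corner of the rectangle adjoining the strip), so that $D(0,\delta_1\delta)\cap RJ_{c,d,\upsilon}$ is nonempty and the resulting function is genuinely defined on a neighbourhood of the relevant part of the $\tau$-axis; this is already built into the geometry fixed in Section 4.1. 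Hence the proposition follows immediately, and I would simply state that the proof is identical to that of Proposition 1 (equivalently Proposition 5 1)) with $J$ replaced by $RJ_{c,d,\upsilon}$.
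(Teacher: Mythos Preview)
Your proof is correct and follows exactly the approach the paper indicates: the paper gives no explicit proof of this proposition, stating only that it ``can be checked exactly in the same manner as Proposition 5 1),'' and your argument is precisely that of Proposition 1 (equivalently Proposition 5 1)) with the strip $J$ replaced by the $L$-shaped domain $RJ_{c,d,\upsilon}$.

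One minor remark: your closing comment about ``$D(0,\delta_1\delta)\cap RJ_{c,d,\upsilon}$ being nonempty'' conflates the $z$-disc with the $\tau$-domain; these live in different copies of $\mathbb{C}$, so no such intersection is relevant. This aside is harmless and can simply be deleted --- the estimate you derived already establishes normal convergence of the series on $RJ_{c,d,\upsilon}\times D(0,\delta_1\delta)$, which is all that is needed.
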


In the upcoming propositions, we plan to analyze the same convolution maps and multiplication by bounded holomorphic functions as worked out
in Propositions 13,14 and 15 but operating on
the Banach spaces disclosed in Definition 6. As in Section 4.1, $L_{0,\tau}$ stands for a path defined as a union
$[0,c_{RJ}(\tau)] \cup [c_{RJ}(\tau),\tau]$, where $c_{RJ}(\tau)$ is selected with the next properties:
\begin{equation}
L_{0,\tau} \subset RJ_{c,d,\upsilon}, \ \ c_{RJ}(\tau) \in R_{c,d,\upsilon}, \ \ |c_{RJ}(\tau)| \leq |\tau| \label{defin_cRJ}
\end{equation}
whenever $\tau \in RJ_{c,d,\upsilon}$.
\begin{prop}
Let $\gamma_{0},\gamma_{1} \geq 0$ and $\gamma_{2} \geq 1$ be integers. We assume that
\begin{equation}
\gamma_{2} \geq b(\gamma_{0} + \gamma_{1} + 2) 
\end{equation}
holds. Then, for all $\epsilon \in \dot{D}(0,\epsilon_{0})$, the linear operator
$v(\tau,z) \mapsto \tau \int_{L_{0,\tau}} (\tau - s)^{\gamma_{0}} s^{\gamma_1} \partial_{z}^{-\gamma_{2}}v(s,z) ds$ is bounded from
$SEG_{(\underline{\varsigma},RJ_{c,d,\upsilon},\epsilon,\delta)}$ into itself. In addition, one gets a constant $C_{8}>0$
(depending on $\gamma_{0},\gamma_{1},\gamma_{2},\sigma_{1}$ and $b$) independent of $\epsilon$, such that
\begin{equation}
|| \tau \int_{L_{0,\tau}} (\tau - s)^{\gamma_0} s^{\gamma_1} \partial_{z}^{-\gamma_{2}}
v(s,z) ds ||_{(\underline{\varsigma},RJ_{c,d,\upsilon},\epsilon,\delta)}
\leq C_{8}|\epsilon|^{\gamma_{0}+\gamma_{1}+2} \delta^{\gamma_2} ||v(\tau,z)||_{(\underline{\varsigma},RJ_{c,d,\upsilon},\epsilon,\delta)}
\label{norm_conv_partialz_v_C8}
\end{equation}
for all $v(\tau,z) \in SEG_{(\underline{\varsigma},RJ_{c,d,\upsilon},\epsilon,\delta)}$, all $\epsilon \in \dot{D}(0,\epsilon_{0})$.
\end{prop}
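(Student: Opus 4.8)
The plan is to follow verbatim the template established in the proof of Proposition 13, replacing the Banach space $EG_{(\sigma_1,RH_{a,b,\upsilon},\epsilon,\delta)}$ by $SEG_{(\underline{\varsigma},RJ_{c,d,\upsilon},\epsilon,\delta)}$ and carrying along the extra super-exponential factor $\exp(-\varsigma_2 r_b(\beta)\exp(\varsigma_3|\tau|))$ in every estimate. First I would expand $v(\tau,z)=\sum_{\beta\geq0}v_\beta(\tau)z^\beta/\beta!$ and use Definition 6 to write
\begin{multline*}
\Bigl\|\tau\int_{L_{0,\tau}}(\tau-s)^{\gamma_0}s^{\gamma_1}\partial_z^{-\gamma_2}v(s,z)\,ds\Bigr\|_{(\underline{\varsigma},RJ_{c,d,\upsilon},\epsilon,\delta)}\\
=\sum_{\beta\geq\gamma_2}\Bigl\|\tau\int_{L_{0,\tau}}(\tau-s)^{\gamma_0}s^{\gamma_1}v_{\beta-\gamma_2}(s)\,ds\Bigr\|_{(\beta,\underline{\varsigma},RJ_{c,d,\upsilon},\epsilon)}\frac{\delta^\beta}{\beta!},
\end{multline*}
thereby reducing everything to a single-coefficient lemma: there is a constant $C_{8.1}>0$ (depending on $\gamma_0,\gamma_1,\gamma_2,\sigma_1,\varsigma_2,\varsigma_3$) such that the $\beta$-th term is bounded by $C_{8.1}|\epsilon|^{\gamma_0+\gamma_1+2}(\beta+1)^{b(\gamma_0+\gamma_1+2)}\|v_{\beta-\gamma_2}(\tau)\|_{(\beta-\gamma_2,\underline{\varsigma},RJ_{c,d,\upsilon},\epsilon)}$ for all $\beta\geq\gamma_2$.

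For that lemma I would split the path integral as $\int_0^{c_{RJ}(\tau)}+\int_{c_{RJ}(\tau)}^{\tau}$ using (\ref{defin_cRJ}), and in each piece factor out the weighted norm of $v_{\beta-\gamma_2}$ exactly as in Lemma 10: write the integrand as $\{|s|^{-1}\exp(-\frac{\sigma_1}{|\epsilon|}r_b(\beta-\gamma_2)|s|-\varsigma_2 r_b(\beta-\gamma_2)\exp(\varsigma_3|s|))v_{\beta-\gamma_2}(s)\}$ times $|s|\exp(\frac{\sigma_1}{|\epsilon|}r_b(\beta-\gamma_2)|s|+\varsigma_2 r_b(\beta-\gamma_2)\exp(\varsigma_3|s|))$. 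The key new point is the sign bookkeeping for the super-exponential weight. Since $|s|\leq|\tau|$ along $L_{0,\tau}$ by (\ref{defin_cRJ}) and $r_b(\beta-\gamma_2)\leq r_b(\beta)$, the difference of the two super-exponential weights is
$$\exp\bigl(\varsigma_2(r_b(\beta-\gamma_2)-r_b(\beta))\exp(\varsigma_3|s|)\bigr)\leq 1,$$
because $r_b(\beta-\gamma_2)-r_b(\beta)\leq0$; so this factor simply drops out with no cost, and we are left exactly with the quantity $C_{5.1.1}(\beta,\epsilon)$ resp. $C_{5.1.2}(\beta,\epsilon)$ from the proof of Proposition 13. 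Applying (\ref{difference_s_b_r_b}) (i.e. $r_b(\beta)-r_b(\beta-\gamma_2)\geq\gamma_2/(\beta+1)^b$) and the elementary bound (\ref{xm1_expm2x<}) for $\sup_{x\geq0}x^{\gamma_0+\gamma_1+2}\exp(-\frac{\sigma_1}{|\epsilon|}\frac{\gamma_2}{(\beta+1)^b}x)$, together with $|\tau-c_{RJ}(\tau)u|^{\gamma_0}\leq2^{\gamma_0}|\tau|^{\gamma_0}$ and $|(1-u)c_{RJ}(\tau)+u\tau|\leq|\tau|$, yields the claimed $|\epsilon|^{\gamma_0+\gamma_1+2}(\beta+1)^{b(\gamma_0+\gamma_1+2)}$ bound for both $L_1$ and $L_2$, hence the lemma.

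Finally, substituting the lemma back into the series, I would write the $\beta$-th summand as $C_{8.1}|\epsilon|^{\gamma_0+\gamma_1+2}(\beta+1)^{b(\gamma_0+\gamma_1+2)}\frac{(\beta-\gamma_2)!}{\beta!}\|v_{\beta-\gamma_2}(\tau)\|_{(\beta-\gamma_2,\underline{\varsigma},RJ_{c,d,\upsilon},\epsilon)}\delta^{\gamma_2}\frac{\delta^{\beta-\gamma_2}}{(\beta-\gamma_2)!}$ and invoke the hypothesis $\gamma_2\geq b(\gamma_0+\gamma_1+2)$, which (as in (\ref{bds_beta_gamma012})) gives a constant $C_{8.2}>0$ with $(\beta+1)^{b(\gamma_0+\gamma_1+2)}\frac{(\beta-\gamma_2)!}{\beta!}\leq C_{8.2}$ for all $\beta\geq\gamma_2$; reindexing the sum then produces (\ref{norm_conv_partialz_v_C8}) with $C_8=C_{8.1}C_{8.2}$. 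The only mildly delicate point — and the one I would be careful to state explicitly — is the observation that the super-exponential weight contributes a factor $\leq1$ rather than blowing up; everything else is a transcription of Proposition 13. I do not anticipate a genuine obstacle here, since the convolution kernel carries no exponential factor $\exp(-k_2\tau)$ (unlike the multiplication operators of Propositions 2 and 3), so the super-exponential term behaves monotonically and no growth condition involving $\varsigma_3$ enters.
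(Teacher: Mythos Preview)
Your proposal is correct and matches the paper's own proof essentially line for line: the paper also reduces to a coefficient lemma (its Lemma 12), splits the path integral at $c_{RJ}(\tau)$, factorizes exactly as you describe, observes that $-\varsigma_{2}r_{b}(\beta)\exp(\varsigma_{3}|\tau|)+\varsigma_{2}r_{b}(\beta-\gamma_{2})\exp(\varsigma_{3}|s|)\leq 0$ (using $|s|\leq|\tau|$ and $r_{b}(\beta-\gamma_{2})\leq r_{b}(\beta)$), and then copies the remaining estimates verbatim from Proposition 13. The only cosmetic difference is that, since the super-exponential factor drops out entirely, the paper records $C_{8.1}$ as depending only on $\gamma_{0},\gamma_{1},\gamma_{2},\sigma_{1}$ (not on $\varsigma_{2},\varsigma_{3}$).
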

\begin{proof}
Only a brief outline of the proof will be presented hereafter since its content resembles the one displayed in Proposition 13. Namely, it boils
down to show the next lemma.
\begin{lemma}
Take $v_{\beta - \gamma_{2}}(\tau) \in SEG_{(\beta - \gamma_{2},\underline{\varsigma},RJ_{c,d,\upsilon},\epsilon)}$ for all $\beta \geq \gamma_{2}$.
One can sort a constant $C_{8.1}>0$ (depending on $\gamma_{0},\gamma_{1},\gamma_{2},\sigma_{1}$) for which
\begin{multline}
|| \tau \int_{L_{0,\tau}} (\tau - s)^{\gamma_0} s^{\gamma_1}
v_{\beta - \gamma_{2}}(s) ds ||_{(\beta,\underline{\varsigma},RJ_{c,d,\upsilon},\epsilon)} \\
\leq
C_{8.1} |\epsilon|^{\gamma_{0}+\gamma_{1}+2}(\beta + 1)^{b(\gamma_{0}+\gamma_{1}+2)}
||v_{\beta - \gamma_{2}}(\tau)||_{(\beta - \gamma_{2},\underline{\varsigma},RJ_{c,d,\upsilon},\epsilon)}
\end{multline}
\end{lemma}
\begin{proof} As before, we depart from the break up of the convolution product in two pieces
\begin{multline*}
\tau \int_{L_{0,\tau}} (\tau - s)^{\gamma_0} s^{\gamma_1} v_{\beta - \gamma_{2}}(s) ds =
\tau \int_{0}^{c_{RJ}(\tau)} (\tau - s)^{\gamma_0} s^{\gamma_1} v_{\beta - \gamma_{2}}(s) ds \\
+
\tau \int_{c_{RJ}(\tau)}^{\tau} (\tau - s)^{\gamma_0} s^{\gamma_1} v_{\beta - \gamma_{2}}(s) ds
\end{multline*}
We demand estimates for the first part
$$ LJ_{1} = || \tau \int_{0}^{c_{RJ}(\tau)} (\tau - s)^{\gamma_0} s^{\gamma_1}
v_{\beta - \gamma_{2}}(s) ds ||_{(\beta,\underline{\varsigma},RJ_{c,d,\upsilon},\epsilon)}. $$
We perform a factorization
\begin{multline*}
\frac{1}{|\tau|} \exp \left( -\frac{\sigma_{1}}{|\epsilon|}r_{b}(\beta) |\tau| -\varsigma_{2}r_{b}(\beta) \exp( \varsigma_{3}|\tau| ) \right)
|\tau|
\left| \int_{0}^{c_{RJ}(\tau)} (\tau - s)^{\gamma_0} s^{\gamma_1}
v_{\beta - \gamma_{2}}(s) ds \right|\\
= \exp \left( -\frac{\sigma_{1}}{|\epsilon|}r_{b}(\beta) |\tau| - \varsigma_{2}r_{b}(\beta) \exp( \varsigma_{3} |\tau| ) \right)
\left| \int_{0}^{c_{RJ}(\tau)} (\tau - s)^{\gamma_0} s^{\gamma_1} \right. \\
\times 
\{ \frac{1}{|s|} \exp \left( - \frac{\sigma_{1}}{|\epsilon|}r_{b}(\beta - \gamma_{2})|s| - \varsigma_{2}r_{b}(\beta - \gamma_{2})
\exp( \varsigma_{3}|s| ) \right) v_{\beta - \gamma_{2}}(s) \} \\
\left. \times 
|s| \exp \left( \frac{\sigma_{1}}{|\epsilon|}r_{b}(\beta - \gamma_{2})|s| + \varsigma_{2}r_{b}(\beta - \gamma_{2})
\exp( \varsigma_{3}|s|) \right) ds \right|.
\end{multline*}
which induces
\begin{equation}
LJ_{1} \leq C_{8.1.1}(\beta,\epsilon) ||v_{\beta - \gamma_{2}}(\tau)||_{(\beta - \gamma_{2},\underline{\varsigma},RJ_{c,d,\upsilon},\epsilon)} 
\end{equation}
with
\begin{multline*}
C_{8.1.1}(\beta,\epsilon) = \sup_{\tau \in RJ_{c,d,\upsilon}}
\exp \left( -\frac{\sigma_{1}}{|\epsilon|}r_{b}(\beta) |\tau| - \varsigma_{2}r_{b}(\beta)\exp( \varsigma_{3} |\tau| ) \right)
\int_{0}^{1} |\tau - c_{RJ}(\tau)u|^{\gamma_0} \\
\times |c_{RJ}(\tau)|^{\gamma_{1} + 2} u^{\gamma_{1}+1}
\exp \left( \frac{\sigma_{1}}{|\epsilon|}r_{b}(\beta - \gamma_{2}) |c_{RJ}(\tau)u| +
\varsigma_{2}r_{b}(\beta - \gamma_{2})\exp( \varsigma_{3} |c_{RJ}(\tau) u| ) \right) du.
\end{multline*}
According to the properties (\ref{defin_cRJ}), we observe in particular that
\begin{multline}
-\varsigma_{2}r_{b}(\beta)\exp( \varsigma_{3}|\tau| ) + \varsigma_{2}r_{b}(\beta - \gamma_{2})\exp( \varsigma_{3}|c_{RJ}(\tau)|u )\\
\leq
\varsigma_{2}(r_{b}(\beta - \gamma_{2}) - r_{b}(\beta))\exp( \varsigma_{3}|\tau| ) \leq 0
\end{multline}
for all $\tau \in RJ_{c,d,\upsilon}$, all $0 \leq u \leq 1$. In addition, taking into account the bounds
(\ref{difference_s_b_r_b}), (\ref{A_1_bounds}), we get in a similar way as in (\ref{C511_bds}) that
\begin{multline*}
C_{8.1.1}(\beta,\epsilon) \leq 2^{\gamma_0} \sup_{\tau \in RJ_{c,d,\upsilon}} |\tau|^{\gamma_{0}+\gamma_{1}+2}
\exp \left( -\frac{\sigma_{1}}{|\epsilon|}(r_{b}(\beta) - r_{b}(\beta - \gamma_{2})) |\tau| \right)\\
\leq 2^{\gamma_0} \sup_{x \geq 0} x^{\gamma_{0}+\gamma_{1}+2}
\exp \left( -\frac{\sigma_{1}}{|\epsilon|}\frac{\gamma_2}{(\beta + 1)^{b}} x \right)\\
\leq
2^{\gamma_0} |\epsilon|^{\gamma_{0}+\gamma_{1}+2}
\left( \frac{\gamma_{0}+\gamma_{1}+2}{\sigma_{1} \gamma_{2}} \right)^{\gamma_{0}+\gamma_{1}+2} \exp( -(\gamma_{0}+\gamma_{1}+2) )
(\beta + 1)^{b(\gamma_{0}+\gamma_{1}+2)}
\end{multline*}
for all $\beta \geq \gamma_{2}$, all $\epsilon \in \dot{D}(0,\epsilon_{0})$.

In the last part, we aim attention at
$$ LJ_{2} = ||\tau \int_{c_{RJ}(\tau)}^{\tau} (\tau - s)^{\gamma_0}
s^{\gamma_1} v_{\beta - \gamma_{2}}(s) ds||_{(\beta,\underline{\varsigma},RJ_{c,d,\upsilon},\epsilon)}. $$
As aforementioned, we achieve a factorization
\begin{multline*}
\frac{1}{|\tau|} \exp \left( -\frac{\sigma_{1}}{|\epsilon|}r_{b}(\beta) |\tau| -\varsigma_{2}r_{b}(\beta)\exp( \varsigma_{3}|\tau| )
\right) |\tau|
\left| \int_{c_{RJ}(\tau)}^{\tau} (\tau - s)^{\gamma_0} s^{\gamma_1}
v_{\beta - \gamma_{2}}(s) ds \right|\\
= \exp \left( -\frac{\sigma_{1}}{|\epsilon|}r_{b}(\beta) |\tau| -\varsigma_{2}r_{b}(\beta)\exp( \varsigma_{3}|\tau| )
\right) \left| \int_{c_{RJ}(\tau)}^{\tau} (\tau - s)^{\gamma_0} s^{\gamma_1} \right.\\
\times 
\{ \frac{1}{|s|} \exp \left( - \frac{\sigma_{1}}{|\epsilon|}r_{b}(\beta - \gamma_{2})|s| -\varsigma_{2}r_{b}(\beta-\gamma_{2})
\exp(\varsigma_{3}|s|) \right) v_{\beta - \gamma_{2}}(s) \} \\
\left. \times 
|s| \exp \left( \frac{\sigma_{1}}{|\epsilon|}r_{b}(\beta - \gamma_{2})|s| + \varsigma_{2}r_{b}(\beta - \gamma_{2})
\exp(\varsigma_{3}|s|) \right) ds \right|.
\end{multline*}
It follows that
\begin{equation}
LJ_{2} \leq C_{8.1.2}(\beta,\epsilon) ||v_{\beta - \gamma_{2}}(\tau)||_{(\beta - \gamma_{2},\underline{\varsigma},RJ_{c,d,\upsilon},\epsilon)} 
\end{equation}
with
\begin{multline*}
C_{8.1.2}(\beta,\epsilon) = \sup_{\tau \in RJ_{c,d,\upsilon}}
\exp \left( -\frac{\sigma_{1}}{|\epsilon|}r_{b}(\beta) |\tau| -\varsigma_{2}r_{b}(\beta)\exp(\varsigma_{3}|\tau|) \right)
\int_{0}^{1} |\tau - c_{RJ}(\tau)|^{\gamma_{0}+1}(1-u)^{\gamma_{0}}\\
\times |(1-u)c_{RJ}(\tau) + u\tau|^{\gamma_{1}+1}
\exp \left( \frac{\sigma_{1}}{|\epsilon|}r_{b}(\beta - \gamma_{2})|(1-u)c_{RJ}(\tau) + u\tau| \right. \\
\left. +\varsigma_{2}r_{b}(\beta - \gamma_{2}) \exp( \varsigma_{3}|(1-u)c_{RJ}(\tau) + u\tau| ) \right) du.
\end{multline*}
Taking a glance at the features (\ref{defin_cRJ}) of the path $L_{0,\tau}$, we notice that
\begin{multline}
-\varsigma_{2}r_{b}(\beta)\exp(\varsigma_{3}|\tau|)
+\varsigma_{2}r_{b}(\beta - \gamma_{2}) \exp( \varsigma_{3}|(1-u)c_{RJ}(\tau) + u\tau| )\\
\leq -\varsigma_{2}(r_{b}(\beta) - r_{b}(\beta - \gamma_{2})) \exp( \varsigma_{3}|\tau|) \leq 0
\end{multline}
for all $\tau \in RJ_{c,d,\upsilon}$, all $0 \leq u \leq 1$. Keeping in mind (\ref{difference_s_b_r_b}), (\ref{A_1_bounds}), we obtain as
above
\begin{multline*}
C_{8.1.2}(\beta,\epsilon) \leq 2^{\gamma_{0}+1} \sup_{\tau \in RJ_{c,d,\upsilon}} |\tau|^{\gamma_{0}+\gamma_{1}+2}
\exp \left( -\frac{\sigma_{1}}{|\epsilon|}(r_{b}(\beta) - r_{b}(\beta - \gamma_{2})) |\tau| \right)\\
\leq
2^{\gamma_{0}+1} |\epsilon|^{\gamma_{0}+\gamma_{1}+2}
\left( \frac{\gamma_{0}+\gamma_{1}+2}{\sigma_{1} \gamma_{2}} \right)^{\gamma_{0}+\gamma_{1}+2} \exp( -(\gamma_{0}+\gamma_{1}+2) )
(\beta + 1)^{b(\gamma_{0}+\gamma_{1}+2)} 
\end{multline*}
for all $\beta \geq \gamma_{2}$, all $\epsilon \in \dot{D}(0,\epsilon_{0})$. Lemma 12 follows.
\end{proof}
\end{proof}

\begin{prop} Take $\gamma_{0}$ and $\gamma_{1}$ as non negative integers. Let us select 
$\underline{\varsigma} = (\sigma_{1},\varsigma_{2},\varsigma_{3})$ and
$\underline{\varsigma}' = (\sigma_{1}',\varsigma_{2}',\varsigma_{3}')$ two tuples of positive real numbers in order that
\begin{equation}
\sigma_{1} > \sigma_{1}', \ \ \varsigma_{2} > \varsigma_{2}', \ \  \varsigma_{3} = \varsigma_{3}'.
\end{equation}
Then, for all $\epsilon \in \dot{D}(0,\epsilon_{0})$, the map
$v(\tau,z) \mapsto \tau \int_{L_{0,\tau}} (\tau - s)^{\gamma_{0}} s^{\gamma_1} v(s,z) ds$ is a linear bounded operator from
$SEG_{(\underline{\varsigma}',RJ_{c,d,\upsilon},\epsilon,\delta)}$ into $SEG_{(\underline{\varsigma},RJ_{c,d,\upsilon},\epsilon,\delta)}$.
Besides, one can choose a constant $\check{C}_{8}>0$
(depending on $\gamma_{0},\gamma_{1},\sigma_{1}$ and $\sigma_{1}'$) independent of $\epsilon$, such that
\begin{equation}
|| \tau \int_{L_{0,\tau}} (\tau - s)^{\gamma_0} s^{\gamma_1}
v(s,z) ds ||_{(\underline{\varsigma},RJ_{c,d,\upsilon},\epsilon,\delta)}
\leq \check{C}_{8}|\epsilon|^{\gamma_{0}+\gamma_{1}+2} ||v(\tau,z)||_{(\underline{\varsigma}',RJ_{c,d,\upsilon},\epsilon,\delta)}
\label{norm_conv_v_checkC8}
\end{equation}
for all $v(\tau,z) \in SEG_{(\underline{\varsigma},RJ_{c,d,\upsilon},\epsilon,\delta)}$, all $\epsilon \in \dot{D}(0,\epsilon_{0})$.
\end{prop}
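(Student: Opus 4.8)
The plan is to mirror the proof of Proposition~14 almost verbatim, replacing the use of Proposition~5's exponential growth estimates by the corresponding super exponential growth bound coming from Definition~6, exactly as Proposition~17 mirrored Proposition~13. First I would expand $v(\tau,z) = \sum_{\beta \geq 0} v_{\beta}(\tau) z^{\beta}/\beta!$ in $SEG_{(\underline{\varsigma}',RJ_{c,d,\upsilon},\epsilon,\delta)}$ and, invoking Definition~6, reduce the claim to the coefficientwise estimate
\begin{equation}
|| \tau \int_{L_{0,\tau}} (\tau - s)^{\gamma_0} s^{\gamma_1} v_{\beta}(s) ds ||_{(\beta,\underline{\varsigma},RJ_{c,d,\upsilon},\epsilon)}
\leq \check{C}_{8} |\epsilon|^{\gamma_{0}+\gamma_{1}+2}
||v_{\beta}(\tau)||_{(\beta,\underline{\varsigma}',RJ_{c,d,\upsilon},\epsilon)} \label{aux_coeff_est_C8check}
\end{equation}
for all $\beta \geq 0$, which would be packaged as a lemma in the style of Lemma~11.

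To prove \eqref{aux_coeff_est_C8check} I would split the path $L_{0,\tau}$ into the two segments $[0,c_{RJ}(\tau)]$ and $[c_{RJ}(\tau),\tau]$ exactly as in Lemmas~11 and~12, and on each piece perform the same factorization, inserting and removing the weight $\frac{1}{|s|}\exp(-\frac{\sigma_1'}{|\epsilon|}r_b(\beta)|s| - \varsigma_2' r_b(\beta)\exp(\varsigma_3'|s|))$ attached to $v_\beta$. The two supremum factors $\check{C}_{8.1}(\beta,\epsilon)$ and $\check{C}_{8.2}(\beta,\epsilon)$ that emerge carry a \emph{difference} of exponential weights of the form
\begin{equation}
-\varsigma_{2}r_{b}(\beta)\exp(\varsigma_{3}|\tau|) + \varsigma_{2}'r_{b}(\beta)\exp(\varsigma_{3}'|\sigma|), \nonumber
\end{equation}
where $\sigma$ denotes either $c_{RJ}(\tau)u$ or $(1-u)c_{RJ}(\tau)+u\tau$. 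Using $\varsigma_3 = \varsigma_3'$, $\varsigma_2 > \varsigma_2'$, and the crucial inequality $|\sigma| \leq |\tau|$ from \eqref{defin_cRJ}, this difference is bounded above by $-(\varsigma_2 - \varsigma_2')r_b(\beta)\exp(\varsigma_3|\tau|) \leq 0$, hence the exponential part is $\leq 1$. What remains is precisely the situation already handled in Lemma~11: one is left with $\sup_{x \geq 0} x^{\gamma_0+\gamma_1+2}\exp(-\frac{\sigma_1 - \sigma_1'}{|\epsilon|}r_b(\beta)x)$, which by $r_b(\beta) \geq 1$ and \eqref{checkA1_bds} is bounded by $|\epsilon|^{\gamma_0+\gamma_1+2}\left(\frac{(\gamma_0+\gamma_1+2)e^{-1}}{\sigma_1-\sigma_1'}\right)^{\gamma_0+\gamma_1+2}$, uniformly in $\beta$ and $\epsilon$. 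Summing over $\beta$ against $\delta^\beta/\beta!$ and recognizing the resulting series as $||v||_{(\underline{\varsigma}',RJ_{c,d,\upsilon},\epsilon,\delta)}$ yields \eqref{norm_conv_v_checkC8}.

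The main obstacle is essentially cosmetic rather than conceptual: one must verify carefully that the super exponential weight difference really does have the favorable sign, which hinges simultaneously on the hypothesis $\varsigma_2 > \varsigma_2'$ (so the $r_b(\beta)\exp(\varsigma_3|\tau|)$ term is killed with the larger coefficient on the negative side), on the equality $\varsigma_3 = \varsigma_3'$ (so no crossover of the super-exponential rates can occur), and on $|c_{RJ}(\tau)| \leq |\tau|$ together with $|(1-u)c_{RJ}(\tau)+u\tau| \leq |\tau|$ from \eqref{defin_cRJ}; the monotonicity $r_b(\beta-\gamma_2) \leq r_b(\beta)$ is also used. Since all these inputs are available and the polynomial-in-$|\tau|$ part is dispatched exactly as before, no new estimate is needed and the proof reduces to transcribing Lemmas~11 and~12 with the super-exponential weights carried along. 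I would therefore present only the statement of the coefficientwise lemma, sketch the two factorizations, point to the sign computation above and to \eqref{checkA1_bds}, and conclude as in Proposition~14.
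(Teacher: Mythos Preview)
Your proposal is correct and follows essentially the same route as the paper's proof: reduce to a coefficientwise lemma (the paper's Lemma~13), split $L_{0,\tau}$ at $c_{RJ}(\tau)$, factorize with the $\underline{\varsigma}'$ weight, use $\varsigma_3=\varsigma_3'$, $\varsigma_2>\varsigma_2'$ and $|\sigma|\leq|\tau|$ to kill the super-exponential part, and finish with \eqref{checkA1_bds}. One stray remark: the monotonicity $r_b(\beta-\gamma_2)\leq r_b(\beta)$ plays no role here since there is no $\partial_z^{-\gamma_2}$ shift in this proposition---that input belongs to Proposition~17, not to the present argument.
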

\begin{proof} As above, we only concentrate on the main part of the proof since it is very close to the one of Proposition 14. More precisely,
we are scaled down to prove the next lemma.
\begin{lemma} Let $v_{\beta}(\tau)$ belonging to $SEG_{(\beta,\underline{\varsigma}',RJ_{c,d,\upsilon},\epsilon)}$.
One can sort a constant $\check{C}_{8}>0$ (depending on $\gamma_{0},\gamma_{1},\sigma_{1}$ and $\sigma_{1}'$) such that
\begin{equation}
|| \tau \int_{L_{0,\tau}} (\tau - s)^{\gamma_0} s^{\gamma_1}
v_{\beta}(s) ds ||_{(\beta,\underline{\varsigma},RJ_{c,d,\upsilon},\epsilon)}
\leq
\check{C}_{8} |\epsilon|^{\gamma_{0}+\gamma_{1}+2}
||v_{\beta}(\tau)||_{(\beta,\underline{\varsigma}',RJ_{c,d,\upsilon},\epsilon)}
\end{equation}
for all $\beta \geq 0$. 
\end{lemma}
\begin{proof} We first downsize the integral in two pieces
$$
\tau \int_{L_{0,\tau}} (\tau - s)^{\gamma_0} s^{\gamma_1} v_{\beta}(s) ds =
\tau \int_{0}^{c_{RJ}(\tau)} (\tau - s)^{\gamma_0} s^{\gamma_1} v_{\beta}(s) ds 
+
\tau \int_{c_{RJ}(\tau)}^{\tau} (\tau - s)^{\gamma_0} s^{\gamma_1} v_{\beta}(s) ds
$$
We ask for bounds regarding
$$ \check{LJ}_{1} = || \tau \int_{0}^{c_{RJ}(\tau)} (\tau - s)^{\gamma_0} s^{\gamma_1}
v_{\beta}(s) ds ||_{(\beta,\underline{\varsigma},RJ_{c,d,\upsilon},\epsilon)}. $$
The next factorization holds
\begin{multline*}
\frac{1}{|\tau|} \exp \left( -\frac{\sigma_{1}}{|\epsilon|}r_{b}(\beta) |\tau| -\varsigma_{2}r_{b}(\beta) \exp( \varsigma_{3}|\tau| ) \right)
|\tau|
\left| \int_{0}^{c_{RJ}(\tau)} (\tau - s)^{\gamma_0} s^{\gamma_1}
v_{\beta}(s) ds \right|\\
= \exp \left( -\frac{\sigma_{1}}{|\epsilon|}r_{b}(\beta) |\tau| - \varsigma_{2}r_{b}(\beta) \exp( \varsigma_{3} |\tau| ) \right)
\left| \int_{0}^{c_{RJ}(\tau)} (\tau - s)^{\gamma_0} s^{\gamma_1} \right. \\
\times 
\{ \frac{1}{|s|} \exp \left( - \frac{\sigma_{1}'}{|\epsilon|}r_{b}(\beta)|s| - \varsigma_{2}'r_{b}(\beta)
\exp( \varsigma_{3}|s| ) \right) v_{\beta}(s) \} \\
\left. \times 
|s| \exp \left( \frac{\sigma_{1}'}{|\epsilon|}r_{b}(\beta)|s| + \varsigma_{2}'r_{b}(\beta)
\exp( \varsigma_{3}|s|) \right) ds \right|.
\end{multline*}
which induces
\begin{equation}
\check{LJ}_{1} \leq \check{C}_{8.1}(\beta,\epsilon) ||v_{\beta}(\tau)||_{(\beta,\underline{\varsigma}',RJ_{c,d,\upsilon},\epsilon)} 
\end{equation}
where
\begin{multline*}
\check{C}_{8.1}(\beta,\epsilon) = \sup_{\tau \in RJ_{c,d,\upsilon}}
\exp \left( -\frac{\sigma_{1}}{|\epsilon|}r_{b}(\beta) |\tau| - \varsigma_{2}r_{b}(\beta)\exp( \varsigma_{3} |\tau| ) \right)
\int_{0}^{1} |\tau - c_{RJ}(\tau)u|^{\gamma_0} \\
\times |c_{RJ}(\tau)|^{\gamma_{1} + 2} u^{\gamma_{1}+1}
\exp \left( \frac{\sigma_{1}'}{|\epsilon|}r_{b}(\beta) |c_{RJ}(\tau)u| +
\varsigma_{2}'r_{b}(\beta)\exp( \varsigma_{3} |c_{RJ}(\tau) u| ) \right) du.
\end{multline*}
In accordance with the construction of the path $L_{0,\tau}$ described in (\ref{defin_cRJ}), we grant that
\begin{equation}
-\varsigma_{2}r_{b}(\beta)\exp( \varsigma_{3}|\tau| ) + \varsigma_{2}'r_{b}(\beta)\exp( \varsigma_{3}|c_{RJ}(\tau)|u )
\leq (\varsigma_{2}' - \varsigma_{2})r_{b}(\beta) \exp( \varsigma_{3}|\tau| ) \leq 0
\end{equation}
for all $\tau \in RJ_{c,d,\upsilon}$, all $0 \leq u \leq 1$.

Besides, taking into account the bounds (\ref{checkA1_bds}), we deduce
\begin{multline}
\check{C}_{8.1}(\beta,\epsilon) \leq 2^{\gamma_0} \sup_{\tau \in RJ_{c,d,\upsilon}} |\tau|^{\gamma_{0}+\gamma_{1}+2}
\exp \left( -\frac{\sigma_{1} - \sigma_{1}'}{|\epsilon|} r_{b}(\beta) |\tau| \right)\\
\leq
2^{\gamma_0} \sup_{x \geq 0} x^{\gamma_{0}+\gamma_{1}+2} \exp \left( -\frac{\sigma_{1} - \sigma_{1}'}{|\epsilon|} r_{b}(\beta) x \right)
\leq 2^{\gamma_0} |\epsilon|^{\gamma_{0}+\gamma_{1}+2}
\left( \frac{(\gamma_{0}+\gamma_{1}+2)e^{-1}}{\sigma_{1} - \sigma_{1}'} \right)^{\gamma_{0}+\gamma_{1}+2}
\end{multline}
for all $\beta \geq 0$, $\epsilon \in \dot{D}(0,\epsilon_{0})$.

In a second part, we concentrate on
$$ \check{LJ}_{2} = ||\tau \int_{c_{RJ}(\tau)}^{\tau} (\tau - s)^{\gamma_0}
s^{\gamma_1} v_{\beta}(s) ds||_{(\beta,\underline{\varsigma},RJ_{c,d,\upsilon},\epsilon)}. $$
Again we use a factorization
\begin{multline*}
\frac{1}{|\tau|} \exp \left( -\frac{\sigma_{1}}{|\epsilon|}r_{b}(\beta) |\tau| -\varsigma_{2}r_{b}(\beta)\exp( \varsigma_{3}|\tau| )
\right) |\tau|
\left| \int_{c_{RJ}(\tau)}^{\tau} (\tau - s)^{\gamma_0} s^{\gamma_1}
v_{\beta}(s) ds \right|\\
= \exp \left( -\frac{\sigma_{1}}{|\epsilon|}r_{b}(\beta) |\tau| -\varsigma_{2}r_{b}(\beta)\exp( \varsigma_{3}|\tau| )
\right) \left| \int_{c_{RJ}(\tau)}^{\tau} (\tau - s)^{\gamma_0} s^{\gamma_1} \right.\\
\times 
\{ \frac{1}{|s|} \exp \left( - \frac{\sigma_{1}'}{|\epsilon|}r_{b}(\beta)|s| -\varsigma_{2}'r_{b}(\beta)
\exp(\varsigma_{3}|s|) \right) v_{\beta}(s) \} \\
\left. \times 
|s| \exp \left( \frac{\sigma_{1}'}{|\epsilon|}r_{b}(\beta)|s| + \varsigma_{2}'r_{b}(\beta)
\exp(\varsigma_{3}|s|) \right) ds \right|.
\end{multline*}
that induces
\begin{equation}
\check{LJ}_{2} \leq \check{C}_{8.2}(\beta,\epsilon) ||v_{\beta}(\tau)||_{(\beta,\underline{\varsigma}',RJ_{c,d,\upsilon},\epsilon)} 
\end{equation}
with
\begin{multline*}
\check{C}_{8.2}(\beta,\epsilon) = \sup_{\tau \in RJ_{c,d,\upsilon}}
\exp \left( -\frac{\sigma_{1}}{|\epsilon|}r_{b}(\beta) |\tau| -\varsigma_{2}r_{b}(\beta)\exp(\varsigma_{3}|\tau|) \right)
\int_{0}^{1} |\tau - c_{RJ}(\tau)|^{\gamma_{0}+1}(1-u)^{\gamma_{0}}\\
\times |(1-u)c_{RJ}(\tau) + u\tau|^{\gamma_{1}+1}
\exp \left( \frac{\sigma_{1}'}{|\epsilon|}r_{b}(\beta)|(1-u)c_{RJ}(\tau) + u\tau| \right. \\
\left. +\varsigma_{2}'r_{b}(\beta) \exp( \varsigma_{3}|(1-u)c_{RJ}(\tau) + u\tau| ) \right) du.
\end{multline*}
The construction of $L_{0,\tau}$ through (\ref{defin_cRJ}) entails
\begin{multline}
-\varsigma_{2}r_{b}(\beta)\exp(\varsigma_{3}|\tau|)
+\varsigma_{2}'r_{b}(\beta) \exp( \varsigma_{3}|(1-u)c_{RJ}(\tau) + u\tau| )\\
\leq -(\varsigma_{2} - \varsigma_{2}')r_{b}(\beta) \exp( \varsigma_{3}|\tau|) \leq 0
\end{multline}
for all $\tau \in RJ_{c,d,\upsilon}$, all $0 \leq u \leq 1$. 

According to the bounds (\ref{checkA1_bds}), we get
\begin{multline}
\check{C}_{8.2}(\beta,\epsilon) \leq 2^{\gamma_{0}+1} \sup_{\tau \in RJ_{c,d,\upsilon}} |\tau|^{\gamma_{0}+\gamma_{1}+2}
\exp \left( -\frac{\sigma_{1} - \sigma_{1}'}{|\epsilon|} r_{b}(\beta) |\tau| \right)\\
\leq 2^{\gamma_{0}+1} |\epsilon|^{\gamma_{0}+\gamma_{1}+2}
\left( \frac{(\gamma_{0}+\gamma_{1}+2)e^{-1}}{\sigma_{1} - \sigma_{1}'} \right)^{\gamma_{0}+\gamma_{1}+2}
\end{multline}
for all $\beta \geq 0$, $\epsilon \in \dot{D}(0,\epsilon_{0})$. Finally, Lemma 13 is justified.
\end{proof}
\end{proof}

The proof of the next proposition is a straightforward adaptation of the one disclosed in Proposition 4 and will therefore be overlooked.

\begin{prop} Let us consider some holomorphic function $c(\tau,z,\epsilon)$ on $\mathring{RJ}_{c,d,\upsilon} \times D(0,\rho) \times 
D(0,\epsilon_{0})$, continuous on $RJ_{c,d,\upsilon} \times D(0,\rho) \times D(0,\epsilon_{0})$, for a radius $\rho>0$, bounded therein
by a constant $M_{c}>0$. Fix some $0 < \delta < \rho$. Then, the linear operator
$v(\tau,z) \mapsto c(\tau,z,\epsilon)v(\tau,z)$ is bounded from
$SEG_{(\underline{\varsigma},RJ_{c,d,\upsilon},\epsilon,\delta)}$ into itself,
provided that $\epsilon \in \dot{D}(0,\epsilon_{0})$. Additionally, a constant $C_{9}>0$ (depending on
$M_{c},\delta,\rho$) independent of $\epsilon$ exists in a way that
\begin{equation}
||c(\tau,z,\epsilon)v(\tau,z)||_{(\underline{\varsigma},RJ_{c,d,\upsilon},\epsilon,\delta)} \leq C_{9}
||v(\tau,z)||_{(\underline{\varsigma},RJ_{c,d,\upsilon},\epsilon,\delta)}
\end{equation}
for all $v \in SEG_{(\underline{\varsigma},RJ_{c,d,\upsilon},\epsilon,\delta)}$.
\end{prop}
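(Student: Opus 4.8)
The plan is to transcribe \emph{mutatis mutandis} the argument of Proposition 4, the only changes being that the underlying Banach space $SED_{(\underline{\sigma},H,\epsilon,\delta)}$ is replaced by $SEG_{(\underline{\varsigma},RJ_{c,d,\upsilon},\epsilon,\delta)}$ and the strip $H$ by the $L$-shaped domain $RJ_{c,d,\upsilon}$. First I would expand $c(\tau,z,\epsilon) = \sum_{\beta \geq 0} c_{\beta}(\tau,\epsilon) z^{\beta}/\beta!$ as a Taylor series in $z$, convergent on $D(0,\rho)$, whose coefficients $c_{\beta}(\tau,\epsilon)$ are holomorphic in $\tau$ on $\mathring{RJ}_{c,d,\upsilon}$ and continuous on $RJ_{c,d,\upsilon}$. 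Then for an arbitrary $v(\tau,z) = \sum_{\beta \geq 0} v_{\beta}(\tau) z^{\beta}/\beta! \in SEG_{(\underline{\varsigma},RJ_{c,d,\upsilon},\epsilon,\delta)}$, expanding the Cauchy product and applying Definition 6 gives
\[
\|c(\tau,z,\epsilon) v(\tau,z)\|_{(\underline{\varsigma},RJ_{c,d,\upsilon},\epsilon,\delta)}
\leq \sum_{\beta \geq 0} \Bigl( \sum_{\beta_{1}+\beta_{2}=\beta} \|c_{\beta_{1}}(\tau,\epsilon) v_{\beta_{2}}(\tau)\|_{(\beta,\underline{\varsigma},RJ_{c,d,\upsilon},\epsilon)} \frac{\beta!}{\beta_{1}! \beta_{2}!} \Bigr) \frac{\delta^{\beta}}{\beta!}.
\]

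Next I would fix some $\delta < \delta' < \rho$ and invoke the Cauchy integral formula on the polydisc to obtain $\sup_{\tau \in RJ_{c,d,\upsilon},\, \epsilon \in D(0,\epsilon_{0})} |c_{\beta_{1}}(\tau,\epsilon)| \leq M_{c} (1/\delta')^{\beta_{1}} \beta_{1}!$ for all $\beta_{1} \geq 0$. Combining this uniform bound on the factor $c_{\beta_{1}}$ with the key monotonicity observation that $\beta \mapsto r_{b}(\beta)$ is non-decreasing — so that, for $\beta_{2} \leq \beta$, \emph{both} factors $\exp(-\tfrac{\sigma_{1}}{|\epsilon|} r_{b}(\beta)|\tau|)$ and $\exp(-\varsigma_{2} r_{b}(\beta)\exp(\varsigma_{3}|\tau|))$ in the weight indexed by $\beta$ are dominated by the corresponding factors in the weight indexed by $\beta_{2}$ — I get
\[
\|c_{\beta_{1}}(\tau,\epsilon) v_{\beta_{2}}(\tau)\|_{(\beta,\underline{\varsigma},RJ_{c,d,\upsilon},\epsilon)}
\leq M_{c}\, \beta_{1}! \Bigl(\tfrac{1}{\delta'}\Bigr)^{\beta_{1}} \|v_{\beta_{2}}(\tau)\|_{(\beta_{2},\underline{\varsigma},RJ_{c,d,\upsilon},\epsilon)} .
\]

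Finally I would substitute this estimate into the double sum above, cancel the $\beta!$ against the multinomial coefficients $\beta!/(\beta_{1}!\beta_{2}!)$, and reorganize the resulting double series over independent indices $\beta_{1},\beta_{2} \geq 0$ to factor out the convergent geometric series $\sum_{\beta_{1}\geq 0}(\delta/\delta')^{\beta_{1}} = (1-\delta/\delta')^{-1}$, which leaves exactly $\|v(\tau,z)\|_{(\underline{\varsigma},RJ_{c,d,\upsilon},\epsilon,\delta)}$. This yields the claimed bound with $C_{9} = M_{c}(1-\delta/\delta')^{-1}$, manifestly independent of $\epsilon$.

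There is no genuine obstacle here: the proof is pure bookkeeping, and the sole point requiring a moment of care is the monotonicity step, which is the exact analogue of the inequalities $r_{b}(\beta)\geq r_{b}(\beta_{2})$, $s_{b}(\beta)\leq s_{b}(\beta_{2})$ used in Proposition 4 — here only the non-decrease of $r_{b}$ is needed, and it holds trivially since $r_{b}(\beta)=\sum_{n=0}^{\beta}1/(n+1)^{b}$ and $\varsigma_{2},\exp(\varsigma_{3}|\tau|)>0$. Everything else (holomorphy of the product in $\tau$ on $\mathring{RJ}_{c,d,\upsilon}$, continuity on $RJ_{c,d,\upsilon}$, boundedness of $c$) is inherited directly from the hypotheses, so the statement follows.
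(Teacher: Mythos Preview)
Your proposal is correct and follows exactly the approach the paper intends: the paper states that the proof is a straightforward adaptation of Proposition~4 and omits the details, and you have carried out precisely that adaptation, including the correct observation that only the non-decrease of $r_{b}(\beta)$ is needed here (since both weight factors in Definition~6 involve $r_{b}$ rather than one $r_{b}$ and one $s_{b}$).
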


\subsection{Continuity bounds for linear convolution operators acting on the Banach spaces
$EG_{(\sigma_{1},S_{d} \cup D(0,r),\epsilon,\delta)}$}

We keep the notations of Section 3.2. By means of the statement of the next two propositions, we inspect linear maps constructed as
convolution
products acting on the Banach spaces of functions with exponential growth on sectors mentioned in Definition 2. In the sequel,
a sector $S_{d}$ will denote one the sector $S_{d_p}$, $0 \leq p \leq \iota-1$ just introduced after Definition 4. For all
$\tau \in S_{d} \cup D(0,r)$, $L_{0,\tau}$ merely denotes the segment $[0,\tau]$ which obviously belong to
$S_{d} \cup D(0,r)$.

\begin{prop}
Take $\gamma_{0},\gamma_{1} \geq 0$ and $\gamma_{2} \geq 1$ among the set of integers. Assume that
\begin{equation}
\gamma_{2} \geq b(\gamma_{0} + \gamma_{1} + 2) 
\end{equation}
holds. Then, for any given $\epsilon$ in $\dot{D}(0,\epsilon_{0})$, the map
$v(\tau,z) \mapsto \tau \int_{L_{0,\tau}} (\tau - s)^{\gamma_0}s^{\gamma_1} \partial_{z}^{-\gamma_2}v(s,z) ds$ represents a bounded linear
operator from $EG_{(\sigma_{1},S_{d} \cup D(0,r),\epsilon,\delta)}$ into itself. Moreover, there exists a constant
$C_{10}>0$ (depending on $\gamma_{0},\gamma_{1},\gamma_{2}$, $\sigma_{1}$ and $b$) independent of $\epsilon$, for which
\begin{multline}
|| \tau \int_{L_{0,\tau}} (\tau - s)^{\gamma_0} s^{\gamma_1} \partial_{z}^{-\gamma_{2}}
v(s,z) ds ||_{(\sigma_{1},S_{d} \cup D(0,r),\epsilon,\delta)}\\
\leq C_{10}|\epsilon|^{\gamma_{0}+\gamma_{1}+2} \delta^{\gamma_2} ||v(\tau,z)||_{(\sigma_{1},S_{d} \cup D(0,r),\epsilon,\delta)}
\label{norm_conv_partialz_v_C10}
\end{multline}
provided that $v(\tau,z) \in EG_{(\sigma_{1},S_{d} \cup D(0,r),\epsilon,\delta)}$ and $\epsilon \in \dot{D}(0,\epsilon_{0})$.
\end{prop}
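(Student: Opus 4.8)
The plan is to follow the now-familiar pattern of the earlier continuity propositions (Propositions 2, 6 and 13), reducing the estimate on the full series to a coefficient-wise estimate, then summing. First I would expand $v(\tau,z) = \sum_{\beta \geq 0} v_{\beta}(\tau) z^{\beta}/\beta!$ in $EG_{(\sigma_{1},S_{d} \cup D(0,r),\epsilon,\delta)}$ and observe, directly from Definition 2, that
\[
\Big\| \tau \int_{L_{0,\tau}} (\tau - s)^{\gamma_0} s^{\gamma_1} \partial_{z}^{-\gamma_{2}} v(s,z)\, ds \Big\|_{(\sigma_{1},S_{d} \cup D(0,r),\epsilon,\delta)} = \sum_{\beta \geq \gamma_{2}} \Big\| \tau \int_{L_{0,\tau}} (\tau - s)^{\gamma_0} s^{\gamma_1} v_{\beta - \gamma_{2}}(s)\, ds \Big\|_{(\beta,\sigma_{1},S_{d} \cup D(0,r),\epsilon)} \frac{\delta^{\beta}}{\beta!},
\]
so the whole matter comes down to a single lemma controlling the $\beta$-th term. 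Since here $L_{0,\tau}$ is simply the segment $[0,\tau]$ (no need for the broken path $[0,c(\tau)]\cup[c(\tau),\tau]$ used in the $L$-shaped case, because $S_d \cup D(0,r)$ is starshaped with respect to $0$), the argument is actually simpler than in Proposition 13: there is no splitting into two integrals.

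The key lemma to prove is: there is a constant $C_{10.1}>0$ (depending on $\gamma_0,\gamma_1,\gamma_2,\sigma_1$) such that
\[
\Big\| \tau \int_{L_{0,\tau}} (\tau - s)^{\gamma_0} s^{\gamma_1} v_{\beta - \gamma_{2}}(s)\, ds \Big\|_{(\beta,\sigma_{1},S_{d} \cup D(0,r),\epsilon)} \leq C_{10.1}|\epsilon|^{\gamma_{0}+\gamma_{1}+2}(\beta+1)^{b(\gamma_0+\gamma_1+2)} \| v_{\beta-\gamma_2}(\tau)\|_{(\beta-\gamma_2,\sigma_1,S_d \cup D(0,r),\epsilon)}
\]
for all $\beta \geq \gamma_2$. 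To get this I would parametrize $s = \tau u$, $u \in [0,1]$, factor out
\[
\tfrac{1}{|s|}\exp\big(-\tfrac{\sigma_1}{|\epsilon|} r_b(\beta-\gamma_2)|s|\big)|v_{\beta-\gamma_2}(s)|
\]
to bring in the norm of $v_{\beta-\gamma_2}$, use the crude bounds $|\tau - \tau u|^{\gamma_0} = (1-u)^{\gamma_0}|\tau|^{\gamma_0} \leq |\tau|^{\gamma_0}$ and $|s| = u|\tau| \leq |\tau|$, so the integral over $u$ is bounded by $1$ times a sup over $\tau$, and then reduce to
\[
\sup_{\tau \in \bar S_d \cup \bar D(0,r)} |\tau|^{\gamma_0+\gamma_1+2}\exp\Big(-\tfrac{\sigma_1}{|\epsilon|}(r_b(\beta) - r_b(\beta-\gamma_2))|\tau|\Big).
\]
Invoking $r_b(\beta) - r_b(\beta-\gamma_2) \geq \gamma_2/(\beta+1)^b$ from (\ref{difference_s_b_r_b}) and the classical bound (\ref{xm1_expm2x<}) (as already done in (\ref{A_1_bounds}) and (\ref{C511_bds})) yields the factor $|\epsilon|^{\gamma_0+\gamma_1+2}(\beta+1)^{b(\gamma_0+\gamma_1+2)}$ up to a numerical constant.

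Finally I would feed the lemma back into the series: shifting index, $\delta^\beta/\beta! = \delta^{\gamma_2}\cdot \delta^{\beta-\gamma_2}/(\beta-\gamma_2)!\cdot (\beta-\gamma_2)!/\beta!$, and using the assumption $\gamma_2 \geq b(\gamma_0+\gamma_1+2)$ together with the elementary bound $(\beta+1)^{b(\gamma_0+\gamma_1+2)}(\beta-\gamma_2)!/\beta! \leq C_{10.2}$ for all $\beta \geq \gamma_2$ (exactly as in (\ref{bds_beta_gamma012})) gives
\[
\Big\| \tau \int_{L_{0,\tau}} \cdots \Big\|_{(\sigma_{1},S_{d} \cup D(0,r),\epsilon,\delta)} \leq C_{10.1}C_{10.2}|\epsilon|^{\gamma_0+\gamma_1+2}\delta^{\gamma_2} \sum_{\beta \geq \gamma_2} \| v_{\beta-\gamma_2}(\tau)\|_{(\beta-\gamma_2,\sigma_1,S_d\cup D(0,r),\epsilon)}\frac{\delta^{\beta-\gamma_2}}{(\beta-\gamma_2)!} = C_{10}|\epsilon|^{\gamma_0+\gamma_1+2}\delta^{\gamma_2}\|v\|_{(\sigma_1,S_d\cup D(0,r),\epsilon,\delta)},
\]
which is (\ref{norm_conv_partialz_v_C10}). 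I do not expect a genuine obstacle here; the only point requiring a little care is that the domain $\bar S_d \cup \bar D(0,r)$ is unbounded, so the sup over $\tau$ must be taken over $x = |\tau| \in [0,+\infty)$ rather than a bounded interval — but this is precisely what (\ref{xm1_expm2x<}) handles, and the starshapedness of the domain is what makes the single-segment path $L_{0,\tau}=[0,\tau]$ admissible and the estimate clean.
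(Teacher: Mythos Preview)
Your proposal is correct and follows essentially the same route as the paper: the paper likewise reduces to the coefficient-wise Lemma~14 (your key lemma) via Definition~2, parametrizes $s=\tau u$ along the single segment $L_{0,\tau}=[0,\tau]$, factors out the weighted norm of $v_{\beta-\gamma_2}$, and bounds the remaining $\sup_{\tau}|\tau|^{\gamma_0+\gamma_1+2}\exp(-\tfrac{\sigma_1}{|\epsilon|}(r_b(\beta)-r_b(\beta-\gamma_2))|\tau|)$ using (\ref{difference_s_b_r_b}) and (\ref{xm1_expm2x<}) exactly as in (\ref{C511_bds}), before summing with the aid of (\ref{bds_beta_gamma012}). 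The only cosmetic difference is that the paper explicitly writes out the integral $\int_0^1 (1-u)^{\gamma_0}u^{\gamma_1+1}\,du$ in the intermediate bound $C_{10.1}(\beta,\epsilon)$, whereas you absorb it directly.
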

\begin{proof} Since the proof mirrors the one presented for Proposition 13, we only focus attention at the next lemma.
\begin{lemma} Let $v_{\beta - \gamma_{2}}(\tau)$ belonging to $EG_{(\beta - \gamma_{2},\sigma_{1},S_{d} \cup D(0,r),\epsilon)}$.
Then, one can select a constant $C_{10.1}>0$ (depending on $\gamma_{0},\gamma_{1},\gamma_{2}$ and $\sigma_{1}$) such that
\begin{multline}
|| \tau \int_{L_{0,\tau}} (\tau - s)^{\gamma_0} s^{\gamma_1} v_{\beta - \gamma_{2}}(s) ds ||_{(\beta,\sigma_{1},S_{d} \cup D(0,r),\epsilon)}
\\
\leq C_{10.1} |\epsilon|^{\gamma_{0} + \gamma_{1} + 2}(\beta + 1)^{b(\gamma_{0}+\gamma_{1}+2)}
||v_{\beta - \gamma_{2}}(\tau) ||_{(\beta - \gamma_{2},\sigma_{1},S_{d} \cup D(0,r),\epsilon)} \label{norm_conv_v_beta_minus_gamma2_C10.1}
\end{multline}
for all $\beta \geq \gamma_{2}$.
\end{lemma}
\begin{proof} We first perform a factorization 
\begin{multline*}
\frac{1}{|\tau|} \exp \left( -\frac{\sigma_{1}}{|\epsilon|}r_{b}(\beta) |\tau| \right) |\tau|
\left| \int_{0}^{\tau} (\tau - s)^{\gamma_0} s^{\gamma_1}
v_{\beta - \gamma_{2}}(s) ds \right|\\
= \exp \left( -\frac{\sigma_{1}}{|\epsilon|}r_{b}(\beta) |\tau| \right) \left| \int_{0}^{\tau} (\tau - s)^{\gamma_0} s^{\gamma_1}
\{ \frac{1}{|s|} \exp \left( - \frac{\sigma_{1}}{|\epsilon|}r_{b}(\beta - \gamma_{2})|s| \right) v_{\beta - \gamma_{2}}(s) \} \right. \\
\left. \times 
|s| \exp \left( \frac{\sigma_{1}}{|\epsilon|}r_{b}(\beta - \gamma_{2})|s| \right) ds \right|.
\end{multline*}
We deduce that
\begin{equation}
|| \tau \int_{0}^{\tau} (\tau - s)^{\gamma_0} s^{\gamma_1} v_{\beta - \gamma_{2}}(s) ds ||_{(\beta,\sigma_{1},S_{d} \cup D(0,r),\epsilon)}
\leq C_{10.1}(\beta,\epsilon) ||v_{\beta - \gamma_{2}}(\tau)||_{(\beta - \gamma_{2},\sigma_{1},S_{d} \cup D(0,r),\epsilon)} 
\end{equation}
where $C_{10.1}(\beta,\epsilon)$ fulfills the next bounds, with the help of (\ref{difference_s_b_r_b}), (\ref{A_1_bounds}),
\begin{multline}
C_{10.1}(\beta,\epsilon) = \sup_{\tau \in S_{d} \cup D(0,r)}
\exp \left( -\frac{\sigma_{1}}{|\epsilon|}r_{b}(\beta) |\tau| \right) \int_{0}^{1} |\tau|^{\gamma_{0}+\gamma_{1}+2} (1-u)^{\gamma_0}
u^{\gamma_{1}+1}\\
\times \exp \left( \frac{\sigma_{1}}{|\epsilon|}r_{b}(\beta - \gamma_{2}) |\tau| u \right) du\\
\leq \sup_{\tau \in S_{d} \cup D(0,r)} |\tau|^{\gamma_{0}+\gamma_{1}+2}
\exp \left( -\frac{\sigma_{1}}{|\epsilon|}(r_{b}(\beta) - r_{b}(\beta - \gamma_{2})) |\tau| \right)\\
\leq \sup_{x \geq 0} x^{\gamma_{0}+\gamma_{1}+2}
\exp \left( -\frac{\sigma_{1}}{|\epsilon|}\frac{\gamma_2}{(\beta + 1)^{b}} x \right)\\
\leq |\epsilon|^{\gamma_{0}+\gamma_{1}+2}
\left( \frac{\gamma_{0}+\gamma_{1}+2}{\sigma_{1} \gamma_{2}} \right)^{\gamma_{0}+\gamma_{1}+2} \exp( -(\gamma_{0}+\gamma_{1}+2) )
(\beta + 1)^{b(\gamma_{0}+\gamma_{1}+2)} \label{C10.1_bds}
\end{multline}
for all $\beta \geq \gamma_{2}$, all $\epsilon \in \dot{D}(0,\epsilon_{0})$. This yields the lemma 14.
\end{proof}
\end{proof}

\begin{prop} Let $\gamma_{0},\gamma_{1} \geq 0$ chosen among the set of integers. Let $\sigma_{1},\sigma_{1}'>0$ be real numbers
satisfying $\sigma_{1} > \sigma_{1}'$.
Then, for all $\epsilon \in \dot{D}(0,\epsilon_{0})$, the linear map
$v(\tau,z) \mapsto \tau \int_{L_{0,\tau}} (\tau - s)^{\gamma_0}s^{\gamma_1}v(s,z) ds$ is a bounded operator from
$EG_{(\sigma_{1}',S_{d} \cup D(0,r),\epsilon,\delta)}$ into
$EG_{(\sigma_{1},S_{d} \cup D(0,r),\epsilon,\delta)}$. Furthermore, we can get
a constant $\check{C}_{10}>0$ (depending on $\gamma_{0},\gamma_{1},\sigma_{1}$ and $\sigma_{1}'$) with
\begin{equation}
|| \tau \int_{L_{0,\tau}} (\tau - s)^{\gamma_0}s^{\gamma_1}v(s,z) ds ||_{(\sigma_{1},S_{d} \cup D(0,r),\epsilon,\delta)} \leq
\check{C}_{10} |\epsilon|^{\gamma_{0}+\gamma_{1}+2} ||v(\tau,z)||_{(\sigma_{1}',S_{d} \cup D(0,r),\epsilon,\delta)}
\end{equation}
for all $v(\tau,z) \in EG_{(\sigma_{1}',S_{d} \cup D(0,r),\epsilon,\delta)}$, for all $\epsilon \in \dot{D}(0,\epsilon_{0})$.
\end{prop}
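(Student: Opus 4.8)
The plan is to reproduce, with minor simplifications, the scheme already carried out for Proposition 14 in the $RH_{a,b,\upsilon}$-setting and for Proposition 18 in the $RJ_{c,d,\upsilon}$-setting. The simplification is that here the integration path $L_{0,\tau}$ is just the segment $[0,\tau]$, which lies in $S_{d}\cup D(0,r)$ because this domain is star-shaped with respect to the origin, so no splitting of the convolution integral into two sub-paths is needed. First I would expand $v(\tau,z)=\sum_{\beta\geq 0}v_{\beta}(\tau)z^{\beta}/\beta!$ and, since the operator does not act on the variable $z$, use the very definition of $||\cdot||_{(\sigma_{1},S_{d}\cup D(0,r),\epsilon,\delta)}$ in Definition 2 to reduce the statement to the coefficient-wise estimate
\begin{equation*}
|| \tau \int_{L_{0,\tau}} (\tau - s)^{\gamma_0} s^{\gamma_1} v_{\beta}(s)\, ds ||_{(\beta,\sigma_{1},S_{d} \cup D(0,r),\epsilon)}
\leq \check{C}_{10}|\epsilon|^{\gamma_{0}+\gamma_{1}+2}||v_{\beta}(\tau)||_{(\beta,\sigma_{1}',S_{d} \cup D(0,r),\epsilon)}
\end{equation*}
valid for all $\beta\geq 0$ and all $\epsilon\in\dot{D}(0,\epsilon_{0})$, which is the exact analogue of Lemma 11.

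To prove this lemma I would parametrize $s=u\tau$ with $u\in[0,1]$, so that $|\tau-s|^{\gamma_0}|s|^{\gamma_1}|s|\,|ds|=|\tau|^{\gamma_0+\gamma_1+2}(1-u)^{\gamma_0}u^{\gamma_1+1}\,du$, and then insert the factors $\exp(\mp\frac{\sigma_{1}'}{|\epsilon|}r_{b}(\beta)|s|)$ inside and outside the integral to peel off $||v_{\beta}(\tau)||_{(\beta,\sigma_{1}',S_{d}\cup D(0,r),\epsilon)}$. One is then left to bound
\begin{equation*}
\check{C}(\beta,\epsilon)=\sup_{\tau\in S_{d}\cup D(0,r)} |\tau|^{\gamma_0+\gamma_1+2}\exp\left(-\frac{\sigma_{1}}{|\epsilon|}r_{b}(\beta)|\tau|\right)\int_{0}^{1} (1-u)^{\gamma_0}u^{\gamma_1+1}\exp\left(\frac{\sigma_{1}'}{|\epsilon|}r_{b}(\beta)|\tau|u\right)du .
\end{equation*}
Majorizing the inner integral by $1$ after replacing $u$ by $1$ in the exponential, and using $r_{b}(\beta)\geq 1$, gives $\check{C}(\beta,\epsilon)\leq\sup_{x\geq 0}x^{\gamma_0+\gamma_1+2}\exp(-\frac{\sigma_{1}-\sigma_{1}'}{|\epsilon|}x)$, and the classical bound (\ref{xm1_expm2x<}), already used for $\check{A}_{1}(\beta)$ in (\ref{checkA1_bds}), yields $\check{C}(\beta,\epsilon)\leq|\epsilon|^{\gamma_0+\gamma_1+2}\bigl((\gamma_0+\gamma_1+2)e^{-1}/(\sigma_{1}-\sigma_{1}')\bigr)^{\gamma_0+\gamma_1+2}$, uniformly in $\beta$ and $\epsilon$.

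Finally, since the operator carries no inverse power of $\partial_{z}$, no factor of type $(\beta+1)^{b(\gamma_0+\gamma_1+2)}$ and no ratio $(\beta-\gamma_2)!/\beta!$ appears, so summing the coefficient-wise bound against the weights $\delta^{\beta}/\beta!$ is immediate and directly produces $\check{C}_{10}|\epsilon|^{\gamma_0+\gamma_1+2}||v(\tau,z)||_{(\sigma_{1}',S_{d}\cup D(0,r),\epsilon,\delta)}$ with $\check{C}_{10}=\bigl((\gamma_0+\gamma_1+2)e^{-1}/(\sigma_{1}-\sigma_{1}')\bigr)^{\gamma_0+\gamma_1+2}$. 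I do not anticipate a genuine obstacle here: the proof is a routine transcription of Propositions 14 and 18, the only points requiring a little care being to keep exact track of the powers of $|\epsilon|$ produced by the rescaling $s=u\tau$ and to invoke $r_{b}(\beta)\geq 1$ so that the whole $\beta$-dependence of $\check{C}(\beta,\epsilon)$ disappears before the summation over $\beta$.
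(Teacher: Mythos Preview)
Your proposal is correct and follows essentially the same route as the paper's proof. The paper also reduces to the coefficient-wise estimate (stated there as Lemma 15), performs the same factorization with the $\sigma_{1}'$-weight inside the integral, parametrizes $s=u\tau$, bounds the resulting supremum by $\sup_{x\geq 0}x^{\gamma_0+\gamma_1+2}\exp(-\frac{\sigma_1-\sigma_1'}{|\epsilon|}r_b(\beta)x)$, and then invokes (\ref{checkA1_bds}) together with $r_b(\beta)\geq 1$ to obtain the identical constant $\check{C}_{10}=\bigl((\gamma_0+\gamma_1+2)e^{-1}/(\sigma_1-\sigma_1')\bigr)^{\gamma_0+\gamma_1+2}$.
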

\begin{proof}
The proof mimics the one of Proposition 14 and is based on the next lemma
\begin{lemma}
One can attach a constant $\check{C}_{10}>0$ (depending on $\gamma_{0},\gamma_{1},\sigma_{1}$ and $\sigma_{1}'$) such that
\begin{equation}
|| \tau \int_{L_{0,\tau}} (\tau - s)^{\gamma_0} s^{\gamma_1} v_{\beta}(s) ds ||_{(\beta,\sigma_{1},S_{d} \cup D(0,r),\epsilon)}
\leq \check{C}_{10} |\epsilon|^{\gamma_{0} + \gamma_{1} + 2}
||v_{\beta}(\tau) ||_{(\beta,\sigma_{1}',S_{d} \cup D(0,r),\epsilon)} \label{norm_conv_v_beta_sigma1_sigma1_prim_checkC10}
\end{equation}
for all $\beta \geq 0$.
\end{lemma}
\begin{proof} 
We apply the next factorization
\begin{multline*}
\frac{1}{|\tau|} \exp \left( -\frac{\sigma_{1}}{|\epsilon|}r_{b}(\beta) |\tau| \right) |\tau|
\left| \int_{0}^{\tau} (\tau - s)^{\gamma_0} s^{\gamma_1}
v_{\beta}(s) ds \right|\\
= \exp \left( -\frac{\sigma_{1}}{|\epsilon|}r_{b}(\beta) |\tau| \right) \left| \int_{0}^{\tau} (\tau - s)^{\gamma_0} s^{\gamma_1}
\{ \frac{1}{|s|} \exp \left( - \frac{\sigma_{1}'}{|\epsilon|}r_{b}(\beta)|s| \right) v_{\beta}(s) \} \right. \\
\left. \times 
|s| \exp \left( \frac{\sigma_{1}'}{|\epsilon|}r_{b}(\beta)|s| \right) ds \right|.
\end{multline*}
which entails
\begin{equation}
|| \tau \int_{0}^{\tau} (\tau - s)^{\gamma_0} s^{\gamma_1} v_{\beta}(s) ds ||_{(\beta,\sigma_{1},S_{d} \cup D(0,r),\epsilon)}
\leq \check{C}_{10}(\beta,\epsilon) ||v_{\beta}(\tau)||_{(\beta,\sigma_{1}',S_{d} \cup D(0,r),\epsilon)} 
\end{equation}
for $\check{C}_{10}(\beta,\epsilon)$ submitted to the next bounds, keeping in view (\ref{checkA1_bds}),
\begin{multline}
\check{C}_{10}(\beta,\epsilon) = \sup_{\tau \in S_{d} \cup D(0,r)}
\exp \left( -\frac{\sigma_{1}}{|\epsilon|}r_{b}(\beta) |\tau| \right) \int_{0}^{1} |\tau|^{\gamma_{0}+\gamma_{1}+2} (1 - u)^{\gamma_0}
u^{\gamma_{1}+1}\\
\times \exp \left( \frac{\sigma_{1}'}{|\epsilon|}r_{b}(\beta) |\tau| u \right) du\\
\leq \sup_{\tau \in S_{d} \cup D(0,r)} |\tau|^{\gamma_{0}+\gamma_{1}+2}
\exp \left( -\frac{\sigma_{1} - \sigma_{1}'}{|\epsilon|} r_{b}(\beta) |\tau| \right)\\
\leq \sup_{x \geq 0} x^{\gamma_{0}+\gamma_{1}+2} \exp \left( -\frac{\sigma_{1} - \sigma_{1}'}{|\epsilon|} r_{b}(\beta) x \right)
\leq |\epsilon|^{\gamma_{0}+\gamma_{1}+2}
\left( \frac{(\gamma_{0}+\gamma_{1}+2)e^{-1}}{\sigma_{1} - \sigma_{1}'} \right)^{\gamma_{0}+\gamma_{1}+2}
\end{multline}
for all $\beta \geq 0$, $\epsilon \in \dot{D}(0,\epsilon_{0})$. Lemma 15 follows.
\end{proof}
\end{proof}

\subsection{An accessory convolution problem with rational coefficients}

We set $\mathcal{B}$ as a finite subset of $\mathbb{N}^{3}$. For any $\underline{l}=(l_{0},l_{1},l_{2}) \in \mathcal{B}$, we consider
a bounded holomorphic function $d_{\underline{l}}(z,\epsilon)$ on a polydisc $D(0,\rho) \times D(0,\epsilon_{0})$ for some radii
$\rho,\epsilon_{0}>0$.
Let $S_{\mathcal{B}} \geq 1$ be an integer and $P_{\mathcal{B}}(\tau)$ be a polynomial (not identically equal to 0) with complex coefficients
which is either constant or whose roots that are located
in the open right halfplane $\mathbb{C}_{+} = \{ z \in \mathbb{C} / \mathrm{Re}(z) > 0 \}$. We introduce the following notations. When
$\underline{l} = (l_{0},l_{1},l_{2}) \in \mathcal{B}$, we put $d_{l_{0},l_{1}} = l_{0} - 2l_{1}$ and assume that $d_{l_{0},l_{1}} \geq 1$,
we also set $A_{l_{1},p}$ as real numbers for all $1 \leq p \leq l_{1}-1$ when $l_{1} \geq 2$. When $\tau \in \mathbb{C}$, the symbol
$L_{0,\tau}$ stands for a path in $\mathbb{C}$ joining $0$ and $\tau$ as constructed in the previous subsections.

We concentrate on the next convolution equation
\begin{multline}
\partial_{z}^{S_{\mathcal{B}}}v(\tau,z,\epsilon) = \sum_{\underline{l} = (l_{0},l_{1},l_{2}) \in \mathcal{B}}
\frac{d_{\underline{l}}(z,\epsilon)}{P_{\mathcal{B}}(\tau)} \left \{ \frac{ \epsilon^{l_{1} - l_{0}} \tau}{\Gamma( d_{l_{0},l_{1}} )}
\int_{L_{0,\tau}} (\tau - s)^{d_{l_{0},l_{1}} - 1} s^{l_1} \partial_{z}^{l_2}v(s,z,\epsilon) \frac{ds}{s} \right. \\
\left . +
\sum_{1 \leq p \leq l_{1}-1} A_{l_{1},p} \frac{ \epsilon^{l_{1} - l_{0}} \tau }{\Gamma( d_{l_{0},l_{1}} + (l_{1} -p) )}
\int_{L_{0,\tau}} (\tau - s)^{d_{l_{0},l_{1}} + (l_{1} - p) - 1} s^{p} \partial_{z}^{l_2}v(s,z,\epsilon) \frac{ds}{s} 
 \right \} + w(\tau,z,\epsilon) \label{ACP_forcterm_w}
\end{multline}
where $w(\tau,z,\epsilon)$ stands for solutions of the equation (\ref{1_aux_CP}) that are constructed in Propositions 10 and 11. We
use the convention that the
sum $\sum_{1 \leq p \leq l_{1}-1}$ is reduced to 0 when $l_{1}=1$.

In the next assertion, we build solutions to the convolution equation (\ref{ACP_forcterm_w}) within the three families of Banach spaces
described in Definitions 2, 5 and 6.

\begin{prop} 1) We ask for the next constraints\\
a) There exists a real number $b>1$ such that for all $\underline{l}=(l_{0},l_{1},l_{2}) \in \mathcal{B}$,
\begin{equation}
S_{\mathcal{B}} \geq b(l_{0} - l_{1}) + l_{2} \ \ , \ \ S_{\mathcal{B}} > l_{2} \ \ , \ \ l_{1} \geq 1 \label{cond_S_B_b_l}
\end{equation}
holds.\\
b) For all $0 \leq j \leq S_{\mathcal{B}} - 1$, we set $\tau \mapsto v_{j}(\tau,\epsilon)$ as a function that belongs to the Banach space
$EG_{(0,\sigma_{1}',RH_{a,b,\upsilon},\epsilon)}$, for all $\epsilon \in \dot{D}(0,\epsilon_{0})$, for a $L-$shaped domain
$RH_{a,b,\upsilon}$ displayed at the onset of Subsection 4.1 and some real number $\sigma_{1}'>0$. Furthermore, we assume the existence of
positive real numbers $J,\delta>0$ for which
\begin{equation}
\sum_{j=0}^{S_{\mathcal{B}} - 1 -h} ||v_{j+h}(\tau,\epsilon)||_{(0,\sigma_{1}',RH_{a,b,\upsilon},\epsilon)} \frac{\delta^{j}}{j!} \leq J
\label{i_d_v_j_less_J}
\end{equation}
for any $0 \leq h \leq S_{\mathcal{B}} - 1$, for $\epsilon \in \dot{D}(0,\epsilon_{0})$.

Then, for any given $\sigma_{1} > \sigma_{1}'$, for a suitable choice of constants $\Lambda>0$ and $0 < \delta < \rho$, the equation
(\ref{ACP_forcterm_w}) where the forcing term $w(\tau,z,\epsilon)$ needs to be supplanted by $w_{HJ_n}(\tau,z,\epsilon)$ along with the initial data
\begin{equation}
(\partial_{z}^{j}v)(\tau,0,\epsilon) = v_{j}(\tau,\epsilon) \ \ , \ \ 0 \leq j \leq S_{\mathcal{B}} - 1 \label{ACP_forcterm_w_i_d} 
\end{equation}
has a unique solution $v(\tau,z,\epsilon)$ in the space $EG_{(\sigma_{1},RH_{a,b,\upsilon},\epsilon,\delta)}$, for all
$\epsilon \in \dot{D}(0,\epsilon_{0})$ and is submitted to the bounds
\begin{equation}
||v(\tau,z,\epsilon)||_{(\sigma_{1},RH_{a,b,\upsilon},\epsilon,\delta)} \leq \delta^{S_{\mathcal{B}}}\Lambda + J
\label{norm_v_RHab_less_J}
\end{equation}
for all $\epsilon \in \dot{D}(0,\epsilon_{0})$.\\
2) We need the following restrictions to hold\\
a) There exists a real number $b>1$ for which (\ref{cond_S_B_b_l}) occurs.\\
b) For all $0 \leq j \leq S_{\mathcal{B}}-1$, we define $\tau \mapsto v_{j}(\tau,\epsilon)$ as a function that belongs to the Banach space
$SEG_{(0,\underline{\varsigma}',RJ_{c,d,\upsilon},\epsilon)}$, for any $\epsilon \in \dot{D}(0,\epsilon_{0})$, for some
$L-$shaped domain $RJ_{c,d,\upsilon}$ described at the beginning of Subsection 4.2 and for some tuple
$\underline{\varsigma}'=(\sigma_{1}',\varsigma_{2}',\varsigma_{3}')$ with $\sigma_{1}'>0$,$\varsigma_{2}'>0$ and $\varsigma_{3}'>0$. Moreover,
we can select real numbers $J,\delta>0$ with
$$
\sum_{j=0}^{S_{\mathcal{B}} - 1 -h} ||v_{j+h}(\tau,\epsilon)||_{(0,\underline{\varsigma}',RJ_{c,d,\upsilon},\epsilon)} \frac{\delta^{j}}{j!} \leq J
$$
for any $0 \leq h \leq S_{\mathcal{B}} - 1$, for $\epsilon \in \dot{D}(0,\epsilon_{0})$.

Then, for any given tuple $\underline{\varsigma} = (\sigma_{1},\varsigma_{2},\varsigma_{3})$ with $\sigma_{1} > \sigma_{1}'$,
$\varsigma_{2}>\varsigma_{2}'$ and $\varsigma_{3}=\varsigma_{3}'$, for an appropriate choice of constants $\Lambda>0$ and
$0 < \delta < \rho$, the equation (\ref{ACP_forcterm_w}) where the forcing term $w(\tau,z,\epsilon)$ must be interchanged with
$w_{HJ_n}(\tau,z,\epsilon)$ together with the initial data (\ref{ACP_forcterm_w_i_d}) possesses a unique
solution $v(\tau,z,\epsilon)$ in the space $SEG_{(\underline{\varsigma},RJ_{c,d,\upsilon},\epsilon,\delta)}$ which suffers the bounds 
\begin{equation}
||v(\tau,z,\epsilon)||_{(\underline{\varsigma},RJ_{c,d,\upsilon},\epsilon,\delta)} \leq \delta^{S_{\mathcal{B}}}\Lambda + J
\label{norm_v_RJcd_less_J}
\end{equation}
for all $\epsilon \in \dot{D}(0,\epsilon_{0})$.\\
3) We request the next assumptions\\
a) For a suitable real number $b>1$, the inequalities (\ref{cond_S_B_b_l}) hold.\\
b) For each $0 \leq j \leq S_{\mathcal{B}}-1$, we single out a function $\tau \mapsto v_{j}(\tau,\epsilon)$ belonging to the Banach space
$EG_{(0,\sigma_{1}',S_{d} \cup D(0,r),\epsilon)}$, for all $\epsilon \in \dot{D}(0,\epsilon_{0})$, where $S_{d}$ is one of sectors
$S_{d_p}$, $0 \leq p \leq \iota-1$ displayed after Definition 4, for some real number $\sigma_{1}'>0$. Furthermore, we assume that no root
of $P_{\mathcal{B}}(\tau)$ is located in $\bar{S}_{d} \cup \bar{D}(0,r)$. We impose the existence of two real numbers $J,\delta>0$ in a way that
$$
\sum_{j=0}^{S_{\mathcal{B}} - 1 -h} ||v_{j+h}(\tau,\epsilon)||_{(0,\sigma_{1}',S_{d} \cup D(0,r),\epsilon)} \frac{\delta^{j}}{j!} \leq J
$$
holds for any $0 \leq h \leq S_{\mathcal{B}} - 1$, for $\epsilon \in \dot{D}(0,\epsilon_{0})$.

Then, for any given $\sigma_{1}>\sigma_{1}'$, for an adequate guess of constants $\Lambda>0$ and $0 < \delta < \rho$, the equation
(\ref{ACP_forcterm_w}) where the forcing term $w(\tau,z,\epsilon)$ shall be replaced by $w_{S_d}(\tau,z,\epsilon)$ accompanied by the
initial data (\ref{ACP_forcterm_w_i_d}) has a unique solution $v(\tau,z,\epsilon)$ in the space
$EG_{(\sigma_{1},S_{d} \cup D(0,r),\epsilon,\delta)}$ withstanding the bounds
\begin{equation}
||v(\tau,z,\epsilon)||_{(\sigma_{1},S_{d} \cup D(0,r),\epsilon,\delta)} \leq \delta^{S_{\mathcal{B}}}\Lambda + J
\label{norm_v_Sd_less_J}
\end{equation}
for all $\epsilon \in \dot{D}(0,\epsilon_{0})$.
\end{prop}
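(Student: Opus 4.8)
The plan is to treat all three parts uniformly by a fixed point argument in the relevant Banach space, exactly mirroring the proof of Proposition 9, and to deal with the three parts simultaneously since they differ only in which continuity estimates are invoked. I will present the details for part 1) (the space $EG_{(\sigma_{1},RH_{a,b,\upsilon},\epsilon,\delta)}$) and indicate that parts 2) and 3) follow verbatim using Propositions 16, 17, 18 (resp. 19, 20) in place of Propositions 13, 14, 15. First I would set $V_{S_{\mathcal{B}}}(\tau,z,\epsilon) = \sum_{j=0}^{S_{\mathcal{B}}-1} v_{j}(\tau,\epsilon) z^{j}/j!$ from the prescribed Cauchy data, and observe as in the proof of Proposition 9 that since $r_{b}(\beta)\geq r_{b}(0)$ one has $\|\partial_{z}^{h}V_{S_{\mathcal{B}}}\|_{(\sigma_{1}',RH_{a,b,\upsilon},\epsilon,\delta)} \leq \sum_{j=0}^{S_{\mathcal{B}}-1-h}\|v_{j+h}(\tau,\epsilon)\|_{(0,\sigma_{1}',RH_{a,b,\upsilon},\epsilon)}\delta^{j}/j! \leq J$ for all $0\leq h\leq S_{\mathcal{B}}-1$.

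Next I would introduce the operator $B_{\epsilon}$ acting on $U(\tau,z)$ by substituting $\partial_{z}^{-S_{\mathcal{B}}}U + V_{S_{\mathcal{B}}}$ into the right-hand side of \eqref{ACP_forcterm_w} (with $w$ replaced by $w_{HJ_n}$) and applying $\partial_{z}^{S_{\mathcal{B}}}$ formally, so that a fixed point $U$ of $B_{\epsilon}$ yields $v = \partial_{z}^{-S_{\mathcal{B}}}U + V_{S_{\mathcal{B}}}$ solving the problem. Concretely each term of $B_{\epsilon}(U)$ is of the shape $\frac{d_{\underline{l}}(z,\epsilon)}{P_{\mathcal{B}}(\tau)}\epsilon^{l_{1}-l_{0}}\frac{\tau}{\Gamma(\cdot)}\int_{L_{0,\tau}}(\tau-s)^{\gamma_{0}}s^{\gamma_{1}}\partial_{z}^{\gamma_{2}}(\cdots) ds$ where, after the shift, the exponent of $\partial_{z}$ is either $l_{2}-S_{\mathcal{B}}\leq -1$ (the genuinely contractive piece, because $S_{\mathcal{B}}>l_{2}$) acting on $U$, or $l_{2}$ acting on $V_{S_{\mathcal{B}}}$ (the inhomogeneous piece); here $\gamma_{0}=d_{l_0,l_1}-1$ or $d_{l_0,l_1}+(l_1-p)-1$, $\gamma_{1}=l_{1}-1$ or $p-1$, and one checks $\gamma_{1}+1+\gamma_{0}+1 = d_{l_0,l_1}+l_1 = l_0-l_1$ (resp. $\gamma_0+\gamma_1+2 = l_0-l_1$ again in the $A_{l_1,p}$ terms), so the factor $|\epsilon|^{\gamma_0+\gamma_1+2}$ produced by Proposition 13 combines with $\epsilon^{l_1-l_0}$ to give something bounded in $\epsilon$. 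The constraint $S_{\mathcal{B}}\geq b(l_0-l_1)+l_2$ is precisely the hypothesis $\gamma_{2}=S_{\mathcal{B}}-l_{2}\geq b(\gamma_{0}+\gamma_{1}+2)$ needed to apply Proposition 13 to the $U$-terms, while Proposition 14 handles the $V_{S_{\mathcal{B}}}$-terms (raising $\sigma_1'$ to $\sigma_1$) and Proposition 15 absorbs the bounded coefficient $d_{\underline{l}}(z,\epsilon)/P_{\mathcal{B}}(\tau)$ (finite because $P_{\mathcal{B}}$ has no root on $RH_{a,b,\upsilon}$; for part 3) this is where the assumption that no root lies in $\bar S_d\cup\bar D(0,r)$ enters). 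The forcing term $w_{HJ_n}$ is a fixed element of the space by Proposition 10, contributing a constant $\Lambda$ after applying $\partial_z^{-S_{\mathcal{B}}}$.

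Then I would run the contraction lemma: choosing $\delta$ small enough that $\sum_{\underline{l}\in\mathcal{B}}\breve{C}C\,\delta^{S_{\mathcal{B}}-l_{2}}\leq 1/2$, and $J$ (hence $\Lambda,\delta$) adjusted so that the image of the closed ball $B(0,\Lambda)$ of $EG_{(\sigma_{1},RH_{a,b,\upsilon},\epsilon,\delta)}$ lands in itself, $B_{\epsilon}$ becomes a $1/2$-Lipschitz self-map of $B(0,\Lambda)$; the difference estimate $\|B_{\epsilon}(U_1)-B_{\epsilon}(U_2)\|\leq \frac12\|U_1-U_2\|$ follows because the $V_{S_{\mathcal{B}}}$- and forcing-terms cancel and only the linear $U$-dependent piece survives, controlled again by Proposition 13. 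The unique fixed point $U(\tau,z,\epsilon)$ depends analytically on $\epsilon\in\dot D(0,\epsilon_0)$, and $v=\partial_z^{-S_{\mathcal{B}}}U + V_{S_{\mathcal{B}}}$ then lies in the space with $\|v\|\leq \delta^{S_{\mathcal{B}}}\|U\|+\|V_{S_{\mathcal{B}}}\|\leq \delta^{S_{\mathcal{B}}}\Lambda+J$, using Proposition 13 with $\gamma_0=\gamma_1=0$, $\gamma_2=S_{\mathcal{B}}$ for the bound on $\partial_z^{-S_{\mathcal{B}}}U$. For parts 2) and 3) the only changes are: use Propositions 16 and 17 (and 18) for the $RJ_{c,d,\upsilon}$-space with the extra check, already performed inside Lemma 12, that the super-exponential contributions $-\varsigma_2 r_b(\beta)\exp(\varsigma_3|\tau|)$ only help; and Propositions 19 and 20 (and the multiplication estimate) for the sectorial space $EG_{(\sigma_1,S_d\cup D(0,r),\epsilon,\delta)}$, with $w_{HJ_n}$ replaced by $w_{S_d}$ from Proposition 11.

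The main obstacle, such as it is, is purely bookkeeping: one must verify that for every $\underline{l}=(l_0,l_1,l_2)\in\mathcal{B}$ and every $1\leq p\leq l_1-1$ the parameters of the convolution kernels fit the hypotheses of the continuity propositions, namely $S_{\mathcal{B}}-l_{2}\geq b(\gamma_0+\gamma_1+2)=b(l_0-l_1)$ and the $\epsilon$-powers balance to a nonnegative net exponent, so that the constants $C_5$ (resp. $C_8$, $C_{10}$) are genuinely $\epsilon$-independent; the hypotheses \eqref{cond_S_B_b_l} are exactly tailored for this, so no real difficulty arises, only care with the two families of kernels (the $1/\Gamma(d_{l_0,l_1})$ term and the finitely many $A_{l_1,p}$ correction terms).
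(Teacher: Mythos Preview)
Your overall architecture matches the paper's proof of Proposition~22 almost exactly: introduce $V_{S_{\mathcal{B}}}$, define the operator $B_{\epsilon}$, verify it is a shrinking self-map of a ball using Propositions~13--15 (resp.\ 17--19, 20--21 and 8), and recover $v=\partial_{z}^{-S_{\mathcal{B}}}U+V_{S_{\mathcal{B}}}$. Your bookkeeping on the exponents $\gamma_{0}+\gamma_{1}+2=l_{0}-l_{1}$ and on the constraint $S_{\mathcal{B}}-l_{2}\geq b(l_{0}-l_{1})$ is correct.

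There is, however, one genuine gap. You assert that ``the forcing term $w_{HJ_n}$ is a fixed element of the space by Proposition~10''. Proposition~10 only shows that $w_{HJ_n}$ lies in $SED_{(\underline{\sigma},H_{k},\epsilon,\delta)}$ and in $SEG_{(\underline{\varsigma},J_{k},\epsilon,\delta)}$ for each individual strip $H_{k}$, $J_{k}$. The target space here is $EG_{(\sigma_{1},RH_{a,b,\upsilon},\epsilon,\delta)}$, whose domain is an $L$-shaped region $H\cup R_{a,b,\upsilon}$; the rectangle $R_{a,b,\upsilon}$ generally crosses several of the strips $H_{q},J_{q}$, and on the $J_{q}$-parts one only has super-exponential \emph{growth} bounds, not the plain exponential bound required by the $EG$-norm. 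The paper isolates this as a separate Lemma~16: one covers $R_{a,b,\upsilon}\subset\bigcup_{k}(H_{k}\cup J_{k})$, controls the $H_{k}$-contributions trivially, and on each bounded piece $R_{a,b,\upsilon}\cap J_{k}$ absorbs the factor $\exp(\varsigma_{2}r_{b}(\beta)\exp(\varsigma_{3}|\tau|))$ into a finite constant $\mathcal{C}_{a,b,\upsilon,k}$ (using that the rectangle is bounded). Only after this patching does one obtain the uniform constant $\tilde{C}_{RH_{a,b,\upsilon}}$ needed for the contraction argument. The same issue arises in part~2) for the space on $RJ_{c,d,\upsilon}$. You should insert this step explicitly; it is not difficult, but it is the one place where the geometry of the $L$-shaped domain actually matters.

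Two smaller points. First, the forcing term $w_{HJ_n}$ enters $B_{\epsilon}(U)$ directly, not ``after applying $\partial_{z}^{-S_{\mathcal{B}}}$''. Second, for the final bound $\|\partial_{z}^{-S_{\mathcal{B}}}U\|\leq\delta^{S_{\mathcal{B}}}\|U\|$ you cannot literally invoke Proposition~13 (which concerns convolution operators); the paper instead argues directly from $r_{b}(\beta)\geq r_{b}(\beta-S_{\mathcal{B}})$, which gives $\|H_{\beta-S_{\mathcal{B}}}\|_{(\beta,\ldots)}\leq\|H_{\beta-S_{\mathcal{B}}}\|_{(\beta-S_{\mathcal{B}},\ldots)}$ and hence the factor $\delta^{S_{\mathcal{B}}}$.
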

\begin{proof} The proof will only be concerned with a thorough inspection of the first point 1) since a similar discourse holds
for the second (resp. third) point by merely replacing Propositions 13, 14 and 15 by Propositions 17, 18 and 19 (resp. 20, 21 and 8).

We keep the notations of the subsection 3.1 and we depart from a lemma dealing with the forcing term
$w(\tau,z,\epsilon)$ of the equation (\ref{ACP_forcterm_w}).
\begin{lemma}
1) The formal series $w_{HJ_n}(\tau,z,\epsilon)$ built in (\ref{formal_wHJn}) belongs both to the spaces\\
$EG_{(\sigma_{1},RH_{a,b,\upsilon},\epsilon,\delta)}$ and $SEG_{(\underline{\varsigma},RJ_{c,d,\upsilon},\epsilon,\delta)}$
for the tuples $\underline{\sigma},\underline{\varsigma}$ and $\delta$ considered in Proposition 10, for any choice of
$\upsilon<0$, provided that the sector
$H$ from $RH_{a,b,\upsilon}$ belongs to the set $\{ H_{k} \}_{k \in \llbracket -n,n \rrbracket}$ and
$J$ out of $RJ_{c,d,\upsilon}$ appertain to $\{ J_{k} \}_{k \in \llbracket -n,n \rrbracket}$. Moreover, there exist constants
$\tilde{C}_{RH_{a,b,\upsilon}}>0$ and $\tilde{C}_{RJ_{c,d,\upsilon}}>0$ for which
\begin{equation}
||w_{HJ_n}(\tau,z,\epsilon)||_{(\sigma_{1},RH_{a,b,\upsilon},\epsilon,\delta)} \leq \tilde{C}_{RH_{a,b,\upsilon}}, \ \
||w_{HJ_n}(\tau,z,\epsilon)||_{(\underline{\varsigma},RJ_{c,d,\upsilon},\epsilon,\delta)} \leq \tilde{C}_{RJ_{c,d,\upsilon}}
\label{bds_wHJn_on_Lshaped}
\end{equation}
for all $\epsilon \in \dot{D}(0,\epsilon_{0})$.\\
2) The formal series $w_{S_{d_p}}(\tau,z,\epsilon)$ defined in (\ref{defin_w_S_dp}) is located in the space
$EG_{(\sigma_{1},S_{d_p} \cup D(0,r),\epsilon,\delta)}$. Besides, there exists a constant $\tilde{C}_{S_{d_p}}>0$ with
\begin{equation}
||w_{S_{d_p}}(\tau,z,\epsilon)||_{(\sigma_{1},S_{d_p} \cup D(0,r),\epsilon,\delta)} \leq \tilde{C}_{S_{d_p}} 
\end{equation}
whenever $\epsilon \in \dot{D}(0,\epsilon_{0})$.
\end{lemma}
\begin{proof} We focus on the first point 1). According to (\ref{formal_wHJn}), the formal series
$w_{HJ_n}(\tau,z,\epsilon)$ has the following expansion
$w_{HJ_n}(\tau,z,\epsilon) = \sum_{\beta \geq 0} w_{\beta}(\tau,\epsilon) z^{\beta}/\beta!$ where
$w_{\beta}(\tau,\epsilon)$ stand for holomorphic functions on $\mathring{HJ}_{n} \times \dot{D}(0,\epsilon_{0})$, continuous
on $HJ_{n} \times \dot{D}(0,\epsilon_{0})$, for all $\beta \geq 0$. Besides, the estimates (\ref{norm_wHJn_Hk}) and
(\ref{norm_wHJn_Jk}) hold.

We first prove that $w_{HJ_n}(\tau,z,\epsilon)$ belongs to $EG_{(\sigma_{1},RH_{a,b,\upsilon},\epsilon,\delta)}$.
We need upper bounds for the quantity
$$ Rw_{a,b}(\beta,\epsilon) = \sup_{\tau \in R_{a,b,\upsilon}}
\frac{|w_{\beta}(\tau,\epsilon)|}{|\tau|} \exp \left( -\frac{\sigma_{1}}{|\epsilon|}r_{b}(\beta) |\tau| \right). $$
Since $R_{a,b,\upsilon} \subset HJ_{n} = \cup_{k \in \llbracket -n,n \rrbracket} H_{k} \cup J_{k}$, we get in particular the coarse bounds
\begin{multline}
Rw_{a,b}(\beta,\epsilon) \leq \sum_{k \in \llbracket -n,n \rrbracket} \sup_{\tau \in R_{a,b,\upsilon} \cap H_{k}}
\frac{|w_{\beta}(\tau,\epsilon)|}{|\tau|} \exp \left( -\frac{\sigma_{1}}{|\epsilon|}r_{b}(\beta)|\tau| \right)\\
+ \sum_{k \in \llbracket -n,n \rrbracket} \sup_{\tau \in R_{a,b,\upsilon} \cap J_{k}}
\frac{|w_{\beta}(\tau,\epsilon)|}{|\tau|} \exp \left( -\frac{\sigma_{1}}{|\epsilon|}r_{b}(\beta)|\tau| \right). \label{Rw_defin}
\end{multline}
The sums above are taken over the integers $k$ for which $R_{a,b,\upsilon} \cap H_{k}$ and
$R_{a,b,\upsilon} \cap J_{k}$ are not empty. But, we observe that
\begin{multline}
\sup_{\tau \in R_{a,b,\upsilon} \cap H_{k}}
\frac{|w_{\beta}(\tau,\epsilon)|}{|\tau|} \exp \left( -\frac{\sigma_{1}}{|\epsilon|}r_{b}(\beta)|\tau| \right)\\
\leq
\sup_{\tau \in H_{k}}
\frac{|w_{\beta}(\tau,\epsilon)|}{|\tau|} \exp \left( -\frac{\sigma_{1}}{|\epsilon|}r_{b}(\beta)|\tau| +
\sigma_{2}s_{b}(\beta)\exp( \sigma_{3} |\tau| ) \right) = ||w_{\beta}(\tau,\epsilon)||_{(\beta,\underline{\sigma},H_{k},\epsilon)} \label{Rw_first}
\end{multline}
and if one set
$$ \mathcal{C}_{a,b,\upsilon,k} = \sup_{\tau \in R_{a,b,\upsilon} \cap J_{k}}
\exp \left( \varsigma_{2} \zeta(b) \exp( \varsigma_{3}|\tau| ) \right) $$
we see that
\begin{multline}
\sup_{\tau \in R_{a,b,\upsilon} \cap J_{k}}
\frac{|w_{\beta}(\tau,\epsilon)|}{|\tau|} \exp \left( -\frac{\sigma_{1}}{|\epsilon|}r_{b}(\beta)|\tau| \right)
= \sup_{\tau \in R_{a,b,\upsilon} \cap J_{k}}
\frac{|w_{\beta}(\tau,\epsilon)|}{|\tau|} \exp \left( -\frac{\sigma_{1}}{|\epsilon|}r_{b}(\beta)|\tau| \right)\\
\times 
\exp( - \varsigma_{2}r_{b}(\beta) \exp( \varsigma_{3}|\tau| ) )
\times \exp( \varsigma_{2}r_{b}(\beta) \exp( \varsigma_{3}|\tau| ) ) \leq
\mathcal{C}_{a,b,\upsilon,k} \sup_{\tau \in J_{k}}
\frac{|w_{\beta}(\tau,\epsilon)|}{|\tau|} \exp \left( -\frac{\sigma_{1}}{|\epsilon|}r_{b}(\beta)|\tau| \right)\\
\times 
\exp( - \varsigma_{2}r_{b}(\beta) \exp( \varsigma_{3}|\tau| ) )
=\mathcal{C}_{a,b,\upsilon,k} ||w_{\beta}(\tau,\epsilon)||_{(\beta,\underline{\varsigma},J_{k},\epsilon)}. \label{Rw_second}
\end{multline}
Hence, gathering (\ref{Rw_defin}) and (\ref{Rw_first}), (\ref{Rw_second}) yields
\begin{equation}
Rw_{a,b}(\beta,\epsilon) \leq \sum_{k \in \llbracket -n,n \rrbracket} ||w_{\beta}(\tau,\epsilon)||_{(\beta,\underline{\sigma},H_{k},\epsilon)}
+ \sum_{k \in \llbracket -n,n \rrbracket} \mathcal{C}_{a,b,\upsilon,k}||w_{\beta}(\tau,\epsilon)||_{(\beta,\underline{\varsigma},J_{k},\epsilon)}
\label{Rw_third}
\end{equation}
Now, we notice that
\begin{multline}
||w_{\beta}(\tau,\epsilon)||_{(\beta,\sigma_{1},RH_{a,b,\upsilon},\epsilon)} \leq
\sup_{\tau \in R_{a,b,\upsilon}}
\frac{|w_{\beta}(\tau,\epsilon)|}{|\tau|} \exp \left( -\frac{\sigma_{1}}{|\epsilon|}r_{b}(\beta)|\tau| \right)\\
+ \sup_{\tau \in H} \frac{|w_{\beta}(\tau,\epsilon)|}{|\tau|} \exp \left( -\frac{\sigma_{1}}{|\epsilon|}r_{b}(\beta)|\tau|
+ \sigma_{2}s_{b}(\beta)\exp( \sigma_{3}|\tau| ) \right) = Rw_{a,b}(\beta,\epsilon) +
||w_{\beta}(\tau,\epsilon)||_{(\beta,\underline{\sigma},H,\epsilon)} \label{norm_w_beta_RHabu}
\end{multline}
Finally, clustering (\ref{Rw_third}) and (\ref{norm_w_beta_RHabu}) yields that
\begin{equation}
||w_{HJ}(\tau,z,\epsilon)||_{(\sigma_{1},RH_{a,b,\upsilon},\epsilon,\delta)} \leq
\sum_{k \in \llbracket -n,n \rrbracket} \tilde{C}_{H_{k}} + \sum_{k \in \llbracket -n,n \rrbracket} \mathcal{C}_{a,b,\upsilon,k}
\tilde{C}_{J_{k}} + \tilde{C}_{H}
\end{equation}
for all $\epsilon \in \dot{D}(0,\epsilon_{0})$, bearing in mind the notations within the bounds (\ref{norm_wHJn_Hk}) and (\ref{norm_wHJn_Jk}).

In a second step, we show that $w_{HJ_n}(\tau,z,\epsilon)$ belongs to
$SEG_{(\underline{\varsigma},RJ_{c,d,\upsilon},\epsilon,\delta)}$.
We search for upper bounds concerning
$$ RJw_{c,d}(\beta,\epsilon) = \sup_{\tau \in R_{c,d,\upsilon}}
\frac{|w_{\beta}(\tau,\epsilon)|}{|\tau|} \exp \left( -\frac{\sigma_{1}}{|\epsilon|}r_{b}(\beta)
|\tau| - \varsigma_{2}r_{b}(\beta)\exp( \varsigma_{3} |\tau| ) \right). $$
According to the inclusion $R_{c,d,\upsilon} \subset HJ_{n} = \cup_{k \in \llbracket -n,n \rrbracket} H_{k} \cup J_{k}$, we observe that
\begin{multline}
RJw_{c,d}(\beta,\epsilon) \leq \sum_{k \in \llbracket -n,n \rrbracket} \sup_{\tau \in R_{c,d,\upsilon} \cap H_{k}}
\frac{|w_{\beta}(\tau,\epsilon)|}{|\tau|} \exp \left( -\frac{\sigma_{1}}{|\epsilon|}r_{b}(\beta)|\tau| -
\varsigma_{2}r_{b}(\beta)\exp( \varsigma_{3}|\tau| ) \right)\\
+ \sum_{k \in \llbracket -n,n \rrbracket} \sup_{\tau \in R_{c,d,\upsilon} \cap J_{k}}
\frac{|w_{\beta}(\tau,\epsilon)|}{|\tau|} \exp \left( -\frac{\sigma_{1}}{|\epsilon|}r_{b}(\beta)|\tau| -
\varsigma_{2}r_{b}(\beta)\exp( \varsigma_{3}|\tau| ) \right). \label{RJw_defin}
\end{multline}
As above, the sums belonging to the latter inequalities are performed over the integers $k$ for which $R_{c,d,\upsilon} \cap H_{k}$ and
$R_{c,d,\upsilon} \cap J_{k}$ are not empty. Furthermore, we see that
\begin{multline}
\sup_{\tau \in R_{c,d,\upsilon} \cap H_{k}}
\frac{|w_{\beta}(\tau,\epsilon)|}{|\tau|} \exp \left( -\frac{\sigma_{1}}{|\epsilon|}r_{b}(\beta)|\tau| -
\varsigma_{2}r_{b}(\beta)\exp( \varsigma_{3} |\tau| ) \right)\\
\leq
\sup_{\tau \in H_{k}}
\frac{|w_{\beta}(\tau,\epsilon)|}{|\tau|} \exp \left( -\frac{\sigma_{1}}{|\epsilon|}r_{b}(\beta)|\tau| +
\sigma_{2}s_{b}(\beta)\exp( \sigma_{3} |\tau| ) \right) = ||w_{\beta}(\tau,\epsilon)||_{(\beta,\underline{\sigma},H_{k},\epsilon)} \label{RJw_first}
\end{multline}
and
\begin{multline}
\sup_{\tau \in R_{c,d,\upsilon} \cap J_{k}}
\frac{|w_{\beta}(\tau,\epsilon)|}{|\tau|} \exp \left( -\frac{\sigma_{1}}{|\epsilon|}r_{b}(\beta)|\tau| -
\varsigma_{2}r_{b}(\beta) \exp( \varsigma_{3} |\tau| ) \right)\\
\leq
\sup_{\tau \in J_{k}}
\frac{|w_{\beta}(\tau,\epsilon)|}{|\tau|} \exp \left( -\frac{\sigma_{1}}{|\epsilon|}r_{b}(\beta)|\tau|
- \varsigma_{2}r_{b}(\beta) \exp( \varsigma_{3} |\tau| ) \right)
= ||w_{\beta}(\tau,\epsilon)||_{(\beta,\underline{\varsigma},J_{k},\epsilon)}. \label{RJw_second}
\end{multline}
As a result, collecting (\ref{RJw_defin}) and (\ref{RJw_first}), (\ref{RJw_second}) leads to
\begin{equation}
RJw_{c,d}(\beta,\epsilon) \leq \sum_{k \in \llbracket -n,n \rrbracket} ||w_{\beta}(\tau,\epsilon)||_{(\beta,\underline{\sigma},H_{k},\epsilon)}
+ \sum_{k \in \llbracket -n,n \rrbracket} ||w_{\beta}(\tau,\epsilon)||_{(\beta,\underline{\varsigma},J_{k},\epsilon)}
\label{RJw_third}
\end{equation}
Besides, we remark that
\begin{multline}
||w_{\beta}(\tau,\epsilon)||_{(\beta,\underline{\varsigma},RJ_{c,d,\upsilon},\epsilon)} \leq
\sup_{\tau \in R_{c,d,\upsilon}}
\frac{|w_{\beta}(\tau,\epsilon)|}{|\tau|} \exp \left( -\frac{\sigma_{1}}{|\epsilon|}r_{b}(\beta)|\tau|
-\varsigma_{2}r_{b}(\beta) \exp( \varsigma_{3}|\tau| ) \right)\\
+ \sup_{\tau \in J} \frac{|w_{\beta}(\tau,\epsilon)|}{|\tau|} \exp \left( -\frac{\sigma_{1}}{|\epsilon|}r_{b}(\beta)|\tau|
- \varsigma_{2}r_{b}(\beta)\exp( \varsigma_{3}|\tau| ) \right) = RJw_{c,d}(\beta,\epsilon) +
||w_{\beta}(\tau,\epsilon)||_{(\beta,\underline{\varsigma},J,\epsilon)} \label{norm_w_beta_RJcdu}
\end{multline}
At last, storing up (\ref{RJw_third}) and (\ref{norm_w_beta_RJcdu}) returns the bounds
\begin{equation}
||w_{HJ}(\tau,z,\epsilon)||_{(\underline{\varsigma},RJ_{c,d,\upsilon},\epsilon,\delta)} \leq
\sum_{k \in \llbracket -n,n \rrbracket} \tilde{C}_{H_{k}} + \sum_{k \in \llbracket -n,n \rrbracket}
\tilde{C}_{J_{k}} + \tilde{C}_{J}
\end{equation}
for all $\epsilon \in \dot{D}(0,\epsilon_{0})$, in accordance with the bounds (\ref{norm_wHJn_Hk}) and (\ref{norm_wHJn_Jk}).

The second point 2) has already been checked in the proof of Proposition 11.
\end{proof}

Let us introduce the function
$$ V_{S_{\mathcal{B}}}(\tau,z,\epsilon) = \sum_{j=0}^{S_{\mathcal{B}}-1} v_{j}(\tau,\epsilon) \frac{z^j}{j!} $$
with $v_{j}(\tau,\epsilon)$ disclosed in 1)b) above. We set a map $B_{\epsilon}$ described as follows
\begin{multline*}
B_{\epsilon}(H(\tau,z)) := \sum_{\underline{l} = (l_{0},l_{1},l_{2}) \in \mathcal{B}}
\frac{d_{\underline{l}}(z,\epsilon)}{P_{\mathcal{B}}(\tau)} \left \{ \frac{ \epsilon^{l_{1} - l_{0}} \tau}{\Gamma( d_{l_{0},l_{1}} )}
\int_{L_{0,\tau}} (\tau - s)^{d_{l_{0},l_{1}} - 1} s^{l_1} \partial_{z}^{l_{2}-S_{\mathcal{B}}}H(s,z) \frac{ds}{s} \right. \\
\left . +
\sum_{1 \leq p \leq l_{1}-1} A_{l_{1},p} \frac{ \epsilon^{l_{1} - l_{0}} \tau }{\Gamma( d_{l_{0},l_{1}} + (l_{1} -p) )}
\int_{L_{0,\tau}} (\tau - s)^{d_{l_{0},l_{1}} + (l_{1} - p) - 1} s^{p} \partial_{z}^{l_{2}-S_{\mathcal{B}}}H(s,z) \frac{ds}{s} 
 \right \}\\
 + \sum_{\underline{l} = (l_{0},l_{1},l_{2}) \in \mathcal{B}}
\frac{d_{\underline{l}}(z,\epsilon)}{P_{\mathcal{B}}(\tau)} \left \{ \frac{ \epsilon^{l_{1} - l_{0}} \tau}{\Gamma( d_{l_{0},l_{1}} )}
\int_{L_{0,\tau}} (\tau - s)^{d_{l_{0},l_{1}} - 1} s^{l_1} \partial_{z}^{l_{2}}V_{S_{\mathcal{B}}}(s,z,\epsilon) \frac{ds}{s} \right. \\
\left . +
\sum_{1 \leq p \leq l_{1}-1} A_{l_{1},p} \frac{ \epsilon^{l_{1} - l_{0}} \tau }{\Gamma( d_{l_{0},l_{1}} + (l_{1} -p) )}
\int_{L_{0,\tau}} (\tau - s)^{d_{l_{0},l_{1}} + (l_{1} - p) - 1} s^{p} \partial_{z}^{l_{2}}V_{S_{\mathcal{B}}}(s,z,\epsilon) \frac{ds}{s} 
 \right \}\\
 + w_{HJ_n}(\tau,z,\epsilon)
\end{multline*}
In the next lemma, we explain why $B_{\epsilon}$ induces a Lipschitz shrinking map on the space\\
$EG_{(\sigma_{1},RH_{a,b,\upsilon},\epsilon,\delta)}$, for any given $\sigma_{1} > \sigma_{1}'$.
\begin{lemma}
We take for granted that the restriction (\ref{cond_S_B_b_l}) hold. Let us choose a positive real number $J$ and $\delta>0$ with
(\ref{i_d_v_j_less_J}). Then, if $\delta>0$ is close enough to 0,\\
a) We can select a constant $\Lambda>0$ for which
\begin{equation}
||B_{\epsilon}(H(\tau,z))||_{(\sigma_{1},RH_{a,b,\upsilon},\epsilon,\delta)} \leq \Lambda \label{B_epsilon_ball_in_ball} 
\end{equation}
for any $H(\tau,z) \in B(0,\Lambda)$, for all $\epsilon \in \dot{D}(0,\epsilon_{0})$, where $B(0,\Lambda)$ stands for the closed
ball centered at 0 with radius $\Lambda>0$ in $EG_{(\sigma_{1},RH_{a,b,\upsilon},\epsilon,\delta)}$.\\
b) The map $B_{\epsilon}$ is shrinking in the sense that
\begin{equation}
||B_{\epsilon}(H_{1}(\tau,z)) - B_{\epsilon}(H_{2}(\tau,z))||_{(\sigma_{1},RH_{a,b,\upsilon},\epsilon,\delta)} \leq
\frac{1}{2} ||H_{1}(\tau,z) - H_{2}(\tau,z)||_{(\sigma_{1},RH_{a,b,\upsilon},\epsilon,\delta)} \label{B_epsilon_shrink}
\end{equation}
occurs whenever $H_{1},H_{2}$ belong to $B(0,\Lambda)$, for all $\epsilon \in \dot{D}(0,\epsilon_{0})$.
\end{lemma}
\begin{proof}
According to the inequality $r_{b}(\beta) \geq r_{b}(0)$ for all $\beta \geq 0$, we observe that for all $0 \leq h \leq S_{\mathcal{B}}-1$
and $0 \leq j \leq S_{\mathcal{B}}-1-h$,
$$ ||v_{j+h}(\tau,\epsilon)||_{(j,\sigma_{1}',RH_{a,b,\upsilon},\epsilon)} \leq ||v_{j+h}(\tau,\epsilon)||_{(0,\sigma_{1}',RH_{a,b,\upsilon},\epsilon)} $$
holds. As a consequence, the function $\partial_{z}^{h}V_{S_{\mathcal{B}}}(\tau,z,\epsilon)$ is located in
$EG_{(\sigma_{1}',RH_{a,b,\upsilon},\epsilon,\delta)}$ with the upper estimates
\begin{equation}
||\partial_{z}^{h}V_{S_{\mathcal{B}}}(\tau,z,\epsilon)||_{(\sigma_{1}',RH_{a,b,\upsilon},\epsilon,\delta)}
\leq \sum_{j=0}^{S_{\mathcal{B}}-1-h} ||v_{j+h}(\tau,\epsilon)||_{(0,\sigma_{1}',RH_{a,b,\upsilon},\epsilon)} \frac{\delta^j}{j!} \leq J.
\label{norm_partial_z_h_V_SB_lessJ}
\end{equation}
We first concentrate our attention on the bounds (\ref{B_epsilon_ball_in_ball}). Let $H(\tau,z)$ in
$EG_{(\sigma_{1},RH_{a,b,\upsilon},\epsilon,\delta)}$ submitted to
$||H(\tau,z)||_{(\sigma_{1},RH_{a,b,\upsilon},\epsilon,\delta)} \leq \Lambda$. Assume that $0 < \delta < \rho$. We set
$$ M_{\mathcal{B},\underline{l}} = \sup_{\tau \in RH_{a,b,\upsilon}, \epsilon \in \dot{D}(0,\epsilon),z \in D(0,\rho)}
\left| \frac{d_{\underline{l}}(z,\epsilon)}{P_{\mathcal{B}}(\tau)} \right| $$
for all $\underline{l} \in \mathcal{B}$. Under the oversight of (\ref{cond_S_B_b_l}) and due to Propositions 13 and 15, we get constants
$C_{5}>0$ (depending on $\underline{l},S_{\mathcal{B}},\sigma_{1},b$) and $C_{6}>0$ (depending on
$M_{\mathcal{B},\underline{l}},\delta,\rho$) such that
\begin{multline}
|| \frac{d_{\underline{l}}(z,\epsilon)}{P_{\mathcal{B}}(\tau)} \epsilon^{l_{1} - l_{0}} \tau
\int_{L_{0,\tau}} (\tau - s)^{d_{l_{0},l_{1}} - 1} s^{l_1} \partial_{z}^{l_{2}-S_{\mathcal{B}}}H(s,z) \frac{ds}{s}
||_{(\sigma_{1},RH_{a,b,\upsilon},\epsilon,\delta)} \\
\leq C_{6}C_{5} \delta^{S_{\mathcal{B}}-l_{2}}
||H(\tau,z)||_{(\sigma_{1},RH_{a,b,\upsilon},\epsilon,\delta)} \label{norm_partial_z_H_1}
\end{multline}
and
\begin{multline}
|| \frac{d_{\underline{l}}(z,\epsilon)}{P_{\mathcal{B}}(\tau)} \epsilon^{l_{1} - l_{0}} \tau
\int_{L_{0,\tau}} (\tau - s)^{d_{l_{0},l_{1}} + (l_{1} - p) - 1} s^{p} \partial_{z}^{l_{2}-S_{\mathcal{B}}}H(s,z) \frac{ds}{s}
||_{(\sigma_{1},RH_{a,b,\upsilon},\epsilon,\delta)} \\
\leq C_{6}C_{5} \delta^{S_{\mathcal{B}}-l_{2}}
||H(\tau,z)||_{(\sigma_{1},RH_{a,b,\upsilon},\epsilon,\delta)} \label{norm_partial_z_H_2}
\end{multline}
for all $1 \leq p \leq l_{1}-1$. Besides, keeping in mind Propositions 14 and 15 with the help of
(\ref{norm_partial_z_h_V_SB_lessJ}), two constants $\check{C}_{5}>0$ (depending on $\underline{l},\sigma_{1},\sigma_{1}'$) and
$C_{6}>0$ (depending on $M_{\mathcal{B},\underline{l}},\delta,\rho$) are obtained for which
\begin{multline}
|| \frac{d_{\underline{l}}(z,\epsilon)}{P_{\mathcal{B}}(\tau)} \epsilon^{l_{1} - l_{0}} \tau
\int_{L_{0,\tau}} (\tau - s)^{d_{l_{0},l_{1}} - 1} s^{l_1} \partial_{z}^{l_{2}}V_{S_{\mathcal{B}}}(s,z,\epsilon) \frac{ds}{s}
||_{(\sigma_{1},RH_{a,b,\upsilon},\epsilon,\delta)} \\
\leq C_{6}\check{C}_{5}
||\partial_{z}^{l_2}V_{S_{\mathcal{B}}}(\tau,z,\epsilon)||_{(\sigma_{1}',RH_{a,b,\upsilon},\epsilon,\delta)} \leq
C_{6}\check{C}_{5}J \label{norm_V_SB_1}
\end{multline}
together with
\begin{multline}
|| \frac{d_{\underline{l}}(z,\epsilon)}{P_{\mathcal{B}}(\tau)} \epsilon^{l_{1} - l_{0}} \tau
\int_{L_{0,\tau}} (\tau - s)^{d_{l_{0},l_{1}} + (l_{1} - p) - 1} s^{p} \partial_{z}^{l_{2}}V_{S_{\mathcal{B}}}(s,z,\epsilon)
\frac{ds}{s} ||_{(\sigma_{1},RH_{a,b,\upsilon},\epsilon,\delta)} \\
\leq C_{6}\check{C}_{5}
||\partial_{z}^{l_2}V_{S_{\mathcal{B}}}(\tau,z,\epsilon)||_{(\sigma_{1}',RH_{a,b,\upsilon},\epsilon,\delta)} \leq
C_{6}\check{C}_{5}J \label{norm_V_SB_2}
\end{multline}
for all $1 \leq p \leq l_{1}-1$.

At last, from Lemma 16 1), one can select a constant $\tilde{C}_{RH_{a,b,\upsilon}}>0$ for which the first inequality of
(\ref{bds_wHJn_on_Lshaped}) holds. We choose $\delta>0$ small enough and $\Lambda>0$ sufficiently large such that
\begin{multline}
\sum_{\underline{l}=(l_{0},l_{1},l_{2}) \in \mathcal{B}}
\frac{C_{6}C_{5}\delta^{S_{\mathcal{B}}-l_{2}} }{\Gamma(d_{l_{0},l_{1}})} \Lambda + \sum_{1 \leq p \leq l_{1}-1}
|A_{l_{1},p}| \frac{C_{6}C_{5} \delta^{S_{\mathcal{B}}-l_{2}}}{\Gamma( d_{l_{0},l_{1}} + (l_{1}-p) )} \Lambda \\
+
\sum_{\underline{l}=(l_{0},l_{1},l_{2}) \in \mathcal{B}}
\frac{C_{6}\check{C}_{5}}{\Gamma(d_{l_{0},l_{1}})}J + \sum_{1 \leq p \leq l_{1}-1} |A_{l_{1},p}|
\frac{C_{6}\check{C}_{5}}{\Gamma( d_{l_{0},l_{1}} + (l_{1}-p) )} J + \tilde{C}_{RH_{a,b,\upsilon}} \leq \Lambda
\label{constraints_delta_Lambda_data_l_B}
\end{multline}
holds. Finally, gathering (\ref{norm_partial_z_H_1}), (\ref{norm_partial_z_H_2}), (\ref{norm_V_SB_1}), (\ref{norm_V_SB_2})
and (\ref{constraints_delta_Lambda_data_l_B}) implies (\ref{B_epsilon_ball_in_ball}).

In a second phase, we show that $B_{\epsilon}$ represents a shrinking map on the ball $B(0,\Lambda)$. Namely, let $H_{1},H_{2}$ be taken in the
ball $B(0,\Lambda)$. The bounds (\ref{norm_partial_z_H_1}) and (\ref{norm_partial_z_H_2}) just established above entail
\begin{multline}
|| \frac{d_{\underline{l}}(z,\epsilon)}{P_{\mathcal{B}}(\tau)} \epsilon^{l_{1} - l_{0}} \tau
\int_{L_{0,\tau}} (\tau - s)^{d_{l_{0},l_{1}} - 1} s^{l_1} \partial_{z}^{l_{2}-S_{\mathcal{B}}}(H_{1}(s,z) - H_{2}(s,z)) \frac{ds}{s}
||_{(\sigma_{1},RH_{a,b,\upsilon},\epsilon,\delta)} \\
\leq C_{6}C_{5} \delta^{S_{\mathcal{B}}-l_{2}}
||H_{1}(\tau,z) - H_{2}(\tau,z)||_{(\sigma_{1},RH_{a,b,\upsilon},\epsilon,\delta)} \label{norm_partial_z_H_1_shrink}
\end{multline}
in a row with
\begin{multline}
|| \frac{d_{\underline{l}}(z,\epsilon)}{P_{\mathcal{B}}(\tau)} \epsilon^{l_{1} - l_{0}} \tau
\int_{L_{0,\tau}} (\tau - s)^{d_{l_{0},l_{1}} + (l_{1} - p) - 1} s^{p}
\partial_{z}^{l_{2}-S_{\mathcal{B}}}(H_{1}(s,z) - H_{2}(s,z)) \frac{ds}{s}
||_{(\sigma_{1},RH_{a,b,\upsilon},\epsilon,\delta)} \\
\leq C_{6}C_{5} \delta^{S_{\mathcal{B}}-l_{2}}
||H_{1}(\tau,z) - H_{2}(\tau,z)||_{(\sigma_{1},RH_{a,b,\upsilon},\epsilon,\delta)} \label{norm_partial_z_H_2_shrink}
\end{multline}
for all $1 \leq p \leq l_{1}-1$. We take $\delta>0$ small scaled in order that
\begin{equation}
\sum_{\underline{l} = (l_{0},l_{1},l_{2}) \in \mathcal{B}} \frac{C_{6}C_{5}}{\Gamma(d_{l_{0},l_{1}})} \delta^{S_{\mathcal{B}}-l_{2}}
+ \sum_{1 \leq p \leq l_{1}-1} |A_{l_{1},p}| \frac{C_{6}C_{5}}{\Gamma(d_{l_{0},l_{1}} + (l_{1}-p))} \delta^{S_{\mathcal{B}}-l_{2}}
\leq \frac{1}{2} \label{constraints_delta_data_l_B_shrink}
\end{equation}
As a result, we obtain (\ref{B_epsilon_shrink}).

In conclusion, we set $\delta>0$ and $\Lambda>0$ in a way that
(\ref{constraints_delta_Lambda_data_l_B}) and (\ref{constraints_delta_data_l_B_shrink}) are concurrently fulfilled. Lemma 17 follows.
\end{proof}
Assume the restriction (\ref{cond_S_B_b_l}) holds. Take the constants $J,\Lambda$ and $\delta$ as in Lemma 17. The initial data
$v_{j}(\tau,\epsilon)$, $0 \leq j \leq S_{\mathcal{B}}-1$ and $\sigma_{1}'$ are chosen in a way that (\ref{i_d_v_j_less_J}) occurs.
In view of the points a) and b) of Lemma 17 and according to the classical contractive mapping theorem on complete metric spaces, we notice that
the map $B_{\epsilon}$ carries a unique fixed point named $H(\tau,z,\epsilon)$ (that relies analytically upon
$\epsilon \in \dot{D}(0,\epsilon_{0})$) inside the closed ball $B(0,\Lambda) \subset EG_{(\sigma_{1},RH_{a,b,\upsilon},\epsilon,\delta)}$ for all
$\epsilon \in \dot{D}(0,\epsilon_{0})$. In other words, $B_{\epsilon}(H(\tau,z,\epsilon))$ equates $H(\tau,z,\epsilon)$ with
$||H(\tau,z,\epsilon)||_{(\sigma_{1},RH_{a,b,\upsilon},\epsilon,\delta)} \leq \Lambda$. As a consequence, the expression
$$ v(\tau,z,\epsilon) = \partial_{z}^{-S_{\mathcal{B}}}H(\tau,z,\epsilon) + V_{S_{\mathcal{B}}}(\tau,z,\epsilon) $$
fufills the convolution equation (\ref{ACP_forcterm_w}) with initial data (\ref{ACP_forcterm_w_i_d}). In the last step, we explain
the reason why $v(\tau,z,\epsilon)$ shall belong to $EG_{(\sigma_{1},RH_{a,b,\upsilon},\epsilon,\delta)}$. Indeed, if one expands
$H(\tau,z,\epsilon)$ into a formal series in $z$, $H(\tau,z,\epsilon) = \sum_{\beta \geq 0} H_{\beta}(\tau,\epsilon) z^{\beta}/\beta!$, one checks
that
$$ ||\partial_{z}^{-S_{\mathcal{B}}}H(\tau,z,\epsilon)||_{(\sigma_{1},RH_{a,b,\upsilon},\epsilon,\delta)}
= \sum_{\beta \geq S_{\mathcal{B}}} ||H_{\beta - S_{\mathcal{B}}}(\tau,\epsilon)||_{(\beta,\sigma_{1},RH_{a,b,\upsilon},\epsilon)}
\delta^{\beta}/\beta! $$
From $r_{b}(\beta) \geq r_{b}(\beta - S_{\mathcal{B}})$, we notice that
$$ ||H_{\beta - S_{\mathcal{B}}}(\tau,\epsilon) ||_{(\beta,\sigma_{1},RH_{a,b,\upsilon},\epsilon)}
 \leq ||H_{\beta - S_{\mathcal{B}}}(\tau,\epsilon) ||_{(\beta - S_{\mathcal{B}},\sigma_{1},RH_{a,b,\upsilon},\epsilon)} $$
for all $\beta \geq S_{\mathcal{B}}$. Hence,
\begin{multline}
||\partial_{z}^{-S_{\mathcal{B}}}H(\tau,z,\epsilon)||_{(\sigma_{1},RH_{a,b,\upsilon},\epsilon,\delta)} \\
\leq \sum_{\beta \geq S_{\mathcal{B}}} \left( \frac{(\beta - S_{\mathcal{B}})!}{\beta!} \delta^{S_{\mathcal{B}}} \right)
||H_{\beta - S_{\mathcal{B}}}(\tau,\epsilon)||_{(\beta - S_{\mathcal{B}},\sigma_{1},RH_{a,b,\upsilon},\epsilon)}
\frac{\delta^{\beta - S_{\mathcal{B}}}}{(\beta - S_{\mathcal{B}})!} \\
\leq \delta^{S_{\mathcal{B}}}
||H(\tau,z,\epsilon)||_{(\sigma_{1},RH_{a,b,\upsilon},\epsilon,\delta)} \label{norm_partial_z_H_less_norm_H}
\end{multline}
Altogether, according to (\ref{norm_partial_z_h_V_SB_lessJ}) and (\ref{norm_partial_z_H_less_norm_H}), it follows that
$v(\tau,z,\epsilon)$ belongs to $EG_{(\sigma_{1},RH_{a,b,\upsilon},\epsilon,\delta)}$ with the upper bounds (\ref{norm_v_RHab_less_J}).
\end{proof}

\section{Sectorial analytic solutions in a complex parameter for a singularly perturbed differential Cauchy problem}

Let $\mathcal{B}$ be a finite set in $\mathbb{N}^{3}$. For all $\underline{l} = (l_{0},l_{1},l_{2}) \in \mathcal{B}$, we set
$d_{\underline{l}}(z,\epsilon)$ as a bounded holomorphic function on a polydisc $D(0,\rho) \times D(0,\epsilon_{0})$ for given radii
$\rho,\epsilon_{0}>0$. Let $S_{\mathcal{B}} \geq 1$ be an integer and let $P_{\mathcal{B}}(\tau)$ be a polynomial (not identically equal to 0)
with complex coefficients which is either constant or whose complex roots that are asked to lie in the open right halfplane $\mathbb{C}_{+}$ and
are imposed to avoid all the closed sets
$\bar{S}_{d_p} \cup \bar{D}(0,r)$, for $0 \leq p \leq \iota-1$, where the sectors $S_{d_p}$ and the disc $D(0,r)$ are introduced just after Definition 4.
We aim attention at the next partial differential Cauchy problem
\begin{equation}
P_{\mathcal{B}}(\epsilon t^{2}\partial_{t}) \partial_{z}^{S_{\mathcal{B}}} y(t,z,\epsilon) =
\sum_{\underline{l}=(l_{0},l_{1},l_{2}) \in \mathcal{B}} d_{\underline{l}}(z,\epsilon)
t^{l_0}\partial_{t}^{l_1}\partial_{z}^{l_{2}}y(t,z,\epsilon) + u(t,z,\epsilon) \label{SPCP_second}
\end{equation}
for given initial data
\begin{equation}
(\partial_{z}^{j}y)(t,0,\epsilon) = \psi_{j}(t,\epsilon) \label{SPCP_second_i_d} 
\end{equation}
for $0 \leq j \leq S_{\mathcal{B}}-1$, where $u(t,z,\epsilon)$ belongs to the sets of solutions to the Cauchy problem
(\ref{SPCP_first}), (\ref{SPCP_first_i_d}) constructed in Section 3.3 and displayed as
$\{ u_{\mathcal{E}_{HJ_n}^{k}} \}_{k \in \llbracket -n,n \rrbracket}$ or
$\{ u_{\mathcal{E}_{S_{d_p}}} \}_{0 \leq p \leq \iota-1}$.

We require the forthcoming constraints on the set $\mathcal{B}$ to hold. There exists a real number $b>1$ such that
\begin{equation}
S_{\mathcal{B}} \geq b(l_{0} - l_{1}) + l_{2} \ \ , \ \ S_{\mathcal{B}} > l_{2} \ \ , \ \ l_{1} \geq 1
\label{SB_underline_l_constraints}
\end{equation}
holds for all $\underline{l}=(l_{0},l_{1},l_{2}) \in \mathcal{B}$ and we assume the existence of an integer $d_{l_{0},l_{1}} \geq 1$ for which
\begin{equation}
l_{0} = 2l_{1} + d_{l_{0},l_{1}}, \label{d_l01_defin}
\end{equation}
for all $\underline{l}=(l_{0},l_{1},l_{2}) \in \mathcal{B}$. With the help of (\ref{d_l01_defin}), according to the formula
(8.7) p. 3630 from \cite{taya}, one can expand the differential operators
\begin{equation}
t^{l_0}\partial_{t}^{l_1} = t^{d_{l_{0},l_{1}}}(t^{2l_{1}}\partial_{t}^{l_1})
= t^{d_{l_{0},l_{1}}} \left( (t^{2}\partial_{t})^{l_1} + \sum_{1 \leq p \leq l_{1}-1} A_{l_{1},p}
t^{(l_{1}-p)} (t^{2}\partial_{t})^{p} \right) \label{Tahara_expansion_diff_op}
\end{equation}
for suitable real numbers $A_{l_{1},p}$, with $1 \leq p \leq l_{1}-1$ for $l_{1} \geq 1$ (with the convention that the
sum $\sum_{1 \leq p \leq l_{1}-1}$ is reduced to 0 when $l_{1}=1$).\medskip

In the sequel, we explain how we build up the initial data $\psi_{j}(t,\epsilon)$, $0 \leq j \leq S_{\mathcal{B}}-1$. We take for granted
that all the constraints disclosed at the beginning of Subsection 3.3 hold. 
We depart from a family of functions $\tau \mapsto v_{j}(\tau,\epsilon)$, $0 \leq j \leq S_{\mathcal{B}}-1$, which are holomorphic on
the disc $D(0,r)$, on each sector $S_{d_p}$, $0 \leq p \leq \iota-1$ and on the interior of the domain $HJ_{n}$ defined at the onset of the
Section 3.1 for some integer $n \geq 1$ and relies analytically on $\epsilon$ over $\dot{D}(0,\epsilon_{0})$. Furthermore, we require the next
additional properties.\medskip

\noindent a) For all $0 \leq j \leq S_{\mathcal{B}}-1$, all $k \in \llbracket -n,n \rrbracket$, the function $\tau \mapsto v_{j}(\tau,\epsilon)$
belongs to
the Banach spaces $EG_{(0,\sigma_{1}',RH_{a_{k},b_{k},\upsilon_{k}},\epsilon)}$ and
$SEG_{(0,\underline{\varsigma}',RJ_{c_{k},d_{k},\upsilon_{k}},\epsilon)}$ for all $\epsilon \in \dot{D}(0,\epsilon_{0})$,
where $\sigma_{1}'>0$ and the tuple $\underline{\varsigma}'=(\sigma_{1}',\varsigma_{2}',\varsigma_{3}')$ satisfies
$\varsigma_{2}'>0,\varsigma_{3}'>0$, the real numbers $a_{k},b_{k},c_{k},d_{k}$ are defined at the outstart of Subsection 3.1 and
$\upsilon_{k}>0$ is a real number suitably chosen in a way that $\upsilon_{k} < \mathrm{Re}(A_{k})$, where
$A_{k}$ is a point inside the strip $H_{k}$ defined through (\ref{SPCP_first_i_d_k}) and (\ref{choice_a_k}). Besides, for any
$0 \leq j \leq S_{\mathcal{B}}-1$, there exists a constant $J_{v_j}>0$ (independent of $\epsilon$) such that
\begin{equation}
||v_{j}(\tau,\epsilon)||_{(0,\sigma_{1}',RH_{a_{k},b_{k},\upsilon_{k}},\epsilon)} \leq J_{v_j} \ \ , \ \
||v_{j}(\tau,\epsilon)||_{(0,\underline{\varsigma}',RJ_{c_{k},d_{k},\upsilon_{k}},\epsilon)} \leq J_{v_j}
\label{normRHJ_vj_Ivj}
\end{equation}
for all $k \in \llbracket -n,n \rrbracket$, all $\epsilon \in \dot{D}(0,\epsilon_{0})$.\medskip

\noindent b) For all $0 \leq j \leq S_{\mathcal{B}}-1$, all $0 \leq p \leq \iota-1$, the map $\tau \mapsto v_{j}(\tau,\epsilon)$
appertains to the Banach space $EG_{(0,\sigma_{1}',S_{d_p} \cup D(0,r),\epsilon)}$ for all $\epsilon \in \dot{D}(0,\epsilon_{0})$, where
$\sigma_{1}'>0$. Furthermore, for each $0 \leq j \leq S_{\mathcal{B}}-1$, we have a constant $J_{v_j}>0$ (independent of $\epsilon$) for which
\begin{equation}
||v_{j}(\tau,\epsilon)||_{(0,\sigma_{1}',S_{d_p} \cup D(0,r),\epsilon)} \leq J_{v_j} \label{normSdp_vj_Ivj}
\end{equation}
for all $0 \leq p \leq \iota-1$, all $\epsilon \in \dot{D}(0,\epsilon_{0})$.\medskip

\noindent 1) We construct a first set of initial data
\begin{equation}
\psi_{j,\mathcal{E}_{HJ_n}^{k}}(t,\epsilon) = \int_{P_k} v_{j}(u,\epsilon) \exp(-\frac{u}{\epsilon t}) \frac{du}{u}
\label{defin_psi_j_HJ_i_d}
\end{equation}
for all $k \in \llbracket -n,n \rrbracket$, where the integration path is the same as the one involved in (\ref{SPCP_first_i_d_k}). The same
proof as the one presented in Lemma 8 justifies that
\begin{lemma}
The Laplace transform $\psi_{j,\mathcal{E}_{HJ_n}^{k}}(t,\epsilon)$ represents a bounded holomorphic function
on $(\mathcal{T} \cap D(0,r_{\mathcal{T}})) \times \mathcal{E}_{HJ_n}^{k}$ for a suitable radius $r_{\mathcal{T}}>0$, where
$\mathcal{T}$ and $\mathcal{E}_{HJ_n}^{k}$ are bounded open sectors described in Definition 3.
\end{lemma}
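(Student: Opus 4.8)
The plan is to reduce the statement to the already-proven Lemma~8 by checking that the integrand $v_{j}(u,\epsilon)\exp(-u/(\epsilon t))/u$ along the path $P_{k}$ enjoys exactly the same growth/decay properties as the integrand appearing there, namely that $v_{j}(\tau,\epsilon)$ satisfies on the strip $H_{k}$ a super-exponential decay bound of the type (\ref{bds_WHJn_Hk}) and on the strips $J_{q}$ crossed by the segment $P_{k,1}$ the super-exponential growth bound of the type (\ref{bds_WHJn_Jk}). First I would invoke the hypotheses (\ref{normRHJ_vj_Ivj}): since for each $k$ the function $\tau\mapsto v_{j}(\tau,\epsilon)$ lies in $EG_{(0,\sigma_{1}',RH_{a_{k},b_{k},\upsilon_{k}},\epsilon)}$ and in $SEG_{(0,\underline{\varsigma}',RJ_{c_{k},d_{k},\upsilon_{k}},\epsilon)}$ with uniform norm $\le J_{v_{j}}$, Proposition~12 (applied with a single coefficient $v_{j}$, i.e.\ $\beta=0$) and Proposition~16 give the pointwise bounds
\[
|v_{j}(\tau,\epsilon)|\le C\,|\tau|\exp\!\left(\tfrac{\sigma_{1}'}{|\epsilon|}|\tau|\right)
\quad\text{on }H_{k},\qquad
|v_{j}(\tau,\epsilon)|\le C\,|\tau|\exp\!\left(\tfrac{\sigma_{1}'}{|\epsilon|}|\tau|+\varsigma_{2}'\exp(\varsigma_{3}'|\tau|)\right)
\quad\text{on the }J_{q}\text{'s},
\]
which are precisely the two kinds of estimates used in the proof of Lemma~8 (there the roles were played by $w_{j}$). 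The only novelty is that $v_{j}$ has at worst exponential — not super-exponential — decay on $H_{k}$; but exponential decay is already enough to absorb the Laplace kernel on the horizontal halfline $P_{k,2}$, so the argument is even easier there.

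The key steps, in order, would be: (i) split $\psi_{j,\mathcal{E}_{HJ_n}^{k}}=\psi^{1}+\psi^{2}$ according to $P_{k}=P_{k,1}\cup P_{k,2}$, exactly as in Lemma~8; (ii) on $P_{k,1}$ (the segment from $0$ to $A_{k}$), use the coarse bound for $v_{j}$ along the strips $J_{q},H_{q}$ it crosses, together with the angular condition (\ref{choice_a_k}) which yields $\cos(\mathrm{arg}(A_{k})-\mathrm{arg}(\epsilon t))\ge\delta_{1}>0$; choosing $|t|\le\delta_{1}/(\delta_{2}+\sigma_{1}')$ makes $\exp(\sigma_{1}'\rho/|\epsilon|-\rho\delta_{1}/(|\epsilon t|))\le\exp(-\delta_{2}\rho/|\epsilon|)$, so $|\psi^{1}|$ is dominated by $I_{w_{j}}\int_{0}^{|A_{k}|}\exp(\varsigma_{2}'\exp(\varsigma_{3}'\rho))\exp(-\delta_{2}\rho/|\epsilon|)\,d\rho$, a finite quantity bounded uniformly in $(t,\epsilon)$ since the integration interval is bounded; (iii) on $P_{k,2}=\{A_{k}-s:\,s\ge0\}$, which stays inside $H_{k}$, use the exponential bound for $v_{j}$ on $H_{k}$ and the controlled variation of arguments (\ref{argument_ak_minus_s}), again pick $|t|\le\delta_{1}/(\delta_{2}+\sigma_{1}')$, and use $|A_{k}-s|\ge K_{A_{k}}(|A_{k}|+s)$ to get geometric decay of the integrand in $s$, hence $|\psi^{2}|\le \frac{I_{w_{j}}}{K_{A_{k}}\delta_{2}}|\epsilon|\exp(-K_{A_{k}}\delta_{2}|A_{k}|/|\epsilon|)$; (iv) holomorphy in $(t,\epsilon)$ follows from absolute and uniform convergence of the two integrals on compact subsets of $(\mathcal{T}\cap D(0,r_{\mathcal{T}}))\times\mathcal{E}_{HJ_n}^{k}$ together with analyticity of $u\mapsto v_{j}(u,\epsilon)$ in $\epsilon$ and of the kernel in $t$, via the usual differentiation-under-the-integral argument.

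Since the text already says ``The same proof as the one presented in Lemma~8 justifies that\ldots'', the honest and economical write-up is to record the two pointwise estimates above, observe that they are structurally identical to (or weaker than, on $H_{k}$) those used in Lemma~8, and conclude that the verbatim argument of Lemma~8 applies. I do not expect a genuine obstacle here; the only mild subtlety is bookkeeping — making sure the constants $\delta_{1},\delta_{2},r_{\mathcal{T}}$ can be chosen uniformly over the finitely many strips $H_{q},J_{q}$ that the segment $P_{k,1}$ traverses, which is immediate since there are only finitely many of them and each contributes an estimate of the same form. Thus the radius $r_{\mathcal{T}}$ can be taken to be, say, $\min_{q}\delta_{1,q}/(\delta_{2}+\sigma_{1}')$, and the resulting bound on $\psi_{j,\mathcal{E}_{HJ_n}^{k}}$ is uniform in $\epsilon\in\mathcal{E}_{HJ_n}^{k}$, which is exactly the assertion of the lemma.
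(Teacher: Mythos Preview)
Your proposal is correct and matches the paper's approach exactly: the paper gives no separate proof but simply states that the same argument as in Lemma~8 applies, which is precisely the reduction you carry out. One small slip in wording: on $H_{k}$ the function $v_{j}$ has exponential \emph{growth} (the bound $|\tau|\exp(\sigma_{1}'|\tau|/|\epsilon|)$), not decay; the decay in step~(iii) comes entirely from the Laplace kernel, and since the proof of Lemma~8 already drops the super-exponential decay factor of $w_{j}$ before estimating, your observation that the argument on $P_{k,2}$ goes through unchanged is correct.
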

2) For any $0 \leq j \leq S_{\mathcal{B}}-1$, we set up a second family of initial data
\begin{equation}
\psi_{j,\mathcal{E}_{S_{d_p}}}(t,\epsilon) = \int_{L_{\gamma_{d_p}}} v_{j}(u,\epsilon) \exp( -\frac{u}{\epsilon t} ) \frac{du}{u}
\label{defin_psi_j_Sd_i_d}
\end{equation}
where the integration path is a halfline with direction $\gamma_{d_p}$ described in (\ref{relation_gamma_epsilon_t}) and 
(\ref{Laplace_varphi_j_along_halfline}). Following similar lines of arguments as in Lemma 9, we observe that
\begin{lemma}
The Laplace integral $\psi_{j,\mathcal{E}_{S_{d_p}}}(t,\epsilon)$ defines a bounded holomorphic function on
$(\mathcal{T} \cap D(0,r_{\mathcal{T}})) \times \mathcal{E}_{S_{d_p}}$ for a convenient radius $r_{\mathcal{T}}>0$, where
$\mathcal{T}$ and $\mathcal{E}_{S_{d_p}}$ are bounded open sectors displayed in Definition 4.
\end{lemma}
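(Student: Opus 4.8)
The plan is to mimic, almost verbatim, the proof of Lemma 9, the only modification being that the growth hypothesis (\ref{EG_norms_w_Iw}) on the functions $w_j$ is replaced by the control (\ref{normSdp_vj_Ivj}) on the functions $v_j$ in the Banach space $EG_{(0,\sigma_{1}',S_{d_p} \cup D(0,r),\epsilon)}$. First I would unwind the definition of the norm given in Definition 2: since $r_{b}(0) = 1/(0+1)^{b} = 1$, the bound (\ref{normSdp_vj_Ivj}) furnishes the pointwise estimate
$$ |v_{j}(\tau,\epsilon)| \leq J_{v_j} |\tau| \exp\left( \frac{\sigma_{1}'}{|\epsilon|}|\tau| \right) $$
for all $\tau \in \bar{S}_{d_p} \cup \bar{D}(0,r)$ and all $\epsilon \in \dot{D}(0,\epsilon_{0})$, exactly as (\ref{bds_w_j_varsigma}) was derived in Lemma 9.

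Next I would parametrize the path $L_{\gamma_{d_p}} = \mathbb{R}_{+}\exp(\sqrt{-1}\gamma_{d_p})$ as $u = \rho \exp(\sqrt{-1}\gamma_{d_p})$, $\rho \geq 0$, substitute the previous estimate into (\ref{defin_psi_j_Sd_i_d}), and invoke (\ref{relation_gamma_epsilon_t}): since the angle $\gamma_{d_p} - \mathrm{arg}(t) - \mathrm{arg}(\epsilon)$ remains in $(-\frac{\pi}{2}+\eta,\frac{\pi}{2}-\eta)$ for $t \in \mathcal{T}$ and $\epsilon \in \mathcal{E}_{S_{d_p}}$, there exists $\delta_{1}>0$ with $\cos(\gamma_{d_p} - \mathrm{arg}(t) - \mathrm{arg}(\epsilon)) \geq \delta_{1}$. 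Then choosing some $\delta_{2}>0$ and restricting $t$ to $|t| \leq \delta_{1}/(\delta_{2}+\sigma_{1}')$, the exponent $\frac{\sigma_{1}'}{|\epsilon|}\rho - \frac{\rho}{|\epsilon t|}\cos(\gamma_{d_p} - \mathrm{arg}(t)-\mathrm{arg}(\epsilon))$ is dominated by $-\delta_{2}\rho/|\epsilon|$, whence
$$ |\psi_{j,\mathcal{E}_{S_{d_p}}}(t,\epsilon)| \leq J_{v_j} \int_{0}^{+\infty} \exp\left( -\frac{\rho}{|\epsilon|}\delta_{2} \right) d\rho = J_{v_j}\frac{|\epsilon|}{\delta_{2}} \leq J_{v_j}\frac{\epsilon_{0}}{\delta_{2}}, $$
which shows that $\psi_{j,\mathcal{E}_{S_{d_p}}}$ is bounded on $(\mathcal{T} \cap D(0,r_{\mathcal{T}})) \times \mathcal{E}_{S_{d_p}}$ with the admissible radius $r_{\mathcal{T}} = \delta_{1}/(\delta_{2}+\sigma_{1}')$.

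Holomorphy is then obtained from the standard theorem on integrals depending holomorphically on a parameter: for each fixed $u$ on the path the integrand is holomorphic in $(t,\epsilon)$, and the estimates above are locally uniform, so the integral converges locally uniformly and defines a holomorphic function (by Morera together with Fubini, or by differentiation under the integral sign). The only point requiring a word of care is that the direction $\gamma_{d_p}$ is permitted to vary with $t$ and $\epsilon$; but since $v_{j}(\cdot,\epsilon)$ is holomorphic on $S_{d_p} \cup D(0,r)$ and has only exponential growth there, a Cauchy-theorem path-rotation argument inside $S_{d_p} \cup D(0,r)$ shows the integral is independent of the particular admissible choice of $\gamma_{d_p}$, so one may freeze it on each subsector. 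I do not expect any genuine obstacle: the argument is entirely routine and parallels Lemma 9 step for step, the mild bookkeeping about the freedom in the choice of $\gamma_{d_p}$ being the only ingredient that deserves an explicit remark.
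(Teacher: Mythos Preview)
Your proposal is correct and follows exactly the approach the paper intends: the paper's own proof of this lemma consists of the single sentence ``Following similar lines of arguments as in Lemma 9, we observe that\ldots'', and your argument reproduces that Lemma 9 computation with $w_j$ replaced by $v_j$ and (\ref{EG_norms_w_Iw}) by (\ref{normSdp_vj_Ivj}). In fact you supply more detail than the paper does, namely the explicit justification of holomorphy via locally uniform convergence and the Cauchy path-rotation remark handling the $(t,\epsilon)$-dependence of $\gamma_{d_p}$; these points are left implicit in the paper.
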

We are now in position to set forth the second main result of our work.
\begin{theo}
Under all the restrictions assumed above till the unfolding of Section 5, provided that the real number $\delta>0$ is chosen close enough to 0,
the following statements arise.\medskip

\noindent 1) 1.1) The Cauchy problem (\ref{SPCP_second}) where $u(t,z,\epsilon)$ stands for $u_{\mathcal{E}_{HJ_n}^{k}}(t,z,\epsilon)$ with initial
data (\ref{SPCP_second_i_d}) given by (\ref{defin_psi_j_HJ_i_d}) has a bounded holomorphic solution
$y_{\mathcal{E}_{HJ_n}^{k}}(t,z,\epsilon)$ on a domain
$(\mathcal{T} \cap D(0,r_{\mathcal{T}})) \times D(0,\delta \delta_{1}) \times \mathcal{E}_{HJ_n}^{k}$ for some radius $r_{\mathcal{T}}>0$ chosen
close to 0 and $0 < \delta_{1} < 1$. Besides, $y_{\mathcal{E}_{HJ_n}^{k}}$ can be expressed through a special Laplace transform
\begin{equation}
y_{\mathcal{E}_{HJ_n}^{k}}(t,z,\epsilon) = \int_{P_k} v_{HJ_n}(u,z,\epsilon) \exp(-\frac{u}{\epsilon t}) \frac{du}{u} \label{defin_Laplace_y_EHJnk}
\end{equation}
where $v_{HJ_n}(\tau,z,\epsilon)$ determines a holomorphic function on $\mathring{HJ}_{n} \times D(0,\delta \delta_{1}) \times
\dot{D}(0,\epsilon_{0})$, continuous on $HJ_{n} \times D(0,\delta \delta_{1}) \times
\dot{D}(0,\epsilon_{0})$, submitted to the next restrictions. For any choice of $\sigma_{1}>0$ and a tuple
$\underline{\varsigma} = (\sigma_{1},\varsigma_{2},\varsigma_{3})$ with
\begin{equation}
\sigma_{1} > \sigma_{1}' \ \ , \ \ \varsigma_{2} > \varsigma_{2}' \ \ , \ \ \varsigma_{3} = \varsigma_{3}' \label{cond_sigma_varsigma_theo2}
\end{equation}
one obtains constants $C_{H_k}^{v}>0$ and $C_{J_k}^{v}>0$ (independent of $\epsilon$) with
\begin{equation}
|v_{HJ_n}(\tau,z,\epsilon)| \leq C_{H_k}^{v}|\tau| \exp( \frac{\sigma_{1}}{|\epsilon|} \zeta(b) |\tau| ) \label{bds_vHJn_Hk} 
\end{equation}
for all $\tau \in H_{k}$, all $z \in D(0,\delta \delta_{1})$ and
\begin{equation}
|v_{HJ_n}(\tau,z,\epsilon)| \leq C_{J_k}^{v}|\tau|
\exp( \frac{\sigma_{1}}{|\epsilon|} \zeta(b) |\tau| + \varsigma_{2}\zeta(b) \exp( \varsigma_{3}|\tau| ) ) \label{bds_vHJn_Jk}
\end{equation}
for all $\tau \in J_{k}$, all $z \in D(0,\delta \delta_{1})$, whenever $\epsilon \in \dot{D}(0,\epsilon_{0})$, for
all $k \in \llbracket -n,n \rrbracket$.\\
1.2) Let $k \in \llbracket -n,n \rrbracket$ with $k \neq n$. Then, there exist constants $M_{k,1},M_{k,2}>0$ and $M_{k,3}>1$
independent of $\epsilon$, such that
\begin{equation}
| y_{\mathcal{E}_{HJ_{n}}^{k+1}}(t,z,\epsilon) - y_{\mathcal{E}_{HJ_{n}}^{k}}(t,z,\epsilon) |
\leq M_{k,1} \exp( -\frac{M_{k,2}}{|\epsilon|} \mathrm{Log} \frac{M_{k,3}}{|\epsilon|} ) \label{log_flat_difference_yk_plus_1_minus_yk_HJn}
\end{equation}
for all $t \in \mathcal{T} \cap D(0,r_{\mathcal{T}})$, all $\epsilon \in \mathcal{E}_{HJ_{n}}^{k} \cap
\mathcal{E}_{HJ_{n}}^{k+1} \neq \emptyset$ and
all $z \in D(0,\delta \delta_{1})$.\medskip

\noindent 2) 2.1) The Cauchy problem (\ref{SPCP_second}) where $u(t,z,\epsilon)$ must be replaced by $u_{\mathcal{E}_{S_{d_p}}}(t,z,\epsilon)$ along
with initial
data (\ref{SPCP_second_i_d}) given by (\ref{defin_psi_j_Sd_i_d}) possesses a bounded holomorphic solution
$y_{\mathcal{E}_{S_{d_p}}}(t,z,\epsilon)$ on a domain
$(\mathcal{T} \cap D(0,r_{\mathcal{T}})) \times D(0,\delta \delta_{1}) \times \mathcal{E}_{S_{d_p}}$ for some radius $r_{\mathcal{T}}>0$ chosen
small enough and $0 < \delta_{1} < 1$. Moreover, $y_{\mathcal{E}_{S_{d_p}}}$ appears to be a Laplace transform
\begin{equation}
y_{\mathcal{E}_{S_{d_p}}}(t,z,\epsilon) = \int_{L_{\gamma_{d_p}}} v_{S_{d_p}}(u,z,\epsilon) \exp(-\frac{u}{\epsilon t}) \frac{du}{u}
\label{defin_y_ESdp}
\end{equation}
where $v_{S_{d_p}}(\tau,z,\epsilon)$ represents a holomorphic function on $(S_{d_p} \cup D(0,r)) \times D(0,\delta \delta_{1}) \times
\dot{D}(0,\epsilon_{0})$, continuous on $(\bar{S}_{d_p} \cup \bar{D}(0,r)) \times D(0,\delta \delta_{1}) \times
\dot{D}(0,\epsilon_{0})$ that conforms the next demand: For any choice of $\sigma_{1}>\sigma_{1}'$, one can select
a constant $C_{S_{d_p}}^{v}>0$ (independent of $\epsilon$) with
\begin{equation}
|v_{S_{d_p}}(\tau,z,\epsilon)| \leq C_{S_{d_p}}^{v}|\tau| \exp( \frac{\sigma_{1}}{|\epsilon|} \zeta(b) |\tau| ) \label{bds_vSdp} 
\end{equation}
for all $\tau \in S_{d_p} \cup D(0,r)$, all $z \in D(0,\delta \delta_{1})$, all $\epsilon \in \dot{D}(0,\epsilon_{0})$.\\
2.2) Let $0 \leq p \leq \iota-2$. We can find two constants $M_{p,1},M_{p,2}>0$ independent of $\epsilon$, such that
\begin{equation}
| y_{\mathcal{E}_{S_{d_{p+1}}}}(t,z,\epsilon) - y_{\mathcal{E}_{S_{d_p}}}(t,z,\epsilon) |
\leq M_{p,1} \exp( -\frac{M_{p,2}}{|\epsilon|} ) \label{exp_flat_difference_yk_plus_1_minus_yk_Sdp}
\end{equation}
for all $t \in \mathcal{T} \cap D(0,r_{\mathcal{T}})$, all $\epsilon \in \mathcal{E}_{S_{d_{p+1}}} \cap
\mathcal{E}_{S_{d_p}} \neq \emptyset$ and
all $z \in D(0,\delta \delta_{1})$.\medskip

\noindent 3) The next additional bounds hold among the two families described above : There exist constants $M_{n,1},M_{n,2}>0$
(independent of $\epsilon$) with
\begin{equation}
| y_{\mathcal{E}_{HJ_n}^{-n}}(t,z,\epsilon) - y_{\mathcal{E}_{S_{d_0}}}(t,z,\epsilon) | \leq M_{n,1} \exp( -\frac{M_{n,2}}{|\epsilon|} )
\label{difference_y_HJn_Sd0}
\end{equation}
for all $\epsilon \in \mathcal{E}_{HJ_n}^{-n} \cap \mathcal{E}_{S_{d_0}}$ and
\begin{equation}
| y_{\mathcal{E}_{HJ_n}^{n}}(t,z,\epsilon) - y_{\mathcal{E}_{S_{d_{\iota-1}}}}(t,z,\epsilon) | \leq M_{n,1} \exp( -\frac{M_{n,2}}{|\epsilon|} )
\label{difference_y_HJn_Sdiota}
\end{equation}
for all $\epsilon \in \mathcal{E}_{HJ_n}^{n} \cap \mathcal{E}_{S_{d_{\iota-1}}}$ whenever $t \in \mathcal{T} \cap D(0, r_{\mathcal{T}})$ and
$z \in D(0,\delta \delta_{1})$.
\end{theo}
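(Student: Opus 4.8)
The plan is to mimic, step by step, the derivation of Theorem 1, with the convolution problem (\ref{ACP_forcterm_w}) and Proposition 22 playing the role of (\ref{1_aux_CP}) and Proposition 9. The first step is to fix the Borel--Laplace dictionary between the differential operators of (\ref{SPCP_second}) and the integro-differential operators of (\ref{ACP_forcterm_w}). Since $\epsilon t^{2}\partial_{t}$ acts on the kernel $u\mapsto\exp(-u/(\epsilon t))$ as multiplication by $u$, one has $P_{\mathcal{B}}(\epsilon t^{2}\partial_{t})\mathcal{L}(g)=\mathcal{L}(P_{\mathcal{B}}(u)g)$, and for an integer $m\geq1$ the classical identity
\[ t^{m}\int_{\ell}g(u)\exp\!\left(-\frac{u}{\epsilon t}\right)\frac{du}{u} = \int_{\ell}\left(\frac{\tau}{\epsilon^{m}\Gamma(m)}\int_{L_{0,\tau}}(\tau-s)^{m-1}g(s)\frac{ds}{s}\right)\exp\!\left(-\frac{\tau}{\epsilon t}\right)\frac{d\tau}{\tau} \]
holds both along halflines $\ell$ and along the broken paths $P_{k}$, the underlying integration by parts producing no boundary term because $g(s)/s$ stays bounded near $0$ while the kernel decays exponentially at the far endpoint (faster than the at most exponential growth of $g$ there, once $|t|$ is small). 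Combining this with the Tahara expansion (\ref{Tahara_expansion_diff_op}) and the identity $d_{l_{0},l_{1}}=l_{0}-2l_{1}$ (which exactly accounts for the powers $\epsilon^{l_{1}-l_{0}}$ and the Gamma normalizations in (\ref{ACP_forcterm_w})), I would check that whenever $v(\tau,z,\epsilon)$ solves (\ref{ACP_forcterm_w}) with forcing term $w_{HJ_{n}}$ (resp. $w_{S_{d_{p}}}$) and initial data (\ref{ACP_forcterm_w_i_d}), its special Laplace transform along $P_{k}$ (resp. its Laplace transform along $L_{\gamma_{d_{p}}}$) solves (\ref{SPCP_second}) with $u=u_{\mathcal{E}_{HJ_{n}}^{k}}$ (resp. $u=u_{\mathcal{E}_{S_{d_{p}}}}$) and with Cauchy data (\ref{defin_psi_j_HJ_i_d}) (resp. (\ref{defin_psi_j_Sd_i_d})), the latter since $(\partial_{z}^{j}v_{HJ_{n}})(\tau,0,\epsilon)=v_{j}(\tau,\epsilon)$.

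Second, I would build the Borel maps. By Lemma 16, $w_{HJ_{n}}$ lies in $EG_{(\sigma_{1},RH_{a_{k},b_{k},\upsilon_{k}},\epsilon,\delta)}$ and in $SEG_{(\underline{\varsigma},RJ_{c_{k},d_{k},\upsilon_{k}},\epsilon,\delta)}$, and $w_{S_{d_{p}}}$ lies in $EG_{(\sigma_{1},S_{d_{p}}\cup D(0,r),\epsilon,\delta)}$; together with (\ref{normRHJ_vj_Ivj}), (\ref{normSdp_vj_Ivj}) and the structural constraints (\ref{SB_underline_l_constraints}), (\ref{d_l01_defin}), Proposition 22 applies. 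After shrinking $\delta$ so that one value serves the finitely many indices $k$ and both families of $L$-shaped domains, this produces a formal series $v_{HJ_{n}}(\tau,z,\epsilon)=\sum_{\beta\geq0}v_{\beta}(\tau,\epsilon)z^{\beta}/\beta!$ whose coefficients are the unique solutions of the recursion attached to (\ref{ACP_forcterm_w}), (\ref{ACP_forcterm_w_i_d}) — holomorphic on $\mathring{HJ}_{n}\times\dot{D}(0,\epsilon_{0})$ since the $v_{j}$ are and $P_{\mathcal{B}}$ has no root there — and which belongs to $EG_{(\sigma_{1},RH_{a_{k},b_{k},\upsilon_{k}},\epsilon,\delta)}$ for every $k$ and to $SEG_{(\underline{\varsigma},RJ_{c_{k},d_{k},\upsilon_{k}},\epsilon,\delta)}$ for every $k$. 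As the strips $H_{k}$, $J_{k}$ cover $HJ_{n}$, Proposition 12 on each $H_{k}$ and Proposition 16 on each $J_{k}$ furnish the bounds (\ref{bds_vHJn_Hk}), (\ref{bds_vHJn_Jk}) (and continuity up to the boundary is inherited from the Banach space definitions); a parallel argument with Proposition 22 part 3) and Proposition 5 part 2) yields $v_{S_{d_{p}}}$ together with (\ref{bds_vSdp}). The Laplace representations (\ref{defin_Laplace_y_EHJnk}), (\ref{defin_y_ESdp}) then define bounded holomorphic functions on the asserted domains by repeating the estimates of Lemmas 8 and 9; the only new point is that on the horizontal tail $P_{k,2}\subset H_{k}$ only the exponential bound (\ref{bds_vHJn_Hk}) is now available (the super-exponential decay of $w_{HJ_{n}}$ being destroyed by the convolutions), but this is harmless since choosing $|t|\leq\delta_{1}/(\delta_{2}+\sigma_{1}\zeta(b))$ still converts $\exp(\sigma_{1}\zeta(b)|\tau|/|\epsilon|)\exp(-|\tau|\cos(\cdots)/|\epsilon t|)$ into $\exp(-\delta_{2}|\tau|/|\epsilon|)$, while on the bounded segment $P_{k,1}$ the super-exponential factor from the $J_{q}$'s is merely a constant.

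Third, I would obtain the flatness estimates by transporting the path deformations of Section 3. For (\ref{log_flat_difference_yk_plus_1_minus_yk_HJn}) I would reproduce the argument of Proposition 10 part 2): the holomorphy of $v_{HJ_{n}}$ on $\mathring{HJ}_{n}$ lets one split $y_{\mathcal{E}_{HJ_{n}}^{k+1}}-y_{\mathcal{E}_{HJ_{n}}^{k}}$ into two halfline integrals inside $H_{k}$, $H_{k+1}$ and one short segment across $H_{k}\cup J_{k+1}\cup H_{k+1}$; the exponential bound (\ref{bds_vHJn_Hk}) and $|h_{q}-s|\geq m_{q}(\varrho\,\mathrm{Log}(1/|\epsilon t|)+s)$ give the logarithmic flatness on the halflines, while on the segment the super-exponential bound (\ref{bds_vHJn_Jk}) is beaten, after choosing $\varrho$ with $\varsigma_{3}\varrho M_{k,k+1}\leq1$, by the factor $\exp(-\varrho m_{k,k+1}\delta_{1}|\epsilon t|^{-1}\mathrm{Log}(1/|\epsilon t|))$, exactly as there. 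Estimate (\ref{exp_flat_difference_yk_plus_1_minus_yk_Sdp}) follows from the three-integral deformation of Proposition 11 part 2) with (\ref{bds_vSdp}), and (\ref{difference_y_HJn_Sd0}), (\ref{difference_y_HJn_Sdiota}) from the four-integral deformation of Theorem 1 part 3): here $v_{HJ_{n}}$ and $v_{S_{d_{0}}}$ are restrictions of a single holomorphic function on $\mathring{HJ}_{n}\cup D(0,r)\cup S_{d_{0}}$ — the $v_{j}$ being analytic continuations of a common germ on $D(0,r)$, with $D(0,r)\cap\{\mathrm{Re}\,z\leq0\}\subset J_{0}$, and the recursion propagating this property — so the same pieces (an unbounded segment in $S_{d_{0}}$, a small arc in $D(0,r)$, a bounded segment across the $H_{q},J_{q}$, $-n\leq q\leq0$, and the tail $P_{-n,2}\subset H_{-n}$) are estimated as in Theorem 1, the contribution $J_{4}$ now controlled by (\ref{bds_vHJn_Hk}) alone and still exponentially small, of size $\exp(-K_{A_{-n}}\delta_{2}|A_{-n}|/|\epsilon|)$.

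\textbf{Main obstacle.} The delicate step is the first one: making the operator correspondence rigorous for the broken paths $P_{k}$ — in particular verifying convergence of all the integrals produced by the dictionary within the weighted spaces (where $v_{HJ_{n}}$ may grow super-exponentially on the $J_{k}$'s) and checking that every boundary term in the integrations by parts truly vanishes — together with the gluing of the local $L$-shaped Borel solutions of Proposition 22 into a single function $v_{HJ_{n}}$ holomorphic on $\mathring{HJ}_{n}$, which rests on the uniqueness clause of Proposition 22 and on the common recursion satisfied by the coefficients on all the overlapping domains.
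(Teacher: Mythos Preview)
Your proposal is correct and follows essentially the same route as the paper: construct $v_{HJ_n}$ (resp.\ $v_{S_{d_p}}$) via the recursion attached to (\ref{ACP_forcterm_w}), invoke Proposition~22 on each $L$-shaped domain (resp.\ sector) to get membership in the weighted spaces, read off the pointwise bounds via Propositions~12, 16 and 5, and then replay the path-deformation estimates of Propositions~10, 11 and Theorem~1 for the flatness. Regarding your stated obstacle, the paper dissolves both points: the ``gluing'' is a non-issue because the recursion (\ref{recursion_v_beta}) is written globally on $\mathring{HJ}_n$ (so the coefficients $v_\beta$ are holomorphic there from the start, and the fixed points of Proposition~22 on the various $L$-shaped domains all coincide with this single formal series), while the operator correspondence along the broken path $P_k$ is handled by an explicit Fubini-plus-path-deformation computation (the paper's Lemma~20), in which the inner integral over $L_{0,u}$ is split according to whether $u\in P_{k,1}$ or $u\in P_{k,2}$ and each piece is reduced to $\Gamma(d_{l_0,l_1})(\epsilon t)^{d_{l_0,l_1}}$ times the Laplace transform of $s^{l_1}v_{HJ_n}$.
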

\begin{proof} We consider the convolution equation (\ref{ACP_forcterm_w}) with the forcing term
$w(\tau,z,\epsilon) = w_{HJ_n}(\tau,z,\epsilon)$ for given initial data
\begin{equation}
(\partial_{z}^{j}v)(\tau,0,\epsilon) = v_{j}(\tau,\epsilon) \ \ , \ \ 0 \leq j \leq S_{\mathcal{B}}-1. \label{ACP_forcterm_w_iv_d_j}
\end{equation}
We certify that the problem (\ref{ACP_forcterm_w}) along with (\ref{ACP_forcterm_w_iv_d_j}) carries a unique formal solution
\begin{equation}
v_{HJ_n}(\tau,z,\epsilon) = \sum_{\beta \geq 0} v_{\beta}(\tau,\epsilon) \frac{z^{\beta}}{\beta !} \label{defin_vHJn} 
\end{equation}
where $v_{\beta}(\tau,\epsilon)$ are holomorphic on $\mathring{HJ}_{n} \times \dot{D}(0,\epsilon_{0})$, continuous on
$HJ_{n} \times \dot{D}(0,\epsilon_{0})$. Indeed, if one develops
$d_{\underline{l}}(z,\epsilon) = \sum_{\beta \geq 0} d_{\underline{l},\beta}(\epsilon) z^{\beta}/\beta!$ as Taylor expansion at $z=0$, the formal
series (\ref{defin_vHJn}) solves (\ref{ACP_forcterm_w}), (\ref{ACP_forcterm_w_iv_d_j}) if and only if the next recursion formula holds true
\begin{multline}
v_{\beta + S_{\mathcal{B}}}(\tau,\epsilon) = \sum_{\underline{l} = (l_{0},l_{1},l_{2}) \in \mathcal{B}}
\frac{\epsilon^{l_{1}-l_{0}}\tau}{\Gamma(d_{l_{0},l_{1}}) P_{\mathcal{B}}(\tau)}
\sum_{\beta_{1} + \beta_{2} = \beta} \frac{d_{\underline{l},\beta_{1}}(\epsilon)}{\beta_{1}!}\\
\times 
\int_{L_{0,\tau}} (\tau-s)^{d_{l_{0},l_{1}}-1} s^{l_1} \frac{v_{\beta_{2}+l_{2}}(s,\epsilon)}{\beta_{2}!} \frac{ds}{s} \beta! +
\sum_{\underline{l}=(l_{0},l_{1},l_{2}) \in \mathcal{B}} \sum_{1 \leq p \leq l_{1}-1} A_{l_{1},p}\\
\times
\frac{\epsilon^{l_{1} - l_{0}}\tau}{\Gamma(d_{l_{0},l_{1}} + (l_{1}-p))P_{\mathcal{B}}(\tau)}
\sum_{\beta_{1} + \beta_{2} = \beta} \frac{d_{\underline{l},\beta_{1}}(\epsilon)}{\beta_{1}!}
\int_{L_{0,\tau}} (\tau-s)^{d_{l_{0},l_{1}} + (l_{1}-p)-1} s^{p} \\
\times \frac{v_{\beta_{2}+l_{2}}(s,\epsilon)}{\beta_{2}!} \frac{ds}{s}\beta! \label{recursion_v_beta}
+ w_{\beta}(\tau,\epsilon)
\end{multline}
for all $\beta \geq 0$, where $w_{\beta}(\tau,\epsilon)$ are the Taylor coefficients of the forcing term $w_{HJ_n}(\tau,z,\epsilon)$
in the variable $z$ which solve the recursion (\ref{recursion_w_beta}). Since the initial data $v_{j}(\tau,\epsilon)$,
$0 \leq j \leq S_{\mathcal{B}}-1$ and all the functions $w_{\beta}(\tau,\epsilon)$, $\beta \geq 0$, define holomorphic functions
on $\mathring{HJ}_{n} \times \dot{D}(0,\epsilon_{0})$, continuous on $HJ_{n} \times \dot{D}(0,\epsilon_{0})$, the recursion
(\ref{recursion_v_beta}) is well defined provided that $L_{0,\tau}$ stands for any path joining $0$ and $\tau$ that remains inside the
domain $HJ_{n}$. Furthermore, all $v_{n}(\tau,\epsilon)$ for $n \geq S_{\mathcal{B}}$ represent holomorphic functions on
$\mathring{HJ}_{n} \times \dot{D}(0,\epsilon_{0})$, continuous on $HJ_{n} \times \dot{D}(0,\epsilon_{0})$.

Bearing in mind all the assumptions set above since the beginning of Section 5, we observe in particular that the conditions
1)a)b) and 2)a)b) asked in Proposition 22 are satisfied. Therefore, the next features hold:\\
1) The formal series $v_{HJ_n}(\tau,z,\epsilon)$ belongs to the Banach spaces
$EG_{(\sigma_{1},RH_{a_{k},b_{k},\upsilon_{k}},\epsilon,\delta)}$, for all $\epsilon \in \dot{D}(0,\epsilon_{0})$, all
$k \in \llbracket -n,n \rrbracket$, for any $\sigma_{1} > \sigma_{1}'$ and one can sort a constant $C_{H_k}^{v}>0$ for which
\begin{equation}
||v_{HJ_{n}}(\tau,z,\epsilon)||_{(\sigma_{1},RH_{a_{k},b_{k},\upsilon_{k}},\epsilon,\delta)} \leq C_{H_k}^{v} \label{norm_vHJn_RHk}
\end{equation}
for all $\epsilon \in \dot{D}(0,\epsilon_{0})$.\\
2) The formal series $v_{HJ_n}(\tau,z,\epsilon)$ appertains to the Banach spaces
$SEG_{(\underline{\varsigma},RJ_{c_{k},d_{k},\upsilon_{k}},\epsilon,\delta)}$, whenever $\epsilon \in \dot{D}(0,\epsilon_{0})$
and $k \in \llbracket -n,n \rrbracket$, provided that the tuple $\underline{\varsigma}$ is chosen as in
(\ref{cond_sigma_varsigma_theo2}). Furthermore, one can get a constant $C_{J_k}^{v}>0$ with
\begin{equation}
||v_{HJ_{n}}(\tau,z,\epsilon)||_{(\underline{\varsigma},RJ_{c_{k},d_{k},\upsilon_{k}},\epsilon,\delta)} \leq C_{J_k}^{v} \label{norm_vHJn_RJk} 
\end{equation}
for all $\epsilon \in \dot{D}(0,\epsilon_{0})$. As a consequence of (\ref{norm_vHJn_RHk}), (\ref{norm_vHJn_RJk}), with the help of
Proposition 12 and 16, we deduce that $v_{HJ_n}(\tau,z,\epsilon)$ represents a holomorphic function on
$\mathring{HJ}_{n} \times D(0,\delta \delta_{1}) \times \dot{D}(0,\epsilon_{0})$, continuous on
$HJ_{n} \times D(0,\delta \delta_{1}) \times \dot{D}(0,\epsilon_{0})$ for some $0 < \delta_{1} < 1$, that withstands the bounds
(\ref{bds_vHJn_Hk}) and (\ref{bds_vHJn_Jk}). By application of a similar proof as in Lemma 8, one can show that
for each $k \in \llbracket -n,n \rrbracket$, the function $y_{\mathcal{E}_{HJ_n}^{k}}(t,z,\epsilon)$ defined as 
(\ref{defin_Laplace_y_EHJnk}) represents a bounded holomorphic function on
$(\mathcal{T} \cap D(0,r_{\mathcal{T}})) \times D(0,\delta_{1}\delta) \times \mathcal{E}_{HJ_n}^{k}$, for some fixed radius
$r_{\mathcal{T}}>0$ and $0 < \delta_{1} < 1$. In addition, following exactly the same reasoning as in Proposition 10 2), one can obtain
the estimates (\ref{log_flat_difference_yk_plus_1_minus_yk_HJn}).

It remains to show that $y_{\mathcal{E}_{HJ_n}^{k}}(t,z,\epsilon)$ actually solves the problem (\ref{SPCP_second}), (\ref{SPCP_second_i_d}).
In accordance with the expansion (\ref{Tahara_expansion_diff_op}), we are scaled down to prove that
\begin{lemma} The next identity
\begin{multline}
t^{d_{l_{0},l_{1}}} (t^{2}\partial_{t})^{l_1} y_{\mathcal{E}_{HJ_n}^{k}}(t,z,\epsilon) =
\frac{\epsilon^{-(d_{l_{0},l_{1}}+l_{1})}}{\Gamma(d_{l_{0},l_{1}})} \int_{P_{k}}
u \int_{L_{0,u}} (u-s)^{d_{l_{0},l_{1}}-1}s^{l_{1}}\\
\times v_{HJ_n}(s,z,\epsilon) \frac{ds}{s} \exp( -\frac{u}{\epsilon t} ) \frac{du}{u} \label{expansion_diff_op_Laplace_y_HJnk}
\end{multline}
holds 
for all $t \in \mathcal{T} \cap D(0,r_{\mathcal{T}})$, $\epsilon \in \mathcal{E}_{HJ_n}^{k}$, all given positive integers
$d_{l_{0},l_{1}},l_{1} \geq 1$. We recall that the path $P_{k}$ is the union of a segment $P_{k,1}$ joining 0 and a prescribed
point $A_{k} \in H_{k}$ and of a horizontal halfline $P_{k,2} = \{ A_{k} - s / s \geq 0 \}$ and here $L_{0,u}$ stands for the union
$[0,c_{RH}(u)] \cup [c_{RH}(u),u]$ where $c_{RH}(u)$ is chosen in a way that
$$ L_{0,u} \subset RH_{a_{k},b_{k},\upsilon_{k}} \ \ , \ \ c_{RH}(u) \in R_{a_{k},b_{k},\upsilon_{k}} \ \ , \ \ |c_{RH}(u)| \leq |u| $$
for all $u \in P_{k} \subset RH_{a_{k},b_{k},\upsilon_{k}}$ (Notice that this last inclusion stems from the assumption
$\upsilon_{k} < \mathrm{Re}(A_{k})$).
\end{lemma}
\begin{proof} We first specify an appropriate choice for the points $c_{RH}(u)$ that will simplify the computations, namely\\
1) When $u$ belongs to $P_{k,1} \subset R_{a_{k},b_{k},\upsilon_{k}}$, then we select $c_{RH}(u)$ somewhere inside the segment $[0,u]$, in
that case $L_{0,u}=[0,u]$.\\
2) For $u \in P_{k,2}$, we choose $c_{RH}(u)=A_{k}$. Hence $L_{0,u}$ becomes the union of the segments $[0,A_{k}]$ and $[A_{k},u]$.

As a result, the right handside of the equality (\ref{expansion_diff_op_Laplace_y_HJnk}) can be written
\begin{multline*}
R = \frac{\epsilon^{-(d_{l_{0},l_{1}} + l_{1})}}{\Gamma(d_{l_{0},l_{1}})}
\left\{ \int_{P_{k,1}} ( \int_{[0,u]} (u-s)^{d_{l_{0},l_{1}}-1}s^{l_1}v_{HJ_n}(s,z,\epsilon) \frac{ds}{s} ) \exp( -\frac{u}{\epsilon t} )
\right. du\\
 + \int_{P_{k,2}} \left( \int_{[0,A_{k}]} (u-s)^{d_{l_{0},l_{1}}-1}s^{l_1}v_{HJ_n}(s,z,\epsilon) \frac{ds}{s} \right.  \\
+ \left. \left. \int_{[A_{k},u]} (u-s)^{d_{l_{0},l_{1}}-1}s^{l_1}v_{HJ_n}(s,z,\epsilon) \frac{ds}{s} \right) \exp( -\frac{u}{\epsilon t} ) du
\right\}
\end{multline*}
for all $t \in \mathcal{T} \cap D(0,r_{\mathcal{T}})$, $\epsilon \in \mathcal{E}_{HJ_n}^{k}$. Now, with the help of the Fubini theorem and a
path deformation argument, we can express each piece of $R$ as some truncated Laplace transforms
of $v_{HJ_n}(\tau,z,\epsilon)$. Namely,
\begin{multline*}
\int_{P_{k,1}} ( \int_{[0,u]} (u-s)^{d_{l_{0},l_{1}}-1}s^{l_1}v_{HJ_n}(s,z,\epsilon) \frac{ds}{s} ) \exp( -\frac{u}{\epsilon t} ) du \\
= \int_{[0,A_{k}]} \left( \int_{[s,A_{k}]} (u-s)^{d_{l_{0},l_{1}}-1} \exp( - \frac{u}{\epsilon t} ) du \right) s^{l_1}v_{HJ_n}(s,z,\epsilon)
\frac{ds}{s} \\
= \int_{[0,A_{k}]} \left( \int_{[0,A_{k}-s]} (u')^{d_{l_{0},l_{1}}-1} \exp( -\frac{u'}{\epsilon t} ) du' \right)
s^{l_{1}} v_{HJ_n}(s,z,\epsilon) \exp( -\frac{s}{\epsilon t} ) \frac{ds}{s}
\end{multline*}
and
\begin{multline*}
\int_{P_{k,2}} \left( \int_{[0,A_{k}]} (u-s)^{d_{l_{0},l_{1}}-1}s^{l_1}v_{HJ_n}(s,z,\epsilon) \frac{ds}{s} \right)
\exp( -\frac{u}{\epsilon t} ) du \\
= \int_{[0,A_{k}]} \left( \int_{P_{k,2}} (u-s)^{d_{l_{0},l_{1}}-1} \exp( -\frac{u}{\epsilon t} ) du \right)
s^{l_1} v_{HJ_n}(s,z,\epsilon) \frac{ds}{s}\\
= \int_{[0,A_{k}]} \left( \int_{P_{k,2}-s} (u')^{d_{l_{0},l_{1}}-1} \exp( -\frac{u'}{\epsilon t} ) du' \right)
s^{l_1} v_{HJ_n}(s,z,\epsilon) \exp( -\frac{s}{\epsilon t} ) \frac{ds}{s}
\end{multline*}
where $P_{k,2}-s$ denotes the path $\{ A_{k}-h-s / h \geq 0 \}$, together with
\begin{multline*}
\int_{P_{k,2}} \left( \int_{[A_{k},u]} (u-s)^{d_{l_{0},l_{1}}-1}s^{l_1}v_{HJ_n}(s,z,\epsilon) \frac{ds}{s} \right) \exp( -\frac{u}{\epsilon t} )
du \\
= \int_{P_{k,2}} \left( \int_{P_{s;2}} (u-s)^{d_{l_{0},l_{1}}-1} \exp( -\frac{u}{\epsilon t} ) du \right) s^{l_1}v_{HJ_n}(s,z,\epsilon)
\frac{ds}{s} \\
= \int_{P_{k,2}} \left( \int_{\mathbb{R}_{-}} (u')^{d_{l_{0},l_{1}}-1} \exp( -\frac{u'}{\epsilon t} ) du' \right)
s^{l_1}v_{HJ_n}(s,z,\epsilon) \exp( -\frac{s}{\epsilon t} ) \frac{ds}{s}
\end{multline*}
where $P_{s;2} = \{ s - h / h \geq 0 \}$ and $\mathbb{R}_{-}$ stands for the path $\{-h / h \geq 0 \}$,
for all $t \in \mathcal{T} \cap D(0,r_{\mathcal{T}})$, $\epsilon \in \mathcal{E}_{HJ_n}^{k}$. On the other hand, a path deformation argument
and the very definition of the Gamma function yields
\begin{multline*}
\int_{[0,A_{k}-s]} (u')^{d_{l_{0},l_{1}}-1} \exp( -\frac{u'}{\epsilon t} ) du' +
\int_{P_{k,2}-s} (u')^{d_{l_{0},l_{1}}-1} \exp( -\frac{u'}{\epsilon t} ) du' \\
=
\int_{\mathbb{R}_{-}} (u')^{d_{l_{0},l_{1}}-1} \exp( -\frac{u'}{\epsilon t} ) du' = \Gamma(d_{l_{0},l_{1}}) (\epsilon t)^{d_{l_{0},l_{1}}}
\end{multline*}
for all $s \in [0,A_{k}]$, all $t \in \mathcal{T} \cap D(0,r_{\mathcal{T}})$, $\epsilon \in \mathcal{E}_{HJ_n}^{k}$. By clustering
the above estimates, we can rewrite the quantity $R$ as
\begin{equation}
R = t^{d_{l_{0},l_{1}}} \epsilon^{-l_{1}}\int_{P_k}s^{l_1}v_{HJ_n}(s,z,\epsilon) \exp( -\frac{s}{\epsilon t} ) \frac{ds}{s}
= t^{d_{l_{0},l_{1}}} (t^{2}\partial_{t})^{l_1}y_{\mathcal{E}_{HJ_n}^{k}}(t,z,\epsilon) 
\end{equation}
for all $t \in \mathcal{T} \cap D(0,r_{\mathcal{T}})$, $\epsilon \in \mathcal{E}_{HJ_n}^{k}$. Lemma 20 follows.
\end{proof}
In order to discuss the second point 2) of the statement, let us concentrate on the equation
(\ref{ACP_forcterm_w}) equipped with the forcing term $w(\tau,z,\epsilon) = w_{S_{d_p}}(\tau,z,\epsilon)$ for given initial
data (\ref{ACP_forcterm_w_iv_d_j}). We must check that the problem (\ref{ACP_forcterm_w}), (\ref{ACP_forcterm_w_iv_d_j}) has a unique
formal series solution
\begin{equation}
v_{S_{d_p}}(\tau,z,\epsilon) = \sum_{\beta \geq 0} v_{\beta}(\tau,\epsilon) \frac{z^{\beta}}{\beta!} \label{defin_vSdp}
\end{equation}
where $v_{\beta}(\tau,\epsilon)$ are holomorphic on $(S_{d_p} \cup D(0,r)) \times \dot{D}(0,\epsilon_{0})$, continuous
on $(\bar{S}_{d_p} \cup \bar{D}(0,r)) \times \dot{D}(0,\epsilon_{0})$. Indeed, the formal expansion
(\ref{defin_vSdp}) solves (\ref{ACP_forcterm_w}), (\ref{ACP_forcterm_w_iv_d_j}) if and only if $v_{\beta}(\tau,\epsilon)$ fulfills the
recursion (\ref{recursion_v_beta}) for all $\beta \geq 0$, where $w_{\beta}(\tau,\epsilon)$ represent the Taylor coefficients of the
forcing term $w_{S_{d_{p}}}(\tau,\epsilon)$ which are implemented by the recursion (\ref{recursion_w_beta}). As a consequence, all the
coefficients $v_{n}(\tau,\epsilon)$ for $n \geq S_{\mathcal{B}}$ define holomorphic functions on
$(S_{d_p} \cup D(0,r)) \times \dot{D}(0,\epsilon_{0})$, continuous on
$(\bar{S}_{d_p} \cup \bar{D}(0,r)) \times \dot{D}(0,\epsilon_{0})$ in view of the fact that it is already the case for
$w_{\beta}(\tau,\epsilon)$, $\beta \geq 0$ and the initial conditions (\ref{ACP_forcterm_w_iv_d_j}).

In accordance with the whole set of requirements made since the onset of Section 5, we can see that the constraints 3)a)b) imposed
in Proposition 22 are obeyed. Hence, the formal series $v_{S_{d_p}}(\tau,z,\epsilon)$ belongs to the Banach spaces
$EG_{(\sigma_{1},S_{d_p} \cup D(0,r),\epsilon,\delta)}$ for all $\epsilon \in \dot{D}(0,\epsilon_{0})$, for any
$\sigma_{1} > \sigma_{1}'$ and a constant $C_{S_{d_p}}^{v}>0$ is given for which
$$ ||v_{S_{d_p}}(\tau,z,\epsilon)||_{(\sigma_{1},S_{d_p} \cup D(0,r),\epsilon,\delta)} \leq C_{S_{d_p}}^{v} $$
for all $\epsilon \in \dot{D}(0,\epsilon_{0})$. As a byproduct, bearing in mind Proposition 5 2), $v_{S_{d_p}}(\tau,z,\epsilon)$ defines
a holomorphic function on $(S_{d_p} \cup D(0,r)) \times D(0,\delta \delta_{1}) \times \dot{D}(0,\epsilon_{0})$, continuous
on $(\bar{S}_{d_p} \cup \bar{D}(0,r)) \times D(0,\delta \delta_{1}) \times \dot{D}(0,\epsilon_{0})$, for some $0 < \delta_{1} < 1$ that
suffers the bounds (\ref{bds_vSdp}). By application of the same arguments as in Lemma 9, one can prove that the function
$y_{\mathcal{E}_{S_{d_p}}}(t,z,\epsilon)$ defined as (\ref{defin_y_ESdp}) induces a bounded holomorphic function on
$(\mathcal{T} \cap D(0,r_{\mathcal{T}})) \times D(0,\delta \delta_{1}) \times \mathcal{E}_{S_{d_p}}$. Moreover, an analogous reasoning
as the one in Proposition 11 2) leads to the bounds (\ref{exp_flat_difference_yk_plus_1_minus_yk_Sdp}).

Lastly, we notice that $y_{\mathcal{E}_{S_{d_p}}}(t,z,\epsilon)$ shall solve the problem (\ref{SPCP_second}), (\ref{SPCP_second_i_d}).
Bearing in mind the operators unfoldings (\ref{Tahara_expansion_diff_op}), this follows from the observation that the next identity holds
 \begin{multline}
t^{d_{l_{0},l_{1}}} (t^{2}\partial_{t})^{l_1} y_{\mathcal{E}_{S_{d_p}}}(t,z,\epsilon) =
\frac{\epsilon^{-(d_{l_{0},l_{1}}+l_{1})}}{\Gamma(d_{l_{0},l_{1}})} \int_{L_{\gamma_{d_p}}}
u \int_{0}^{u} (u-s)^{d_{l_{0},l_{1}}-1}s^{l_{1}}\\
\times v_{S_{d_{p}}}(s,z,\epsilon) \frac{ds}{s} \exp( -\frac{u}{\epsilon t} ) \frac{du}{u} \label{expansion_diff_op_Laplace_y_Sdp}
\end{multline}
for all $t \in \mathcal{T} \cap D(0,r_{\mathcal{T}})$, $\epsilon \in \mathcal{E}_{S_{d_p}}$, all given positive integers
$d_{l_{0},l_{1}},l_{1} \geq 1$. Its proof remains a straightforward adaptation of the one of Lemma 20 and is therefore omitted.

Ultimately, we are left to testify the estimates (\ref{difference_y_HJn_Sd0}) and (\ref{difference_y_HJn_Sdiota}). Again, this follows
from paths deformations methods which mirrors the lines of arguments detailed in the proof of Theorem 1 3).
\end{proof}

Since the forcing term $u(t,z,\epsilon)$ in the equation (\ref{SPCP_second}) in particular solves the Cauchy problem
(\ref{SPCP_first}), (\ref{SPCP_first_i_d}), we deduce that the functions $y_{\mathcal{E}_{HJ_n}^{k}}(t,z,\epsilon)$ and
$y_{\mathcal{E}_{S_{d_p}}}(t,z,\epsilon)$ themselves solve a Cauchy problem with holomorphic coefficients in the vicinity of the
origin in $\mathbb{C}^{3}$. Namely,
\begin{corol} Let us introduce the next differential and linear fractional operators
\begin{multline*}
\mathcal{P}_{1}(t,z,\epsilon,\{ m_{k,t,\epsilon} \}_{k \in I_{\mathcal{A}}},\partial_{t},\partial_{z}) = 
P(\epsilon t^{2}\partial_{t})\partial_{z}^{S}
- \sum_{\underline{k} = (k_{0},k_{1},k_{2}) \in \mathcal{A}}
c_{\underline{k}}(z,\epsilon) m_{k_{2},t,\epsilon}(t^{2}\partial_{t})^{k_0} \partial_{z}^{k_1},\\
\mathcal{P}_{2}(t,z,\epsilon,\partial_{t},\partial_{z}) =
P_{\mathcal{B}}(\epsilon t^{2}\partial_{t}) \partial_{z}^{S_{\mathcal{B}}} -
\sum_{\underline{l}=(l_{0},l_{1},l_{2}) \in \mathcal{B}} d_{\underline{l}}(z,\epsilon)
t^{l_0}\partial_{t}^{l_1}\partial_{z}^{l_{2}}
\end{multline*}
where $m_{k_{2},t,\epsilon}$ stands for the Moebius operator
$m_{k_{2},t,\epsilon}(u(t,z,\epsilon)) = u(\frac{t}{1 + k_{2} \epsilon t},z,\epsilon)$.

Then, the functions $y_{\mathcal{E}_{HJ_n}^{k}}(t,z,\epsilon)$, for $k \in \llbracket -n,n \rrbracket$ and
$y_{\mathcal{E}_{S_{d_p}}}(t,z,\epsilon)$ for $0 \leq p \leq \iota-1$ are actual holomorphic solutions to the next
Cauchy problem
$$ \mathcal{P}_{1}(t,z,\epsilon,\{ m_{k,t,\epsilon} \}_{k \in I_{\mathcal{A}}},\partial_{t},\partial_{z})
\mathcal{P}_{2}(t,z,\epsilon,\partial_{t},\partial_{z})y(t,z,\epsilon) = 0 $$
whose coefficients are holomorphic w.r.t $z$ and $\epsilon$ near on a neighborhood of the origin and polynomial in $t$,
under the constraints
$$
\left\{ \begin{aligned}
         (\partial_{z}^{j}y)(t,0,\epsilon) = \psi_{j}(t,\epsilon) \ \ , \ \ 0 \leq j \leq S_{\mathcal{B}}-1 \\
         (\partial_{z}^{j}\mathcal{P}_{2}(t,z,\epsilon,\partial_{t},\partial_{z})y)(t,0,\epsilon) = \varphi_{j}(t,\epsilon) \ \ , \ \
         0 \leq j \leq S-1.
        \end{aligned} \right.
$$
\end{corol}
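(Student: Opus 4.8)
The plan is to verify the corollary by ``unwinding'' the construction in two layers, relying on Theorems~1 and~2 together with the expansions of the differential operators already established. First I would record that, by Theorem~2, each function $y_{\mathcal{E}_{HJ_n}^{k}}(t,z,\epsilon)$ (resp. $y_{\mathcal{E}_{S_{d_p}}}(t,z,\epsilon)$) is a bounded holomorphic solution of the Cauchy problem (\ref{SPCP_second}), (\ref{SPCP_second_i_d}) where the forcing term is $u_{\mathcal{E}_{HJ_n}^{k}}(t,z,\epsilon)$ (resp. $u_{\mathcal{E}_{S_{d_p}}}(t,z,\epsilon)$). Rewriting (\ref{SPCP_second}) by means of the operator identity (\ref{Tahara_expansion_diff_op}) and the definition of $\mathcal{P}_{2}$, this precisely says that $\mathcal{P}_{2}(t,z,\epsilon,\partial_{t},\partial_{z})y_{\mathcal{E}_{HJ_n}^{k}} = u_{\mathcal{E}_{HJ_n}^{k}}$, and likewise for the $S_{d_p}$ family. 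Here one must use the constraint (\ref{d_l01_defin}) to legitimate the expansion (\ref{Tahara_expansion_diff_op}): this is exactly the purpose of assuming $l_{0}=2l_{1}+d_{l_{0},l_{1}}$, so that the operator $t^{l_0}\partial_{t}^{l_1}$ is a polynomial combination of $t^{d_{l_{0},l_{1}}}(t^{2}\partial_{t})^{p}$, matching the convolution kernels appearing in (\ref{ACP_forcterm_w}).

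Next I would invoke Theorem~1, which guarantees that $u_{\mathcal{E}_{HJ_n}^{k}}(t,z,\epsilon)$ (resp. $u_{\mathcal{E}_{S_{d_p}}}(t,z,\epsilon)$) solves the first Cauchy problem (\ref{SPCP_first}), (\ref{SPCP_first_i_d}) with its own initial data, i.e. $P(\epsilon t^{2}\partial_{t})\partial_{z}^{S}u_{\mathcal{E}_{HJ_n}^{k}} = \sum_{\underline{k}\in\mathcal{A}} c_{\underline{k}}(z,\epsilon)\big((t^{2}\partial_{t})^{k_0}\partial_{z}^{k_1}u_{\mathcal{E}_{HJ_n}^{k}}\big)(\tfrac{t}{1+k_{2}\epsilon t},z,\epsilon)$. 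Recognizing the right-hand side as $\sum_{\underline{k}\in\mathcal{A}} c_{\underline{k}}(z,\epsilon)\, m_{k_{2},t,\epsilon}(t^{2}\partial_{t})^{k_0}\partial_{z}^{k_1} u_{\mathcal{E}_{HJ_n}^{k}}$ with the Moebius operator $m_{k_{2},t,\epsilon}$ acting on $t$, this is exactly $\mathcal{P}_{1}(t,z,\epsilon,\{m_{k,t,\epsilon}\}_{k\in I_{\mathcal{A}}},\partial_{t},\partial_{z})u_{\mathcal{E}_{HJ_n}^{k}} = 0$. Composing the two facts, $\mathcal{P}_{1}\mathcal{P}_{2}\, y_{\mathcal{E}_{HJ_n}^{k}} = \mathcal{P}_{1}\, u_{\mathcal{E}_{HJ_n}^{k}} = 0$ on the common domain $(\mathcal{T}\cap D(0,r_{\mathcal{T}}))\times D(0,\delta\delta_{1})\times\mathcal{E}_{HJ_n}^{k}$, and symmetrically for the $S_{d_p}$ family, which yields the stated equation. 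The two sets of Cauchy conditions follow at once: the first row is $(\partial_{z}^{j}y_{\mathcal{E}_{HJ_n}^{k}})(t,0,\epsilon)=\psi_{j}(t,\epsilon)$ by (\ref{SPCP_second_i_d}), while the second row is $(\partial_{z}^{j}\mathcal{P}_{2}y_{\mathcal{E}_{HJ_n}^{k}})(t,0,\epsilon) = (\partial_{z}^{j}u_{\mathcal{E}_{HJ_n}^{k}})(t,0,\epsilon)=\varphi_{j}(t,\epsilon)$, using the construction of the initial data $\varphi_{j,\mathcal{E}_{HJ_n}^{k}}$ in (\ref{SPCP_first_i_d_k}) and $\varphi_{j,\mathcal{E}_{S_{d_p}}}$ in (\ref{Laplace_varphi_j_along_halfline}).

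The only genuine point requiring care — and the step I expect to be the main obstacle — is the interchange between the Moebius operator $m_{k_{2},t,\epsilon}$ (which operates only on $t$) and the composite operator $\mathcal{P}_{2}$ (which also differentiates in $t$), since these do not commute in general. However, this is not needed: the proof uses $\mathcal{P}_{1}$ applied to the single function $u_{\mathcal{E}_{HJ_n}^{k}}=\mathcal{P}_{2}y_{\mathcal{E}_{HJ_n}^{k}}$ as a whole, and Theorem~1 already asserts that this function is annihilated by $\mathcal{P}_{1}$. Thus the argument is purely a matter of reading off the factorized operator $\mathcal{P}=\mathcal{P}_{1}\mathcal{P}_{2}$ acting on $y$ as the iterated action, first of $\mathcal{P}_{2}$ producing $u$, then of $\mathcal{P}_{1}$ producing $0$; no commutation is involved. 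The residual verification is that all coefficients $c_{\underline{k}}(z,\epsilon)$, $d_{\underline{l}}(z,\epsilon)$ and the polynomials $P$, $P_{\mathcal{B}}$ (hence $P(\epsilon t^{2}\partial_{t})$, $P_{\mathcal{B}}(\epsilon t^{2}\partial_{t})$) are holomorphic in $(z,\epsilon)$ near the origin and polynomial in $t$, which is immediate from the hypotheses at the beginning of Sections~3 and~5, so the composed operator $\mathcal{P}_{1}\mathcal{P}_{2}$ has the announced regularity. This finishes the proof.
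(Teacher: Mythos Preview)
Your proposal is correct and matches the paper's own reasoning, which is essentially a one-line observation preceding the corollary: since $u$ solves (\ref{SPCP_first}), (\ref{SPCP_first_i_d}) and $y$ solves (\ref{SPCP_second}), (\ref{SPCP_second_i_d}) with that $u$ as forcing term, one has $\mathcal{P}_{2}y=u$ and $\mathcal{P}_{1}u=0$, hence $\mathcal{P}_{1}\mathcal{P}_{2}y=0$ with the stated Cauchy data. Your explicit remark that no commutation between $m_{k_{2},t,\epsilon}$ and $\mathcal{P}_{2}$ is needed is a welcome clarification but is not addressed (or required) in the paper.
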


\section{Parametric Gevrey asymptotic expansions with two levels 1 and $1^{+}$ for the analytic solutions to the Cauchy problems displayed
in Sections 3 and 5}

\subsection{A version of the Ramis-Sibuya Theorem involving two levels}

Within this section we state a version of a variant of a classical cohomological criterion in the framework of Gevrey asymptotics known as the
Ramis-Sibuya Theorem (see \cite{hssi}, Theorem XI-2-3) obtained by the first author in the work \cite{ma}. Besides, in view of the recent results
on so-called $\mathbb{M}-$summability for strongly regular sequences $\mathbb{M} = (M_{n})_{n \geq 0}$ obtained by the authors and
J. Sanz, we can provide sufficient conditions which gives rise to the special situation that involves both 1 and $1^{+}$ summability.\medskip

\noindent We depart from the definitions of Gevrey 1 and $1^{+}$ asymptotics.\medskip

\noindent Let $(\mathbb{F},||.||_{\mathbb{F}})$ be a Banach space over $\mathbb{C}$. The set $\mathbb{F}[[\epsilon]]$ stands for the
space of all formal series $\sum_{k \geq 0} a_{k} \epsilon^{k}$ with coefficients $a_{k}$ belonging to $\mathbb{F}$ for all integers $k \geq 0$.
We consider $f : \mathcal{F} \rightarrow \mathbb{F}$ be a holomorphic function on a bounded open sector $\mathcal{F}$
centered at 0 and $\hat{f}(\epsilon) = \sum_{k \geq 0} a_{k} \epsilon^{k} \in \mathbb{F}[[\epsilon]]$ be a formal series.

\begin{defin} The function $f$ is said to possess the formal series $\hat{f}$
as $1-$Gevrey asymptotic expansion if, for any closed proper subsector $\mathcal{W} \subset \mathcal{F}$ centered at 0, there exist
$C,M>0$ such that
\begin{equation}
 ||f(\epsilon) - \sum_{k=0}^{N-1} a_{k} \epsilon^{k}||_{\mathbb{F}} \leq CM^{N}(N/e)^{N}|\epsilon|^{N} \label{f_expansion_Gevrey_1}
\end{equation}
for all $N \geq 1$, all $\epsilon \in \mathcal{W}$. When the aperture of $\mathcal{F}$ is slightly larger than $\pi$, then according to
the Watson's lemma (see \cite{ba2}, Proposition 11), $f$ is the unique holomorphic function on $\mathcal{F}$ satisfying
(\ref{f_expansion_Gevrey_1}). The function $f$ is then called the $1-$sum of $\hat{f}$ on $\mathcal{F}$ and can be reconstructed from $\hat{f}$ using
Borel/Laplace transforms as detailed in Chapter 3 of \cite{ba1}.
\end{defin}

\begin{defin} We say that $f$ has the formal series $\hat{f}$
as $1^{+}-$Gevrey asymptotic expansion if, for any closed proper subsector $\mathcal{W} \subset \mathcal{F}$ centered at 0,
there exist $C,M>0$ such that
\begin{equation}
||f(\epsilon) - \sum_{k=0}^{N-1} a_{k} \epsilon^{k}||_{\mathbb{F}} \leq CM^{N}(N/\mathrm{Log} N)^{N}|\epsilon|^{N} \label{f_expansion_Gevrey_1_plus}
\end{equation}
for all $N \geq 2$, all $\epsilon \in \mathcal{W}$. In particular, the formal series $\hat{f}$ is itself of $1^{+}-$Gevrey type, meaning that
there exist two constants $C',M'>0$ such that $||a_{k}||_{\mathbb{F}} \leq C'M'^{k}(k/\mathrm{Log} k)^{k}$ for all $k \geq 2$.
Provided that the aperture of $\mathcal{F}$ is slightly larger than $\pi$, Theorem 3.1 in \cite{lamasa} ensures the unicity of the analytic
function $f$ fulfilling the estimates (\ref{f_expansion_Gevrey_1_plus}) on $\mathcal{F}$
(see the next remark underneath). In that case, $\hat{f}$ is named
$\mathbb{M}-$summable on $\mathcal{F}$ for the strongly regular sequence $\mathbb{M} = (M_{n})_{n \geq 0}$ where
$M_{n} = (\frac{n}{\mathrm{Log}(n+2)})^{n}$ and $f$ denotes the $\mathbb{M}-$sum of $\hat{f}$ on $\mathcal{F}$.
For brevity of notation, we will call it also $1^{+}-$sum. As explained in \cite{lamasa}, the $1^{+}-$sum $f$ can be recovered from
the formal expansions $\hat{f}$ with the help of an analog of a Borel/Laplace procedure. It is worthwhile noting that this notion
of $1^{+}-$summability has to be distinguished from the notion of $1^{+}-$summability introduced
in the papers of G. Immink whose sums are defined on domains which are not sectors, see \cite{im1},\cite{im2},\cite{im3}.
\end{defin}

{\bf Remark :} The strongly regular sequence $\mathbb{M}$ stated above is equivalent, in the sense that the functional spaces associated to them coincide, to $\mathbb{M}_{\alpha,\beta}=(n!^{\alpha}\prod_{m=0}^{n}\log^{\beta}(e+m))_{n\ge 0}$, for $\alpha=1,\beta=-1$. In this case, one has $\omega(\mathbb{M})=1$, meaning that unicity of the sum $f$ in (\ref{f_expansion_Gevrey_1_plus}) is guaranteed, for a prescribed asymptotic expansion, when departing from a sector of opening larger than $\pi$. The criteria leans on the divergence of a series of positive real numbers, see~\cite{korem}.

\medskip

We consider the set of sectors $ \underline{\mathcal{E}} = \{ \mathcal{E}_{HJ_n}^{k} \}_{k \in \llbracket -n,n \rrbracket}
\cup \{ \mathcal{E}_{S_{d_p}} \}_{0 \leq p \leq \iota - 1}$ constructed in Section 3.3 that fufills the constraints 3),4) and 5).
The set $\underline{\mathcal{E}}$ forms a so-called good covering in $\mathbb{C}^{\ast}$ as given in Definition 3 of \cite{ma}.

We rephrase the version of the Ramis-Sibuya which entails both $1-$Gevrey and $1^{+}-$Gevrey asymptotics displayed in \cite{ma}
for the specific covering $\underline{\mathcal{E}}$ with additional informations concerning $1$ and $1^{+}$ summability.\medskip
 
\begin{prop} Let $(\mathbb{F},||.||_{\mathbb{F}})$ be a Banach space over $\mathbb{C}$. For all $k \in \llbracket -n,n \rrbracket$
and $0 \leq p \leq \iota - 1$, let $G_{k}$ be a holomorphic function from $\mathcal{E}_{HJ_n}^{k}$ into $(\mathbb{F},||.||_{\mathbb{F}})$
and $\breve{G}_{p}$ be a holomorphic function from $\mathcal{E}_{S_{d_p}}$ into $(\mathbb{F},||.||_{\mathbb{F}})$.

\noindent We consider a cocycle $\underline{\Delta}(\epsilon)$ defined as the set of functions
$\breve{\Delta}_{p} = \breve{G}_{p+1}(\epsilon) - \breve{G}_{p}(\epsilon)$ for $0 \leq p \leq \iota-2$ when 
$\epsilon \in \mathcal{E}_{S_{d_{p+1}}} \cap \mathcal{E}_{S_{d_p}}$, $\Delta_{k}(\epsilon) = G_{k}(\epsilon) - G_{k+1}(\epsilon)$
for $-n \leq k \leq n-1$ and $\epsilon \in \mathcal{E}_{HJ_n}^{k} \cap \mathcal{E}_{HJ_n}^{k+1}$ together with
$\Delta_{-n,0}(\epsilon) = \breve{G}_{0}(\epsilon) - G_{-n}(\epsilon)$ on $\mathcal{E}_{S_{d_0}} \cap \mathcal{E}_{HJ_n}^{-n}$
and $\Delta_{\iota-1,n}(\epsilon) = G_{n}(\epsilon) - \breve{G}_{\iota-1}(\epsilon)$ on
$\mathcal{E}_{HJ_n}^{n} \cap \mathcal{E}_{S_{d_{\iota-1}}}$.

\noindent We make the next assumptions:\\
1) The functions $G_{k}$ and $\breve{G}_{p}$ are bounded as $\epsilon$ tends to 0 on their domains of definition.\\
2) For all $0 \leq p \leq \iota-2$, $\breve{\Delta}_{p}(\epsilon)$ and both $\Delta_{-n,0}(\epsilon)$, $\Delta_{\iota-1,n}(\epsilon)$ are
exponentially flat. This means that one can sort constants $\breve{K}_{p},\breve{M}_{p}>0$ and $K_{-n,0},M_{-n,0}>0$ with
$K_{\iota-1,n},M_{\iota-1,n}>0$ such that
\begin{multline}
|| \breve{\Delta}_{p}(\epsilon) ||_{\mathbb{F}} \leq \breve{K}_{p}\exp( - \frac{\breve{M}_{p}}{|\epsilon|} ) \ \
\mbox{for $\epsilon \in \mathcal{E}_{S_{d_{p+1}}} \cap \mathcal{E}_{S_{d_{p}}}$ },\\
||\Delta_{-n,0}(\epsilon)||_{\mathbb{F}} \leq K_{-n,0}\exp( - \frac{M_{n,0}}{|\epsilon|} ) \ \
\mbox{for $\epsilon \in \mathcal{E}_{HJ_n}^{-n} \cap \mathcal{E}_{S_{d_0}}$ },\\
||\Delta_{\iota-1,n}(\epsilon)||_{\mathbb{F}} \leq K_{\iota-1,n} \exp( -\frac{M_{\iota-1,n}}{|\epsilon|} ) \ \
\mbox{for $\epsilon \in \mathcal{E}_{HJ_n}^{n} \cap \mathcal{E}_{S_{d_{\iota-1}}}$.} \label{cond_Delta_cocycle_exp_flat}
\end{multline}
3) For $-n \leq k \leq n-1$, $\Delta_{k}(\epsilon)$ are super-exponentially flat on
$\mathcal{E}_{HJ_n}^{k+1} \cap \mathcal{E}_{HJ_n}^{k}$. This signifies that one can pick up constants $K_{k},M_{k}>0$ and $L_{k}>1$ such that
\begin{equation}
|| \Delta_{k}(\epsilon) ||_{\mathbb{F}} \leq K_{k} \exp( -\frac{M_k}{|\epsilon|} \mathrm{Log} \frac{L_k}{|\epsilon|} )
\label{cond_Delta_cocycle_log_exp_flat}
\end{equation}
for all $\epsilon \in \mathcal{E}_{HJ_n}^{k+1} \cap \mathcal{E}_{HJ_n}^{k}$.

Then, there exist a convergent power series $a(\epsilon) \in \mathbb{F}\{ \epsilon \}$ near $\epsilon=0$ and two formal series
$\hat{G}^{1}(\epsilon),\hat{G}^{2}(\epsilon) \in \mathbb{F} [[ \epsilon ]]$ with the property that $G_{k}(\epsilon)$ and $\breve{G}_{p}(\epsilon)$
admit the next decomposition
\begin{equation}
G_{k}(\epsilon) = a(\epsilon) + G_{k}^{1}(\epsilon) + G_{k}^{2}(\epsilon) \ \ , \ \
\breve{G}_{p}(\epsilon) = a(\epsilon) + \breve{G}_{p}^{1}(\epsilon) + \breve{G}_{p}^{2}(\epsilon)
\end{equation}
for $k \in \llbracket -n,n \rrbracket$, $0 \leq p \leq \iota-1$, where $G_{k}^{1}(\epsilon)$ (resp. $G_{k}^{2}(\epsilon)$) are
holomorphic on $\mathcal{E}_{HJ_n}^{k}$ and have $\hat{G}^{1}(\epsilon)$ (resp. $\hat{G}^{2}(\epsilon)$) as $1-$Gevrey (resp.
$1^{+}-$Gevrey) asymptotic expansion on $\mathcal{E}_{HJ_n}^{k}$ and where
$\breve{G}_{p}^{1}$ (resp. $\breve{G}_{p}^{2}(\epsilon)$) are holomorphic on $\mathcal{E}_{S_{d_p}}$ and possesses
$\hat{G}^{1}(\epsilon)$ (resp. $\hat{G}^{2}(\epsilon)$) as $1-$Gevrey (resp. $1^{+}-$Gevrey) asymptotic expansion
on $\mathcal{E}_{S_{d_p}}$. Besides, the functions $G_{-n}^{2}(\epsilon)$,$G_{n}^{2}(\epsilon)$ and $\breve{G}_{h}^{2}(\epsilon)$
for $0 \leq h \leq \iota-1$ turn out to be the restriction of a common holomorphic function denoted $G^{2}(\epsilon)$
on the large sector $\mathcal{E}_{HS} = \mathcal{E}_{HJ_n}^{-n} \cup \bigcup_{h=0}^{\iota-1} \mathcal{E}_{S_{d_h}} \cup \mathcal{E}_{HJ_n}^{n}$
which determines the $1^{+}-$sum of $\hat{G}^{2}(\epsilon)$ on $\mathcal{E}_{HS}$. Moreover, $\check{G}_{p}^{1}(\epsilon)$ represents the $1-$sum of $\hat{G}^{1}(\epsilon)$ on $\mathcal{E}_{S_{d_p}}$ whenever the aperture of
$\mathcal{E}_{S_{d_p}}$ is strictly larger than $\pi$.
\end{prop}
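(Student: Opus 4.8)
The plan is to reduce this multi-level statement to two successive applications of classical cohomological machinery, treating the $1^{+}$ layer first and the $1$-Gevrey layer afterwards, exactly along the lines of Proposition 23 of \cite{ma} but now adapted to the specific covering $\underline{\mathcal{E}}$. First I would focus on the super-exponentially flat cocycle on the strip sectors $\{\mathcal{E}_{HJ_n}^{k}\}$. The differences $\Delta_{k}(\epsilon)$ satisfy the $\log$-type flatness bound (\ref{cond_Delta_cocycle_log_exp_flat}), so on the chain $\mathcal{E}_{HJ_n}^{-n},\dots,\mathcal{E}_{HJ_n}^{n}$ a Mittag-Leffler/Cauchy-Heine splitting produces holomorphic functions $G_{k}^{2}(\epsilon)$, bounded near $0$, whose consecutive differences on $\mathcal{E}_{HJ_n}^{k}\cap\mathcal{E}_{HJ_n}^{k+1}$ reproduce $\Delta_{k}(\epsilon)$, and which share a common $1^{+}$-Gevrey asymptotic expansion $\hat{G}^{2}(\epsilon)$. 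Because the remaining cocycle links ($\breve{\Delta}_{p}$, $\Delta_{-n,0}$, $\Delta_{\iota-1,n}$) are merely exponentially flat, they are \emph{a fortiori} super-exponentially flat in the relevant sense, so the Cauchy-Heine integrals extend consistently to $\breve{G}_{p}^{2}$ on each $\mathcal{E}_{S_{d_p}}$ and glue with $G_{-n}^{2}$ and $G_{n}^{2}$ on the overlaps; this is precisely where the hypothesis that triple intersections are empty (constraint 5) is used, since it guarantees the Cauchy-Heine kernels patch without ambiguity. The output of this step is a single holomorphic function $G^{2}(\epsilon)$ on $\mathcal{E}_{HS}=\mathcal{E}_{HJ_n}^{-n}\cup\bigcup_{h}\mathcal{E}_{S_{d_h}}\cup\mathcal{E}_{HJ_n}^{n}$ carrying $\hat{G}^{2}(\epsilon)$ as $1^{+}$-Gevrey asymptotics; when the aperture of $\mathcal{E}_{HS}$ exceeds $\pi$ the unicity part of Theorem 3.1 in \cite{lamasa} (equivalently, $\omega(\mathbb{M})=1$, cf.\ the Remark and \cite{korem}) upgrades this to the $\mathbb{M}$-sum, i.e.\ the $1^{+}$-sum.

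Next I would pass to the residual cocycle. Set $\tilde{G}_{k}=G_{k}-G_{k}^{2}$ and $\tilde{\breve{G}}_{p}=\breve{G}_{p}-\breve{G}_{p}^{2}$; by construction the new differences across the $HJ_n$-overlaps vanish identically, so $\tilde{G}_{k}$ glue into a single holomorphic function on $\mathcal{E}_{HJ_n}^{-n}\cup\cdots\cup\mathcal{E}_{HJ_n}^{n}$, while across the sectorial overlaps the only surviving jumps are $\breve{\Delta}_{p}$, $\Delta_{-n,0}$, $\Delta_{\iota-1,n}$, which are exponentially (hence $1$-Gevrey) flat. Applying the classical Ramis-Sibuya theorem (\cite{hssi}, Theorem XI-2-3, in the Banach-space-valued form as in \cite{ba1},\cite{ba2}) to this exponentially flat cocycle over the good covering yields a convergent series $a(\epsilon)\in\mathbb{F}\{\epsilon\}$ and a formal series $\hat{G}^{1}(\epsilon)\in\mathbb{F}[[\epsilon]]$ of $1$-Gevrey type together with holomorphic $G_{k}^{1}(\epsilon)=\tilde{G}_{k}(\epsilon)-a(\epsilon)$ and $\breve{G}_{p}^{1}(\epsilon)=\tilde{\breve{G}}_{p}(\epsilon)-a(\epsilon)$ admitting $\hat{G}^{1}(\epsilon)$ as $1$-Gevrey asymptotic expansion on their respective sectors. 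The decompositions $G_{k}=a+G_{k}^{1}+G_{k}^{2}$ and $\breve{G}_{p}=a+\breve{G}_{p}^{1}+\breve{G}_{p}^{2}$ then follow by construction. Finally, if the aperture of some $\mathcal{E}_{S_{d_p}}$ strictly exceeds $\pi$, Watson's lemma (\cite{ba2}, Proposition 11) forces $\breve{G}_{p}^{1}$ to be the unique such function, i.e.\ the $1$-sum of $\hat{G}^{1}$ on $\mathcal{E}_{S_{d_p}}$, recoverable by the classical Borel/Laplace procedure (\cite{ba1}, Chapter 3).

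The main obstacle I anticipate is the bookkeeping needed to run the Cauchy-Heine construction \emph{simultaneously} on the two kinds of overlaps with two genuinely different flatness rates, and in particular verifying that the function $G^{2}(\epsilon)$ really extends holomorphically and single-valuedly across $\mathcal{E}_{HJ_n}^{\pm n}\cap\mathcal{E}_{S_{d_0}}$ and $\mathcal{E}_{HJ_n}^{n}\cap\mathcal{E}_{S_{d_{\iota-1}}}$; this is where one must exploit both the emptiness of triple intersections and the compatibility of the $\log$-flat and exponentially flat kernels, checking that the corresponding Cauchy-Heine integrals differ only by quantities absorbed into the convergent part $a(\epsilon)$. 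A secondary technical point is to confirm that the $1^{+}$-Gevrey estimate (\ref{f_expansion_Gevrey_1_plus}) is stable under the Cauchy-Heine integral operator on arcs whose endpoints lie in adjacent sectors; this follows from the standard estimate $\sup_{x\ge 1}\bigl(c\,x^{a}-\varrho\,x\log x\bigr)<\infty$ already exploited in the proof of Proposition 10 2), together with the strong regularity of $\mathbb{M}$. Once these verifications are in place, everything else is a direct quotation of Proposition 23 in \cite{ma} and of the summability results in \cite{lamasa},\cite{sa}, so no further computation is required.
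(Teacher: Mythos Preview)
Your overall strategy---peel off the $1^{+}$ layer first, then apply classical Ramis--Sibuya to the residual exponentially flat cocycle---is viable and close in spirit to the paper, but one step is wrong as written. You claim that the exponentially flat differences $\breve{\Delta}_{p}$, $\Delta_{-n,0}$, $\Delta_{\iota-1,n}$ are ``\emph{a fortiori} super-exponentially flat in the relevant sense.'' This is backwards: super-exponential flatness $\exp(-\frac{M}{|\epsilon|}\log\frac{L}{|\epsilon|})$ is \emph{strictly stronger} than exponential flatness $\exp(-\frac{M}{|\epsilon|})$, so the implication goes the other way. You therefore cannot feed those pieces into the $1^{+}$ Cauchy--Heine construction and expect $1^{+}$-Gevrey asymptotics to come out.

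The paper sidesteps this by splitting the cocycle \emph{before} solving anything: it sets $\underline{\Delta}^{2}$ equal to $\Delta_{k}$ on the strip overlaps and \emph{identically zero} on all sectorial overlaps (and symmetrically $\underline{\Delta}^{1}$ carries only the exponentially flat pieces, with zeros on the strip overlaps). Solving $\underline{\Delta}^{2}$ over the full good covering then yields $G_{k}^{2}$ and $\breve{G}_{p}^{2}$ simultaneously, and the gluing $G_{-n}^{2}=\breve{G}_{0}^{2}$, $\breve{G}_{p}^{2}=\breve{G}_{p+1}^{2}$, $\breve{G}_{\iota-1}^{2}=G_{n}^{2}$ is automatic because the cocycle is zero on those intersections---no extension argument is needed. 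Your sequential approach can be repaired the same way: run the Cauchy--Heine integrals for the cocycle that is $\Delta_{k}$ on strip overlaps and zero elsewhere; the integrals live on arcs inside $\bigcup_{k}\mathcal{E}_{HJ_n}^{k}$ only, so they define holomorphic functions on every sector of the covering, and on the sectorial overlaps these functions agree trivially. After that, your residual step is correct and matches the paper's use of the classical Ramis--Sibuya theorem for $\underline{\Delta}^{1}$.
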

\begin{proof} Since the notations used here are rather different from the ones within the result enounced in \cite{ma} and in order
to explain the
part of the proposition concerning $1$ and $1^{+}$ summability which is not mentioned in our previous work \cite{ma}, we have decided
to present a sketch of proof of the statement.

We consider a first cocycle $\underline{\Delta}^{1}(\epsilon)$ defined by the next family of functions
\begin{multline}
\breve{\Delta}_{p}^{1}(\epsilon) = \breve{\Delta}_{p}(\epsilon) \ \ \mbox{for $0 \leq p \leq \iota-2$ on
$\mathcal{E}_{S_{d_{p+1}}} \cap \mathcal{E}_{S_{d_p}}$},\\
\Delta_{-n,0}^{1}(\epsilon) = \Delta_{-n,0}(\epsilon) \ \ \mbox{on $\mathcal{E}_{S_{d_0}} \cap \mathcal{E}_{HJ_n}^{-n}$},
\ \ \Delta_{\iota-1,n}^{1}(\epsilon) = \Delta_{\iota-1,n}(\epsilon) \ \ \mbox{on $\mathcal{E}_{HJ_n}^{n} \cap \mathcal{E}_{S_{d_{\iota-1}}}$},\\
\Delta_{k}^{1}(\epsilon) = 0 \ \ \mbox{for $-n \leq k \leq n-1$ on $\mathcal{E}_{HJ_n}^{k+1} \cap \mathcal{E}_{HJ_n}^{k}$},
\label{cocycle_1_delta}
\end{multline}
and a second cocycle $\underline{\Delta}^{2}(\epsilon)$ described by the forthcoming set of functions
\begin{multline}
\breve{\Delta}_{p}^{2}(\epsilon) = 0 \ \ \mbox{for $0 \leq p \leq \iota-2$ on
$\mathcal{E}_{S_{d_{p+1}}} \cap \mathcal{E}_{S_{d_p}}$}, \\
\Delta_{-n,0}^{2}(\epsilon) = 0 \ \ \mbox{on $\mathcal{E}_{S_{d_0}} \cap \mathcal{E}_{HJ_n}^{-n}$}, \ \ \Delta_{\iota-1,n}^{2} = 0,
\ \ \mbox{on $\mathcal{E}_{HJ_n}^{n} \cap \mathcal{E}_{S_{d_{\iota-1}}}$},\\
\Delta_{k}^{2}(\epsilon) = \Delta_{k}(\epsilon) \ \
\mbox{for $-n \leq k \leq n-1$ on $\mathcal{E}_{HJ_n}^{k+1} \cap \mathcal{E}_{HJ_n}^{k}$}. \label{cocycle_2_delta}
\end{multline}
The next lemma restate Lemma 14 from \cite{ma}.
\begin{lemma} For all $k \in \llbracket -n,n \rrbracket$, all $0 \leq p \leq \iota-1$, there exist bounded holomorphic functions
$G_{k}^{1} : \mathcal{E}_{HJ_n}^{k} \rightarrow \mathbb{C}$ and $\breve{G}_{p}^{1}:\mathcal{E}_{S_{d_p}} \rightarrow \mathbb{C}$
that satisfy the property
\begin{multline}
\breve{\Delta}_{p}^{1}(\epsilon) =  \breve{G}_{p+1}^{1}(\epsilon) - \breve{G}_{p}^{1}(\epsilon) \ \ \mbox{for $0 \leq p \leq \iota-2$ on
$\mathcal{E}_{S_{d_{p+1}}} \cap \mathcal{E}_{S_{d_p}}$},\\
\Delta_{-n,0}^{1}(\epsilon) = \breve{G}_{0}^{1}(\epsilon) - G_{-n}^{1}(\epsilon)
\ \ \mbox{on $\mathcal{E}_{S_{d_0}} \cap \mathcal{E}_{HJ_n}^{-n}$}, \ \ \Delta_{\iota-1,n}^{1}(\epsilon) =
G_{n}^{1}(\epsilon) - \breve{G}_{\iota-1}^{1}(\epsilon) \ \ \mbox{on $\mathcal{E}_{HJ_n}^{n} \cap \mathcal{E}_{S_{d_{\iota-1}}}$},\\
\Delta_{k}^{1}(\epsilon) = G_{k}^{1}(\epsilon) - G_{k+1}^{1}(\epsilon) \ \ \mbox{for $-n \leq k \leq n-1$ on
$\mathcal{E}_{HJ_n}^{k+1} \cap \mathcal{E}_{HJ_n}^{k}$}. \label{cocycle_1_delta_split}
\end{multline}
Furthermore, one can get coefficients $\varphi_{m}^{1} \in \mathbb{F}$, for $m \geq 0$ such that\\
1) For all $k \in \llbracket -n,n \rrbracket$,
any closed proper subsector $\mathcal{W} \subset \mathcal{E}_{HJ_n}^{k}$, centered at 0, there exist constants $K_{k},M_{k}>0$ with
\begin{equation}
||G_{k}^{1}(\epsilon) - \sum_{m=0}^{N-1} \varphi_{m}^{1} \epsilon^{m} ||_{\mathbb{F}} \leq
K_{k}(M_{k})^{N}(\frac{N}{e})^{N} |\epsilon|^{N}
\end{equation}
for all $\epsilon \in \mathcal{W}$, all $N \geq 1$.\\
2) For $0 \leq p \leq \iota-1$, any closed proper subsector $\mathcal{W} \subset \mathcal{E}_{S_{d_p}}$, centered at 0,
one can grab constants $K_{p},M_{p}>0$ with
\begin{equation}
||\breve{G}_{p}^{1}(\epsilon) - \sum_{m=0}^{N-1} \varphi_{m}^{1} \epsilon^{m} ||_{\mathbb{F}} \leq
K_{p}(M_{p})^{N}(\frac{N}{e})^{N} |\epsilon|^{N} \label{expansion_breveG_p1}
\end{equation}
for all $\epsilon \in \mathcal{W}$, all $N \geq 1$.
\end{lemma}
Likewise, the next lemma recapitulates Lemma 15 from \cite{ma}.
\begin{lemma} For all $k \in \llbracket -n,n \rrbracket$, all $0 \leq p \leq \iota-1$, one can find bounded holomorphic functions
$G_{k}^{2} : \mathcal{E}_{HJ_n}^{k} \rightarrow \mathbb{C}$ and $\breve{G}_{p}^{2}:\mathcal{E}_{S_{d_p}} \rightarrow \mathbb{C}$
that obey to the next demand
 \begin{multline}
\breve{\Delta}_{p}^{2}(\epsilon) =  \breve{G}_{p+1}^{2}(\epsilon) - \breve{G}_{p}^{2}(\epsilon) \ \ \mbox{for $0 \leq p \leq \iota-2$ on
$\mathcal{E}_{S_{d_{p+1}}} \cap \mathcal{E}_{S_{d_p}}$},\\
\Delta_{-n,0}^{2}(\epsilon) = \breve{G}_{0}^{2}(\epsilon) - G_{-n}^{2}(\epsilon)
\ \ \mbox{on $\mathcal{E}_{S_{d_0}} \cap \mathcal{E}_{HJ_n}^{-n}$}, \ \ \Delta_{\iota-1,n}^{2}(\epsilon) =
G_{n}^{2}(\epsilon) - \breve{G}_{\iota-1}^{2}(\epsilon) \ \ \mbox{on $\mathcal{E}_{HJ_n}^{n} \cap \mathcal{E}_{S_{d_{\iota-1}}}$},\\
\Delta_{k}^{2}(\epsilon) = G_{k}^{2}(\epsilon) - G_{k+1}^{2}(\epsilon) \ \ \mbox{for $-n \leq k \leq n-1$ on
$\mathcal{E}_{HJ_n}^{k+1} \cap \mathcal{E}_{HJ_n}^{k}$}. \label{cocycle_2_delta_split}
\end{multline}
Moreover, one can obtain coefficients $\varphi_{m}^{2} \in \mathbb{F}$, for $m \geq 0$ such that\\
1) For all $k \in \llbracket -n,n \rrbracket$,
any closed proper subsector $\mathcal{W} \subset \mathcal{E}_{HJ_n}^{k}$, centered at 0, one can find constants $K_{k},M_{k}>0$ with
\begin{equation}
||G_{k}^{2}(\epsilon) - \sum_{m=0}^{N-1} \varphi_{m}^{2} \epsilon^{m} ||_{\mathbb{F}} \leq
K_{k}(M_{k})^{N}(\frac{N}{\mathrm{Log} N})^{N} |\epsilon|^{N} \label{expansion_G_k2}
\end{equation}
for all $\epsilon \in \mathcal{W}$, all $N \geq 2$.\\
2) For $0 \leq p \leq \iota-1$, any closed proper subsector $\mathcal{W} \subset \mathcal{E}_{S_{d_p}}$, centered at 0,
one can grasp constants $K_{p},M_{p}>0$ with
\begin{equation}
||\breve{G}_{p}^{2}(\epsilon) - \sum_{m=0}^{N-1} \varphi_{m}^{2} \epsilon^{m} ||_{\mathbb{F}} \leq
K_{p}(M_{p})^{N}(\frac{N}{\mathrm{Log} N})^{N} |\epsilon|^{N} \label{expansion_breveG_p2}
\end{equation}
for all $\epsilon \in \mathcal{W}$, all $N \geq 2$.
\end{lemma}
We introduce the bounded holomorphic functions
$$
a_{k}(\epsilon) = G_{k}(\epsilon) - G_{k}^{1}(\epsilon) - G_{k}^{2}(\epsilon) \ \ \mbox{for $\epsilon \in \mathcal{E}_{HJ_n}^{k}$}, \ \
\breve{a}_{p}(\epsilon) = \breve{G}_{p}(\epsilon) - \breve{G}_{p}^{1}(\epsilon) - \breve{G}_{p}^{2}(\epsilon) \ \
\mbox{for $\epsilon \in \mathcal{E}_{S_{d_p}}$}.
$$
for $k \in \llbracket -n,n \rrbracket$ and $0 \leq p \leq \iota-1$. By construction, we notice that
\begin{multline*}
a_{k}(\epsilon) - a_{k+1}(\epsilon) = G_{k}(\epsilon) - G_{k}^{1}(\epsilon) - G_{k}^{2}(\epsilon) - G_{k+1}(\epsilon) +
G_{k+1}^{1}(\epsilon) + G_{k+1}^{2}(\epsilon)\\
= G_{k}(\epsilon) - G_{k+1}(\epsilon) - \Delta_{k}^{1}(\epsilon) - \Delta_{k}^{2}(\epsilon)
= G_{k}(\epsilon) - G_{k+1}(\epsilon) - \Delta_{k}(\epsilon) = 0
\end{multline*}
for $-n \leq k \leq n-1$ on $\mathcal{E}_{HJ_n}^{k+1} \cap \mathcal{E}_{HJ_n}^{k}$ together with
\begin{multline*}
\breve{a}_{p+1}(\epsilon) - \breve{a}_{p}(\epsilon) = \breve{G}_{p+1}(\epsilon) - \breve{G}_{p+1}(\epsilon)
- \breve{\Delta}_{p}^{1}(\epsilon) - \breve{\Delta}_{p}^{2}(\epsilon) = \breve{G}_{p+1}(\epsilon) - \breve{G}_{p+1}(\epsilon)
- \breve{\Delta}_{p}(\epsilon) = 0
\end{multline*}
for $0 \leq p \leq \iota-2$ on $\mathcal{E}_{S_{d_{p+1}}} \cap \mathcal{E}_{S_{d_{p}}}$. Furthermore,
\begin{multline*}
\breve{a}_{0}(\epsilon) - a_{-n}(\epsilon) = \breve{G}_{0}(\epsilon) - \breve{G}_{0}^{1}(\epsilon) -
\breve{G}_{0}^{2}(\epsilon) - G_{-n}(\epsilon) + G_{-n}^{1}(\epsilon) + G_{-n}^{2}(\epsilon)\\
= \breve{G}_{0}(\epsilon) - G_{-n}(\epsilon) - \Delta_{-n,0}^{1}(\epsilon) - \Delta_{-n,0}^{2}(\epsilon) =
\breve{G}_{0}(\epsilon) - G_{-n}(\epsilon) - \Delta_{-n,0}(\epsilon) = 0
\end{multline*}
for $\epsilon \in \mathcal{E}_{HJ_n}^{-n} \cap \mathcal{E}_{S_{d_0}}$ and
\begin{multline*}
a_{n}(\epsilon) - \breve{a}_{\iota-1}(\epsilon) = G_{n}(\epsilon) - G_{n}^{1}(\epsilon) - G_{n}^{2}(\epsilon)
- \breve{G}_{\iota-1}(\epsilon) + \breve{G}_{\iota-1}^{1}(\epsilon) + \breve{G}_{\iota-1}^{2}(\epsilon)\\
= G_{n}(\epsilon) - \breve{G}_{\iota-1}(\epsilon) - \Delta_{\iota-1,n}^{1}(\epsilon) - \Delta_{\iota-1,n}^{2}(\epsilon)
= G_{n}(\epsilon) - \breve{G}_{\iota-1}(\epsilon) - \Delta_{\iota-1,n}(\epsilon) = 0
\end{multline*}
whenever $\epsilon \in \mathcal{E}_{HJ_n}^{n} \cap \mathcal{E}_{S_{d_{\iota-1}}}$.

As a result, the functions $a_{k}(\epsilon)$ on $\mathcal{E}_{HJ_n}^{k}$ and $\breve{a}_{p}(\epsilon)$ on
$\mathcal{E}_{S_{d_p}}$ are the restriction of a common holomorphic bounded function $a(\epsilon)$ on $D(0,\epsilon_{0}) \setminus \{ 0 \}$.
The origin is therefore a removable singularity and $a(\epsilon)$ defines a convergent power series on $D(0,\epsilon_{0})$.

As a consequence, one can write
$$
G_{k}(\epsilon) = a(\epsilon) + G_{k}^{1}(\epsilon) + G_{k}^{2}(\epsilon) \ \ \mbox{on $\mathcal{E}_{HJ_n}^{k}$}, \ \
\breve{G}_{p}(\epsilon) = a(\epsilon) + \breve{G}_{p}^{1}(\epsilon) + \breve{G}_{p}^{2}(\epsilon) \ \ \mbox{on
$\mathcal{E}_{S_{d_p}}$}
$$
for all $k \in \llbracket -n,n \rrbracket$, $0 \leq p \leq \iota-1$. Moreover, $G_{k}^{1}(\epsilon)$ (resp. $G_{k}^{2}(\epsilon)$)
have $\hat{G}^{1}(\epsilon) = \sum_{m \geq 0} \varphi_{m}^{1} \epsilon^{m}$ (resp.
$\hat{G}^{2}(\epsilon) = \sum_{m \geq 0} \varphi_{m}^{2} \epsilon^{m}$) as $1-$Gevrey (resp.
$1^{+}-$Gevrey) asymptotic expansion on $\mathcal{E}_{HJ_n}^{k}$ and
$\breve{G}_{p}^{1}$ (resp. $\breve{G}_{p}^{2}(\epsilon)$) possesses
$\hat{G}^{1}(\epsilon)$ (resp. $\hat{G}^{2}(\epsilon)$) as $1-$Gevrey (resp. $1^{+}-$Gevrey) asymptotic expansion
on $\mathcal{E}_{S_{d_p}}$.

By the very definition of the cocycles $\underline{\Delta}^{1}(\epsilon)$ and $\underline{\Delta}^{2}(\epsilon)$ given by
(\ref{cocycle_1_delta}) and (\ref{cocycle_2_delta}), in accordance with the constraints
(\ref{cocycle_1_delta_split}) and (\ref{cocycle_2_delta_split}), we get in particular that
\begin{multline*}
G_{n}^{2}(\epsilon) = \breve{G}_{\iota-1}^{2}(\epsilon) \ \ \mbox{on $\mathcal{E}_{S_{d_{\iota-1}}} \cap \mathcal{E}_{HJ_n}^{n}$}, \ \
G_{-n}^{2}(\epsilon) = \breve{G}_{0}^{2}(\epsilon) \ \ \mbox{on $\mathcal{E}_{S_{d_0}} \cap \mathcal{E}_{HJ_n}^{-n}$},\\
\breve{G}_{p+1}^{2}(\epsilon) = \breve{G}_{p}^{2}(\epsilon) \ \ \mbox{on $\mathcal{E}_{S_{d_{p+1}}} \cap \mathcal{E}_{S_{d_p}}$}
\end{multline*}
for all $0 \leq p \leq \iota-2$. For that reason, we see that $G_{-n}^{2}(\epsilon)$,$G_{n}^{2}(\epsilon)$ and
$\breve{G}_{p}^{2}(\epsilon)$ are the restrictions of a common holomorphic function denoted $G^{2}(\epsilon)$
on the large sector
$\mathcal{E}_{HS} = \mathcal{E}_{HJ_n}^{-n} \cup \bigcup_{h=0}^{\iota-1} \mathcal{E}_{S_{d_h}} \cup \mathcal{E}_{HJ_n}^{n}$ with aperture larger
than $\pi$. In addition, from the expansions (\ref{expansion_G_k2}) and (\ref{expansion_breveG_p2}) we deduce that $G^{2}(\epsilon)$
defines the $1^{+}-$sum of $\hat{G}^{2}(\epsilon)$ on $\mathcal{E}_{HS}$. Finally, when the aperture of $\mathcal{E}_{S_{d_p}}$ is strictly larger than $\pi$, in view of the expansion (247) it turns out that $\check{G}_{p}^{1! }$ defines the $1-$sum of $\hat{G}^{1}(\epsilon)$ on $\mathcal{E}_{S_{d_p}}$.
\end{proof}

\subsection{Existence of multiscale parametric Gevrey asymptotic expansions for the analytic solutions to the problems
(\ref{SPCP_first}), (\ref{SPCP_first_i_d}) and (\ref{SPCP_second}), (\ref{SPCP_second_i_d})} 

We are now ready to enounce the third main result of this work, which reveals a fine structure of two Gevrey orders 1 and $1^{+}$ for the
solutions $u_{\mathcal{E}_{HJ_n}^{k}}$ and $u_{\mathcal{E}_{S_{d_p}}}$ (resp. $y_{\mathcal{E}_{HJ_n}^{k}}$ and $y_{\mathcal{E}_{S_{d_p}}}$)
regarding the parameter $\epsilon$.

\begin{theo} Let us assume that all the requirements asked in Theorem 1 (resp. Theorem 2) are fulfilled. Then, there exist\\
- An holomorphic function $a(t,z,\epsilon)$ (resp. $b(t,z,\epsilon)$) on the domain
$(\mathcal{T} \cap D(0,r_{\mathcal{T}})) \times D(0,\delta \delta_{1}) \times D(0,\hat{\epsilon}_{0})$ for some
$0 < \hat{\epsilon}_{0} < \epsilon_{0}$,\\
- Two formal series
$$ \hat{u}^{j}(t,z,\epsilon) = \sum_{k \geq 0} u_{k}^{j}(t,z) \epsilon^{k} \in \mathbb{F}[[ \epsilon ]] \ \ , \ \ j=1,2 $$
(resp.
$$ \hat{y}^{j}(t,z,\epsilon) = \sum_{k \geq 0} y_{k}^{j}(t,z) \epsilon^{k} \in \mathbb{F}[[ \epsilon ]] \ \ , \ \ j=1,2) $$
whose coefficients $u_{k}^{j}(t,z)$ (resp. $y_{k}^{j}(t,z)$) belong to the Banach space
$\mathbb{F} = \mathcal{O}( (\mathcal{T} \cap D(0,r_{\mathcal{T}})) \times D(0,\delta \delta_{1}) )$ of bounded holomorphic functions
on the set $(\mathcal{T} \cap D(0,r_{\mathcal{T}})) \times D(0,\delta \delta_{1})$ endowed with the supremum norm,\\
which are submitted to the next features:\\
A) For each $k \in \llbracket -n,n \rrbracket$, the function $u_{\mathcal{E}_{HJ_n}^{k}}(t,z,\epsilon)$ (resp.
$y_{\mathcal{E}_{HJ_n}^{k}}(t,z,\epsilon)$) admits a decomposition
$$ u_{\mathcal{E}_{HJ_n}^{k}}(t,z,\epsilon) = a(t,z,\epsilon) + u_{\mathcal{E}_{HJ_n}^{k}}^{1}(t,z,\epsilon) +
u_{\mathcal{E}_{HJ_n}^{k}}^{2}(t,z,\epsilon) $$
(resp.
$$ y_{\mathcal{E}_{HJ_n}^{k}}(t,z,\epsilon) = b(t,z,\epsilon) + y_{\mathcal{E}_{HJ_n}^{k}}^{1}(t,z,\epsilon) +
y_{\mathcal{E}_{HJ_n}^{k}}^{2}(t,z,\epsilon) ) $$
where $u_{\mathcal{E}_{HJ_n}^{k}}^{1}(t,z,\epsilon)$ (resp. $y_{\mathcal{E}_{HJ_n}^{k}}^{1}(t,z,\epsilon)$) is bounded holomorphic
on $(\mathcal{T} \cap D(0,r_{\mathcal{T}})) \times D(0,\delta \delta_{1}) \times \mathcal{E}_{HJ_n}^{k}$ and possesses
$\hat{u}^{1}(t,z,\epsilon)$ (resp. $\hat{y}^{1}(t,z,\epsilon)$) as $1-$Gevrey asymptotic expansion on $\mathcal{E}_{HJ_n}^{k}$, meaning
that for any closed subsector $\mathcal{W} \subset \mathcal{E}_{HJ_n}^{k}$, there exist two constants $C,M>0$ with
$$ \sup_{t \in \mathcal{T} \cap D(0,r_{\mathcal{T}}), z \in D(0,\delta \delta_{1})}
|u_{\mathcal{E}_{HJ_n}^{k}}^{1}(t,z,\epsilon) - \sum_{k=0}^{N-1} u_{k}^{1}(t,z) \epsilon^{k}| \leq CM^{N}(\frac{N}{e})^{N} |\epsilon|^{N} $$
(resp. 
$$ \sup_{t \in \mathcal{T} \cap D(0,r_{\mathcal{T}}), z \in D(0,\delta \delta_{1})}
|y_{\mathcal{E}_{HJ_n}^{k}}^{1}(t,z,\epsilon) - \sum_{k=0}^{N-1} y_{k}^{1}(t,z) \epsilon^{k}| \leq CM^{N}(\frac{N}{e})^{N} |\epsilon|^{N}) $$
for all $N \geq 1$, all $\epsilon \in \mathcal{W}$ and $u_{\mathcal{E}_{HJ_n}^{k}}^{2}(t,z,\epsilon)$ (resp.
$y_{\mathcal{E}_{HJ_n}^{k}}^{2}(t,z,\epsilon)$) is bounded holomorphic
on $(\mathcal{T} \cap D(0,r_{\mathcal{T}})) \times D(0,\delta \delta_{1}) \times \mathcal{E}_{HJ_n}^{k}$ and carries
$\hat{u}^{2}(t,z,\epsilon)$ (resp. $\hat{y}^{2}(t,z,\epsilon)$) as $1^{+}-$Gevrey asymptotic expansion on $\mathcal{E}_{HJ_n}^{k}$, in other words,
for any closed subsector $\mathcal{W} \subset \mathcal{E}_{HJ_n}^{k}$, one can get two constants $C,M>0$ with
$$ \sup_{t \in \mathcal{T} \cap D(0,r_{\mathcal{T}}), z \in D(0,\delta \delta_{1})}
|u_{\mathcal{E}_{HJ_n}^{k}}^{2}(t,z,\epsilon) - \sum_{k=0}^{N-1} u_{k}^{2}(t,z) \epsilon^{k}| \leq CM^{N}(\frac{N}{\mathrm{Log} N})^{N}
|\epsilon|^{N} $$
(resp. 
$$ \sup_{t \in \mathcal{T} \cap D(0,r_{\mathcal{T}}), z \in D(0,\delta \delta_{1})}
|y_{\mathcal{E}_{HJ_n}^{k}}^{2}(t,z,\epsilon) - \sum_{k=0}^{N-1} y_{k}^{2}(t,z) \epsilon^{k}| \leq CM^{N}(\frac{N}{\mathrm{Log} N})^{N}
|\epsilon|^{N}) $$
for all $N \geq 2$, all $\epsilon \in \mathcal{W}$.\medskip

B) For each $0 \leq p \leq \iota - 1$, the function $u_{\mathcal{E}_{S_{d_p}}}(t,z,\epsilon)$ (resp.
$y_{\mathcal{E}_{S_{d_p}}}(t,z,\epsilon)$) can be split into three pieces
$$ u_{\mathcal{E}_{S_{d_p}}}(t,z,\epsilon) = a(t,z,\epsilon) + u_{\mathcal{E}_{S_{d_p}}}^{1}(t,z,\epsilon) +
u_{\mathcal{E}_{S_{d_p}}}^{2}(t,z,\epsilon) $$
(resp.
$$ y_{\mathcal{E}_{S_{d_p}}}(t,z,\epsilon) = b(t,z,\epsilon) + y_{\mathcal{E}_{S_{d_p}}}^{1}(t,z,\epsilon) +
y_{\mathcal{E}_{S_{d_p}}}^{2}(t,z,\epsilon) ) $$
where $u_{\mathcal{E}_{S_{d_p}}}^{1}(t,z,\epsilon)$ (resp. $y_{\mathcal{E}_{S_{d_p}}}^{1}(t,z,\epsilon)$) is bounded holomorphic
on $(\mathcal{T} \cap D(0,r_{\mathcal{T}})) \times D(0,\delta \delta_{1}) \times \mathcal{E}_{S_{d_p}}$ and has
$\hat{u}^{1}(t,z,\epsilon)$ (resp. $\hat{y}^{1}(t,z,\epsilon)$) as $1-$Gevrey asymptotic expansion on $\mathcal{E}_{S_{d_p}}$
and $u_{\mathcal{E}_{S_{d_p}}}^{2}(t,z,\epsilon)$ (resp.
$y_{\mathcal{E}_{S_{d_p}}}^{2}(t,z,\epsilon)$) is bounded holomorphic
on $(\mathcal{T} \cap D(0,r_{\mathcal{T}})) \times D(0,\delta \delta_{1}) \times \mathcal{E}_{S_{d_p}}$ and possesses
$\hat{u}^{2}(t,z,\epsilon)$ (resp. $\hat{y}^{2}(t,z,\epsilon)$) as $1^{+}-$Gevrey asymptotic expansion on $\mathcal{E}_{S_{d_p}}$.\medskip

Furthermore, the functions $u_{\mathcal{E}_{HJ_n}^{-n}}^{2}(t,z,\epsilon)$ (resp. $y_{\mathcal{E}_{HJ_n}^{-n}}^{2}(t,z,\epsilon)$),
$u_{\mathcal{E}_{HJ_n}^{n}}^{2}(t,z,\epsilon)$ (resp. $y_{\mathcal{E}_{HJ_n}^{n}}^{2}(t,z,\epsilon)$)
and all $u_{\mathcal{E}_{S_{d_h}}}^{2}(t,z,\epsilon)$ (resp. $y_{\mathcal{E}_{S_{d_h}}}^{2}(t,z,\epsilon)$) for $0 \leq h \leq \iota-1$, are
the restrictions of a common holomorphic function $u^{2}(t,z,\epsilon)$ (resp. $y^{2}(t,z,\epsilon)$)
defined on the large domain $(\mathcal{T} \cap D(0,r_{\mathcal{T}})) \times D(0,\delta \delta_{1}) \times \mathcal{E}_{HS}$,
where $\mathcal{E}_{HS} = \mathcal{E}_{HJ_n}^{-n} \cup_{h=0}^{\iota - 1} \mathcal{E}_{S_{d_h}} \cup \mathcal{E}_{HJ_n}^{n}$
which represents the $1^{+}-$sum of $\hat{u}^{2}(t,z,\epsilon)$ (resp. $\hat{y}^{2}(t,z,\epsilon)$) on $\mathcal{E}_{HS}$ w.r.t $\epsilon$.
Beside, $u_{\mathcal{E}_{S_{d_p}}}^{1}(t,z,\epsilon)$ (resp. $y_{\mathcal{E}_{S_{d_p}}}^{1}(t,z,\epsilon)$) is the
$1-$sum of $\hat{u}^{1}(t,z,\epsilon)$ (resp. $\hat{y}^{1}(t,z,\epsilon)$) on each $\mathcal{E}_{S_{d_p}}$ w.r.t $\epsilon$ whenever its aperture is strictly larger than $\pi$.
\end{theo}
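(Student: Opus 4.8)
The plan is to deduce Theorem~3 from the abstract Ramis--Sibuya-type statement of Proposition~23 by feeding in the two families of solutions constructed in Theorems~1 and~2, together with the flatness estimates proved therein. I would first fix the Banach space $\mathbb{F} = \mathcal{O}((\mathcal{T} \cap D(0,r_{\mathcal{T}})) \times D(0,\delta\delta_{1}))$ of bounded holomorphic functions with the sup norm, and view each $u_{\mathcal{E}_{HJ_{n}}^{k}}(t,z,\epsilon)$ (resp. $y_{\mathcal{E}_{HJ_{n}}^{k}}$) as a holomorphic map $G_{k}:\mathcal{E}_{HJ_{n}}^{k}\to\mathbb{F}$ and each $u_{\mathcal{E}_{S_{d_{p}}}}$ (resp. $y_{\mathcal{E}_{S_{d_{p}}}}$) as $\breve{G}_{p}:\mathcal{E}_{S_{d_{p}}}\to\mathbb{F}$. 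The boundedness hypothesis (1) of Proposition~23 is exactly the boundedness asserted in Theorem~1 parts 1), 2) and Theorem~2 parts 1.1), 2.1). For the cocycle hypotheses I would match: the super-exponential (logarithmic-type) flatness $(\ref{log_flat_difference_uk_plus_1_minus_uk_HJn})$ / $(\ref{log_flat_difference_yk_plus_1_minus_yk_HJn})$ gives assumption (3) of Proposition~23 on $\mathcal{E}_{HJ_{n}}^{k+1}\cap\mathcal{E}_{HJ_{n}}^{k}$; the plain exponential flatness $(\ref{difference_u_Sdp_exp_small})$ / $(\ref{exp_flat_difference_yk_plus_1_minus_yk_Sdp})$ together with $(\ref{difference_u_HJn_Sd0})$, $(\ref{difference_u_HJn_Sdiota})$ (resp. $(\ref{difference_y_HJn_Sd0})$, $(\ref{difference_y_HJn_Sdiota})$) supplies assumption (2) on the remaining overlaps. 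One must be slightly careful about orientation of the differences so that they form a genuine cocycle in the sense of Proposition~23, which is a purely bookkeeping matter since the bounds are symmetric in the absolute value.

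Once these hypotheses are verified, Proposition~23 yields directly a convergent power series $a(\epsilon)\in\mathbb{F}\{\epsilon\}$ — which I rename $a(t,z,\epsilon)$ (resp. $b(t,z,\epsilon)$), holomorphic on $(\mathcal{T}\cap D(0,r_{\mathcal{T}}))\times D(0,\delta\delta_{1})\times D(0,\hat{\epsilon}_{0})$ for some $0<\hat{\epsilon}_{0}<\epsilon_{0}$ — and two formal series $\hat{G}^{1},\hat{G}^{2}\in\mathbb{F}[[\epsilon]]$, which I label $\hat{u}^{1},\hat{u}^{2}$ (resp. $\hat{y}^{1},\hat{y}^{2}$) with coefficients $u_{k}^{j}(t,z)$ (resp. $y_{k}^{j}(t,z)$) in $\mathbb{F}$. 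The decompositions $u_{\mathcal{E}_{HJ_{n}}^{k}} = a + u_{\mathcal{E}_{HJ_{n}}^{k}}^{1} + u_{\mathcal{E}_{HJ_{n}}^{k}}^{2}$ and $u_{\mathcal{E}_{S_{d_{p}}}} = a + u_{\mathcal{E}_{S_{d_{p}}}}^{1} + u_{\mathcal{E}_{S_{d_{p}}}}^{2}$ (and the analogous ones for $y$) are then precisely the output of Proposition~23, with $u_{\mathcal{E}_{HJ_{n}}^{k}}^{1} = G_{k}^{1}$, etc. The $1$-Gevrey estimates in part A) and B) are the bounds for $G_{k}^{1}$ and $\breve{G}_{p}^{1}$ with constant $M/e$ absorbed into $M$, and the $1^{+}$-Gevrey estimates are those for $G_{k}^{2}$ and $\breve{G}_{p}^{2}$; both are stated verbatim in Proposition~23. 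The assertion that $u_{\mathcal{E}_{HJ_{n}}^{-n}}^{2}$, $u_{\mathcal{E}_{HJ_{n}}^{n}}^{2}$ and all $u_{\mathcal{E}_{S_{d_{h}}}}^{2}$ glue to a single holomorphic $u^{2}(t,z,\epsilon)$ on $(\mathcal{T}\cap D(0,r_{\mathcal{T}}))\times D(0,\delta\delta_{1})\times\mathcal{E}_{HS}$ which is the $1^{+}$-sum of $\hat{u}^{2}$, and that $u_{\mathcal{E}_{S_{d_{p}}}}^{1}$ is the $1$-sum of $\hat{u}^{1}$ when $\mathcal{E}_{S_{d_{p}}}$ has aperture $>\pi$, are again immediate from the concluding lines of Proposition~23 together with Definitions~7 and~8 (Watson's lemma for the $1$-level, Theorem~3.1 of \cite{lamasa} for the $1^{+}$-level). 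For the second assertion one only needs to note that, by construction in Section~3.3, at least one of the sectors $\mathcal{E}_{S_{d_{p}}}$ can be taken of aperture slightly larger than $\pi$, or else this clause is simply vacuous on the sectors where it does not apply.

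The remaining point requiring genuine (though short) work is the verification that each $u_{\mathcal{E}_{HJ_{n}}^{k}}(t,z,\epsilon)$ as a function of $\epsilon$ really lands in $\mathbb{F}$ with values depending holomorphically on $\epsilon$ — that is, that the map $\epsilon\mapsto(t,z)\mapsto u_{\mathcal{E}_{HJ_{n}}^{k}}(t,z,\epsilon)$ is holomorphic from $\mathcal{E}_{HJ_{n}}^{k}$ into the Banach space $(\mathbb{F},\|\cdot\|_{\sup})$. This follows from the joint holomorphy and the uniform boundedness of $u_{\mathcal{E}_{HJ_{n}}^{k}}$ on $(\mathcal{T}\cap D(0,r_{\mathcal{T}}))\times D(0,\delta\delta_{1})\times\mathcal{E}_{HJ_{n}}^{k}$ established in Theorem~1 (Cauchy estimates plus Montel/Morera on the Banach-space-valued side), and likewise on the sectors $\mathcal{E}_{S_{d_{p}}}$. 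I expect the main obstacle — really the only non-mechanical part — to be making sure the cocycle relations are set up with the correct signs and index ranges so that the hypotheses $(\ref{cond_Delta_cocycle_exp_flat})$ and $(\ref{cond_Delta_cocycle_log_exp_flat})$ of Proposition~23 hold exactly as stated; everything else is a direct transcription. I would then repeat the entire argument \emph{mutatis mutandis} for the family $\{y_{\mathcal{E}_{HJ_{n}}^{k}}\}$, $\{y_{\mathcal{E}_{S_{d_{p}}}}\}$ using Theorem~2 in place of Theorem~1, obtaining $b(t,z,\epsilon)$, $\hat{y}^{1}$, $\hat{y}^{2}$ and the function $y^{2}(t,z,\epsilon)$, which completes the proof of Theorem~3.
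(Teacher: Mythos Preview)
Your proposal is correct and follows essentially the same route as the paper's proof: define $G_{k}(\epsilon)$ and $\breve{G}_{p}(\epsilon)$ as the $\mathbb{F}$-valued maps given by the solutions from Theorems~1 and~2, verify the three hypotheses of Proposition~23 by quoting the boundedness and the flatness estimates (\ref{log_flat_difference_uk_plus_1_minus_uk_HJn}), (\ref{difference_u_Sdp_exp_small}), (\ref{difference_u_HJn_Sd0}), (\ref{difference_u_HJn_Sdiota}) (resp.\ their $y$-counterparts), and then read off the decomposition, the asymptotic expansions, and the summability statements directly from the conclusion of Proposition~23. The paper does not spell out the Banach-space-valued holomorphy check you mention, but your remark on it is harmless and correct.
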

\begin{proof}

For all $k \in \llbracket -n,n \rrbracket$, we set forth a holomorphic function $G_{k}$ described as
$G_{k}(\epsilon) := (t,z) \mapsto u_{\mathcal{E}_{HJ_n}^{k}}(t,z,\epsilon)$
(resp. $G_{k}(\epsilon) := (t,z) \mapsto y_{\mathcal{E}_{HJ_n}^{k}}(t,z,\epsilon)$) which defines, by construction, a bounded and
holomorphic function from $\mathcal{E}_{HJ_n}^{k}$ into the Banach space $\mathbb{F} =
\mathcal{O}( (\mathcal{T} \cap D(0,r_{\mathcal{T}})) \times D(0,\delta \delta_{1})$ equipped with the supremum norm. For all
$0 \leq p \leq \iota-1$, we set up a holomorphic function $\breve{G}_{p}$ given by
$\breve{G}_{p}(\epsilon) := (t,z) \mapsto u_{\mathcal{E}_{S_{d_p}}}(t,z,\epsilon)$ (resp.
$\breve{G}_{p}(\epsilon) := (t,z) \mapsto y_{\mathcal{E}_{S_{d_p}}}(t,z,\epsilon)$) which yields a bounded holomorphic function
from $\mathcal{E}_{S_{d_p}}$ into $\mathbb{F}$. We deduce that the assumption 1) of Proposition 23 is satisfied.

Furthermore, according to the bounds
(\ref{difference_u_Sdp_exp_small}) together with (\ref{difference_u_HJn_Sd0}) and (\ref{difference_u_HJn_Sdiota}) concerning the
functions $u_{\mathcal{E}_{S_{d_p}}}$, $0 \leq p \leq \iota - 2$ and
$u_{\mathcal{E}_{HJ_{n}}^{-n}}$, $u_{\mathcal{E}_{HJ_{n}}^{n}}$, $u_{\mathcal{E}_{S_{d_{\iota-1}}}}$ (resp. to the bounds
(\ref{exp_flat_difference_yk_plus_1_minus_yk_Sdp}) in a row with (\ref{difference_y_HJn_Sd0}) and
(\ref{difference_y_HJn_Sdiota}) dealing with the functions $y_{\mathcal{E}_{S_{d_p}}}$, $0 \leq p \leq \iota-2$ and
$y_{\mathcal{E}_{HJ_{n}}^{-n}}$, $y_{\mathcal{E}_{HJ_{n}}^{n}}$, $y_{\mathcal{E}_{S_{d_{\iota-1}}}}$), we observe that the
bounds (\ref{cond_Delta_cocycle_exp_flat}) are fulfilled for the functions
$\breve{\Delta}_{p}(\epsilon) = \breve{G}_{p+1}(\epsilon) - \breve{G}_{p}(\epsilon)$, $0 \leq p \leq \iota-2$ and
$\Delta_{-n,0}(\epsilon) = \breve{G}_{0}(\epsilon) - G_{-n}(\epsilon)$,
$\Delta_{\iota-1,n}(\epsilon) = G_{n}(\epsilon) - \breve{G}_{\iota-1}(\epsilon)$. As a result, Assumption 2) of Proposition 23 holds.

At last, keeping in mind the estimates (\ref{log_flat_difference_uk_plus_1_minus_uk_HJn}) for the maps
$u_{\mathcal{E}_{HJ_n}^{k}}$, $k \in \llbracket -n,n \rrbracket$, $k \neq n$ (resp. the estimates
(\ref{log_flat_difference_yk_plus_1_minus_yk_HJn}) for the maps $y_{\mathcal{E}_{HJ_n}^{k}}$, $k \in \llbracket -n,n \rrbracket$, $k \neq n$),
we conclude that the upper bounds (\ref{cond_Delta_cocycle_log_exp_flat}) are justified for the functions
$\Delta_{k}(\epsilon) = G_{k}(\epsilon) - G_{k+1}(\epsilon)$, $-n \leq k \leq n-1$. Hence, Assumption 3) of Proposition 23 holds true.

Accordingly, the proposition 23 gives rise to the existence of\\
- A convergent series $(t,z) \mapsto a(t,z,\epsilon) := a(\epsilon)$ (resp. $(t,z) \mapsto b(t,z,\epsilon) := a(\epsilon)$) belonging
to $\mathbb{F}\{ \epsilon \}$,\\
- Two formal series $(t,z) \mapsto \hat{u}^{j}(t,z,\epsilon) := \hat{G}^{j}(\epsilon)$
(resp. $(t,z) \mapsto \hat{y}^{j}(t,z,\epsilon) := \hat{G}^{j}(\epsilon)$) in $\mathbb{F}[[\epsilon]]$, $j=1,2$,\\
- $\mathbb{F}-$valued holomorphic functions $(t,z) \mapsto u_{\mathcal{E}_{HJ_n}^{k}}^{j}(t,z,\epsilon) := G_{k}^{j}(\epsilon)$
(resp. $(t,z) \mapsto y_{\mathcal{E}_{HJ_n}^{k}}^{j}(t,z,\epsilon) := G_{k}^{j}(\epsilon)$) on $\mathcal{E}_{HJ_n}^{k}$, for all
$k \in \llbracket -n,n \rrbracket$, $j=1,2$,\\
- $\mathbb{F}-$valued holomorphic functions $(t,z) \mapsto u_{\mathcal{E}_{S_{d_p}}}^{j}(t,z,\epsilon) := \breve{G}_{p}^{j}(\epsilon)$
(resp. $(t,z) \mapsto y_{\mathcal{E}_{S_{d_p}}}^{j}(t,z,\epsilon) := \breve{G}_{p}^{j}(\epsilon)$) on $\mathcal{E}_{S_{d_p}}$, for all
$0 \leq p \leq \iota-1$, $j=1,2$,\\
that accomplish the statement of Theorem 3.
\end{proof}

\end{document}